\documentclass[11pt,a4paper]{article}

\usepackage[utf8]{inputenc}
\usepackage[paperheight=297mm,paperwidth=210mm,textheight=26cm,textwidth=16cm,includehead]{geometry}
\usepackage[english]{babel}
\usepackage{amsmath,amssymb,mathrsfs,stmaryrd,color}
\usepackage{amsthm}
\usepackage{mathtools,bm}
\usepackage{enumerate}
\usepackage{comment}
\usepackage{hyperref}
\usepackage{todonotes}
\usepackage{fullpage}
\usepackage{stmaryrd}
\usepackage{graphicx}
\usepackage{caption}
\usepackage{subcaption}

\usepackage[style = alphabetic]{biblatex}
\numberwithin{equation}{section}


	\newcommand{\bbN}{\mathbb N}
	
	\newcommand{\bbR}{\mathbb R}

	\newcommand{\bbX}{\mathbb X}

	\newcommand{\bfX}{\mathbf X}
\newcommand{\bfY}{\mathbf Y}	\newcommand{\bfZ}{\mathbf Z}

\newcommand{\calI}{\mathcal I}
\newcommand{\calT}{\mathcal T}
\newcommand{\calY}{\mathcal Y}


\theoremstyle{plain}
\newtheorem{theorem}{Theorem}[section]

\newtheorem{corollary}[theorem]{Corollary}
\newtheorem{lemma}[theorem]{Lemma}
\newtheorem{proposition}[theorem]{Proposition}

\theoremstyle{definition}
\newtheorem{definition}[theorem]{Definition}

\newtheorem{example}[theorem]{Example}

\newtheorem{remark}[theorem]{Remark}


\newcommand{\norm}[1]{\left\Vert #1 \right\Vert}
\newcommand{\nnorm}[1]{{\left\vert\kern-0.25ex\left\vert\kern-0.25ex\left\vert #1 \right\vert\kern-0.25ex\right\vert\kern-0.25ex\right\vert}}
\newcommand{\Set}[1]{\left\{ #1 \right\}}
\newcommand{\abs}[1]{\left\vert #1 \right\vert}
\newcommand{\dif}{\text{d}}
\newcommand{\given}{~\vert~}

\newcommand{\tX}{\tilde{X}}
\newcommand{\tY}{\tilde{Y}}
\newcommand{\tZ}{\tilde{Z}}

\newcommand{\tphi}{\tilde{\phi}}

\newcommand{\tPhi}{\Phi}

\newcommand{\cI}{\mathcal I}

\newcommand{\Csew}{C_{\mathrm{sew}}}
\newcommand{\Lip}{\mathrm{Lip}}
\addbibresource{references.bib}

\AtEveryBibitem{\clearfield{eprint}}
\AtEveryCitekey{\clearfield{eprint}}
\AtEveryBibitem{\clearfield{url}}
\AtEveryCitekey{\clearfield{url}}

\begin{document}

\title{Lipschitz estimates in the Besov settings for Young and rough differential equations}
\author{Peter Friz, Hannes Kern, Pavel Zorin-Kranich}

\maketitle

\begin{abstract}
We develop a set of techniques that enable us to effectively recover Besov rough analysis from $p$-variation rough analysis.
Central to our approach are new metric groups, in which some objects in rough path theory that have been previously viewed as two-parameter can be considered as path increments.
Furthermore, we develop highly precise Lipschitz estimates for Young and rough differential equations, both in the variation and Besov scale.
\end{abstract}
  
\tableofcontents

\section{Introduction}

Rough path theory gives meaning to differential equations of the form
\begin{equation} \label{equ:intro1}
dY_t = \phi(Y_t) dX_t
\end{equation}
with a (multidimensional) driver $X$ that has (locally) finite $r$-variation.
When $r < 2$, this equation can be understood as a Young integral equation.
For $r \ge 2$, it was understood by T. Lyons \cite{Lyo98} that $X$ needs to be enhanced with additional information to restore well-posedness of the problem. In particular, for $r \in [2,3)$, an appropriate driver is a level-$2$ rough path of the form $\bfX = (X, \bbX)$, where $\bbX = \bbX (s,t)$ is typically interpreted as $\int_s^t (X_u - X_s) \otimes  d X_u$.

While finite $r$-variation is the minimum regularity hypothesis on $X$ needed to make sense of \eqref{equ:intro1}, higher regularity hypotheses may lead to stronger well-posedness results for the solution $Y$.
A classical hypothesis is H\"older continuity, and several authors \cite{PT16, FP18, Liu19, FS22, LPT22} have contributed to extending rough path estimates to Sobolev and Besov scale.
We note that the Besov sewing lemma of \cite{FS22} has found recent applications  \cite{bechtold2024youngregimeslocallymonotone} in non-linear SPDE theory.

In this article, we present a way of upgrading $r$-variation estimates for differential equations of the form \eqref{equ:intro1} to Besov norm estimates.
At the highest level, our approach consists of two steps.
Firstly, we show equivalence between Besov norms and metrics on the space of drivers $\bfX$ and nested norms involving local $r$-variation and an outer $2$-parameter Besov norm.
Secondly, we show local $r$-variation estimates for differential equations that are directly applicable to these nested norms.
We found the previously existing $r$-variation estimates not to be sufficiently precise for this purpose.

It would be interesting to know whether an analogous treatment of \emph{partial} differential equations subjected to rough noise, also in space-time, is possible, starting with suitably localized estimates and recovering well-posedness results in a range of function spaces. Initial progress in this direction was made in the article \cite{ZK22}, which contains a localized reconstruction theorem that recovers the Besov space case first shown in \cite{HL17}.


\paragraph{Local $r$-variation}
The basic connection between Besov and local $r$-variation norms is contained in our first result.

\begin{theorem}[cf.\ Theorem~\ref{theo:VrBesovEstimate}]
\label{theo:IntroVrBesovEstimate}
Let $f:[0,T]\to X$ for some metric space $(X,d)$ be continuous. Let $r\ge 1, \alpha>0, p,q\in(0,\infty]$ such that
\[
	\frac 1\alpha <r\le p\,.
\]
Then
\begin{equation}
\label{eq:9}
\norm{V^r f}_{B^\alpha_{p,q}} \lesssim \norm{f}_{B^\alpha_{p,q}}\,.
\end{equation}
\end{theorem}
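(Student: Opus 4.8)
I would reduce everything to a pointwise comparison plus a dyadic argument. Write $\omega_f(s,t):=\norm{f}_{r\text{-var};[s,t]}$. Since $r\ge 1$, this seminorm is subadditive, $\omega_f(a,c)\le\omega_f(a,b)+\omega_f(b,c)$ for $a\le b\le c$ (refine a partition of $[a,c]$ through $b$ and use $(x+y)^{1/r}\le x^{1/r}+y^{1/r}$), and it is monotone under restriction of the interval. Hence $V^r f$ — whether it is literally $t\mapsto\omega_f(0,t)$ or its incarnation in the metric group of Theorem~\ref{theo:VrBesovEstimate} — has increments controlled by $\omega_f$:
\[
	d\big(V^rf(s),V^rf(t)\big)\le\omega_f(s\wedge t,\,s\vee t),\qquad s,t\in[0,T].
\]
We may assume $\norm{f}_{B^\alpha_{p,q}}<\infty$ (otherwise nothing to prove); the estimate below will a posteriori give $\omega_f(0,T)<\infty$, so that $V^rf$ is finite and continuous. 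Using the definition of the Besov seminorm via $L^p$-moduli of continuity and the displayed inequality, the task becomes to bound $\norm{h\mapsto h^{-\alpha}\,\norm{t\mapsto\omega_f(t,t+h)}_{L^p_t}}_{L^q(\dif h/h)}$ by $\norm{f}_{B^\alpha_{p,q}}$.

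\textbf{Step 2 (dyadic majorant for the $r$-variation; the main obstacle).} For $[a,b]\subset[0,T]$ let $\mathcal D_m[a,b]$ denote its subdivision into $2^m$ equal intervals, with generic interval $I$ and endpoints $I^-<I^+$. The key input is a bound of the form
\[
	\omega_f(a,b)\le C\sum_{m\ge0}\Big(\sum_{I\in\mathcal D_m[a,b]}d\big(f(I^-),f(I^+)\big)^r\Big)^{1/r}.
\]
This is the inequality underlying the embedding $B^{1/r}_{r,1}\hookrightarrow V^r$, and producing a clean dyadic majorant is, I expect, the delicate point, since $\omega_f$ is a supremum over \emph{all} partitions while the right-hand side only sees dyadic scales. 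If no reference in the required generality is available, I would prove it by a charging argument on the dyadic tree: by continuity of $f$ it suffices to bound $\big(\sum_{[s,t]\in\pi}d(f(s),f(t))^r\big)^{1/r}$ over dyadic partitions $\pi$; each increment $f(s)\to f(t)$ is telescoped along the path in the dyadic tree through the coarsest dyadic point of $(s,t)$, which uses at most two dyadic intervals of each generation $m$; each generation-$m$ dyadic interval is in turn used by at most $O(1)$ of the $[s,t]\in\pi$ (two partition intervals cannot both have their left endpoint in a given dyadic cell and their right endpoint beyond it); and Minkowski's inequality in the generation index (admissible since $r\ge1$) then assembles the bound.

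\textbf{Step 3 (assembling the Besov norm of $f$).} Inserting Step 2 with $[a,b]=[t,t+h]$ and using Minkowski in $L^p_t$ to pull out the $m$-sum, the target quantity is
\[
	\norm{V^rf}_{B^\alpha_{p,q}}\lesssim\norm{\,h\mapsto h^{-\alpha}\sum_{m\ge0}\norm{t\mapsto S_{m,h}(t)}_{L^p_t}\,}_{L^q(\dif h/h)},\quad S_{m,h}(t):=\Big(\sum_{j=0}^{2^m-1}d\big(f(t+j2^{-m}h),f(t+(j+1)2^{-m}h)\big)^r\Big)^{1/r}.
\]
Because $p\ge r$, Minkowski's inequality in $L^{p/r}_t$ yields $\norm{S_{m,h}}_{L^p_t}\le\big(\sum_{j=0}^{2^m-1}\norm{d(f(\cdot+j2^{-m}h),f(\cdot+(j+1)2^{-m}h))}_{L^p}^r\big)^{1/r}\le 2^{m/r}\,\omega^{(p)}_f(2^{-m}h)$, where $\omega^{(p)}_f(\tau):=\norm{t\mapsto d(f(t+\tau),f(t))}_{L^p}$ and we used that translation does not increase $\norm{\cdot}_{L^p_t}$ (boundary effects on $[0,T]$ being harmless). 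Plugging this in, using the triangle inequality of the outer $L^q(\dif h/h)$-norm in the $m$-sum (or $q$-subadditivity when $q<1$), and substituting $\tau=2^{-m}h$ — which only shrinks the $\tau$-range inside $(0,T)$ — I arrive at
\[
	\norm{V^rf}_{B^\alpha_{p,q}}\lesssim\sum_{m\ge0}2^{m/r}\norm{h\mapsto h^{-\alpha}\omega^{(p)}_f(2^{-m}h)}_{L^q(\dif h/h)}=\Big(\sum_{m\ge0}2^{-m(\alpha-1/r)}\Big)\norm{f}_{B^\alpha_{p,q}},
\]
and the geometric series converges precisely because $\alpha>1/r$, i.e.\ $1/\alpha<r$. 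Thus the two hypotheses enter at exactly two points — $r\le p$ for the Minkowski step in $L^{p/r}$, and $1/\alpha<r$ for the summability over dyadic scales — and the remaining work ($q<1$, $p=\infty$, interval endpoints, and the precise choice of Besov modulus) is routine bookkeeping. I expect essentially all the difficulty to be concentrated in obtaining Step 2 with a harmless constant.
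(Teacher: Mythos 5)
Your proposal is correct and follows essentially the same route as the paper: the dyadic majorant in your Step 2 is exactly the paper's Lemma~\ref{lem:VrDyadicExpansion}/Corollary~\ref{cor:VrDyadicExpansion} (proved as in the cited reference, with the same chaining/Minkowski-in-generation argument you sketch), and your Step 3 reproduces the proof of Theorem~\ref{theo:VrBesovEstimate} — Minkowski in $L^{p/r}$ using $r\le p$, translation invariance, the substitution $\tau=2^{-m}h$, and the geometric series requiring $\alpha>1/r$. The only cosmetic difference is that you work with the starred norm (no supremum over $h\le t$) whereas the paper carries the supremum along; since $V^rf$ is subadditive this is harmless (either carry the sup, which costs nothing, or invoke the equivalence of $\norm{\cdot}_{B^\alpha_{p,q}}$ and $\norm{\cdot}^*_{B^\alpha_{p,q}}$ from the paper's Lemma 2.3).
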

The $B^\alpha_{p,q}$ norm of the $2$-parameter function $(s,t) \mapsto V^r f_{[s,t]}$ that appears on the left-hand side of \eqref{eq:9} is introduced in Definition~\ref{def:2Besov}.

We note that, with our conventions for Besov norms, it is immediately clear that $\norm{f}_{B^\alpha_{p,q}} \leq \norm{V^r f}_{B^\alpha_{p,q}}$.
Hence, Theorem~\ref{theo:IntroVrBesovEstimate} can be viewed as a characterization of Besov functions in terms of bounds on their local $r$-variation.

\paragraph{Rough path differences}
It is well-known (e.g. \cite{FV10,FH20}) that a rough path $\bfX = (X, \bbX)$ can be regarded as a path with values in a metric group, with (group) increments given by $(s,t) \mapsto \bfX (s,t) \equiv (\delta X, \bbX)(s,t)$. It is much less obvious, however, how to apply a metric statement like Theorem \ref{theo:IntroVrBesovEstimate} to the difference $\Delta \bfX = (\Delta X, \Delta \bfX)$, with $\Delta X = X - \tilde X$ and
$$
   \Delta \bbX: (s,t) \mapsto \bbX_{s,t} - \tilde  \bbX_{s,t} .
$$
For this reason, metric arguments were considered insufficient to deal with this more general situation, see e.g.\ the remark above \cite[Theorem 3.3]{FH20}, let alone controlled rough paths $\bfY = (Y,Y')$ with
 $R^{\bfY,\bfX} = \delta Y - Y' \delta X$ and differences like
\[
\Delta R^{\bfY,\bfX}:  (s,t) \mapsto (\delta Y - Y' \delta X) - (\delta \tilde  Y -  \tilde Y' \delta  \tilde X).
\]
Our second contribution is a new range of metric groups (Lemma~\ref{lem:homogeneous-subadditive-functional}), which allow viewing various two-parameter functions relevant to rough path theory as increments of paths in these groups.
The functions that can be represented in this way include
  \begin{align*}
  (s,t)&\mapsto (\delta X_{s,t}, \delta\tilde X_{s,t}, \Delta X_{s,t}, \Delta\bbX_{s,t})\\
  (s,t)&\mapsto (\delta Y'_{s,t}, \delta X_{s,t}, R^{\bfY,\bfX}_{s,t})\\
  (s,t)&\mapsto (\delta\tilde Y'_{s,t}, \Delta Y'_{s,t}, \delta X_{s,t},\Delta X_{s,t}, \Delta R^{\bfY,\bfX}_{s,t})\,,
  \end{align*}
  where $\Delta Y' = Y'-\tilde Y'$. Applying Theorem \ref{theo:IntroVrBesovEstimate} to these paths leads to a complete range of estimates.

\begin{theorem}[cf.\ Proposition~\ref{prop:Vr-Besov-Embedding-RYX} and Corollary~\ref{cor:Vr_Besov_Embedding_Rough_Paths}]\label{theo:IntroVrBesovEstimateRP}
	Let $\bfX,\tilde\bfX$ be $r$-rough paths on $[0,T]$, for some $r\in[2,3)$ and let $\bfY,\tilde\bfY$ be controlled rough paths. Assume that $0<\frac 1\alpha<r\le p\le \infty$ and $0<q\le\infty$. Then:
	\begin{align*}
		\norm{V^{r/2} \bbX}_{B^{2\alpha}_{p/2,q/2}} &\lesssim \norm{\bbX}_{B^{2\alpha}_{p/2,q/2}} + \norm{X}_{B^\alpha_{p,q}}^2 \\
\norm{V^{r/2} R^{\bfY,\bfX}}_{B^{2\alpha}_{p/2,q/2}} &\lesssim \norm{R^{\bfY,\bfX}}_{B^{2\alpha}_{p/2,q/2}} + \norm{Y'}_{B^\alpha_{p,q}}\norm{X}_{B^\alpha_{p,q}} \\
\norm{V^{r/2}\Delta\bbX}_{B^{2\alpha}_{p/2,q/2}} &\lesssim \norm{\Delta\bbX}_{B^{2\alpha}_{p/2,q/2}} + \norm{\Delta X}_{B^\alpha_{p,q}}(\norm{X}_{B^{\alpha}_{p,q}} + \norm{\tilde X}_{B^\alpha_{p,q}}) \\
\norm{V^{r/2} \Delta R^{\bfY,\bfX}}_{B^{2\alpha}_{p/2,q/2}} &\lesssim \norm{\Delta R^{\bfY,\bfX}}_{B^{2\alpha}_{p/2,q/2}} + \norm{\Delta Y'}_{B^\alpha_{p,q}} \norm{X}_{B^\alpha_{p,q}} + \norm{\tilde Y'}_{B^\alpha_{p,q}} \norm{\Delta X}_{B^\alpha_{p,q}}\,.
	\end{align*}
\end{theorem}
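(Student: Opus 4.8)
The plan is to derive each of the four estimates by applying the scalar characterization of Besov regularity in terms of local $r$-variation, Theorem~\ref{theo:IntroVrBesovEstimate}, to a suitably chosen path with values in one of the metric groups constructed in Lemma~\ref{lem:homogeneous-subadditive-functional}. The first step is to recall that for a level-$2$ rough path, the map $(s,t)\mapsto(\delta X_{s,t},\bbX_{s,t})$ is the increment of a path in a metric group whose homogeneous norm behaves like $\abs{\delta X}+\abs{\bbX}^{1/2}$; applying Theorem~\ref{theo:IntroVrBesovEstimate} with exponents $(\alpha,p,q)$ to this path (legitimate since $1/\alpha<r\le p$) yields a bound on the $B^\alpha_{p,q}$-norm of the homogeneous local $r$-variation $V^r(\delta X,\bbX)$. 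One then has to unwind the homogeneity: because $V^r$ of the group-valued path controls both $V^r X$ and $(V^{r/2}\bbX)^{1/2}$, and because the $2$-parameter Besov norm interacts with the squaring/scaling exactly as $\norm{\,\cdot\,^{1/2}}_{B^\alpha_{p,q}}^2\asymp\norm{\cdot}_{B^{2\alpha}_{p/2,q/2}}$ (this is where the index halving $(2\alpha,p/2,q/2)$ comes from), we get $\norm{V^{r/2}\bbX}_{B^{2\alpha}_{p/2,q/2}}\lesssim\norm{\bbX}_{B^{2\alpha}_{p/2,q/2}}+\norm{X}_{B^\alpha_{p,q}}^2$, which is the first claimed inequality. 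Here the elementary inequality $\norm{X}_{B^\alpha_{p,q}}\lesssim\norm{V^rX}_{B^\alpha_{p,q}}$ noted after Theorem~\ref{theo:IntroVrBesovEstimate}, together with subadditivity of $V^r$, is used to absorb cross terms.

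The remaining three estimates follow the same template but with progressively larger metric groups. For $R^{\bfY,\bfX}$ one uses the group carrying $(s,t)\mapsto(\delta Y'_{s,t},\delta X_{s,t},R^{\bfY,\bfX}_{s,t})$, whose homogeneous norm is comparable to $\abs{\delta Y'}+\abs{\delta X}+\abs{R^{\bfY,\bfX}}^{1/2}$; Theorem~\ref{theo:IntroVrBesovEstimate} bounds the $B^\alpha_{p,q}$-norm of $V^r$ of this path, and extracting the $R$-component (again with the squaring that converts $(\alpha,p,q)$ into $(2\alpha,p/2,q/2)$) produces the claimed bound, the cross term $\norm{Y'}_{B^\alpha_{p,q}}\norm{X}_{B^\alpha_{p,q}}$ appearing because locally $R$ is built from products of $\delta Y'$ and $\delta X$. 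For the two difference estimates one takes the paths listed in the third bullet of the introduction, namely $(s,t)\mapsto(\delta X,\delta\tilde X,\Delta X,\Delta\bbX)(s,t)$ and $(s,t)\mapsto(\delta\tilde Y',\Delta Y',\delta X,\Delta X,\Delta R^{\bfY,\bfX})(s,t)$, in the corresponding metric groups of Lemma~\ref{lem:homogeneous-subadditive-functional}, whose homogeneous norms weight the $\Delta$-second-level components ($\Delta\bbX$, $\Delta R$) with exponent $1/2$ while the level-one components enter linearly. Theorem~\ref{theo:IntroVrBesovEstimate} then gives control of the $B^\alpha_{p,q}$-norm of the full homogeneous local $r$-variation, and projecting to the $\Delta\bbX$- resp.\ $\Delta R$-coordinate yields the last two inequalities, with the cross terms reflecting the bilinear structure of the differences (for $\Delta\bbX$: $\Delta X$ paired against $X$ or $\tilde X$; for $\Delta R$: $\Delta Y'$ against $X$ and $\tilde Y'$ against $\Delta X$).

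I expect the main obstacle to be bookkeeping rather than a single hard idea: one must verify that the homogeneous norm of each metric group from Lemma~\ref{lem:homogeneous-subadditive-functional} really does dominate the individual coordinates with the stated weights, so that $V^r$ of the group-valued path dominates $V^r$ of each coordinate (and $V^{r/2}$ of the weighted-$1/2$ coordinates), and then that the $2$-parameter Besov norm of Definition~\ref{def:2Besov} is monotone under taking such coordinate projections and transforms correctly under the $\abs{\cdot}\mapsto\abs{\cdot}^{1/2}$ reparametrization of indices. A secondary technical point is that the metric-space version of Theorem~\ref{theo:IntroVrBesovEstimate} is stated for genuine paths, so one must confirm the continuity hypothesis for the group-valued paths — which holds because the rough paths $\bfX,\tilde\bfX$ and controlled rough paths $\bfY,\tilde\bfY$ are assumed continuous — and that the constraint $1/\alpha<r\le p\le\infty$ is exactly what is needed for Theorem~\ref{theo:IntroVrBesovEstimate} to apply at level one, with the induced constraint $2/(2\alpha)<r/2\cdot2=r\le p$ handling level two. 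Once these compatibilities are in place, each of the four estimates is a one-line consequence of the scalar theorem plus subadditivity of local variation.
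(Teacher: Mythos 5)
Your overall strategy is the paper's: view the relevant two-parameter objects as increments of paths in the metric groups of Lemma~\ref{lem:homogeneous-subadditive-functional}, apply Theorem~\ref{theo:VrBesovEstimate} to those paths, and use the exact identity $\norm{\abs{A}^{1/2}}_{B^{\alpha}_{p,q}}^{2}=\norm{A}_{B^{2\alpha}_{p/2,q/2}}$ to pass between the two scales. (The paper organizes this slightly differently: it proves only the fourth estimate directly, in Proposition~\ref{prop:Vr-Besov-Embedding-RYX}, and obtains the other three in Corollary~\ref{cor:Vr_Besov_Embedding_Rough_Paths} by specializing, using that $\bbX=R^{\bbX,\bfX}$; that difference is harmless.) However, there is a genuine gap at the decisive step where you claim the bilinear cross terms ``appear because of the bilinear structure of the differences.'' If you equip the groups with their natural (unweighted) homogeneous norms and apply Theorem~\ref{theo:VrBesovEstimate}, projecting to the second-level coordinate only gives, e.g.\ for the fourth estimate,
\begin{equation*}
\norm{V^{r/2}\Delta R^{\bfY,\bfX}}_{B^{2\alpha}_{p/2,q/2}}
\lesssim
\norm{\Delta R^{\bfY,\bfX}}_{B^{2\alpha}_{p/2,q/2}}
+\bigl(\norm{\tilde Y'}_{B^{\alpha}_{p,q}}+\norm{\Delta Y'}_{B^{\alpha}_{p,q}}+\norm{X}_{B^{\alpha}_{p,q}}+\norm{\Delta X}_{B^{\alpha}_{p,q}}\bigr)^{2},
\end{equation*}
and analogously $(\norm{X}+\norm{\tilde X}+\norm{\Delta X})^{2}$ for $\Delta\bbX$ and $(\norm{Y'}+\norm{X})^{2}$ for $R^{\bfY,\bfX}$. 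These are strictly weaker than the claimed bounds: the square of the sum contains $\norm{X}^{2}$, $\norm{\tilde Y'}^{2}$, etc., which do not vanish when the differences vanish, so the Lipschitz structure needed later is lost. The homogeneous norm is a maximum of the coordinates, so the bilinear information does not survive projection by itself; some additional mechanism is required to turn squares of sums into the products $\norm{Y'}\norm{X}$, $\norm{\Delta X}(\norm{X}+\norm{\tilde X})$ and $\norm{\Delta Y'}\norm{X}+\norm{\tilde Y'}\norm{\Delta X}$.

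The missing idea is the renormalization built into Examples~\ref{ex:GroupForDeltaX} and~\ref{ex:GroupForR}: one incorporates free constants into the group norm, e.g.\ weights $N_a,N_b,N_c,N_d$ on the level-one coordinates and $N_r=N_aN_d+N_bN_c$ on the second-level coordinate (chosen so that the bilinear map still has $\norm{B}\le 2$, which is exactly why $N_r$ must be this particular combination), applies Theorem~\ref{theo:VrBesovEstimate} to the reweighted path, and then chooses $N_a=\norm{\tilde Y'}_{B^{\alpha}_{p,q}}$, $N_b=\norm{\Delta Y'}_{B^{\alpha}_{p,q}}$, $N_c=\norm{X}_{B^{\alpha}_{p,q}}$, $N_d=\norm{\Delta X}_{B^{\alpha}_{p,q}}$, so that all level-one contributions are $\lesssim 1$ and, after rearranging, the right-hand side is $N_r+\norm{\Delta R^{\bfY,\bfX}}_{B^{2\alpha}_{p/2,q/2}}$ with precisely the claimed cross terms. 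Only the first estimate, where the cross term genuinely is $\norm{X}_{B^{\alpha}_{p,q}}^{2}$, follows from the unweighted argument you describe; compare also Lemma~\ref{lem:BesovControlledRPAreVariationControlledRP} (no renormalization, hence $\norm{Y'}^{2}+\norm{X}^{2}$) with the second line of Corollary~\ref{cor:Vr_Besov_Embedding_Rough_Paths} (renormalized, hence $\norm{Y'}\norm{X}$). Without this weighting-and-optimization step your outline does not prove the stated inequalities.
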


\paragraph{Young differential equations}  As an application, we use these estimates to derive Lipschitz estimates to solutions of RDEs in the Besov setting from the variation setting. We start with the Young regime.
Let $Y$ be a solution to \eqref{equ:intro1} with driver $X \in V^r$, $r<2$ and consider another solution to \begin{equation}\label{eq:IntroYoungDE}
d \tilde Y_t = \tilde \phi(\tilde Y_t) d \tilde X_t
\end{equation}
started from $\tilde y_0$ and $\tilde X\in V^r, r<2$, on $[0,T]$.Then it is well known that one has local Lipschitz estimates of the form\footnote{Relevant seminorms are defined in \eqref{equ:Calpha}.}
\begin{equation*}
V^r(Y-\tilde Y) \lesssim \abs{y_0-\tilde y_0} +\norm{\phi-\tilde\phi}_{\sup} + \norm{\phi-\tilde\phi}_{C^\alpha} + V^r(X-\tilde X)\,.
\end{equation*}
More precisely, for intervals $[s,t] \subset [0,T]$,
\begin{equation*}
V^r(Y-\tilde Y)_{[s,t]} \le c_1(s,t)(\abs{y_0-\tilde y_0} +\norm{\phi-\tilde\phi}_{\sup} + \norm{\phi-\tilde\phi}_{C^\alpha}) + c_2(s,t) V^r(X-\tilde X)\,.
\end{equation*}

Our techniques require very precise forms of the constants $c_1,c_2$ as function of the interval $I=[s,t]$, so we derive these in the following proposition:

\begin{theorem}[cf. Theorem \ref{thm:Lipschitz-stability}]\label{thm:IntroYoungLipschitz}
Let $1 \leq r < 1+\alpha \leq 2$.
Let $\phi,\tilde\phi\in C^{1,\alpha}_b$ be bounded in the sense of \eqref{eq:C^0,alpha-bounds}, \eqref{eq:C^1,alpha-bounds}.
Let $Y,\tilde Y$ be solutions to \eqref{equ:intro1}, \eqref{eq:IntroYoungDE} with $X,\tilde X \in V^{r}$ and initial data $y_0,\tilde y_0$. Then, for small enough $T>0$ and all intervals $I\subset [0,T]$, we have
\begin{equation}
V^r\Delta Y_{I}
\leq
\gamma V^rX_{I} + 2\Phi_{0} V^r\Delta X_I
\end{equation}
with
\begin{align*}
\gamma \lesssim
((\Csew+1)\Phi_{1} + \Csew 2^{\alpha+1} \Phi_{1,\alpha} \Phi_0^{\alpha} \epsilon^{\alpha}) \Phi_{0} V^r \Delta X_{[0,T]}  +
(\Csew 2^{\alpha+1} \Phi_{1,\alpha} \Phi_0^{\alpha} \epsilon^{\alpha} + \Phi_{1}) \abs{y_{0}-\tilde{y}_{0}}
\\+ \Phi_0^{1-\alpha} \norm{\Delta\phi}_{C^{\alpha}}
+ \norm{\Delta\phi}_{\sup}.
\end{align*}
\end{theorem}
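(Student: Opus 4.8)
The plan is to run the standard Young-ODE fixed-point / a priori estimate, but keeping scrupulous track of how every constant depends on the interval $I$ through $V^rX_I$. First I would set up the difference equation: writing $\Delta Y = Y - \tilde Y$, one has, on any interval $[s,t]$,
\[
\delta(\Delta Y)_{s,t} = \int_s^t \phi(Y_u)\,\dif X_u - \int_s^t \tilde\phi(\tilde Y_u)\,\dif\tilde X_u,
\]
and the integrand difference splits as $\phi(Y_u) - \tilde\phi(\tilde Y_u) = (\phi(Y_u) - \phi(\tilde Y_u)) + (\phi - \tilde\phi)(\tilde Y_u)$, the first term controlled by $\Phi_1\abs{\Delta Y_u}$ via the $C^{1}$-bound and the second by $\norm{\Delta\phi}_{\sup}$; similarly one rewrites $\int\phi(Y)\dif X - \int\phi(\tilde Y)\dif\tilde X$ using the telescoping $\phi(Y)\dif X - \phi(\tilde Y)\dif\tilde X = \phi(Y)\dif(\Delta X) + (\phi(Y)-\phi(\tilde Y))\dif\tilde X$. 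This is where the precise Young-integral sewing bound enters: estimating $V^r$ of each of these integrals requires the constant $\Csew$ from the sewing lemma together with the $C^{\alpha}$-modulus of the integrand, which itself forces the appearance of the Hölder seminorm $\Phi_{1,\alpha}$, the oscillation bound $\Phi_0$ through $\abs{Y_u - Y_s}\lesssim\Phi_0 V^rX_{[s,t]}$, and the $\epsilon^{\alpha}$ factor (here $\epsilon$ plays the role of the a priori bound on $V^rX_{[0,T]}$ on the small time horizon).

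Next I would assemble these into an inequality of the shape
\[
V^r\Delta Y_I \le A(I)\,V^r\Delta X_I + B(I)\big(\abs{y_0-\tilde y_0} + \Phi_0^{1-\alpha}\norm{\Delta\phi}_{C^\alpha} + \norm{\Delta\phi}_{\sup}\big) + C(I)\,V^r\Delta Y_I\cdot V^rX_I,
\]
where $A,B$ collect the $(\Csew+1)\Phi_1$, $\Csew 2^{\alpha+1}\Phi_{1,\alpha}\Phi_0^\alpha\epsilon^\alpha$ type terms, and the last term is the "feedback" contribution in which $\Delta Y$ appears multiplied by a small quantity $V^rX_I$. Choosing $T$ small enough that $C(I)\,V^rX_{[0,T]} \le \tfrac12$ (possible because $V^rX$ is a continuous control vanishing on the diagonal) lets me absorb the feedback term, yielding
\[
V^r\Delta Y_I \le 2A(I)\,V^r\Delta X_I + 2B(I)\big(\dots\big).
\]
Then I would identify $2\Phi_0$ as (a bound for) $2A(I)$ — tracking that the $\phi(Y)\dif(\Delta X)$ term contributes exactly $\Phi_0$ at leading order — and collect everything else into $\gamma$, checking that each summand of the claimed $\gamma$ matches: the $((\Csew+1)\Phi_1 + \Csew 2^{\alpha+1}\Phi_{1,\alpha}\Phi_0^\alpha\epsilon^\alpha)\Phi_0 V^r\Delta X_{[0,T]}$ piece is the cross term $B(I)\cdot V^r\Delta X$ bounded on the whole interval, the $\abs{y_0-\tilde y_0}$ coefficient is $B(I)$ restricted to its $\Phi_1$-type part, and the $\Phi_0^{1-\alpha}\norm{\Delta\phi}_{C^\alpha} + \norm{\Delta\phi}_{\sup}$ come straight from the integrand-difference estimates.

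The main obstacle I anticipate is \emph{bookkeeping precision rather than conceptual difficulty}: the statement demands the exact powers $\Phi_0^\alpha$, $\Phi_0^{1-\alpha}$, the constant $2^{\alpha+1}$, and the separation of $\gamma$ from the clean leading coefficient $2\Phi_0$, so I must be careful that (i) the Young sewing estimate is applied with the sharp Hölder modulus (not a crude product bound), so that $\Phi_{1,\alpha}$ rather than $\Phi_1$ multiplies the $\alpha$-power terms; (ii) the a priori oscillation bound $\abs{\delta Y_{s,t}}\le\Phi_0 V^rX_{[s,t]}$ — which is itself obtained by the same fixed-point argument on the original equation with small $T$ — is invoked with the right constant; and (iii) the smallness of $T$ is used only once, to kill the feedback term, so that all other constants stay as written. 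A secondary subtlety is ensuring the estimate is genuinely interval-localized: $V^rX_I$, $V^r\Delta X_I$ must appear with their $I$-dependence intact (only $V^r\Delta X_{[0,T]}$ and $\epsilon$ are allowed to be global), which means the Grönwall-type absorption must be done at the level of controls, not of a single global constant.
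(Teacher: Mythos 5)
Your overall route --- differencing the two equations, splitting $\phi(Y)-\tilde\phi(\tilde Y)$ and $\phi(Y)\,\dif X-\tilde\phi(\tilde Y)\,\dif\tilde X$, applying the sewing lemma with its sharp constant, and using the a priori bound $V^rY_I\le 2\Phi_0V^rX_I$ on a small horizon --- is the same as the paper's (Lemma~\ref{lem:composition-stable}, Lemma~\ref{lem:integration-stable}, Proposition~\ref{prop:Young-apriori}). The gap is in how you propose to close the estimate. The inequality you plan to absorb, with feedback only through a local term $C(I)\,V^r\Delta Y_I\,V^rX_I$, is not what these estimates actually produce: the H\"older-modulus term in Lemma~\ref{lem:composition-stable} carries $\norm{\Delta Y}_{\sup}$ and the germ term carries $\abs{\Delta Y_s}$ at the left endpoint $s$ of $I$, and for a general interval $I\subset[0,T]$ these are controlled by $\abs{y_0-\tilde y_0}+V^r\Delta Y_{[0,T]}$, not by $V^r\Delta Y_I$. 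So absorbing interval by interval does not close, and your stated remedy (``Gr\"onwall-type absorption at the level of controls, not of a single global constant'') is exactly the opposite of the device that makes the argument work. The paper closes the loop with a single global constant: Proposition~\ref{prop:Young-apriori} shows that \eqref{eq:Young-loc-Lip1} holds with $\gamma=4\Phi_0$, one then takes $\gamma$ to be the \emph{smallest} constant valid for all intervals simultaneously, bounds $\norm{\Delta Y}_{\sup}\le \abs{\Delta y_0}+\gamma\epsilon+2\Phi_0 V^r\Delta X_{[0,T]}$ and $V^r\Delta Y_I\le\gamma\epsilon+2\Phi_0 V^r\Delta X_{[0,T]}$ in terms of $\gamma$, and the smallness conditions \eqref{eq:VpX-small1}, \eqref{eq:Young-Picard-interval2} yield $\gamma\le\gamma/2+(\text{right-hand side})$, hence the claimed bound on $\gamma$.

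A second, related precision problem: even if you repaired the localization by first running your estimate on $I=[0,T]$ and feeding the resulting global bound back in, dividing through by $1-C\,V^rX_{[0,T]}$ doubles \emph{every} coefficient, in particular the coefficient of $V^r\Delta X_I$, which before absorption is already about $2\Phi_0$ (one $\Phi_0$ from the germ $\tilde\phi(\tilde Y_s)\Delta X_{s,t}$ and one from the sewing remainder $\Csew V^{r/\alpha}\tilde\phi(\tilde Y)\,V^r\Delta X$ after using the smallness of $\epsilon$); you would end up with $4\Phi_0$ rather than the exact $2\Phi_0$ demanded by the statement. The paper keeps $2\Phi_0$ exact because its bootstrap acts only on $\gamma$, the coefficient of $V^rX_I$, while $\gamma'=2\Phi_0$ is held fixed throughout.
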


This precise form of $c_1,c_2$ allows us to apply Theorem \ref{theo:IntroVrBesovEstimate} to immediately derive the following:

\begin{theorem}[cf. Theorem \ref{thm:LipschitzBesovEstimateYoung}]\label{thm:IntroYoungLipschitzBesov}
Let $\alpha > \frac 12, \beta > 0$ such that $\frac 1\alpha < 1+\beta$ and $p\in[1,\infty]$ with $\frac 1\alpha <p$, as well as $q\in(0,\infty]$. Let $\phi,\tilde\phi\in C^{1,\beta}_b$ be bounded in the sense of \eqref{eq:C^0,alpha-bounds}, \eqref{eq:C^1,alpha-bounds}. Let $Y,\tilde Y$ be solutions to \eqref{equ:intro1}, \eqref{eq:IntroYoungDE} with $X,\tilde X \in B^\alpha_{p,q}$ and initial data $y_0,\tilde y_0$. Then, for small enough $T>0$ and all intervals $I\subset [0,T]$, we have

\begin{equation*}
\norm{Y-\tilde Y}_{B^{\alpha}_{p,q}} \lesssim \abs{y_0-\tilde y_0} + \norm{\phi-\tilde\phi}_{\sup}+\norm{\phi-\tilde\phi}_{C^\beta} + \norm{X-\tilde X}_{B^\alpha_{p,q}}\,.
\end{equation*}
\end{theorem}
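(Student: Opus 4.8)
The plan is to reduce the Besov estimate to the variation estimate of Theorem~\ref{thm:IntroYoungLipschitz} by applying the local-variation-to-Besov embedding of Theorem~\ref{theo:IntroVrBesovEstimate}. First I would fix $r$ with $\frac1\alpha < r < 1+\beta$ (possible by the hypothesis $\frac1\alpha<1+\beta$ and, after shrinking, also $r<2$ since $\alpha>\frac12$); the Besov embedding $B^\alpha_{p,q}\hookrightarrow V^r_{\loc}$ then guarantees that $X,\tilde X\in V^r$ on $[0,T]$, so the Young theory and in particular Theorem~\ref{thm:IntroYoungLipschitz} applies pathwise. The key structural input is that Theorem~\ref{thm:IntroYoungLipschitz} gives, for \emph{every} subinterval $I\subset[0,T]$, a bound of the schematic form $V^r\Delta Y_I \le c_1\,(\text{initial/vector-field data}) + c_2\, V^r\Delta X_I$ in which the crucial feature — the reason the precise form of $\gamma$ was needed — is that the "multiplicative" constant $c_2 = 2\Phi_0$ and the prefactor of $V^rX_I$ inside $\gamma$ are \emph{uniform in $I$} (they depend only on global quantities like $\Phi_0,\Phi_1,\Phi_{1,\alpha},\Csew,\epsilon$ and $V^r(\cdot)_{[0,T]}$), so the $I$-dependence of the right-hand side enters only through the genuine increment-type quantities $V^rX_I$, $V^r\Delta X_I$, and the constant (in $I$) terms.

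Next I would read this family of interval inequalities as a pointwise inequality between the two-parameter functions $(s,t)\mapsto V^r\Delta Y_{[s,t]}$, $(s,t)\mapsto V^rX_{[s,t]}$, $(s,t)\mapsto V^r\Delta X_{[s,t]}$ on the simplex, plus a constant term; concretely,
\[
V^r\Delta Y_{[s,t]} \le C\big(V^r\Delta X_{[0,T]}+\abs{y_0-\tilde y_0}+\norm{\Delta\phi}_{C^\beta}+\norm{\Delta\phi}_{\sup}\big)\,V^rX_{[s,t]} + C\, V^r\Delta X_{[s,t]} + (\text{const}_I\equiv 0),
\]
where $C$ collects all the global constants. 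Now apply the two-parameter Besov norm $\norm{\cdot}_{B^\alpha_{p,q}}$ (Definition~\ref{def:2Besov}) to both sides: by monotonicity and the (quasi-)triangle inequality for this norm one gets $\norm{V^r\Delta Y}_{B^\alpha_{p,q}}$ bounded by the same combination with $V^rX$, $V^r\Delta X$ replaced by their two-parameter Besov norms. Then invoke Theorem~\ref{theo:IntroVrBesovEstimate} twice — once for $f=X$ and once for $f=\Delta X=X-\tilde X$, legitimate because $\frac1\alpha<r\le p$ — to replace $\norm{V^rX}_{B^\alpha_{p,q}}\lesssim\norm{X}_{B^\alpha_{p,q}}$ and $\norm{V^r\Delta X}_{B^\alpha_{p,q}}\lesssim\norm{X-\tilde X}_{B^\alpha_{p,q}}$. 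Finally, $\norm{Y-\tilde Y}_{B^\alpha_{p,q}}\le\norm{V^r(Y-\tilde Y)}_{B^\alpha_{p,q}}$ (the trivial direction noted after Theorem~\ref{theo:IntroVrBesovEstimate}) converts the left side back to the one-parameter Besov norm of $Y-\tilde Y$, yielding the claimed estimate; the smallness of $T$ is what keeps $C$ bounded and makes the global $V^r$-norms of $X,\tilde X$ (hence of $\Delta X$) controlled.

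I expect the main obstacle to be bookkeeping rather than conceptual: one must check that \emph{no} term on the right-hand side of Theorem~\ref{thm:IntroYoungLipschitz}, after the substitutions, carries a genuinely $I$-dependent factor other than the increment quantities $V^rX_I$ and $V^r\Delta X_I$ — in particular that the "$\gamma\cdot V^rX_I$" term does not hide a second factor of $V^r\Delta X_I$ or $V^rX_I$ that would produce a product of two two-parameter functions (which the linear Besov estimate cannot absorb). This is exactly why the precise form of $\gamma$ in Theorem~\ref{thm:IntroYoungLipschitz} matters: $\gamma$ must be a constant (in $I$), multiplied by a single first-power increment $V^rX_I$. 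A secondary point to handle carefully is the dependence of the vector-field contribution on $\Phi_0^{1-\alpha}\norm{\Delta\phi}_{C^\alpha}$ versus the stated $\norm{\Delta\phi}_{C^\beta}$: since we chose $r<1+\beta$ the Young theory is run with regularity exponent $\beta$ (playing the role of $\alpha$ in Theorem~\ref{thm:IntroYoungLipschitz}), so one should track that the constants $\Phi_{1,\beta}$, $\epsilon$, etc., and the interpolation $\norm{\Delta\phi}_{\sup}+\norm{\Delta\phi}_{C^\beta}$ control the $C^{1,\beta}_b$ data appearing there; absorbing $\Phi_0^{1-\beta}$ and similar powers into the implicit constant is routine once $T$ (hence all $\Phi$'s evaluated on $[0,T]$) is fixed small.
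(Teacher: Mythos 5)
Your proposal is correct and follows essentially the same route as the paper: choose $r$ with $\frac1\alpha<r\le p$, $r<1+\beta$, use the Besov-to-variation embedding to get smallness of $V^rX,V^r\tilde X$ for small $T$, invoke the interval-wise estimate of Theorem~\ref{thm:IntroYoungLipschitz} with its $I$-independent $\gamma$, apply the two-parameter Besov norm, and convert back with Theorem~\ref{theo:IntroVrBesovEstimate} (for $X$ and $\Delta X$) together with the trivial bound $\norm{\Delta Y}_{B^\alpha_{p,q}}\le\norm{V^r\Delta Y}_{B^\alpha_{p,q}}$. This is exactly the argument of Theorem~\ref{thm:LipschitzBesovEstimateYoung}, including the final absorption of $\gamma$ via $V^r\Delta X_{[0,T]}\lesssim\norm{\Delta X}_{B^\alpha_{p,q}}$.
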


\paragraph{Rough Differential equations}
Since we have similar estimates for rough paths in Theorem \ref{theo:IntroVrBesovEstimateRP}, we can do the same for rough differential equations. Consider the RDE
\begin{equation}\label{eq:IntroRDE}
	d\bfY_t = \phi(\bfY_t) d\bfX_t
\end{equation}
for a $r$-rough path $\bfX$, $r\in[2,3)$. As before, we use bold letters not only for rough paths $\bfX = (X, \bbX)$, but also for $\bfX$-controlled rough
path. That is, $\bfY = (Y, Y')$ with $Y' = \phi (Y)$ in
above situation; the pushforward $\phi (\bfY)$ is defined in Lemma
\ref{lem:controlled-composition}.
To keep technicalities in check, we only consider two solutions $\bfY,\tilde\bfY$ with different starting points $y_0,\tilde y_0$ and driving paths $\bfX,\tilde\bfX$, but the same nonlinearity $\phi$.
We present a Lipschitz estimate with very precise dependence on the underlying interval $I\subset[0,T]$:

\begin{theorem}[cf. Theorem \ref{thm:LipschitzDependenceRDE}]\label{thm:IntroRoughLipschitz}
Let $\phi\in C^{2,1}_{b}$, $\bfX,\tilde\bfX$ be continuous $r$-rough paths for $r\in[2,3)$ and let $y_0,\tilde y_0\in F$ for some Banach space $F$. Then the solutions $\bfY,\tilde\bfY$ to \eqref{eq:IntroRDE} fulfill for small enough $T>0$ and any interval $I\subset[0,T]$:
\begin{align*}
V^r(\Delta Y)_I &\lesssim (V^rX_I + V^{r/2}\bbX_{I} + V^{r/2}\tilde\bbX_{I}) \gamma + V^r(\Delta X)_I + V^{r/2}(\Delta \bbX)_{I} \\
V^{r/2}(\Delta R^{\bfY,\bfX})_{I}&\lesssim (V^rX_I^2 + V^rX_I \cdot V^r\tilde X_I +V^{r/2}\bbX_{I} + V^{r/2}\tilde \bbX_{I})\gamma\\
&\qquad + (V^r X_I+V^r\tilde X_I +V^{r/2}\bbX_I + V^{r/2}\tilde\bbX_{I})V^r(\Delta X)_I + V^{r/2}(\Delta\bbX)_{I}\nonumber
\end{align*}
where
\[
	\gamma = \abs{\Delta y_0} + V^r{\Delta X}_{[0,T]} + V^{r/2}{\Delta\bbX}_{[0,T]}\,.
\]
\end{theorem}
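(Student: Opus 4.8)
The two estimates are coupled through the rough integral, so I would establish them simultaneously. The skeleton is the classical two-scale argument for rough differential equations, but with bookkeeping precise enough to separate the \emph{local} driver size on the interval $I$ from the \emph{global} data $\gamma$. Concretely: (i) reduce to an estimate on a ``unit'' interval on which a suitable joint control of $\bfX$ and $\tilde\bfX$ has mass at most a small absolute constant $L$; (ii) on such an interval prove a local Lipschitz estimate in which the left-endpoint values $|\Delta Y_s|, |\Delta Y'_s|, \dots$ are multiplied exactly by local driver quantities, with an error of strictly positive order in $L$ that can be absorbed; (iii) run a greedy-partition / discrete Grönwall argument to bound the endpoint data along the partition by $\gamma$; (iv) reassemble the local estimates over an arbitrary $I$ using subadditivity of $(V^r)^r$ and $(V^{r/2})^{r/2}$ over adjacent intervals. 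The hypothesis ``$T$ small enough'' enters through continuity of $X,\bbX,\tilde X,\tilde\bbX$, which forces the joint control to have small total mass on $[0,T]$, hence bounds the number of greedy intervals, and with it the Grönwall constant, by an absolute constant.

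\textbf{Local estimate.} Write $\bfY=(Y,Y')$ with $Y'=\phi(Y)$; by Lemma~\ref{lem:controlled-composition}, $\phi(\bfY)$ is a controlled rough path with Gubinelli derivative $(\phi(\bfY))'=D\phi(Y)\,\phi(Y)$, and the equation $d\bfY=\phi(\bfY)\,d\bfX$ together with the sewing lemma gives
\[
 \delta Y_{s,t} = \phi(Y_s)\,\delta X_{s,t} + D\phi(Y_s)\phi(Y_s)\,\bbX_{s,t} + (\text{remainder of order } 3/r>1),
\]
so that $R^{\bfY,\bfX}_{s,t} = D\phi(Y_s)\phi(Y_s)\,\bbX_{s,t} + (\text{remainder})$. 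Subtracting the same expansion for $\tilde\bfY$ produces expansions of $\delta(\Delta Y)_{s,t}$ and $\Delta R^{\bfY,\bfX}_{s,t}$ whose leading terms are $\Delta\phi(Y_s)\,\delta X_{s,t}$, $\phi(\tilde Y_s)\,\Delta X_{s,t}$, $\Delta\big(D\phi(Y)\phi(Y)\big)_s\,\bbX_{s,t}$, $D\phi(\tilde Y_s)\phi(\tilde Y_s)\,\Delta\bbX_{s,t}$, plus a difference of sewing remainders. Here $\phi\in C^{2,1}_b$ is used precisely as needed: $C^2$ to give $\phi(\bfY)$ its Gubinelli derivative, and the Lipschitz second derivative to estimate $\Delta\phi(Y)$ and $\Delta(D\phi(Y)\phi(Y))$ by $\|\Delta Y\|_\infty$ and the local $V^r$-norm of $\Delta Y$ (and likewise the remainder differences, via a difference-sewing estimate). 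Collecting terms, on $I=[s,t]$ with joint control mass $\le L$ one gets
\[
 V^r(\Delta Y)_I + V^{r/2}(\Delta R^{\bfY,\bfX})_I \lesssim \big(|\Delta Y_s|+|\Delta Y'_s|+\dots\big)\,\kappa(I) + V^r(\Delta X)_I + V^{r/2}(\Delta\bbX)_I + L^{\theta}\big(V^r(\Delta Y)_I+V^{r/2}(\Delta R^{\bfY,\bfX})_I\big),
\]
with $\kappa(I)$ assembled from $V^rX_I,\ V^rX_I^2,\ V^{r/2}\bbX_I,\ V^r\tilde X_I,\ V^{r/2}\tilde\bbX_I$ (and products thereof) and some $\theta>0$; picking $L$ small absorbs the last term. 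The $\|\cdot\|_\infty$ quantities are themselves bounded by the endpoint value plus the local $V^r$-norm, since $r\ge1$.

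\textbf{Patching.} Let $\omega(I)=(V^rX_I)^r+(V^{r/2}\bbX_I)^{r/2}+(V^r\tilde X_I)^r+(V^{r/2}\tilde\bbX_I)^{r/2}$, a superadditive control, and take the greedy partition $0=\tau_0<\dots<\tau_N=T$ with $\omega([\tau_{i-1},\tau_i])\le L$; then $N\lesssim 1+\omega([0,T])/L$, bounded for $T$ small. Applying the local estimate on each $[\tau_{i-1},\tau_i]$ gives a recursion $a_i\le a_{i-1}(1+C\kappa_i)+D_i$ for the endpoint vector $a_i=(|\Delta Y_{\tau_i}|,|\Delta Y'_{\tau_i}|,\dots)$, with $\sum_i\kappa_i$ bounded and $\sum_i D_i\lesssim V^r(\Delta X)_{[0,T]}+V^{r/2}(\Delta\bbX)_{[0,T]}$; discrete Grönwall then yields $a_i\lesssim|\Delta y_0|+\sum_i D_i\lesssim\gamma$ for all $i$. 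For arbitrary $I\subset[0,T]$, refine its endpoints into the greedy partition to split $I$ into $O(1+\omega(I)/L)$ pieces $I_j$ of mass $\le L$; on each, the local estimate with endpoint data $\lesssim\gamma$ gives $V^r(\Delta Y)_{I_j}\lesssim\gamma\,\kappa(I_j)+V^r(\Delta X)_{I_j}+V^{r/2}(\Delta\bbX)_{I_j}$. Raising to the $r$-th power, summing, and using $\sum_j(V^rX_{I_j})^r\le(V^rX_I)^r$, $\sum_j(V^{r/2}\bbX_{I_j})^r\le(V^{r/2}\bbX_I)^r$ and the analogous bounds for the $\Delta$-terms, the pieces reassemble to the stated right-hand side for $V^r(\Delta Y)_I$; the parallel computation for $V^{r/2}(\Delta R^{\bfY,\bfX})_I$ produces the quadratic factors $V^rX_I^2$ and $V^rX_I\cdot V^r\tilde X_I$.

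\textbf{Main obstacle.} The crux is steps (ii)–(iv): one must arrange the local estimate so that \emph{only} local quantities $\kappa(I_j)$ (and, for $\Delta R$, their products) multiply the endpoint data, and then verify that after raising to the $r$-th power these products reassemble without loss of homogeneity — in particular the mixed terms $V^rX\cdot V^{r/2}\bbX$ and $V^rX\cdot V^r\tilde X$ arising in the $\Delta R$ bound must collapse correctly on unit intervals (where the control is $\le L$) to recover the stated quadratic-in-$X$, linear-in-$\bbX$ dependence. Keeping the difference-sewing remainder of strictly positive order in $L$ while still homogeneous of the correct degree in the $\Delta$-data is the precise point at which the coarser, pre-existing $r$-variation estimates are insufficient.
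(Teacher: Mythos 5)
Your proposal is correct in substance, and its core — a local estimate built from stability of composition ($\phi\in C^{2,1}_b$ used exactly as you say), a difference-sewing bound for the rough integral, absorption of the $\Delta Y,\Delta R^{\bfY,\bfX}$ terms that appear multiplied by small local driver norms, and a bootstrap replacing left-endpoint data by $\gamma$ — is exactly the engine of the paper's proof (Lemmas~\ref{lem:controlled-implicit}, \ref{lem:controlled-composition} and \ref{lem:integral-bound}, followed by absorption with $\tilde c\,\epsilon\le \tfrac12$ and a re-substitution to get the refined $\Delta R^{\bfY,\bfX}$ bound with the quadratic factors). Where you diverge is the global assembly: you wrap this in a greedy partition, a discrete Gr\"onwall recursion for the endpoint data, and a reassembly of variations over the pieces, whereas the paper needs none of this. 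Since the theorem is only claimed for $T$ small enough (equivalently $V^rX,V^{r/2}\bbX,V^r\tilde X,V^{r/2}\tilde\bbX<\epsilon$ on all of $[0,T]$, guaranteed by membership in the solution spaces $\calY(\bfX),\calY(\tilde\bfX)$), every $I\subset[0,T]$ is already a ``small'' interval: the paper proves the estimates directly on arbitrary $I=[s,t]$ with $\abs{\Delta y_s}$ in place of $\gamma$, then bounds $\abs{\Delta y_s}\le\abs{\Delta y_0}+V^r(\Delta Y)_{[0,s]}\lesssim\gamma$ by applying the same estimate on $[0,s]$ — a two-line bootstrap replacing your Gr\"onwall step. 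What your extra machinery buys is an extension to arbitrary time horizons, with constants depending on the total control; what it costs is the reassembly step, where you should be careful that $\Delta R^{\bfY,\bfX}$ is a genuine two-parameter object: concatenating pieces produces the cross terms $\delta\Delta R^{\bfY,\bfX}_{s,u,t}=\Delta Y'_{s,u}X_{u,t}+\tilde Y'_{s,u}\Delta X_{u,t}$, which must be fed back through the bound on $V^r(\Delta Y')$ — your sketch glosses over this, though it is repairable and, under the small-$T$ hypothesis actually claimed, never arises because the partition degenerates to a single interval. Your lumped local display also hides that the coefficient of $V^r(\Delta X)_I$ in the $\Delta R^{\bfY,\bfX}$ bound is a local driver factor rather than a constant; as in the paper, this comes out of re-substituting the $\Delta Y$ estimate into the $\Delta R^{\bfY,\bfX}$ inequality, a step you allude to but should make explicit.
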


Applying Theorem \ref{theo:IntroVrBesovEstimateRP} to this immediately gives a Lipschitz estimate in the Besov setting:

\begin{theorem}[cf. Theorem \ref{thm:LipschitzBesovEstimateRDE}]\label{thm:IntroRoughLipschitzBesov}
Let $\phi\in C^{2,1}_b$, $\bfX,\tilde\bfX$ be $r$-rough paths for $r\in[2,3)$ and let $Y,\tilde Y$ be the corresponding solutions to \eqref{eq:IntroRDE} for small enough $T>0$. Further assume that $\frac 13 < \alpha$, $\frac 1\alpha < r\le p\le\infty$, $\frac 12\le q\le \infty$. We then get the local Lipschitz estimates in the Besov scale:
\begin{equation*}
\norm{Y-\tilde Y}_{B^\alpha_{p,q}} + \norm{R^{\bfY,\bfX}-R^{\tilde\bfY,\tilde\bfX}}_{B^{2\alpha}_{p/2,q/2}} \lesssim \abs{y_0-\tilde y_0} + \norm{X-\tilde X}_{B^\alpha_{p,q}} + \norm{\bbX-\tilde\bbX}_{B^{2\alpha}_{p/2,q/2}}
\end{equation*}
\end{theorem}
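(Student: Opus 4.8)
The plan is to feed the interval-indexed variation estimates of Theorem~\ref{thm:IntroRoughLipschitz} into the two-parameter Besov embeddings of Theorems~\ref{theo:IntroVrBesovEstimate} and~\ref{theo:IntroVrBesovEstimateRP}. I would read the two inequalities of Theorem~\ref{thm:IntroRoughLipschitz} as pointwise bounds, for every interval $[s,t]\subset[0,T]$, on the two-parameter functions $(s,t)\mapsto V^r(\Delta Y)_{[s,t]}$ and $(s,t)\mapsto V^{r/2}(\Delta R^{\bfY,\bfX})_{[s,t]}$, and then apply the functional $\norm{\cdot}_{B^\alpha_{p,q}}$ of Definition~\ref{def:2Besov} to the first and $\norm{\cdot}_{B^{2\alpha}_{p/2,q/2}}$ to the second. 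On the left, since $\abs{\delta(\Delta Y)_{s,t}}\le V^r(\Delta Y)_{[s,t]}$ and $\abs{(R^{\bfY,\bfX}-R^{\tilde\bfY,\tilde\bfX})_{s,t}}\le V^{r/2}(\Delta R^{\bfY,\bfX})_{[s,t]}$, monotonicity of the two-parameter Besov norm in the size of its argument --- the remark after Theorem~\ref{theo:IntroVrBesovEstimate}, used here at both the ``level-$1$'' scale $(\alpha,p,q)$ and the ``level-$2$'' scale $(2\alpha,p/2,q/2)$ --- gives $\norm{Y-\tilde Y}_{B^\alpha_{p,q}}\lesssim\norm{V^r\Delta Y}_{B^\alpha_{p,q}}$ and $\norm{R^{\bfY,\bfX}-R^{\tilde\bfY,\tilde\bfX}}_{B^{2\alpha}_{p/2,q/2}}\lesssim\norm{V^{r/2}\Delta R^{\bfY,\bfX}}_{B^{2\alpha}_{p/2,q/2}}$. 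It then suffices to bound the Besov norms of the two right-hand sides and add.

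For the right-hand sides I would first use quasi-subadditivity and homogeneity of the two-parameter Besov quasi-norms --- legitimate here since $p/2\ge 1$ and $q\ge\tfrac12$ --- to split into single terms and pull the scalar $\gamma=\abs{\Delta y_0}+V^r\Delta X_{[0,T]}+V^{r/2}\Delta\bbX_{[0,T]}$ out of the norms. By the Besov$\hookrightarrow$variation embedding on the fixed interval $[0,T]$ (valid under the standing hypotheses $\tfrac1\alpha<r\le p$, hence also $\tfrac1{2\alpha}<\tfrac r2$), $V^r\Delta X_{[0,T]}\lesssim\norm{X-\tilde X}_{B^\alpha_{p,q}}$ and $V^{r/2}\Delta\bbX_{[0,T]}\lesssim\norm{\bbX-\tilde\bbX}_{B^{2\alpha}_{p/2,q/2}}$, so $\gamma$ is already dominated by the target right-hand side $\abs{y_0-\tilde y_0}+\norm{X-\tilde X}_{B^\alpha_{p,q}}+\norm{\bbX-\tilde\bbX}_{B^{2\alpha}_{p/2,q/2}}$. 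The two-parameter factors that remain are handled by the two embedding theorems: the level-$1$ factors $V^rX,V^r\tilde X,V^r\Delta X$ by $\norm{V^rf}_{B^\alpha_{p,q}}\lesssim\norm f_{B^\alpha_{p,q}}$ (Theorem~\ref{theo:IntroVrBesovEstimate}), and $V^{r/2}\Delta\bbX$ and the composite brackets by Theorem~\ref{theo:IntroVrBesovEstimateRP}, e.g.\ $\norm{V^{r/2}\Delta\bbX}_{B^{2\alpha}_{p/2,q/2}}\lesssim\norm{\Delta\bbX}_{B^{2\alpha}_{p/2,q/2}}+\norm{\Delta X}_{B^\alpha_{p,q}}(\norm X_{B^\alpha_{p,q}}+\norm{\tilde X}_{B^\alpha_{p,q}})$. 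Because $\norm X_{B^\alpha_{p,q}},\norm{\tilde X}_{B^\alpha_{p,q}},\norm\bbX_{B^{2\alpha}_{p/2,q/2}},\norm{\tilde\bbX}_{B^{2\alpha}_{p/2,q/2}}$ are fixed, the local-Lipschitz convention absorbs them into the implicit constant, and every such contribution is $\lesssim$ the target right-hand side.

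The step I expect to cost the most care is the treatment of the ``mixed-scale'' terms. In the first inequality of Theorem~\ref{thm:IntroRoughLipschitz} the left-hand side $V^r\Delta Y$ lives at the level-$1$ scale, yet the right-hand side carries the level-$2$ quantities $V^{r/2}\bbX\cdot\gamma$, $V^{r/2}\tilde\bbX\cdot\gamma$, $V^{r/2}\Delta\bbX$, which must be measured in $\norm{\cdot}_{B^\alpha_{p,q}}$ rather than in their natural $\norm{\cdot}_{B^{2\alpha}_{p/2,q/2}}$; and in the $\Delta R$ line there are genuine products such as $V^rX\cdot V^r\Delta X$ and $V^{r/2}\bbX\cdot V^r\Delta X$. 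I would handle the descent of a level-$2$ quantity by factoring $V^{r/2}\bbX=(V^{r/2}\bbX)^{1/2}\cdot(V^{r/2}\bbX)^{1/2}$ and using, on the one hand, the identity $\norm{g^{1/2}}_{B^\alpha_{p,q}}=\norm g_{B^{2\alpha}_{p/2,q/2}}^{1/2}$, valid for $g\ge 0$ vanishing on the diagonal and essentially a restatement of the index relations $2\alpha\cdot\tfrac q2=\alpha q$ and the corresponding $L^p$/$L^{p/2}$ scaling, and on the other hand the crude bound $\norm{gh}_{B^\alpha_{p,q}}\le\norm g_{L^\infty}\norm h_{B^\alpha_{p,q}}$; together with $\norm{V^{r/2}\bbX}_{L^\infty}=V^{r/2}\bbX_{[0,T]}\lesssim\norm\bbX_{B^{2\alpha}_{p/2,q/2}}$ and Theorem~\ref{theo:IntroVrBesovEstimateRP} this yields $\norm{V^{r/2}\bbX}_{B^\alpha_{p,q}}\lesssim$ a fixed constant and $\norm{V^{r/2}\Delta\bbX}_{B^\alpha_{p,q}}\lesssim\norm{\Delta\bbX}_{B^{2\alpha}_{p/2,q/2}}+\norm{\Delta X}_{B^\alpha_{p,q}}$. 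For the genuine products I would use, since all these two-parameter functions vanish on the diagonal, the H\"older-type bilinear estimate $\norm{gh}_{B^{2\alpha}_{p/2,q/2}}\le\norm g_{B^\alpha_{p,q}}\norm h_{B^\alpha_{p,q}}$ (from applying H\"older in the $L^p$-variable and in the $\ell^q$-summation over scales) together with Theorem~\ref{theo:IntroVrBesovEstimate}, which gives $\norm{V^rX\cdot V^r\Delta X}_{B^{2\alpha}_{p/2,q/2}}\lesssim\norm X_{B^\alpha_{p,q}}\norm{\Delta X}_{B^\alpha_{p,q}}\lesssim\norm{\Delta X}_{B^\alpha_{p,q}}$, and the crude bound $\norm{gh}_{B^{2\alpha}_{p/2,q/2}}\le\norm g_{B^{2\alpha}_{p/2,q/2}}\norm h_{L^\infty}$ for $V^{r/2}\bbX\cdot V^r\Delta X$. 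The delicate point is to route each product so that the final estimate is \emph{linear}, not quadratic, in the perturbation, which is exactly why the bilinear H\"older estimate (rather than a Young-type splitting $gh\le\tfrac12(g^2+h^2)$) is needed for the borderline product $V^rX\cdot V^r\Delta X$. Putting the resulting bounds for $\norm{V^r\Delta Y}_{B^\alpha_{p,q}}$ and $\norm{V^{r/2}\Delta R^{\bfY,\bfX}}_{B^{2\alpha}_{p/2,q/2}}$ together and adding the two inequalities gives the claim.
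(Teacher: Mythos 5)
Your proposal is correct and, in its overall architecture, coincides with the paper's proof of Theorem~\ref{thm:LipschitzBesovEstimateRDE}: apply $\norm{\cdot}_{B^\alpha_{p,q}}$ and $\norm{\cdot}_{B^{2\alpha}_{p/2,q/2}}$ to the interval-wise variation estimates of Theorem~\ref{thm:IntroRoughLipschitz}, use monotonicity of the two-parameter norm on the left, pull out the scalar $\gamma$ and dominate it via the Besov-to-variation embedding (Lemma~\ref{lem:BesovRPAreVariationRP}), split the right-hand side by quasi-subadditivity, treat products by the Cauchy--Schwarz/H\"older inequality for these norms, and control all nested norms via Theorem~\ref{theo:IntroVrBesovEstimateRP}. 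Where you genuinely diverge is the one delicate term, $\norm{V^{r/2}\Delta\bbX}_{B^\alpha_{p,q}}$, i.e.\ the level-two perturbation measured at the level-one scale. The paper handles it with Proposition~\ref{prop:VrBesovEmbeddingDeltaX2}, which runs the group-valued path of Example~\ref{ex:GroupForDeltaX} through the embedding of Proposition~\ref{prop:Besov-Embedding-P-Alpha} (constant $\alpha-1/p$), and this is precisely where the hypotheses $p\ge 1$, $q\ge\tfrac12$ enter. You instead factor $V^{r/2}\Delta\bbX=(V^{r/2}\Delta\bbX)^{1/2}(V^{r/2}\Delta\bbX)^{1/2}$, estimate one factor in $L^\infty$ by $(V^{r/2}\Delta\bbX_{[0,T]})^{1/2}$ and the other by the exact scaling identity $\norm{g^{1/2}}_{B^\alpha_{p,q}}=\norm{g}^{1/2}_{B^{2\alpha}_{p/2,q/2}}$ together with Theorem~\ref{theo:IntroVrBesovEstimateRP}; since both factors are $\lesssim\bigl(\norm{\Delta\bbX}_{B^{2\alpha}_{p/2,q/2}}+\norm{\Delta X}_{B^\alpha_{p,q}}(\norm{X}_{B^\alpha_{p,q}}+\norm{\tilde X}_{B^\alpha_{p,q}})\bigr)^{1/2}$, the geometric mean is linear in the perturbation, which is the whole point. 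This is a sound and in fact more elementary route: it avoids the monotonicity/subadditivity machinery behind Proposition~\ref{prop:Besov-Embedding-P-Alpha} (and would not even need $q\ge\tfrac12$ for this step), at the cost of not producing the general embedding statement the paper records for independent use.

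Two small inaccuracies to fix when writing this up. First, $V^{r/2}\bbX_{[0,T]}\lesssim\norm{\bbX}_{B^{2\alpha}_{p/2,q/2}}+\norm{X}_{B^\alpha_{p,q}}^2$ (not just $\norm{\bbX}_{B^{2\alpha}_{p/2,q/2}}$), and analogously $V^{r/2}\Delta\bbX_{[0,T]}$ picks up the cross term $\norm{\Delta X}_{B^\alpha_{p,q}}(\norm{X}_{B^\alpha_{p,q}}+\norm{\tilde X}_{B^\alpha_{p,q}})$; both are harmless here (constants, resp.\ still linear in the perturbation), but the cited bound is Lemma~\ref{lem:BesovRPAreVariationRP}, not a plain identity. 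Second, to invoke Theorem~\ref{thm:IntroRoughLipschitz} you must know that shrinking $T$ makes $V^rX$, $V^{r/2}\bbX$ (and the tilded quantities) smaller than the $\epsilon$ required there; the paper justifies this by noting that finiteness of the nested norms forces $t\mapsto V^r X_{[0,t]}$, $t\mapsto V^{r/2}\bbX_{[0,t]}$ to be H\"older continuous, hence small for small $T$. Your outline silently assumes this; add the one-line argument.
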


\subsubsection*{Structure of this paper}

In Section~\ref{sec:BesovSpaces}, we recall Besov and variation norms and show some embeddings in these spaces for which we found no convenient references.
We then show Theorem~\ref{theo:IntroVrBesovEstimate}.

In Section~\ref{sec:RoughPaths}, we construct the metric groups relevant to our analysis of rough paths.
We then apply the results from Section~\ref{sec:BesovSpaces} to these groups to show Theorem~\ref{theo:IntroVrBesovEstimateRP}.

In Section \ref{sec:Sewing}, we recall the sewing lemma in the setting of \cite{MR3770049}.
In line with the theme of deducing Besov estimates from variation norm estimates, we use it to recover the Besov sewing lemma from \cite{FS22}.
This requires a slight extension to the sewing lemma for $p$-metrics, which we show in Lemma~\ref{lem:sewing-limit}.

In Section~\ref{sec:LipschitzYoung}, we show variation norm Lipschitz estimates for Young differential equations, in particular Theorem~\ref{thm:IntroYoungLipschitz}.
In Section~\ref{sec:LipschitzRP}, we addess the similar but substantially more technical problem for rough differential equations, in particular obtaining Theorem~\ref{thm:IntroRoughLipschitz}.

Section \ref{sec:BesovEstimates} then puts all of these results together and derives Lipschitz estimates in the Besov setting. We first show estimates for the Young and rough integral, and then proceed to prove Theorem \ref{thm:IntroYoungLipschitzBesov} and finally Theorem \ref{thm:IntroRoughLipschitzBesov}.

\subsubsection*{Acknowledgements}
 PKF acknowledges seed support for the DFG CRC/TRR 388 “Rough Analysis and Stochastic Dynamics” and is also supported by DFG Excellence Cluster MATH+ and a MATH+ Distinguished Fellowship. HK acknowledges former support from DFG IRTG 2544 “Stochastic Analysis in Interaction”.

\section{Besov and Variation norms of rough paths}\label{sec:BesovSpaces}

\subsection{Besov norms}

Let us recall the definition of Besov spaces together with some basic embeddings. Similar embeddings for the real-valued case can be found in \cite{BF01765315}, \cite{triebel1992theory}.
Our setting is slightly more general, in that we consider Besov norms of two-parameter processes that are not necessarily the difference/distance of a one-parameter process.
We consider a fixed time horizon $T>0$ and denote the simplex by
\[
\Delta_T := \{(s,t)\in[0,T]^2~\vert~ s\le t\}\,.
\]
Then, given a $\chi:\Delta_T \to \bbR_+ := [0,\infty)$, we define
\[
	\Omega_p^\chi(t) := \sup_{0< h\le t}\left(\int_0^{T-h} \chi(s,s+h)^p \dif s\right)^{\frac 1p}\,,
\]
for any $0<p\le\infty$, with the usual modification for $p=\infty$. The Besov norm of $\chi$ is then defined as follows:

\begin{definition} \label{def:2Besov}
Let $\alpha>0, 0<p,q\le\infty$. We then set
\begin{equation*}
	\norm{\chi}_{B^\alpha_{p,q}} := \left(\int_0^T\left(\frac{\Omega_p^\chi(t)}{t^\alpha}\right)^q\frac{\dif t} t\right)^{\frac 1q}\,,
\end{equation*}
with the usual modification for $q=\infty$.
We write $\chi\in B^\alpha_{p,q}$ if the above norm is finite.
We further set
\begin{equation*}
\norm{\chi}^*_{B^\alpha_{p,q}} := \left(\int_0^T t^{-\alpha q} \left(\int_0^{T-t} \chi(s,s+t)^p\dif s\right)^{\frac qp} \frac{\dif t}t\right)^{\frac 1q}\,.
\end{equation*}
\end{definition}

\noindent This setting allows us to simultaneously consider one-parameter processes in metric spaces and two-parameter processes in normed spaces. If $f:[0,T]\to X$ for some metric space $(X,d)$, we set
\begin{equation*}
\norm{f}_{B^\alpha_{p,q}} := \norm{d(f(\cdot),f(\cdot))}_{B^\alpha_{p,q}}, \qquad \norm{f}^*_{B^\alpha_{p,q}} := \norm{d(f(\cdot),f(\cdot))}^*_{B^\alpha_{p,q}}
\end{equation*}
and if $A:\Delta_T\to E$ maps into a normed space $(E,\abs{\cdot})$, we set
\begin{equation*}
\norm{A}_{B^\alpha_{p,q}} := \norm{\abs{A_{\cdot,\cdot}}}_{B^\alpha_{p,q}}, \qquad \norm{A}^*_{B^\alpha_{p,q}} := \norm{\abs{A_{\cdot,\cdot}}}^*_{B^\alpha_{p,q}}\,.
\end{equation*}
The following assumption is motivated by the (quasi-)metric case but extends to the variation norm of $f$, see Remark~\ref{rem:VariationIsSubadditive}.
\begin{definition}
We say that $\chi$ is \emph{$\rho$-subadditive} for some $\rho>0$ if, for all $s,u,t\in[0,T]$ with $s\le u\le t$, we have
\begin{equation}\label{cond:rho-Subadditivity}
	\chi(s,t)^{\rho} \le \chi(s,u)^{\rho}+\chi(u,t)^{\rho}\,.
\end{equation}
In the case $\rho=1$, that is,
\begin{equation}\label{cond:Subadditivity}
	\chi(s,t) \le \chi(s,u)+\chi(u,t),
\end{equation}
we say that $\chi$ is \emph{subadditive}.
\end{definition}

\noindent Under this condition, it turns out that $\norm{\chi}_{B^\alpha_{p,q}}$ and $\norm{\chi}^*_{B^\alpha_{p,q}}$ are equivalent, as the following Lemma shows.

\begin{lemma}
Let $\chi:\Delta_T\to\bbR_+$ be $\rho$-subadditive in the sense of \eqref{cond:rho-Subadditivity} for some $\rho>0$.
Further assume $\alpha\in (0,1], p,q\in (0,\infty]$. Then
\begin{equation*}
\norm{\chi}^*_{B^\alpha_{p,q}}
\le \norm{\chi}_{B^\alpha_{p,q}}
\le (2^{\alpha \tilde\rho}-1)^{-1/\tilde\rho}\norm{\chi}^*_{B^\alpha_{p,q}},
\end{equation*}
where $\tilde\rho = \min(1,\rho,p,q)$.
\end{lemma}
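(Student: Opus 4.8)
My plan is to translate everything into a statement about the one–variable ``slice'' function
\[
g(h):=\Big(\int_0^{T-h}\chi(s,s+h)^p\,\dif s\Big)^{1/p},\qquad 0<h\le T,
\]
(with the usual supremum modification for $p=\infty$), so that $\Omega_p^\chi(t)=\sup_{0<h\le t}g(h)$, while $\norm{\chi}_{B^\alpha_{p,q}}$ is the $L^q(\dif t/t)$–norm of $t\mapsto t^{-\alpha}\Omega_p^\chi(t)$ and $\norm{\chi}^*_{B^\alpha_{p,q}}$ that of $t\mapsto t^{-\alpha}g(t)$. Since $g(t)\le\Omega_p^\chi(t)$ (take $h=t$ in the supremum), the left inequality $\norm{\chi}^*_{B^\alpha_{p,q}}\le\norm{\chi}_{B^\alpha_{p,q}}$ is immediate. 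For the right inequality we may assume $\norm{\chi}^*_{B^\alpha_{p,q}}<\infty$.

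The key structural fact is that $g^{\tilde\rho}$ is \emph{subadditive} on $(0,T)$, and I would prove it in two moves. Fixing $0<h'<h$ and using $\rho$–subadditivity pointwise, $\chi(s,s+h)^\rho\le\chi(s,s+h')^\rho+\chi(s+h',s+h)^\rho$; taking $L^{p/\rho}$–norms over $s\in[0,T-h]$ — via the triangle inequality in $L^{p/\rho}$ when $p\ge\rho$ and the subadditivity of $x\mapsto x^{p/\rho}$ when $p<\rho$ — and enlarging the integration domain (after a translation for the second term) yields
\[
g(h)^{\min(\rho,p)}\le g(h')^{\min(\rho,p)}+g(h-h')^{\min(\rho,p)}\qquad (0<h'<h).
\]
Since $\tilde\rho=\min(1,\rho,p,q)\le\min(\rho,p)$, applying the subadditive power map $x\mapsto x^{\tilde\rho/\min(\rho,p)}$ to both sides upgrades this to subadditivity of $g^{\tilde\rho}$. (For $p=\infty$ the same computation works with essential suprema.)

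From subadditivity I extract a dyadic bound on the maximal function. For $h\in(0,t]$ one has either $h\le t/2$, or $h=\tfrac t2+(h-\tfrac t2)$ with $h-\tfrac t2\in(0,t/2]$, so in either case $g(h)^{\tilde\rho}\le g(t/2)^{\tilde\rho}+\Omega_p^\chi(t/2)^{\tilde\rho}$; taking the supremum over $h$ gives the recursion $\Omega_p^\chi(t)^{\tilde\rho}\le g(t/2)^{\tilde\rho}+\Omega_p^\chi(t/2)^{\tilde\rho}$, which iterates to
\[
\Omega_p^\chi(t)^{\tilde\rho}\le\sum_{j=1}^{n}g(2^{-j}t)^{\tilde\rho}+\Omega_p^\chi(2^{-n}t)^{\tilde\rho}.
\]
The remainder vanishes as $n\to\infty$: integrating the pointwise bound above over $h'\in(h/2,h)$ gives $g(h)^{\tilde\rho}\le\frac 2h\int_0^h g(u)^{\tilde\rho}\,\dif u$, and estimating $\int_0^\delta g(u)^{\tilde\rho}\,\dif u$ by H\"older against the convergent integral defining $\norm{\chi}^*_{B^\alpha_{p,q}}$ forces $\Omega_p^\chi(\delta)=\sup_{h\le\delta}g(h)\to 0$ as $\delta\to0$. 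Hence $\Omega_p^\chi(t)^{\tilde\rho}\le\sum_{j\ge1}g(2^{-j}t)^{\tilde\rho}$ for all $t$.

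Finally I raise this to the power $q/\tilde\rho$, integrate against $t^{-\alpha q}\,\dif t/t$, and apply Minkowski's inequality in $L^{q/\tilde\rho}((0,T),\dif t/t)$ — legitimate precisely because $\tilde\rho\le q$, and it is this $\ell^1$–type summation (rather than an $\ell^{q/\tilde\rho}$–type one) that produces the sharp constant. Substituting $u=2^{-j}t$ in the $j$–th term bounds it by $2^{-j\alpha\tilde\rho}\big(\norm{\chi}^*_{B^\alpha_{p,q}}\big)^{\tilde\rho}$, and summing the geometric series $\sum_{j\ge1}2^{-j\alpha\tilde\rho}=(2^{\alpha\tilde\rho}-1)^{-1}$ gives $\norm{\chi}_{B^\alpha_{p,q}}^{\tilde\rho}\le(2^{\alpha\tilde\rho}-1)^{-1}\big(\norm{\chi}^*_{B^\alpha_{p,q}}\big)^{\tilde\rho}$, i.e.\ the claim. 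I expect the two ``plumbing'' steps to be where the real work lies: pushing $\rho$–subadditivity of $\chi$ through the $L^p$ layer uniformly across the ranges $p\ge\rho$, $p<\rho$ and $p=\infty$ to obtain subadditivity of $g^{\tilde\rho}$; and — since $\chi$ is only assumed subadditive, with no continuity — verifying via the averaging/H\"older argument that the telescoping remainder genuinely vanishes.
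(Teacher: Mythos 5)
Your proposal is correct and follows essentially the same route as the paper's proof: the same slice function $g(h)$ (the paper's $a(h)$), the same transfer of $\rho$-subadditivity to it via Minkowski in $L^{p/\rho}$ (the paper first reduces to $\rho=\tilde\rho$, you pass through $\min(\rho,p)$ and then lower the exponent), the same averaging/H\"older argument to force $g(h)\to 0$ as $h\to 0$, and the same dyadic reduction of $\Omega_p^\chi(t)^{\tilde\rho}$ to $\sum_{j\ge 1}g(2^{-j}t)^{\tilde\rho}$ (your halving recursion replaces the paper's binary-expansion trick), finished by Minkowski in $L^{q/\tilde\rho}(\dif t/t)$, the substitution $u=2^{-j}t$, and the geometric series yielding $(2^{\alpha\tilde\rho}-1)^{-1/\tilde\rho}$. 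No gaps of substance.
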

\begin{proof}
It is clear from the definition that
\begin{equation}
	\norm{\chi}^*_{B^\alpha_{p,q}} \le \norm{\chi}_{B^\alpha_{p,q}}\,,
\end{equation}
so we only need to show the second inequality.
It suffices to consider that case $\norm{\chi}^*_{B^\alpha_{p,q}} < \infty$.

Since $\rho$-subadditivety implies $\hat\rho$-subadditivity for any $\hat\rho \in (0,\rho)$, we may assume that $\rho=\tilde\rho=\min(1,\rho,p,q)$.
Let
\[
a(h) := \left(\int_0^{T-h} \chi(s,s+h)^p \dif s\right)^{\frac 1p},
\quad 0 < h \leq T.
\]
Using $\rho$-subadditivity of $\chi$, we see that, for every $h_{1},h_{2}>0$ with $h_{1}+h_{2}\leq T$, we have
\begin{align*}
a(h_{1}+h_{2})^{\rho}
&=
\left(\int_0^{T-h_{1}-h_{2}} \chi(s,s+h_1+h_2)^{\rho (p/\rho)} \dif s \right)^{\rho^2 \frac 1{p/\rho}}
\\ &\leq
\left(\int_0^{T-h_{1}-h_{2}} (\chi(s,s+h_{1})^{\rho}+\chi(s+h_{1},s+h_{1}+h_{2})^{\rho})^{p/\rho} \dif s \right)^{\rho^2 \frac 1{p/\rho}}
\\ &\leq
\bigg( \left(\int_0^{T-h_{1}-h_{2}} (\chi(s,s+h_{1})^{\rho})^{p/\rho} \dif s \right)^{\frac 1{p/\rho}} \\&\qquad+ \left(\int_0^{T-h_{1}-h_{2}} (\chi(s+h_{1},s+h_{1}+h_{2})^{\rho})^{p/\rho} \dif s \right)^{\frac 1{p/\rho}} \bigg)^{\rho^2}
\\ &\leq
(a(h_{1})^{\frac 1\rho} + a(h_{2})^{\frac 1\rho})^{\rho^2} \\
&\leq a(h_1)^\rho + a(h_2)^\rho\,,
\end{align*}
so that $a$ is $\rho$-subadditive, in the sense that
\[
a(h_{1}+h_{2})^{\rho} \leq a(h_{1})^{\rho} + a(h_{2})^{\rho}
\]
whenever both sides are defined.

This in turn implies that
\begin{align*}
a(t)^{\rho}
&\leq
\inf_{h \in (t/3,2t/3)} a(t-h)^{\rho} + a(h)^{\rho}
\\ &\leq
\bigl( \frac{3}{t} \int_{t/3}^{2t/3} (a(t-h)^{\rho} + a(h)^{\rho})^{q/\rho} \dif h \bigr)^{\rho/q}
\\ &\lesssim
t^{\alpha\rho} \bigl( \int_{t/3}^{2t/3} (h^{-\alpha} a(h))^{q} \frac{\dif h}{h} \bigr)^{\rho/q}
\\ &\leq
t^{\alpha\rho} (\norm{\chi}^*_{B^\alpha_{p,q}})^{\rho}.
\end{align*}
In particular, $\lim_{t\to 0}a(t) = 0$.
Since every number in $(0,1]$ can be written as a sum of distinct negative integer powers of $2$, this allows us to use $\rho$-subadditivity of $a$ to conclude
\[
\sup_{h \in (0,t]} a(h)^{\rho} \leq \sum_{j=1}^{\infty} a(2^{-j}t)^{\rho}.
\]
Therefore,
\begin{align*}
\norm{\chi}_{B^\alpha_{p,q}}^{\rho}
&=
\left(\int_0^T\left(t^{-\alpha\rho} \sup_{0<h\leq t}a(h)^{\rho}\right)^{q/\rho} \frac{\dif t} t\right)^{\rho/q}
\\ &\leq
\left(\int_0^T\left(t^{-\alpha\rho} \sum_{j=1}^{\infty} a(2^{-j}t)^{\rho}\right)^{q/\rho} \frac{\dif t} t\right)^{\rho/q}
\\ &\leq
\sum_{j=1}^{\infty} \left(\int_0^T\left(t^{-\alpha\rho} a(2^{-j}t)^{\rho} \right)^{q/\rho} \frac{\dif t} t\right)^{\rho/q}
\\ &=
\sum_{j=1}^{\infty} 2^{-j\alpha\rho} \left(\int_0^{2^{-j}T}\left(t^{-\alpha\rho} a(t)^{\rho} \right)^{q/\rho} \frac{\dif t} t\right)^{\rho/q}
\\ &\leq
\sum_{j=1}^{\infty} 2^{-j\alpha\rho} (\norm{\chi}_{B^\alpha_{p,q}}^{*})^{\rho}.
\qedhere
\end{align*}
\end{proof}

\noindent In the remainder of this section, we show some Besov embeddings in our setting. We mainly follow \cite{BF01765315}, which showed similar embeddings for real-valued functions $f$.

\begin{lemma}[Dependence on $\alpha,q$]\label{lem:Besov-Embedding-Q-Alpha}
	Let $\chi:\Delta_T\to\bbR_+$, $0<\alpha,\tilde\alpha$ and $0<p,q,\tilde q\le\infty$. Assume that $\alpha <\tilde\alpha$ and $q\le\tilde q$. Then there exists a constant $c>0$ only depending on $\alpha,\tilde\alpha,q,\tilde q$ and $T$ such that
	\begin{equation*}
		\norm{\chi}_{B^\alpha_{p,q}}\le c\norm{\chi}_{B^{\tilde\alpha}_{p,\tilde q}}\,.
	\end{equation*}
\end{lemma}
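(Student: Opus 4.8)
The plan is to treat $\omega(t):=\Omega_p^\chi(t)$ as a black box: the only facts we need are that $\omega$ is non-decreasing on $(0,T]$ (immediate from the definition, since the supremum defining $\Omega_p^\chi(t)$ is taken over a set of $h$'s that grows with $t$) and that $\norm{\chi}_{B^\beta_{p,r}}$ depends on $\chi$ only through $\omega$. So the claim reduces to the one-dimensional statement: if $\omega\colon(0,T]\to\bbR_+$ is non-decreasing, $\alpha<\tilde\alpha$ and $q\le\tilde q$, then
\[
\Bigl(\int_0^T(\omega(t)t^{-\alpha})^q\tfrac{\dif t}{t}\Bigr)^{1/q}\ \lesssim\ \Bigl(\int_0^T(\omega(t)t^{-\tilde\alpha})^{\tilde q}\tfrac{\dif t}{t}\Bigr)^{1/\tilde q}=:M,
\]
with the usual modifications for $q,\tilde q=\infty$, and we may assume $M<\infty$.

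First I would prove a pointwise decay bound near $t=0$: for $0<t\le T/2$,
\[
M^{\tilde q}\ \ge\ \int_t^{2t}(\omega(s)s^{-\tilde\alpha})^{\tilde q}\tfrac{\dif s}{s}\ \ge\ \omega(t)^{\tilde q}\int_t^{2t}s^{-\tilde\alpha\tilde q}\tfrac{\dif s}{s}\ =\ \omega(t)^{\tilde q}\,t^{-\tilde\alpha\tilde q}\,\tfrac{1-2^{-\tilde\alpha\tilde q}}{\tilde\alpha\tilde q},
\]
using $\omega(t)\le\omega(s)$ for $s\in[t,2t]$; hence $\omega(t)\le c_1Mt^{\tilde\alpha}$ with $c_1=(\tilde\alpha\tilde q/(1-2^{-\tilde\alpha\tilde q}))^{1/\tilde q}$ (when $\tilde q=\infty$, $\omega(t)\le Mt^{\tilde\alpha}$ directly). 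Then I would split $\int_0^T(\omega(t)t^{-\alpha})^q\tfrac{\dif t}{t}=\int_0^{T/2}+\int_{T/2}^T$. On $[T/2,T]$ the weight $t^{(\tilde\alpha-\alpha)q}$ is at most $T^{(\tilde\alpha-\alpha)q}$, so the integral is $\le T^{(\tilde\alpha-\alpha)q}\int_{T/2}^T(\omega t^{-\tilde\alpha})^q\tfrac{\dif t}{t}$, and since $[T/2,T]$ carries finite $\tfrac{\dif t}{t}$-measure $\ln 2$ and $q\le\tilde q$, Hölder bounds this by $T^{(\tilde\alpha-\alpha)q}(\ln2)^{1-q/\tilde q}M^q$. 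On $(0,T/2]$ I would interpolate: fix $\theta\in(0,1)$ (say $\theta=\tfrac12$), write $\omega(t)^q=\omega(t)^{(1-\theta)q}\omega(t)^{\theta q}$, bound the first factor by $(c_1M)^{(1-\theta)q}t^{\tilde\alpha(1-\theta)q}$ using the pointwise bound, and apply Hölder with exponents $\tilde q/(\theta q)\ge 1$ and its conjugate to
\[
\int_0^{T/2}\omega(t)^qt^{-\alpha q}\tfrac{\dif t}{t}\ \le\ (c_1M)^{(1-\theta)q}\int_0^{T/2}\bigl(\omega(t)^{\tilde q}t^{-\tilde\alpha\tilde q-1}\bigr)^{\theta q/\tilde q}\,t^{\beta}\,\dif t,\qquad \beta=(\tilde\alpha-\alpha)q-1+\tfrac{\theta q}{\tilde q}.
\]
A one-line computation shows $\beta\cdot\tfrac{\tilde q}{\tilde q-\theta q}>-1$ is equivalent to $(\tilde\alpha-\alpha)q\tilde q>0$, hence holds; so the $t^\beta$-factor contributes a finite constant depending only on $\alpha,\tilde\alpha,q,\tilde q,T$, while the other factor equals $\bigl(\int_0^{T/2}(\omega t^{-\tilde\alpha})^{\tilde q}\tfrac{\dif t}{t}\bigr)^{\theta q/\tilde q}\le M^{\theta q}$. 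Adding the two pieces gives $\norm{\chi}_{B^\alpha_{p,q}}\lesssim M$. The cases $q=\infty$ and/or $\tilde q=\infty$ are only easier: when $\tilde q=\infty$ one has $\omega(t)\le Mt^{\tilde\alpha}$ on all of $(0,T]$, so $\omega(t)t^{-\alpha}\le Mt^{\tilde\alpha-\alpha}$ and one just integrates (or takes the supremum if $q=\infty$).

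The main obstacle is the right endpoint $t=T$: the doubling argument of the first step breaks there (the interval $[t,2t]$ leaves $[0,T]$), and in fact $\omega(T)$ may be infinite even when $M<\infty$, so no pointwise bound is available on $(T/2,T]$; this forces the split, with the endpoint piece controlled by the trivial finite-measure Hölder inequality — legitimate precisely because $q\le\tilde q$ — rather than by decay. The other point requiring care is the integrability at $0$ of the explicit power $t^\beta$ produced by the interpolation, which is exactly where $\alpha<\tilde\alpha$ is genuinely used; everything else is bookkeeping of constants depending on $\alpha,\tilde\alpha,q,\tilde q,T$.
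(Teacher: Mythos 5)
Your argument is correct, but it goes by a different route than the paper. The paper's proof is a single application of H\"older's inequality on the whole interval: writing $t^{-\alpha q}\Omega_p^\chi(t)^q\,\tfrac{1}{t} = \bigl(t^{-\tilde\alpha q - q/\tilde q}\Omega_p^\chi(t)^q\bigr)\cdot t^{(\tilde\alpha-\alpha)q+q/\tilde q-1}$ and applying H\"older with exponents $\tilde q/q$ and $\tilde q/(\tilde q-q)$, the first factor reproduces $\norm{\chi}_{B^{\tilde\alpha}_{p,\tilde q}}^{q}$ and the second contributes $\bigl(\int_0^T t^{-1+(\tilde\alpha-\alpha)q\tilde q/(\tilde q-q)}\dif t\bigr)^{(\tilde q-q)/\tilde q}<\infty$; no monotonicity of $\Omega_p^\chi$ and no splitting of $(0,T]$ is needed. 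You instead exploit the (correct, and automatic) monotonicity of $\Omega_p^\chi$ to get the pointwise decay $\Omega_p^\chi(t)\le c_1 M t^{\tilde\alpha}$ on $(0,T/2]$, handle $[T/2,T]$ by the finite-measure H\"older inequality (this is where $q\le\tilde q$ enters for you, exactly as it enters the choice of H\"older exponent in the paper), and treat $(0,T/2]$ by interpolation. All steps check out, including the exponent computation $\beta\,\tfrac{\tilde q}{\tilde q-\theta q}>-1\iff\tilde\alpha>\alpha$, and the $q=\infty$ or $\tilde q=\infty$ cases are handled correctly. Two remarks: first, your interpolation with $\theta$ is redundant — on $(0,T/2]$ the pointwise bound alone already gives $\int_0^{T/2}(\Omega_p^\chi(t)t^{-\alpha})^q\tfrac{\dif t}{t}\le (c_1M)^q\int_0^{T/2}t^{(\tilde\alpha-\alpha)q-1}\dif t\lesssim M^q$, so the $\theta$-splitting and second H\"older can be dropped; second, your route buys the extra (and sometimes useful) pointwise information $\Omega_p^\chi(t)\lesssim \norm{\chi}_{B^{\tilde\alpha}_{p,\tilde q}}\,t^{\tilde\alpha}$ away from the right endpoint, at the cost of a split and more bookkeeping, whereas the paper's weighted-H\"older argument is shorter and does not use any structure of $\Omega_p^\chi$ beyond nonnegativity.
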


\begin{proof}
The proof is simple for $q=\tilde q$, so we assume $q<\tilde q$. We also only show the proof for $q,\tilde q<\infty$, the other extensions to the infinite cases are straightforward.

Our main tool is the Hölder inequality. Set $\theta = \tilde q/q$ and $\tilde\theta$ such that $1 = \frac 1\theta + \frac 1{\tilde\theta}$. That is, $\tilde\theta = \frac{\tilde q}{\tilde q- q}$. It then follows that

\begin{align*}
	\int_0^T t^{-\alpha q} \Omega_p^\chi(t)^q \frac{\dif t} t \le \underbrace{\left(\int_0^T t^{[(\tilde\alpha-\alpha+\frac{1}{\tilde q})q-1]\tilde\theta}\dif t\right)^{\frac 1{\tilde\theta}}}_{=: c(\alpha,\tilde\alpha,q,\tilde q,T)}\left(\int_0^T t^{-\tilde\alpha \tilde q-1}\Omega_p^\chi(t)^{\tilde q} \dif t\right)^{\frac 1\theta}\,.
\end{align*}
The claim then follows from
\begin{equation*}
c(\alpha,\tilde\alpha,q,\tilde q,T)^{\tilde\theta} = \int_0^T t^{-1+(\tilde\alpha-\alpha)\frac{q\tilde q}{\tilde q-q}}\dif t <\infty\,,
\end{equation*}
since $(\tilde\alpha-\alpha)\frac{q\tilde q}{\tilde q-q} > 0$.
\end{proof}

\begin{lemma}[Dependence on p]\label{lem:Besov-Embedding-P}
	Let $\chi:\Delta_T\to\bbR_+$, $0<\alpha$ and $0<p,\tilde p,q\le\infty$ with $p\le \tilde p$. Then there exists $c(p,\tilde p, T)> 0$ such that
	\begin{equation*}
		\norm{\chi}_{B^\alpha_{p,q}}\le c(p,\tilde p,T) \norm{\chi}_{B^\alpha_{\tilde p,q}}\,.
	\end{equation*}
\end{lemma}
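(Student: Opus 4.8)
The plan is to reduce the estimate to a pointwise-in-$t$ comparison of the inner quantities $\Omega_p^\chi(t)$ and $\Omega_{\tilde p}^\chi(t)$, and then integrate (or take a supremum) in $t$. First I would fix $t\in(0,T]$ and $h\in(0,t]$ and apply Hölder's inequality on the interval $(0,T-h)$ to the function $s\mapsto\chi(s,s+h)$: since $p\le\tilde p$ and $T-h\le T$, the exponent $\tfrac1p-\tfrac1{\tilde p}$ is nonnegative and $x\mapsto x^{1/p-1/\tilde p}$ is nondecreasing on $[0,\infty)$, so
\[
\left(\int_0^{T-h}\chi(s,s+h)^p\dif s\right)^{\frac1p}
\le (T-h)^{\frac1p-\frac1{\tilde p}}\left(\int_0^{T-h}\chi(s,s+h)^{\tilde p}\dif s\right)^{\frac1{\tilde p}}
\le T^{\frac1p-\frac1{\tilde p}}\,\Omega_{\tilde p}^\chi(t),
\]
where in the last step I used $h\le t$ together with the definition of $\Omega_{\tilde p}^\chi(t)$. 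Taking the supremum over $0<h\le t$ then yields $\Omega_p^\chi(t)\le T^{1/p-1/\tilde p}\,\Omega_{\tilde p}^\chi(t)$ for every $t\in(0,T]$.

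It remains to substitute this pointwise bound into Definition~\ref{def:2Besov}. Since the constant $T^{1/p-1/\tilde p}$ is independent of $t$, it factors out of the (quasi-)norm in the $t$ variable, giving
\[
\norm{\chi}_{B^\alpha_{p,q}}
=\left(\int_0^T\left(\frac{\Omega_p^\chi(t)}{t^\alpha}\right)^q\frac{\dif t}{t}\right)^{\frac1q}
\le T^{\frac1p-\frac1{\tilde p}}\left(\int_0^T\left(\frac{\Omega_{\tilde p}^\chi(t)}{t^\alpha}\right)^q\frac{\dif t}{t}\right)^{\frac1q}
= T^{\frac1p-\frac1{\tilde p}}\norm{\chi}_{B^\alpha_{\tilde p,q}},
\]
so one may take $c(p,\tilde p,T)=T^{1/p-1/\tilde p}$. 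The degenerate cases are routine: when $p=\tilde p$ the constant is $1$ and there is nothing to prove; for $q=\infty$ one replaces the integral in $t$ by an essential supremum, and for $\tilde p=\infty$ one reads $1/\tilde p$ as $0$ and replaces the $L^{\tilde p}$ norm in $s$ by an $L^\infty$ norm, the Hölder step becoming the trivial bound $\|g\|_{L^p(0,T-h)}\le (T-h)^{1/p}\|g\|_{L^\infty(0,T-h)}$.

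There is no genuine obstacle here; the only point requiring a moment's care is that Hölder's inequality is applied on the interval $(0,T-h)$ whose length depends on $h$, which is why one first bounds $(T-h)^{1/p-1/\tilde p}\le T^{1/p-1/\tilde p}$ to obtain a constant uniform in $h$ before taking the supremum defining $\Omega_p^\chi(t)$. (The same argument, mutatis mutandis, also gives the analogous comparison for $\norm{\cdot}^*_{B^\alpha_{p,q}}$, should it be needed.)
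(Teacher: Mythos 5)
Your proposal is correct and follows essentially the same route as the paper: a single application of H\"older's (power-mean) inequality in the $s$-variable on $(0,T-h)$, bounding the resulting length factor by $T^{1/p-1/\tilde p}$ uniformly in $h$, and then inserting the pointwise bound $\Omega_p^\chi(t)\le T^{1/p-1/\tilde p}\Omega_{\tilde p}^\chi(t)$ into the definition of the norm. The constant you obtain coincides with the paper's $T^{1/(\tilde\theta p)}$, and your treatment of the degenerate cases matches the paper's remark that the extensions to $p,\tilde p=\infty$ are straightforward.
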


\begin{proof}
We again only show the case $p,\tilde p <\infty$. Set $\theta = \tilde p/p\ge 1$ and $\tilde \theta\ge 1$ such that $1= \frac 1\theta + \frac 1{\tilde \theta}$. Then the Hölder inequality gives us for all $h\in[0,T]$
\begin{align*}
\left(\int_0^{T-h} \chi(s,s+h)^p \dif s\right)^{\frac 1p} &\le \left(\int_0^{T-h} 1\dif s\right)^{\frac 1{\tilde\theta p}} \left(\int_0^{T-h} \chi(s,s+h)^{\tilde p}\dif s\right)^{\frac 1{\tilde p}} \\
&\le T^{\frac1{\tilde\theta p}}\left(\int_0^{T-h} \chi(s,s+h)^{\tilde p}\dif s\right)^{\frac 1{\tilde p}}\,.
\end{align*}
Putting this into the definition of $\norm{\chi}_{B^\alpha_{p,q}}$ shows the claim.
\end{proof}

\noindent Our next objective is analyzing the dependence on $q$, which is based on the following sum approximation.

\begin{lemma}
Let $\chi:\Delta_T\to\bbR_+$, $0<\alpha$, $0<p\le\infty$ and $0<q<\infty$. Then there exists constants $c(\alpha,q,T), C(\alpha,q,T)>0$, such that
\begin{equation}\label{ineq:BesovSumApproximation1}
c(\alpha,q,T)\left(\sum_{n=1}^\infty (2^{n\alpha } \Omega_p^\chi(2^{-n}T))^q\right)^{\frac 1q} \le \norm{\chi}_{B^\alpha_{p,q}} \le C(\alpha,q,T) \left(\sum_{n=0}^\infty (2^{n\alpha } \Omega_p^\chi(2^{-n}T))^q\right)^{\frac 1q} \,.
\end{equation}
If $\chi$ is subbadditive, it follows that there are constants $c(\alpha,q,p,T), C(\alpha,q,p,T)>0$, such that
\begin{equation}\label{ineq:BesovSumApproximation2}
c(\alpha,q,p,T)\left(\sum_{n=1}^\infty (2^{n\alpha } \Omega_p^\chi(2^{-n}T))^q\right)^{\frac 1q} \le \norm{\chi}_{B^\alpha_{p,q}} \le C(\alpha,q,p,T) \left(\sum_{n=1}^\infty (2^{n\alpha } \Omega_p^\chi(2^{-n}T))^q\right)^{\frac 1q} \,.
\end{equation}
\end{lemma}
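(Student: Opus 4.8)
The plan is a dyadic decomposition of the integral defining $\norm{\chi}_{B^\alpha_{p,q}}$, exploiting that $t\mapsto\Omega_p^\chi(t)$ is non-decreasing (hence, in particular, measurable), which is immediate since it is a supremum over the increasing family of sets $\{0<h\le t\}$. I would not invoke any of the earlier embedding lemmas, only Definition~\ref{def:2Besov} and, for the subadditive refinement, the computation from the earlier lemma comparing $\norm{\chi}_{B^\alpha_{p,q}}$ with $\norm{\chi}^*_{B^\alpha_{p,q}}$.

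First, for \eqref{ineq:BesovSumApproximation1} I would write $(0,T]=\bigsqcup_{n\ge0}(2^{-n-1}T,2^{-n}T]$, so that $\norm{\chi}_{B^\alpha_{p,q}}^q=\sum_{n\ge0}\int_{2^{-n-1}T}^{2^{-n}T}\bigl(t^{-\alpha}\Omega_p^\chi(t)\bigr)^q\frac{\dif t}{t}$. On the $n$-th block one has $2^{n\alpha q}T^{-\alpha q}\le t^{-\alpha q}\le 2^{(n+1)\alpha q}T^{-\alpha q}$, the block has logarithmic measure $\log 2$, and by monotonicity $\Omega_p^\chi(2^{-n-1}T)\le\Omega_p^\chi(t)\le\Omega_p^\chi(2^{-n}T)$. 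Replacing $\Omega_p^\chi(t)$ and $t^{-\alpha}$ by their upper bounds on each block and summing gives $\norm{\chi}_{B^\alpha_{p,q}}^q\lesssim\sum_{n\ge0}\bigl(2^{n\alpha}\Omega_p^\chi(2^{-n}T)\bigr)^q$, i.e.\ the right-hand estimate of \eqref{ineq:BesovSumApproximation1}. Replacing instead by the lower bounds yields $\norm{\chi}_{B^\alpha_{p,q}}^q\gtrsim\sum_{n\ge0}2^{n\alpha q}T^{-\alpha q}\Omega_p^\chi(2^{-n-1}T)^q$, and reindexing $m=n+1$ produces the left-hand estimate of \eqref{ineq:BesovSumApproximation1} with the sum starting at $m=1$, as claimed. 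All constants come out as explicit powers of $2$ and $\log 2$ times a power of $T$.

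Next, for \eqref{ineq:BesovSumApproximation2} it suffices, in view of \eqref{ineq:BesovSumApproximation1}, to absorb the single $n=0$ term, i.e.\ to prove $\Omega_p^\chi(T)^q\le C(\alpha,q,p)\sum_{n\ge1}\bigl(2^{n\alpha}\Omega_p^\chi(2^{-n}T)\bigr)^q$ so that the sums from $n=0$ and from $n=1$ are comparable (the case $\norm{\chi}_{B^\alpha_{p,q}}=\infty$ being trivial, since then all quantities are $+\infty$ by \eqref{ineq:BesovSumApproximation1}). Here is where subadditivity enters. Set $\tilde\rho:=\min(1,p,q)\le p$; since $\chi$ is subadditive it is $\tilde\rho$-subadditive, and by the computation in the proof of the earlier $\rho$-subadditivity lemma the function $a(h):=\bigl(\int_0^{T-h}\chi(s,s+h)^p\,\dif s\bigr)^{1/p}$ satisfies $a(h_1+h_2)^{\tilde\rho}\le a(h_1)^{\tilde\rho}+a(h_2)^{\tilde\rho}$ whenever $h_1+h_2\le T$. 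Applying this with $h_1=h_2=h/2$ and taking the supremum over $0<h\le T$ gives $\Omega_p^\chi(T)^{\tilde\rho}\le 2\,\Omega_p^\chi(T/2)^{\tilde\rho}$. Since $\tilde\rho\le q$, we may raise this to the power $q/\tilde\rho\ge1$ to obtain $\Omega_p^\chi(T)^q\le 2^{q/\tilde\rho}\Omega_p^\chi(T/2)^q=2^{q/\tilde\rho-\alpha q}\bigl(2^{\alpha}\Omega_p^\chi(2^{-1}T)\bigr)^q\le 2^{q/\tilde\rho-\alpha q}\sum_{n\ge1}\bigl(2^{n\alpha}\Omega_p^\chi(2^{-n}T)\bigr)^q$, which is the desired absorption. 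Combining with the right-hand estimate of \eqref{ineq:BesovSumApproximation1} yields \eqref{ineq:BesovSumApproximation2}, now with constants depending also on $p$ through $\tilde\rho$.

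I expect no genuine obstacle. The only points requiring care are the index shift in the first step (so that the claimed $c$-inequality in \eqref{ineq:BesovSumApproximation1} legitimately has its sum indexed from $n=1$), and, in the second step, quoting the propagation of $\tilde\rho$-subadditivity from $\chi$ to $a$ — already available in the proof of the earlier equivalence lemma — rather than re-deriving it; everything else is bookkeeping of explicit constants.
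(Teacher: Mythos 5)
Your proof is correct and follows essentially the same route as the paper: a dyadic decomposition of $(0,T]$ combined with the monotonicity of $t\mapsto\Omega_p^\chi(t)$ for \eqref{ineq:BesovSumApproximation1}, and the doubling bound $\Omega_p^\chi(T)\lesssim_{p,q}\Omega_p^\chi(T/2)$ from subadditivity to absorb the $n=0$ term for \eqref{ineq:BesovSumApproximation2}. The only difference is cosmetic: where the paper simply asserts the doubling inequality for $p<1$ with a $p$-dependent constant, you derive it explicitly via the $\tilde\rho$-subadditivity of $a(h)$ established in the earlier equivalence lemma, which is a legitimate (if slightly lossy in the constant) way to fill in that remark.
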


\begin{proof}
If $\chi$ is subadditive, we immediately have
\begin{equation}
\Omega_p^\chi(T) \le 2\Omega_p^\chi(T/2)
\end{equation}
if $p\ge 1$. If $p<1$, the inequality still holds with a constant depending on $p$. Thus, \eqref{ineq:BesovSumApproximation2} is a direct consequence of \eqref{ineq:BesovSumApproximation1}. Let us now prove \eqref{ineq:BesovSumApproximation1}.

We use $\lesssim$ to hide dependences on $\alpha,q,T$. This proof is based on the observations that $\Omega_p^A(t)$ is monotonously increasing in $t$ and that $\int_{2^{-n-1}T}^{2^{-n}T} t^{-1-q\alpha}\dif t \approx T^{-q\alpha} 2^{nq\alpha} \approx \int_{2^{-n}T}^{2^{-n+1}T} t^{-1-q\alpha}\dif t$ holds. Together, these two facts allow us to calculate
\begin{align*}
\sum_{n=1}^\infty (2^{n\alpha}\Omega_p^\chi(2^{-nT}))^q &\lesssim \sum_{n=1}^\infty \int_{2^{-n}T}^{2^{-n+1}T} t^{-q\alpha-1}\dif t \cdot \Omega_p^\chi(2^{-nT})^q \\
&\lesssim \sum_{n=1}^\infty \int_{2^{-n}T}^{2^{-n+1}T} \left(\frac{\Omega_p^\chi(t)}{t^\alpha}\right)^q\frac{\dif t} t\\
&= \norm{\chi}_{B^\alpha_{p,q}}^q\,.
\end{align*}
The second inequality follows from a similar calculation.
\end{proof}

\begin{lemma}[Dependence on $q$]\label{lem:Besov-Embedding-Q}
Let $\chi:\Delta_T\to\bbR_+$, $\alpha>0,0<p\le\infty$ and $0<\tilde q\le q \le\infty$. Assume that $\chi$ is subadditive. Then there exists a $c(\alpha,p,\tilde q,q,T) > 0$ such that
\begin{equation*}
\norm{A}_{B^\alpha_{p,q}} \le c(\alpha,p,q,\tilde q,T) \norm{A}_{B^\alpha_{p,\tilde q}} 
\end{equation*}
\end{lemma}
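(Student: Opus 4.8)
The plan is to reduce the statement to the elementary nesting $\ell^{\tilde q}\subset\ell^q$ of sequence spaces, the passage to sequences being provided by the sum approximation of the preceding lemma. (A direct comparison of the integral quasi-norms is not available: the measure $\dif t/t$ on $(0,T]$ has infinite mass, so neither $L^{\tilde q}\subset L^q$ nor the reverse inclusion holds in general; it is precisely the subadditivity hypothesis — through the doubling of $\Omega_p^\chi$ it entails — that turns the relevant integrals into sums over $\bbN$.) Since $\chi$ is subadditive, \eqref{ineq:BesovSumApproximation2} is available in both exponents: writing $a_n := 2^{n\alpha}\Omega_p^\chi(2^{-n}T)$ for $n\ge 1$, there are constants depending only on $(\alpha,p,q,T)$, resp.\ $(\alpha,p,\tilde q,T)$, with
\[
c(\alpha,p,q,T)\,\Bigl(\sum_{n\ge1} a_n^q\Bigr)^{1/q}\le\norm{\chi}_{B^\alpha_{p,q}}\le C(\alpha,p,q,T)\,\Bigl(\sum_{n\ge1} a_n^q\Bigr)^{1/q},
\]
and the same with $q$ replaced by $\tilde q$.

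Next I would invoke the nesting inequality: for any nonnegative sequence $(a_n)$ and $0<\tilde q\le q\le\infty$ one has $\norm{(a_n)}_{\ell^q}\le\norm{(a_n)}_{\ell^{\tilde q}}$, with constant $1$ — after scaling so that $\sum_n a_n^{\tilde q}=1$, one has $a_n\le1$ for all $n$, hence $a_n^q\le a_n^{\tilde q}$ and $\sum_n a_n^q\le1$ (the case $q=\infty$ being trivial). Combining this with the two displayed comparisons yields
\[
\norm{\chi}_{B^\alpha_{p,q}}\le C(\alpha,p,q,T)\,\norm{(a_n)}_{\ell^q}\le C(\alpha,p,q,T)\,\norm{(a_n)}_{\ell^{\tilde q}}\le\frac{C(\alpha,p,q,T)}{c(\alpha,p,\tilde q,T)}\,\norm{\chi}_{B^\alpha_{p,\tilde q}},
\]
which is the assertion, with $c(\alpha,p,q,\tilde q,T):=C(\alpha,p,q,T)/c(\alpha,p,\tilde q,T)$.

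The single point that needs its own (equally soft) argument is the endpoint $q=\infty$, for which \eqref{ineq:BesovSumApproximation2} is not stated. There I would use monotonicity of $t\mapsto\Omega_p^\chi(t)$: for $t\in(2^{-n}T,2^{-n+1}T]$ we get $t^{-\alpha}\Omega_p^\chi(t)\le 2^\alpha T^{-\alpha}a_{n-1}$, so $\norm{\chi}_{B^\alpha_{p,\infty}}=\sup_{0<t\le T}t^{-\alpha}\Omega_p^\chi(t)\lesssim\sup_{n}a_n=\norm{(a_n)}_{\ell^\infty}$, and then finish as above via \eqref{ineq:BesovSumApproximation2} for the finite exponent $\tilde q$ (if also $\tilde q=\infty$, the statement is trivial). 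I do not anticipate any genuine obstacle: the whole proof is a transfer to the sequence side plus bookkeeping of harmless constants; the only thing to make sure of is that the quasi-norm regime $q<1$ or $\tilde q<1$ is included, which it is, as $\ell^{\tilde q}\subset\ell^q$ holds for all positive exponents.
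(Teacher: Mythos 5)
Your argument is exactly the paper's proof: pass to the dyadic sequence $a_n=2^{n\alpha}\Omega_p^\chi(2^{-n}T)$ via the two-sided sum approximation \eqref{ineq:BesovSumApproximation2} (where subadditivity enters) and conclude with the nesting $\ell^{\tilde q}\hookrightarrow\ell^q$, valid for all positive exponents. Your separate treatment of $q=\infty$ (which the paper merely calls ``simpler'') is also fine; just note that the bound for $t\in(T/2,T]$ produces the term $a_0=\Omega_p^\chi(T)$, which is reabsorbed into $\sup_{n\ge 1}a_n$ by the doubling $\Omega_p^\chi(T)\lesssim\Omega_p^\chi(T/2)$ that subadditivity provides.
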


\begin{proof}
The case $q=\infty$ is again simpler, so we concentrate on $q<\infty$. By \eqref{ineq:BesovSumApproximation2}, we have
\begin{align*}
\norm{\chi}_{B^\alpha_{p,q}} &\lesssim \left(\sum_{n=1}^\infty (2^{n\alpha } \Omega_p^\chi(2^{-n}T))^q\right)^{\frac 1q} \\
&\le \left(\sum_{n=1}^\infty (2^{n\alpha } \Omega_p^\chi(2^{-n}T))^{\tilde q}\right)^{\frac 1{\tilde q}} \\
&\lesssim \norm{\chi}_{B^\alpha_{p,\tilde q}}\,.
\end{align*}
\end{proof}

\noindent The final embedding we need is a dependence on $p$ and $\alpha$. We show a new, efficient way to prove this by making use of a monotonicity assumption:

\begin{definition}
We say that $\chi:\Delta_{T}\to\bbR_+$ is monotone, if for all $u,s,t,v\in[0,T], u\le s\le t\le v$ we have
\begin{equation}\label{cond:Monotonicity}
\chi(s,t)\le \chi(u,v)\,.
\end{equation}
\end{definition}

\begin{example}
This is motivated by processes of the form
\begin{equation*}
(s,t)\mapsto V^r \chi_{[s,t]}
\end{equation*}
where $V^r$ is the $r$-variation, see \eqref{eq:VariationNorm}.
\end{example}

\noindent Before we show the embedding itself, we need some preliminary lemmas. The first one shows that it suffices to consider small $t$ for the Besov norm:

\begin{lemma}\label{lem:smallTau}
Let $\chi:\Delta_T\to\bbR_+$ fulfill \eqref{cond:Subadditivity} and \eqref{cond:Monotonicity} and let $\alpha\in (0,1], p,q\in [1,\infty]$. Then
\begin{equation*}
	\norm{\chi}^*_{B^\alpha_{p,q}} \le 3^{1-\alpha}\left(\int_0^{T/3} t^{-\alpha q} \left(\int_0^{T-t} \chi(s,s+t)^p \dif s\right)^\frac q p \frac{\dif t}{t}\right)^{\frac 1q}\,.
\end{equation*}
\end{lemma}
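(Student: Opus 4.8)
The plan is to reduce the two‑parameter statement to a one‑parameter inequality for the function
\[
a(t):=\Big(\int_0^{T-t}\chi(s,s+t)^p\dif s\Big)^{1/p},\qquad 0<t\le T
\]
(with the usual sup‑modification for $p=\infty$), for which $\norm{\chi}^*_{B^\alpha_{p,q}}=\big(\int_0^T(t^{-\alpha}a(t))^q\tfrac{\dif t}{t}\big)^{1/q}$. The first step is the sub‑multiplicativity bound $a(t)\le 3\,a(t/3)$ for all $t\in(0,T]$. To obtain it, split $[s,s+t]$ into the three equal pieces $[s+\tfrac i3t,\,s+\tfrac{i+1}3t]$, $i=0,1,2$; iterated use of subadditivity \eqref{cond:Subadditivity} gives $\chi(s,s+t)\le\sum_{i=0}^2\chi(s+\tfrac i3t,\,s+\tfrac{i+1}3t)$, and then Minkowski's inequality in $L^p([0,T-t])$ (this is where $p\ge1$ is used) together with the substitution $\sigma=s+\tfrac i3t$ and the enlargement of the integration domain to $[0,T-t/3]$ bounds each of the three $L^p$‑terms by $a(t/3)$. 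Note that this step uses only subadditivity, not monotonicity.

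Setting $g(t):=t^{-\alpha}a(t)$ and $\beta:=(1-\alpha)q\ge0$ — this is the only place $\alpha\le1$ enters — the bound rewrites as $g(t)\le 3^{1-\alpha}g(t/3)$, so $h:=g^q$ satisfies $h(t)\le 3^\beta h(t/3)$ on $(0,T]$. It then suffices to prove
\[
\int_0^T h(t)\,\tfrac{\dif t}{t}\le 3^\beta\int_0^{T/3}h(t)\,\tfrac{\dif t}{t}
\]
and take $q$‑th roots; the case $q=\infty$ is the analogous statement with integrals replaced by essential suprema and follows immediately from $g(t)\le 3^{1-\alpha}g(t/3)$ by splitting the supremum over $(0,T]$ at $T/3$. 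For $q<\infty$, split the left side at $T/3$ and substitute $t=3u$ in the high part:
\[
\int_{T/3}^T h(t)\,\tfrac{\dif t}{t}=\int_{T/9}^{T/3}h(3u)\,\tfrac{\dif u}{u}\le 3^\beta\int_{T/9}^{T/3}h(u)\,\tfrac{\dif u}{u}.
\]

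The crucial remaining ingredient is the matching reverse bound
\[
\int_{T/9}^{T/3}h(u)\,\tfrac{\dif u}{u}\le (3^\beta-1)\int_0^{T/9}h(u)\,\tfrac{\dif u}{u}.
\]
For this, iterate $h(t)\le 3^\beta h(t/3)$ downward to get $h(t/3^j)\ge 3^{-\beta j}h(t)$, partition $(0,T/9]=\bigsqcup_{j\ge1}[3^{-j-2}T,3^{-j-1}T]$ — the $j$‑th piece being exactly $3^{-j}$ times $[T/9,T/3]$ — substitute $u=t/3^j$ on each piece, and sum the geometric series $\sum_{j\ge1}3^{-\beta j}=(3^\beta-1)^{-1}$, valid since $\beta>0$. (When $\beta=0$, i.e.\ $\alpha=1$, the series diverges, which forces $\int_{T/9}^{T/3}h=0$ whenever $\int_0^{T/9}h<\infty$, and if $\int_0^{T/9}h=\infty$ the right‑hand side of the Lemma is infinite and there is nothing to prove.) Combining, $\int_0^{T/3}h=\int_0^{T/9}h+\int_{T/9}^{T/3}h\le 3^\beta\int_0^{T/9}h$, and hence
\[
\int_0^T h\le\int_0^{T/3}h+3^\beta\int_{T/9}^{T/3}h\le 3^\beta\int_0^{T/3}h,
\]
which is the claim.

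The main obstacle is precisely this last reverse bound. A crude split at $T/3$ only yields a constant of order $(1+3^{(1-\alpha)q})^{1/q}$; recovering the sharp $3^{1-\alpha}$ requires the observation that the subadditivity‑driven growth control $h(t)\le 3^\beta h(t/3)$ simultaneously forces a lower bound on the small‑scale integral $\int_0^{T/9}h$, which the geometric summation then converts into exactly the missing factor. Everything else — Minkowski, the scaling substitutions, and the degenerate cases $p=\infty$, $q=\infty$, $\alpha=1$ — is routine.
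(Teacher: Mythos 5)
Your proof is correct, including the edge cases $p=\infty$, $q=\infty$ and $\alpha=1$, and its first half runs on the same mechanism as the paper's: split $\chi(s,s+t)$ into the three increments over thirds of $[s,s+t]$ using \eqref{cond:Subadditivity}, apply Minkowski's inequality in $L^p(\dif s)$, shift $s$ and enlarge the domain of integration; like the paper, you never use \eqref{cond:Monotonicity}. Where you diverge is the treatment of the outer $t$-integral. The paper applies the triangle inequality in the full mixed norm and then performs the change of variables $t=3\tilde t$ in each of the three resulting terms, so the whole range $(0,T]$ is mapped onto $(0,T/3]$ at once and the constant $3\cdot 3^{-\alpha}=3^{1-\alpha}$ drops out immediately. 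You instead first prove the pointwise bound $a(t)\le 3a(t/3)$, then split $\int_0^T$ at $T/3$ and recover the sharp constant via the self-improving ``reverse bound'' $\int_{T/9}^{T/3}h\,\tfrac{\dif t}{t}\le(3^{\beta}-1)\int_0^{T/9}h\,\tfrac{\dif t}{t}$ obtained from iterating $h(t)\le 3^{\beta}h(t/3)$ and summing a geometric series (with $\beta=(1-\alpha)q$). That step is correct, but your closing claim that it is the crucial remaining ingredient is misleading: from $h(3u)\le 3^{\beta}h(u)$ one gets directly $\int_0^T h(t)\,\tfrac{\dif t}{t}=\int_0^{T/3}h(3u)\,\tfrac{\dif u}{u}\le 3^{\beta}\int_0^{T/3}h(u)\,\tfrac{\dif u}{u}$, i.e.\ a single global substitution replaces the entire splitting and geometric-series argument, and it also disposes of the case $\alpha=1$ (where your series diverges and you need a separate discussion). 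So the argument stands, but its second half can be compressed to one line, which is essentially what the paper does.
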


\begin{proof}
We use the subadditivity of $\chi$ to split
\begin{align*}
\norm{\chi}^*_{B^\alpha_{p,q}} \le &\sum_{k=0}^2\left(\int_0^{T} t^{-\alpha q} \left(\int_0^{T-t} \chi(s+\frac k3\tau,s+\frac{k+1}3\tau)^p \dif s\right)^\frac q p \frac{\dif t}{t}\right)^{\frac 1q}
\end{align*}
For $k=0$, we substitute $t/3 = \tilde t$ to get
\begin{align*}
\left(\int_0^{T} t^{-\alpha q} \left(\int_0^{T-t} \chi(s,s+t/3)^p \dif s\right)^\frac q p \frac{\dif t}{t}\right)^{\frac 1q} &= 3^{-\alpha}\left(\int_0^{T/3} \tilde t^{-\alpha q} \left(\int_0^{T-3\tilde t} \chi(s,s+\tilde t)^p \dif s\right)^\frac q p \frac{\dif\tilde t}{\tilde t}\right)^{\frac 1q}\\
&\le 3^{-\alpha}\left(\int_0^{T/3} \tilde t^{-\alpha q} \left(\int_0^{T-\tilde t} \chi(s,s+\tilde t)^p \dif s\right)^\frac q p \frac{\dif\tilde t}{\tilde t}\right)^{\frac 1q}\,.
\end{align*}
Similar substitutions in the other terms give the claim.
\end{proof}

\begin{proposition}[Dependence on $p,\alpha$]\label{prop:Besov-Embedding-P-Alpha}
Let $\chi:\Delta_T\to\bbR_+$ fulfill the conditions \eqref{cond:Subadditivity} and \eqref{cond:Monotonicity}.
Let $\alpha,\tilde\alpha\in[0,1], p,\tilde p,q\in[1,\infty]$. Then, as long as
\begin{align*}
\alpha &\le\tilde\alpha \\
\alpha-1/p &= \tilde\alpha-1/\tilde p\,,
\end{align*}
we have
\begin{equation*}
\norm{\chi}^*_{B^\alpha_{p,q}} \le c(\alpha,\tilde\alpha,p) \norm{\chi}^*_{B^{\tilde \alpha}_{\tilde p,q}}\,,
\end{equation*}
where $c(\alpha,\tilde\alpha,p) = 3^{1-\alpha} (2^{\tilde\alpha+1/p+1}\sum_{j\ge 0} 2^{-j\alpha}+2^{\tilde\alpha})$.
\end{proposition}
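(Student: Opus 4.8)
The plan is to reduce, via Lemma~\ref{lem:smallTau}, to controlling $\norm{\chi}^*_{B^\alpha_{p,q}}$ by its restriction to small scales $t\le T/3$ (this is where the factor $3^{1-\alpha}$ comes from), and then, for each such $t$, to prove a scale-by-scale estimate
\[
a_p(t)\lesssim t^{\alpha-\tilde\alpha}\,a_{\tilde p}(2t)+(\text{boundary terms}),\qquad a_p(t):=\Big(\int_0^{T-t}\chi(s,s+t)^p\,\dif s\Big)^{1/p},
\]
and $a_{\tilde p}$ the analogous quantity at integrability exponent $\tilde p$. The conceptual point is that $\alpha\le\tilde\alpha$ together with $\alpha-1/p=\tilde\alpha-1/\tilde p$ forces $\tilde p\le p$, so one can trade integrability for regularity in the \emph{discrete} direction, where the embedding $\ell^p\subseteq\ell^{\tilde p}$ is free.

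First I would discretize $a_p(t)$. Partition $[0,T]$ into the consecutive intervals $K_i=[(i-1)t,it]$, $i=1,\dots,N$ with $N=\lfloor T/t\rfloor$, together with a final shorter interval. Using \eqref{cond:Subadditivity} and \eqref{cond:Monotonicity}, for $s\in K_i$ one has $\chi(s,s+t)\le\chi(K_i)+\chi(K_{i+1})$, where $\chi(K)$ denotes the value of $\chi$ on the interval $K$; hence, by convexity (here $p\ge 1$),
\[
a_p(t)^p\le t\sum_i\bigl(\chi(K_i)+\chi(K_{i+1})\bigr)^p\lesssim t\sum_i\chi(K_i)^p=t\,\norm{(\chi(K_i))_i}_{\ell^p}^p.
\]
Since $\tilde p\le p$, we have $\norm{(\chi(K_i))_i}_{\ell^p}\le\norm{(\chi(K_i))_i}_{\ell^{\tilde p}}$, and this monotonicity of sequence spaces is precisely where the Sobolev-type gain is created. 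Next, by \eqref{cond:Monotonicity} each $\chi(K_i)$ is dominated by $\chi(s,s+2t)$ for every $s$ in the preceding interval $K_{i-1}$ (here $t\le T/3$ keeps $s+2t\le T$); integrating over the disjoint $K_{i-1}$ gives $t\,\norm{(\chi(K_i))_i}_{\ell^{\tilde p}}^{\tilde p}\lesssim a_{\tilde p}(2t)^{\tilde p}$ for the interior indices $i$. Combining, $a_p(t)\lesssim t^{1/p-1/\tilde p}a_{\tilde p}(2t)=t^{\alpha-\tilde\alpha}a_{\tilde p}(2t)$, which after multiplying by $t^{-\alpha}$ yields $t^{-\alpha}a_p(t)\lesssim 2^{\tilde\alpha}(2t)^{-\tilde\alpha}a_{\tilde p}(2t)$; this is the $2^{\tilde\alpha}$ summand in $c(\alpha,\tilde\alpha,p)$, and the factor $2^{\tilde\alpha+1/p+1}$ simply collects the numerical losses from the convexity step, the doubling of the scale, and the pairing $\chi(K_i)+\chi(K_{i+1})$.

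The remaining — and genuinely delicate — difficulty is the handful of blocks abutting the endpoints $0$ and $T$, for which the shift by $2t$ above leaves $[0,T]$. I would control these by combining \eqref{cond:Monotonicity} with \eqref{cond:Subadditivity} along a dyadic ladder of scales towards the boundary: a length-$t$ block at an endpoint is dominated by $\chi$ on a nested family of intervals of lengths $t/2,t/4,\dots$, each of which can be slid into a window $(s,s+\cdot)$ that does fit in $[0,T]$; estimating each such window by the corresponding $a_{\tilde p}$ and summing produces the geometric series $\sum_{j\ge0}2^{-j\alpha}$, convergent because $\alpha>0$, which is the first summand of $c(\alpha,\tilde\alpha,p)$.

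Finally I would integrate the pointwise bound $t^{-\alpha}a_p(t)\lesssim 2^{\tilde\alpha}(2t)^{-\tilde\alpha}a_{\tilde p}(2t)+(\text{boundary})$ in the $L^q(\dif t/t)$ norm over $(0,T/3)$. Because $\dif t/t$ is invariant under $t\mapsto 2^{\pm j}t$, each term is bounded by $\norm{\chi}^*_{B^{\tilde\alpha}_{\tilde p,q}}$ after a change of variables, the integration range always remaining inside $(0,T)$; the sum over the dyadic parameter $j$ is handled by the triangle inequality in $L^q(\dif t/t)$, which is available since $q\ge 1$. I expect the boundary-block analysis to be the main obstacle: both making it work at all — it is exactly the reason Lemma~\ref{lem:smallTau} is proved first, so that one always retains three units of room — and tracking the constants precisely enough to reach the stated form of $c(\alpha,\tilde\alpha,p)$. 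The interior estimate, by contrast, is essentially the discrete Bernstein/Sobolev inequality and should be routine.
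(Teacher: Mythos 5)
Your interior estimate is fine and is in fact a legitimately different route from the paper's: the paper splits the $s$-integral into $(I)+(II)+(III)$ and bounds the interior piece $(II)$ by Young's convolution inequality (Lemma~\ref{lem:bigS}), whereas your block discretization plus $\norm{\cdot}_{\ell^p}\le\norm{\cdot}_{\ell^{\tilde p}}$ (note the inclusion goes $\ell^{\tilde p}\subseteq\ell^{p}$, though the inequality you actually use is the right one) gives the same bound $a_p(t)\lesssim t^{1/p-1/\tilde p}a_{\tilde p}(2t)$ up to constants. The genuine gap is in your treatment of the boundary blocks. Your scheme retains the block values $\chi(K_1)=\chi(0,t)$ and the block abutting $T$ with weight $t^{1/p}$, and you propose to dominate them by windows $(s,s+\cdot)$ slid inside $[0,T]$. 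This cannot work: by \eqref{cond:Monotonicity} a window $[s,s+h]\subseteq[0,T]$ contains an interval with left endpoint $0$ only if $s=0$ (and contains an interval with right endpoint $T$ only if $s+h=T$), i.e.\ only for a null set of starting points, so no positive-measure family of windows controls these block values; and the alternative, an infinite dyadic chain $\chi(0,t)\le\sum_{j}\chi(2^{-j-1}t,2^{-j}t)$, does not follow from the finite subadditivity \eqref{cond:Subadditivity} without continuity at the endpoint, which the proposition does not assume. Concretely, $\chi(0,t)\equiv c>0$ for $t>0$ and $\chi(s,t)=0$ for $s>0$ is monotone and subadditive, has $\norm{\chi}^*_{B^{\tilde\alpha}_{\tilde p,q}}=0$, yet your intermediate quantity $t^{1/p}\chi(K_1)=ct^{1/p}$ is not bounded by the right-hand side; so the chain of inequalities you plan cannot be completed, even though the statement itself is true (both sides vanish there). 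Lemma~\ref{lem:smallTau} does not help with this: the three units of room are to the right, while the obstruction is that an interval touching the endpoint cannot be slid at all.

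The fix is what the paper does in Lemma~\ref{lem:smallS}: near the endpoints, do not discretize; keep the $s$-integral $\int_0^t\chi(s,s+t)^p\dif s$ and decompose the \emph{range of $s$} into dyadic shells $[2^{-j}t,2^{-j+1}t)$, which cover almost every $s\in(0,t)$. For $s$ in such a shell, monotonicity gives $\chi(s,s+t)\le\chi(2^{-j}t,2t)$, and only a \emph{finite} application of \eqref{cond:Subadditivity} along the chain $2^{-j}t<\dots<t<2t$ is needed; each resulting piece $\chi(2^{-k}t,2^{-k+1}t)$ has left endpoint strictly positive and is therefore dominated by the windows $[s',s'+2^{-k+1}t]$ with $s'$ ranging over the positive-measure set $[0,2^{-k}t]$, which is what produces $\sum_{j\ge0}2^{-j\alpha}$. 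A secondary point: your constant bookkeeping ("the factor $2^{\tilde\alpha+1/p+1}$ simply collects the losses") is not substantiated, and the convexity loss in your block step would change the stated $c(\alpha,\tilde\alpha,p)$; since the proposition asserts a specific constant, either redo the interior estimate via the convolution argument or state the result with an unspecified constant.
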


\noindent Note that the conditions on $\alpha,\tilde\alpha, p,\tilde p$ imply $\tilde p\le p$. To prove this result, we split the Besov norm for $t\le T/3$ into three parts.
\begin{align*}
\left(\int_0^{T-t} \chi(s,s+t)^p\right)^{\frac 1p} &\le \underbrace{\left(\int_0^t \chi(s,s+t)^p\dif s\right)^{\frac 1p}}_{(I)} + \underbrace{\left(\int_t^{T-2t} \chi(s,s+t)^p\dif s\right)^{\frac 1p}}_{(II)} \\&\qquad+ \underbrace{\left(\int_{T-2t}^{T-t} \chi(s,s+t)^p\dif s\right)^{\frac 1p}}_{(III)}\,,
\end{align*}
and show preliminary results for these three terms:

\begin{lemma}\label{lem:smallS}
Let $\chi:\Delta_T\to\bbR_+$ fulfill \eqref{cond:Subadditivity} and \eqref{cond:Monotonicity}. For $p\in[1,\infty]$ and $t\le T/2$, we have
\begin{equation*}
(I) \le \sum_{j\ge 0} 2^{-\frac{j-1} p}t^{\frac 1p} \chi(2^{-j}t,2^{-j+1}t)\,,
\end{equation*}
and, similarly
\begin{equation*}
(III) \le \sum_{j\ge 0} 2^{-\frac{j-1} p}t^{\frac 1p} \chi(T-2^{-j+1}t,T-2^{-j}t)\,.
\end{equation*}
\end{lemma}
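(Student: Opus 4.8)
The plan is to estimate the term $(I) = \left(\int_0^t \chi(s,s+t)^p \dif s\right)^{1/p}$ by decomposing the integration interval $[0,t]$ dyadically. Specifically, I would write $[0,t] = \bigcup_{j\ge 0} [2^{-j-1}t, 2^{-j}t]$, so that
\[
\int_0^t \chi(s,s+t)^p \dif s = \sum_{j\ge 0} \int_{2^{-j-1}t}^{2^{-j}t} \chi(s,s+t)^p \dif s,
\]
and then take $p$-th roots, using that $\left(\sum a_j\right)^{1/p} \le \sum a_j^{1/p}$ for $p\ge 1$ (the quasi-triangle inequality in $\ell^{1/p}$, or just subadditivity of $x\mapsto x^{1/p}$). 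This reduces matters to bounding each $\left(\int_{2^{-j-1}t}^{2^{-j}t} \chi(s,s+t)^p\dif s\right)^{1/p}$.

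For a fixed $j$ and $s\in[2^{-j-1}t, 2^{-j}t]$, the idea is to control $\chi(s,s+t)$ by $\chi(2^{-j}t, 2^{-j+1}t)$ using the two structural hypotheses. First, subadditivity \eqref{cond:Subadditivity} lets me bound $\chi(s,s+t)$ by a sum of increments over a chain from $s$ to $s+t$; the natural chain passes through the point $2^{-j+1}t$ (note $s \le 2^{-j}t \le 2^{-j+1}t \le s+t$ when $t\le T/2$, since $2^{-j+1}t \le 2t \le s+t$ fails in general — I need to be careful here and instead use that $s+t \ge 2^{-j-1}t + t$ and pick the chain $s \to 2^{-j}t \to 2^{-j+1}t \to \dots$, or more simply dominate $\chi(s,s+t)$ directly). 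The cleaner route: by monotonicity \eqref{cond:Monotonicity}, since $[s,s+t]$ contains... actually the reverse — I want $\chi(s,s+t) \le$ something, so I need $[s,s+t] \subseteq [u,v]$ with $\chi(u,v)$ the target, which would require the target interval to be \emph{longer}. So monotonicity alone goes the wrong way; the key must be to first split via subadditivity so that one of the pieces is the interval $[2^{-j}t, 2^{-j+1}t]$ — whose length $2^{-j}t$ is comparable to the length of $[s, 2^{-j}t] \cup (\text{remainder})$ — and then apply monotonicity to the leftover pieces to fold them into the same dyadic block. Iterating this telescoping across scales produces the full sum $\sum_{j\ge 0} (\dots) \chi(2^{-j}t, 2^{-j+1}t)$.

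Once the pointwise bound $\chi(s,s+t) \lesssim \sum_{k} (\text{geometric weight}) \chi(2^{-k}t, 2^{-k+1}t)$ on each dyadic block is in place, I integrate over $s\in[2^{-j-1}t,2^{-j}t]$, which has length $2^{-j-1}t$, producing the factor $2^{-(j-1)/p}t^{1/p}$ after taking $p$-th roots. Summing over $j$ and reorganizing the double sum (the weights decay geometrically in both indices) yields $(I) \le \sum_{j\ge 0} 2^{-(j-1)/p} t^{1/p} \chi(2^{-j}t, 2^{-j+1}t)$. The estimate for $(III)$ is entirely symmetric: the interval $[T-2t, T-t]$ of $s$-values is dyadically decomposed approaching $T-t$, i.e. $s$ ranges over $[T - 2^{-j}t, T - 2^{-j-1}t]$-type blocks, and the same subadditivity/monotonicity argument — now anchored at the right endpoint $T$ — gives the claimed bound with $\chi(T - 2^{-j+1}t, T - 2^{-j}t)$.

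The main obstacle I anticipate is getting the chain decomposition via subadditivity to line up correctly with the dyadic blocks: subadditivity only gives upper bounds through intermediate points, and one must choose the breakpoints so that after one split the "good" piece is exactly a dyadic interval $[2^{-j}t, 2^{-j+1}t]$ and the "bad" pieces are strictly shorter intervals to which monotonicity (or a further split) applies, so that the induction on scales terminates and the geometric series converges. Tracking the constants through this telescoping — and verifying that the restriction $t\le T/2$ is exactly what keeps all the intermediate points inside $[0,T]$ — is the delicate bookkeeping; the rest is the routine $\ell^{1/p}$ triangle inequality and computing the lengths of the integration blocks.
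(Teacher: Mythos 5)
Your pointwise bound is fine, and in fact simpler than you make it: monotonicity does not ``go the wrong way''. For $s$ in the $j$-th dyadic block one simply enlarges the interval, $[s,s+t]\subseteq[2^{-j-1}t,2t]$ (this is exactly where $t\le T/2$ enters, guaranteeing $2t\le T$), so \eqref{cond:Monotonicity} gives $\chi(s,s+t)\le\chi(2^{-j-1}t,2t)$, and then \eqref{cond:Subadditivity} along the dyadic chain $2^{-j-1}t<2^{-j}t<\dots<t<2t$ yields $\chi(s,s+t)\le\sum_{k=0}^{j+1}\chi(2^{-k}t,2^{-k+1}t)$; no iteration or telescoping across scales is needed. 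The genuine gap is in how you integrate this bound. Splitting $\int_0^t$ into the blocks $[2^{-j-1}t,2^{-j}t]$ and using $(\sum_j a_j)^{1/p}\le\sum_j a_j^{1/p}$ attaches the weight $(2^{-j-1}t)^{1/p}$ to every increment $\chi(2^{-k}t,2^{-k+1}t)$ with $k\le j+1$, so each fixed increment is counted by all blocks $j\ge k-1$. Summing that geometric series gives (for $k\ge1$) the total weight $t^{1/p}\,2^{-k/p}(1-2^{-1/p})^{-1}$, which exceeds the claimed weight $2^{-(k-1)/p}t^{1/p}$ by the factor $(2^{1/p}-1)^{-1}>1$ for every $p>1$, and for $p=\infty$ — which the lemma covers — the block sum diverges altogether (all weights equal $1$), and the inequality $(\sum a_j)^{1/p}\le\sum a_j^{1/p}$ has no meaning there. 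So your scheme proves only a weaker inequality with a constant that blows up as $p\to\infty$, not the statement as given.

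The paper avoids this double counting by not splitting the domain: it keeps the integral over all of $[0,t]$, writes the pointwise bound with indicators, $\chi(s,s+t)\le\sum_{j\ge0}1_{\{j\le j_0(s)\}}\,\chi(2^{-j}t,2^{-j+1}t)$ where $j_0(s)$ is the dyadic scale of $s$, and then applies Minkowski's inequality (the triangle inequality in $L^p(\dif s)$, valid for all $p\in[1,\infty]$) to this sum of functions of $s$. The weight attached to the $j$-th increment is then the measure of $\{s\in[0,t]: j\le j_0(s)\}=\{s<2^{-j+1}t\}\cap[0,t]$ raised to the power $1/p$, hence at most $2^{-(j-1)/p}t^{1/p}$ — each increment is counted once, with exactly the claimed coefficient, uniformly in $p$. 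With this reordering (pointwise bound first, then one application of Minkowski, rather than block-by-block summation) your outline matches the paper's proof; the bound for $(III)$ then indeed follows by the reflection $\tilde\chi(s,t)=\chi(T-t,T-s)$, as you say.
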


\begin{proof}
We only show the first inequality, the second one follows by setting $\tilde \chi(s,t) = \chi(T-t,T-s)$.
For all $s\in [0,t)$, we set $j_0(s)$ to be the unique integer $j_0\ge 1$, such that $s\in [2^{-j_0} t, 2^{-j_0+1}t)$. Then the monotonicity \eqref{cond:Monotonicity} implies
\begin{align*}
(I)^p &\le \int_0^t \chi(2^{-j_0(s)}t, 2t)^p \dif s\\
\eqref{cond:Subadditivity}&\le \int_0^t (\sum_{0\le j} 1_{\{j\le j_0(s)\}} \chi(2^{-j}t,2^{-j+1}t))^p \dif s\,.
\end{align*}
By applying Minkowski's inequality, we get
\begin{equation*}
(I) \le \sum_{0\le j} \left(\int_0^t 1_{\{j\le j_0(s)\}} \chi(2^{-j}t,2^{-j+1}t)^p\dif s\right)^{\frac 1p}\,.
\end{equation*}
One checks that $j\le j_0(s)$ if and only if $s<2^{-j+1}t$, giving
\begin{equation*}
(I) \le \sum_{0\le j} \chi(2^{-j}t,2^{-j+1}t) \left(\int_0^{2^{-{j+1}}t} 1 \dif s\right)^{\frac 1p}\,.
\end{equation*}
This shows the claim.
\end{proof}

\begin{lemma}\label{lem:bigS}
Let $\chi:\Delta_T\to\bbR_+$ fulfill \eqref{cond:Monotonicity} and let $p,\tilde p\in[1,\infty]$ with $\tilde p\le p$ and $t\le T/3$. Then
\begin{equation*}
(II) \le t^{\frac 1p- \frac 1{\tilde p}} \left(\int_0^{T-{2t}} \chi(s,s+2t)^{\tilde p} \dif s \right)^{\frac 1{\tilde p}}\,.
\end{equation*}
\end{lemma}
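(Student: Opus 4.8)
My plan rests on the observation that, writing $\theta:=p/\tilde p\ge 1$ and $g(s):=\chi(s,s+t)^{\tilde p}$, one has $(II)^{p}=\int_t^{T-2t}g(s)^{\theta}\,\dif s$, together with the elementary interpolation inequality $\int g^{\theta}\le\norm{g}_{\infty}^{\theta-1}\int g$, valid for any nonnegative $g$ since $g^{\theta}=g^{\theta-1}\cdot g\le\norm{g}_{\infty}^{\theta-1}g$ pointwise. So it suffices to produce, purely from the monotonicity \eqref{cond:Monotonicity}, both a uniform bound and an $L^{1}$ bound on $g$ in terms of $H:=\int_0^{T-2t}\chi(s,s+2t)^{\tilde p}\,\dif s$.

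For the uniform bound: for $s\in[t,T-2t]$ and any $s'\in[s-t,s]$ the points satisfy $0\le s'\le s\le s+t\le s'+2t\le s+2t\le T$ (it is exactly here that $t\le T/3$ enters, ensuring $[t,T-2t]$ is a genuine interval and that shifting the base point down by $t$ does not leave $[0,T]$), so \eqref{cond:Monotonicity} gives $\chi(s,s+t)\le\chi(s',s'+2t)$. Since the left side is constant in $s'$, I would average over $s'\in[s-t,s]$ and then enlarge the integration domain to $[0,T-2t]$:
\[
g(s)=\frac1t\int_{s-t}^{s}\chi(s,s+t)^{\tilde p}\,\dif s'\le\frac1t\int_{s-t}^{s}\chi(s',s'+2t)^{\tilde p}\,\dif s'\le\frac{H}{t}.
\]
For the $L^{1}$ bound, applying \eqref{cond:Monotonicity} instead to $s\le s\le s+t\le s+2t$ yields $\chi(s,s+t)\le\chi(s,s+2t)$, hence $\int_t^{T-2t}g(s)\,\dif s\le\int_0^{T-2t}\chi(s,s+2t)^{\tilde p}\,\dif s=H$.

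Finally I would combine the two: $(II)^{p}=\int_t^{T-2t}g^{\theta}\le(H/t)^{\theta-1}\int_t^{T-2t}g\le(H/t)^{\theta-1}H=t^{1-\theta}H^{\theta}$, and taking $p$-th roots gives $(II)\le t^{1/p-1/\tilde p}H^{1/\tilde p}$, which is the assertion (using $\theta/p=1/\tilde p$ and $(1-\theta)/p=1/p-1/\tilde p$). The endpoint cases $p=\infty$ and/or $\tilde p=\infty$ are subsumed: when $p=\infty$ the uniform bound alone gives $(II)=\sup_{s\in[t,T-2t]}\chi(s,s+t)\le(H/t)^{1/\tilde p}=t^{-1/\tilde p}H^{1/\tilde p}$ directly, and if moreover $\tilde p=\infty$ then $t^{1/p-1/\tilde p}=1$ and the claim reduces to $\chi(s,s+t)\le\chi(s,s+2t)$. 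I do not anticipate a genuine difficulty; the one point that needs care is the domain bookkeeping that justifies each invocation of \eqref{cond:Monotonicity}, namely keeping every base point together with its $2t$-translate inside $[0,T]$.
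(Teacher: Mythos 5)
Your proof is correct. It shares the paper's central idea — for $s\in[t,T-2t]$, average the monotonicity bound $\chi(s,s+t)\le\chi(s',s'+2t)$ over the shifted window $s'\in[s-t,s]$, which is exactly the convolution of $a(s'):=\chi(s',s'+2t)1_{[0,T-2t]}(s')$ with $\rho=t^{-1}1_{[0,t]}$ — but you finish differently. The paper keeps the pointwise bound $\chi(s,s+t)\le(a*\rho)(s)$ and then applies Young's convolution inequality $\norm{a*\rho}_{L^p}\le\norm{\rho}_{L^r}\norm{a}_{L^{\tilde p}}$ with $1/p+1=1/\tilde p+1/r$, which yields the factor $t^{-1+1/r}=t^{1/p-1/\tilde p}$ in one stroke. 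You instead work with $g=\chi(\cdot,\cdot+t)^{\tilde p}$ and interpolate $\int g^{\theta}\le\norm{g}_{\infty}^{\theta-1}\int g$, extracting the $L^\infty$ bound $g\le H/t$ from the averaging step and the $L^1$ bound from the direct monotonicity $\chi(s,s+t)\le\chi(s,s+2t)$ (a bound the paper never needs). This is essentially a hand-rolled proof of the relevant endpoint case of Young's inequality: slightly longer, but more elementary, self-contained, and with the $p=\infty$ and $\tilde p=\infty$ modifications made explicit rather than left to the "usual modification" convention. Both arguments use the hypotheses in the same places (monotonicity for the domain comparisons, $t\le T/3$ only to keep the relevant intervals inside $[0,T]$), and your exponent bookkeeping $\theta/p=1/\tilde p$, $(1-\theta)/p=1/p-1/\tilde p$ checks out.
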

\begin{proof}
Let $a(s):=\chi(s,s+2t) 1_{s\in [0,T-2t]}$.
Then, using \eqref{cond:Monotonicity}, one can show that for $s\in [t,T-2t]$, we have $\chi(s,s+t) \leq (a*\rho)(s)$, where $\rho=t^{-1}1_{[0,t]}$.
Therefore, by Young's convolution inequality with exponents $1/p+1 = 1/\tilde{p}+1/r$, we obtain
\[
(II)
\leq
\left(\int_{\bbR} \rho(s)^r \dif s\right)^{\frac 1r} \left(\int_\bbR a(s)^{\tilde p}\dif s\right)^{\frac 1{\tilde p}}
=
t^{-1+1/r} \left(\int_\bbR a(s)^{\tilde p}\dif s\right)^{\frac 1{\tilde p}}
=
t^{\frac 1p- \frac 1{\tilde p}} \left(\int_\bbR a(s)^{\tilde p}\dif s\right)^{\frac 1{\tilde p}},
\]
which shows the claim.
\end{proof}

\noindent With these, we can now show the Proof of Proposition \ref{prop:Besov-Embedding-P-Alpha}:

\begin{proof}
By Lemma \ref{lem:smallTau}, we have
\begin{align*}
\norm{\chi}^*_{B^\alpha_{p,q}} \le 3^{1-\alpha}\left(\int_0^{T/3} t^{-\alpha q} ((I)+(II)+(III))^q \frac{\dif t}t\right)^{\frac 1q}\,.
\end{align*}
Using Lemma \ref{lem:smallS}, we have
\begin{align*}
\left(\int_0^{T/3} t^{-\alpha q}(I)^q \frac{\dif t}t \right)^{\frac 1q} &\le \sum_{j\ge 0} \left(\int_0^{T/3} t^{-\alpha q} \left(2^{-(j-1)/p}t^{\frac 1p} \chi(2^{-j}t,2^{-j+1}t)\right)^q\frac {\dif t}t\right)^{\frac 1q} \\
&\le \sum_{j\ge 0} \Bigg(\int_0^{T/3}t^{-\alpha q+\frac qp} 2^{-(j-1)\frac qp} (2^{-j}t)^{-\frac q{\tilde p}}\\&\qquad\qquad \times\left(\int_0^{2^{-j}t} \chi(s,s+2^{-j+1}t)^{\tilde p} \dif s\right)^{\frac q{\tilde p}} \frac{\dif t}{t}\Bigg)^{\frac 1q} \\
&= \sum_{j\ge 0} \left(\int_0^{T/3}(2^{-j}t)^{-\tilde \alpha q}2^{-j\alpha q+\frac q p}\left(\int_0^{2^{-j}t} \chi(s,s+2^{-j+1}t)^{\tilde p}\dif s\right)^{\frac q{\tilde p}}\frac{\dif t}t\right)^{\frac 1q}\\
(\text{substitute }h= 2^{-j+1}t)&= \sum_{j\ge 0} 2^{-j\alpha+\tilde\alpha+\frac 1 p} \left(\int_0^{2^{-j+1}T/3} h^{-\tilde \alpha q}\left(\int_0^{h/2} \chi(s,s+h)^{\tilde p} \dif s\right)^{\frac q{\tilde p}}\frac{\dif h} h\right)^{\frac 1q}\\
&\le c_1(\alpha,\tilde\alpha,p) \norm{\chi}^*_{B^{\tilde \alpha}_{\tilde p,q}}\,.
\end{align*}
With constant $c_1(\alpha,\tilde\alpha,p) = (\sum_{j\ge 0} 2^{-j\alpha})2^{\frac 1 p+\tilde\alpha}$. Similarly, the third term fulfills
\begin{equation*}
\left(\int_0^{T/3} t^{-\alpha q}(III)^q \frac{\dif t}t \right)^{\frac 1q} \le c_1(\alpha,\tilde\alpha,p)  \norm{\chi}^*_{B^{\tilde\alpha}_{\tilde p,q}}\,,
\end{equation*}
Finally, the second term fulfills
\begin{align*}
\left(\int_0^{T/3} t^{-\alpha q}(II)^q \frac{\dif t}t \right)^{\frac 1q} &\le \left(\int_0^{T/3} t^{-\alpha q+\frac qp-\frac q{\tilde p}}\left(\int_0^{T-2t}\chi(s,s+2t)^{\tilde p} ds\right)^{\frac q{\tilde p}} \frac{\dif t}t \right)^{\frac 1q} \\
(\text{substitute }h=2t)&= 2^{\tilde \alpha}\left(\int_0^{\frac 23 T} h^{-\tilde\alpha q}\left(\int_0^{T-h}\chi(s,s+h)^{\tilde p} ds\right)^{\frac q{\tilde p}}\frac{\dif h}h\right)^{\frac 1q}\\
&\le 2^{\tilde\alpha} \norm{\chi}^*_{B^{\tilde\alpha}_{\tilde p,q}}\,.
\end{align*}
Adding these three terms gives the result.
\end{proof}

\begin{remark}\label{rem:reduceQ}
Since $\norm{\cdot}_{B^\alpha_{p,q}}$ is a $\min(1,p,q)$-norm in the sense of Definition \ref{def:pNorm}, we assume that it is possible, although tedious, to show Proposition \ref{prop:Besov-Embedding-P-Alpha} for $p,\tilde p,q\in(0,\infty]$.
\end{remark}

\subsection{Variation norm and composition with Besov norms}

For any $r>0,\chi:\Delta_T\to \bbR_+$ and an interval $I\subset[0,T]$, we define the variation norm by
\begin{equation}\label{eq:VariationNorm}
	V^r \chi_I := \sup_\pi\left(\sum_{i=1}^{N-1} \chi_{ \pi_i,\pi_{i+1}} ^r\right)^{\frac 1r}\,,
\end{equation}
where the supremum goes over all finite increasing sequences $\pi = \{ \pi_1<\dots<\pi_N \} \subset I$.
Similar to the Besov-case, we set the variation norm of a one-parameter process $f:[0,T]\to X$ for a metric space $(X,d)$, as well as the variation norm of a two-parameter process $A:\Delta_T\to E$ for a normed space $(E,\abs{\cdot})$ to be
\[
V^r f_I := V^r(d(f(\cdot),f(\cdot)))_I\,, \qquad V^r A_I := V^r(\abs{A_{\cdot,\cdot}})_I\,.
\]
If $I= [0,T]$, we write $V^r\chi := V^r\chi_{[0,T]}$ and if $V^r\chi <\infty$, we say $\chi\in V^r$. Let us recall the following embedding between Besov and variation spaces.
\begin{proposition}[\cite{FS22}, Proposition 2.3]\label{prop:EmbeddingVrInBesov}
For every $r\geq 1$ and $T>0$, there exists a $c(r,T)>0$ such that, for every complete metric space $(E,d)$, every function $f : [0,T] \to E$ with $f \in B^{1/r}_{r,1}([0,T])$ coincides almost everywhere with a continuous function, again denoted by $f$, and this continuous function satisfies
\begin{equation*}
V^r f \le c(r,T)\norm{f}_{B^{1/r}_{r,1}}\,.
\end{equation*}
\end{proposition}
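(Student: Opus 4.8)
The plan is to reduce the statement to a dyadic estimate and then prove the latter by telescoping over dyadic cell-averages. Write $\chi(s,t):=d(f(s),f(t))$; since $\chi$ is subadditive, \eqref{ineq:BesovSumApproximation1} gives $\sum_{n\ge 0}2^{n/r}\Omega_r^\chi(2^{-n}T)\lesssim\norm{f}_{B^{1/r}_{r,1}}$ (the $n=0$ term being absorbed via $\Omega_r^\chi(T)\le 2\Omega_r^\chi(T/2)$). So it suffices to produce a continuous version of $f$ satisfying
\[
V^r f\lesssim\sum_{n\ge 0}2^{n/r}\,\Omega_r^\chi(2^{-n}T).
\]
By the Kuratowski embedding we may assume $f$ is valued in a Banach space, so that cell-averages make sense.

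For the continuous modification, let $P_nf$ be the average of $f$ over each dyadic subinterval of $[0,T]$ of length $2^{-n}T$, and $g_n:=P_{n+1}f-P_nf$; then $g_n$ has mean zero on each level-$n$ interval and is constant on its two halves, with vector Haar coefficient $v^n_k$ on the $k$-th such interval. Jensen's inequality together with the substitution $h=t-s$ yields
\[
\norm{g_n}_\infty\lesssim 2^{n/r}\Omega_r^\chi(2^{-n}T),\qquad \sum_k\norm{v^n_k}^r\lesssim\tfrac{2^n}{T}\,\Omega_r^\chi(2^{-n}T)^r.
\]
The first bound and $\sum_n 2^{n/r}\Omega_r^\chi(2^{-n}T)<\infty$ show that $P_nf$ is uniformly Cauchy; since the jump of $P_nf$ at any point is $\lesssim 2^{n/r}\Omega_r^\chi(2^{-n+1}T)\to 0$, the uniform limit is continuous, and it agrees with $f$ a.e.\ by Lebesgue differentiation. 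Relabel it $f$.

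For the variation bound, fix a partition $\pi=\{t_0<\dots<t_N\}$; by continuity of $f$ and density of the dyadic points it is enough to treat $\pi$ consisting of dyadic points of a fixed level, with bounds uniform in that level. Since $P_nf(t)\to f(t)$ pointwise we telescope $f(t_{i+1})-f(t_i)=\sum_{n\ge 0}\Delta^{(i)}_n$ with $\Delta^{(i)}_n:=g_n(t_{i+1})-g_n(t_i)$, noting that $\Delta^{(i)}_n=0$ unless $t_i,t_{i+1}$ lie in distinct level-$(n+1)$ intervals, and that always $\norm{\Delta^{(i)}_n}\le\norm{v^n_{k(t_i)}}+\norm{v^n_{k(t_{i+1})}}$, where $k(t)$ indexes the level-$n$ interval of $t$. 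Applying Hölder's inequality in $n$ against geometric weights, taking $r$-th powers and summing over $i$, everything reduces to bounding, for each scale $n$, a weighted sum $\sum_i w_{i,n}\norm{v^n_{k(t_i)}}^r$. Here two combinatorial observations enter: a fixed level-$n$ interval can contain $t_i$ for only boundedly many partition intervals $(t_i,t_{i+1})$ of any one dyadic length, and at most $\lesssim 2^n$ partition intervals straddle a level-$(n+1)$ point at all. Combined with the $\ell^r$-bound on the $v^n_k$ and with the elementary inequality $\sum_n 2^n\Omega_r^\chi(2^{-n}T)^r\le\big(\sum_n 2^{n/r}\Omega_r^\chi(2^{-n}T)\big)^r$ (valid for $r\ge 1$), this gives the claimed estimate, whence $V^r f\lesssim\norm{f}_{B^{1/r}_{r,1}}$ after taking the supremum over $\pi$.

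I expect the main obstacle to be the choice of weights $w_{i,n}$ in the last step — this is also what pins down the constant $c(r,T)$. They must simultaneously render the Hölder correction summable and, at each scale $n$, control the weighted $v^n_k$-mass arising from partition intervals of \emph{all} lengths, including very short intervals that nevertheless cross dyadic points much coarser than themselves; naive one-sided geometric weights do not suffice, and the correct balance has to play the $\ell^r$-bound on the Haar coefficients against the bounded-overlap counts. This bookkeeping is exactly what is carried out in \cite{FS22}, Proposition~2.3, the statement we are quoting, and I would follow that proof.
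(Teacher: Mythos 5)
First, a point of reference: the paper does not prove this proposition at all. It is imported verbatim from \cite{FS22} (Proposition 2.3) and used as a black box, so the paper's entire ``proof'' is the citation. Your closing move --- deferring the decisive bookkeeping to that same reference --- therefore leaves you exactly where the paper stands, and at citation level that is unobjectionable.

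Judged as a standalone proof, however, your attempt stops short at the one step that carries the content: the bound $V^r f \lesssim \sum_{n\ge 0} 2^{n/r}\Omega_r^\chi(2^{-n}T)$ over an arbitrary partition, which you reduce to a weighted sum and then declare out of reach without the weights from \cite{FS22}. That gap is real as written, but it is smaller than you fear: no choice of weights $w_{i,n}$ is needed. After telescoping $f(t_{i+1})-f(t_i)=\sum_{n}\Delta^{(i)}_n$, apply Minkowski's inequality for the $\ell^r$-norm in the index $i$ (legitimate since $r\ge 1$), giving $\bigl(\sum_i \norm{\sum_n \Delta^{(i)}_n}^r\bigr)^{1/r} \le \sum_n \bigl(\sum_i \norm{\Delta^{(i)}_n}^r\bigr)^{1/r}$. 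For fixed $n$, $\Delta^{(i)}_n\neq 0$ forces $t_i$ and $t_{i+1}$ into distinct level-$(n+1)$ cells; since the partition intervals are ordered and disjoint, each level-$(n+1)$ cell contains at most one left endpoint and at most one right endpoint of a contributing interval, so with $\norm{\Delta^{(i)}_n}\le\norm{v^n_{k(t_i)}}+\norm{v^n_{k(t_{i+1})}}$ one gets $\sum_i\norm{\Delta^{(i)}_n}^r\lesssim\sum_k\norm{v^n_k}^r\lesssim 2^n T^{-1}\Omega_r^\chi(2^{-n}T)^r$ by your Jensen bound. Taking $r$-th roots and summing over $n$ yields exactly the dyadic sum you reduced to, with no H\"older correction across scales and no cross-scale weighting. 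The points you gloss over are routine but should be acknowledged: Bochner integrability of the Kuratowski-embedded $f$ (needed both for the averages $P_nf$ and for the a.e.\ identification via Lebesgue differentiation) has to be extracted from finiteness of the Besov norm, and one must check that the continuous uniform limit takes values in the closed isometric image of the complete space $E$. So: incomplete as submitted at its central step, but the route is sound and completable --- and for the purposes of this paper, the citation you fall back on is precisely what the authors themselves do.
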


\begin{corollary}\label{cor:EmbeddingVrInBesov}
Let $r\ge 1, \alpha>0, 0<q,p\le \infty$ such that $0<\frac 1\alpha< r\le p$.
Then there exists a $c(r,\alpha,p,q,T)>0$ such that, for any complete metric space $(X,d)$, any function $f:[0,T]\to X$ with $f \in B^\alpha_{p,q}$ coincides almost everywhere with a continuous function, and that continuous function satisfies
\begin{equation*}
V^r f \le c(r,\alpha,p,q,T) \norm{f}_{B^\alpha_{p,q}}\,.
\end{equation*}
\end{corollary}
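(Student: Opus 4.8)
The plan is to deduce the Corollary from Proposition~\ref{prop:EmbeddingVrInBesov} by a short chain of the Besov embeddings established just above, without invoking the monotonicity-based Proposition~\ref{prop:Besov-Embedding-P-Alpha} — note that $\chi := d(f(\cdot),f(\cdot)):\Delta_T\to\bbR_+$ is subadditive in the sense of \eqref{cond:Subadditivity} by the triangle inequality, but it is not monotone, so that result is simply not available here. Since $\norm{f}_{B^s_{u,v}}=\norm{\chi}_{B^s_{u,v}}$ for every admissible triple of indices, it suffices to prove the single embedding $\norm{f}_{B^{1/r}_{r,1}}\lesssim\norm{f}_{B^\alpha_{p,q}}$: granted this, $f\in B^{1/r}_{r,1}$, and Proposition~\ref{prop:EmbeddingVrInBesov} then supplies both the continuous modification of $f$ and the bound $V^rf\le c(r,T)\norm{f}_{B^{1/r}_{r,1}}$, which combined with the embedding gives the claim with a constant depending on $r,\alpha,p,q,T$.

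To prove $\norm{f}_{B^{1/r}_{r,1}}\lesssim\norm{f}_{B^\alpha_{p,q}}$, I would first descend in integrability: since $r\le p$, Lemma~\ref{lem:Besov-Embedding-P} gives $\norm{f}_{B^{1/r}_{r,1}}\lesssim\norm{f}_{B^{1/r}_{p,1}}$ — here one uses that on a bounded interval the larger integrability exponent yields the stronger norm, so this is the ``free'' direction (and one should resist the temptation to lower $p$ down to $r$ via a Sobolev-type trade, which is exactly what is not available). It then remains, at the fixed integrability $p$, to pass from the pair $(\alpha,q)$ to $(1/r,1)$, which is possible precisely because $\frac1\alpha<r$ forces $\frac1r<\alpha$. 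I would split on $q$. If $q\ge 1$, Lemma~\ref{lem:Besov-Embedding-Q-Alpha} applies in one step with the pair $\frac1r<\alpha$ and $1\le q$, giving $\norm{f}_{B^{1/r}_{p,1}}\lesssim\norm{f}_{B^{\alpha}_{p,q}}$. If $q<1$, I would first use Lemma~\ref{lem:Besov-Embedding-Q} — this is the one place subadditivity of $\chi$ is needed — to get $\norm{f}_{B^{\alpha}_{p,1}}\lesssim\norm{f}_{B^{\alpha}_{p,q}}$, and then Lemma~\ref{lem:Besov-Embedding-Q-Alpha} (again with $\frac1r<\alpha$, summability exponent $1$ on both sides) to reach $\norm{f}_{B^{1/r}_{p,1}}\lesssim\norm{f}_{B^{\alpha}_{p,1}}$. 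Chaining these inequalities yields the desired embedding.

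I do not expect a genuine obstacle; the work is entirely bookkeeping. The two points to get right are choosing the correct direction in each of the three embedding lemmas — in particular recognizing that the $p$-step goes \emph{up} to $p$ rather than down to $r$, so Proposition~\ref{prop:Besov-Embedding-P-Alpha} is neither usable nor needed — and isolating the regime $q<1$ so that the single use of subadditivity of the metric increment (via Lemma~\ref{lem:Besov-Embedding-Q}) is invoked correctly; for $q\ge 1$ no subadditivity is required at all. Multiplying the constants along the chain produces the asserted dependence $c(r,\alpha,p,q,T)$.
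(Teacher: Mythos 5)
Your proposal is correct and follows essentially the same route as the paper: reduce to the embedding $\norm{f}_{B^{1/r}_{r,1}}\lesssim\norm{f}_{B^\alpha_{p,q}}$ via Lemmas~\ref{lem:Besov-Embedding-P} and \ref{lem:Besov-Embedding-Q-Alpha}, with the case $q<1$ handled by the subadditivity-based Lemma~\ref{lem:Besov-Embedding-Q}, and then conclude with Proposition~\ref{prop:EmbeddingVrInBesov}. The only difference is the (immaterial) order in which the integrability and smoothness/summability indices are adjusted along the chain.
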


\begin{proof}
We hide constants depending on $r,\alpha,p,q,T$ in $\lesssim$.

Assume $q\ge 1$. Then we use Lemma \ref{lem:Besov-Embedding-Q-Alpha} and Lemma \ref{lem:Besov-Embedding-P} to get
\[
	\norm{f}_{B^{\frac 1r}_{r,1}} \lesssim \norm{f}_{B^{\alpha}_{r,q}} \lesssim \norm{f}_{B^{\alpha}_{p,q}}\,.
\]
If $q<1$, we use Lemma \ref{lem:Besov-Embedding-Q-Alpha}, Lemma \ref{lem:Besov-Embedding-P} and Lemma \ref{lem:Besov-Embedding-Q} to calculate
\[
	\norm{f}_{B^{\frac 1r}_{r,1}} \lesssim \norm{f}_{B^{\alpha}_{r,1}} \lesssim \norm{f}_{B^{\alpha}_{p,1}}\lesssim \norm{f}_{B^{\alpha}_{p,q}}\,.
\]
In either case, we conclude by Proposition~\ref{prop:EmbeddingVrInBesov}.
\end{proof}

\noindent We use the notation
\[
	V^r\chi_{s,t} := V^r\chi_{[s,t]}\,,
\]
allowing us to think of $V^r\chi$ as a two-parameter process itself.

\begin{remark}\label{rem:VariationIsSubadditive}
If $\chi$ is subadditive in the sense of \eqref{cond:Subadditivity} and $r\ge 1$, then one easily checks that $(s,t)\mapsto V^r\chi_{s,t}$ is also subadditive.
\end{remark}

\noindent It follows that the expression $\norm{V^r\chi}_{B^\alpha_{p,q}}$ is well defined. We aim to show an estimate of this nested norm. It is based on the following dyadic decomposition of the $V^r$ norm, similar to \cite[Lemma 2.5]{MEZK2020}.

\begin{lemma}\label{lem:VrDyadicExpansion}
Let $\chi:\Delta_T\to\bbR^+$ fulfill the subadditivity condition \eqref{cond:Subadditivity} and let $r\in[1,\infty)$. Then, for any $k\in\bbN$ such that $2^k\le T$, we have
\begin{equation*}
V^r\chi_{\bbN\cap[0,2^k]} \le 2^{\frac{r-1}r} \sum_{l\ge 0}\left(\sum_{m=0}^{2^l-1}\chi(2^{k-l}m, 2^{k-l}(m+1))^r\right)^{\frac 1r}\,.
\end{equation*}
\end{lemma}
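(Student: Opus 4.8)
The goal is a dyadic bound on $V^r\chi_{\mathbb{N}\cap[0,2^k]}$, so I would first set up the natural dyadic refinement of partitions. Fix $k$ and, for each level $l\ge 0$, let $\mathcal{D}_l = \{2^{k-l}m : 0\le m\le 2^l\}$ be the $l$-th dyadic partition of $[0,2^k]$; note $\mathcal{D}_l$ consists of integers once $l\le k$ (and once $l>k$ the points $2^{k-l}m$ may be dyadic rationals, but the statement only needs integer nodes, so I would restrict attention to partitions of $\mathbb{N}\cap[0,2^k]$ and interpolate through the dyadic grid whose mesh eventually dips below $1$; in fact since the supremum defining $V^r\chi_{\mathbb{N}\cap[0,2^k]}$ ranges over finite subsets of the integers, every partition point lies in some $\mathcal{D}_l$ with $l\le k$). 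Given an arbitrary increasing tuple $\pi = \{\pi_1 < \dots < \pi_N\}$ of integers in $[0,2^k]$, the plan is to express each increment $\chi(\pi_i,\pi_{i+1})$ via the telescoping through dyadic points and then use $\rho$-subadditivity with $\rho=1$ (the hypothesis \eqref{cond:Subadditivity}) to dominate it.

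The key combinatorial step: for consecutive $\pi_i<\pi_{i+1}$, choose the coarsest dyadic scale and decompose $[\pi_i,\pi_{i+1}]$ into a bounded-overlap union of dyadic intervals $[2^{k-l}m, 2^{k-l}(m+1)]$, in the standard way where each $[\pi_i,\pi_{i+1}]$ is covered by at most two dyadic intervals from each level $l$. By subadditivity \eqref{cond:Subadditivity},
\[
\chi(\pi_i,\pi_{i+1}) \le \sum_{(l,m)\in S_i} \chi(2^{k-l}m, 2^{k-l}(m+1)),
\]
where $S_i$ is the covering index set, and summing over $i$ the total multiplicity with which each dyadic interval $(l,m)$ is used is at most $2$ (a fixed dyadic interval at level $l$ is "half-covered" by at most two consecutive $\pi$-gaps). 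Then
\[
\Big(\sum_i \chi(\pi_i,\pi_{i+1})^r\Big)^{1/r}
\le \Big(\sum_i \Big(\sum_{(l,m)\in S_i}\chi_{l,m}\Big)^r\Big)^{1/r},
\]
and I would apply Minkowski's inequality in $\ell^r$ over the level index $l$ (after grouping the covering contributions by level), turning this into $\sum_l$ of $\ell^r$-norms of the level-$l$ dyadic increments, each appearing with multiplicity $\le 2$; the factor $2^{(r-1)/r}$ comes precisely from bounding $(x_1+x_2)^r \le 2^{r-1}(x_1^r+x_2^r)$ for the at-most-two-per-level overlap, i.e. from Minkowski combined with the overlap count. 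This yields
\[
V^r\chi_{\mathbb{N}\cap[0,2^k]} \le 2^{(r-1)/r}\sum_{l\ge 0}\Big(\sum_{m=0}^{2^l-1}\chi(2^{k-l}m,2^{k-l}(m+1))^r\Big)^{1/r}.
\]

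The main obstacle is bookkeeping the overlap constant correctly: one must verify that, over all gaps $[\pi_i,\pi_{i+1}]$ simultaneously, each fixed dyadic interval at each level is invoked at most twice, and that the Minkowski step distributes cleanly across levels without accumulating a constant worse than $2^{(r-1)/r}$. Concretely, I expect the cleanest route is an induction on $k$: split $[0,2^k]$ at the midpoint $2^{k-1}$, apply subadditivity to split any gap straddling the midpoint into two (contributing the $2^{(r-1)/r}$ via $(a+b)^r\le 2^{r-1}(a^r+b^r)$ exactly once), and apply the inductive hypothesis to each half $[0,2^{k-1}]$ and $[2^{k-1},2^k]$, then recombine the two resulting sums level-by-level with Minkowski — the two halves' level-$l$ sums assemble into the level-$(l+1)$ sum for $[0,2^k]$, with the stray level-$0$ term $\chi(0,2^k)$ absorbed. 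This is the argument of \cite[Lemma 2.5]{MEZK2020} adapted to our two-parameter subadditive $\chi$, and the only real care needed is confirming subadditivity \eqref{cond:Subadditivity} is all that is used (monotonicity is not needed here).
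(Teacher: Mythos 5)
Your primary argument is, in substance, the standard proof that the paper itself invokes by simply citing \cite{MEZK2020} (the paper gives no further detail): decompose each gap $[\pi_i,\pi_{i+1}]$ with integer endpoints into a disjoint union of grid intervals $[2^{k-l}m,2^{k-l}(m+1)]$ with at most two per level, bound $\chi(\pi_i,\pi_{i+1})$ by the telescoped sum using only \eqref{cond:Subadditivity} (legitimate, since all division points lie inside the gap), then apply Minkowski in $\ell^r$ over the level index and $(x_1+x_2)^r\le 2^{r-1}(x_1^r+x_2^r)$ for the at most two level-$l$ increments inside a single gap. However, your multiplicity bookkeeping is stated the wrong way around, and as written it does not give the claimed constant. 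The correct accounting is: each gap uses at most \emph{two} dyadic intervals per level (this is the sole source of the factor $2^{(r-1)/r}$), while each dyadic interval is used by at most \emph{one} gap, because the intervals of the decomposition are \emph{contained} in their gap and the gaps are disjoint; this containment is what lets you bound $\sum_i$ of the $r$-th powers at level $l$ by $\sum_{m}\chi(2^{k-l}m,2^{k-l}(m+1))^r$ with no further loss. Your picture of a fixed dyadic interval being ``half-covered by at most two consecutive $\pi$-gaps'' would mean intervals sticking out of the gaps; then the bound $\chi(\pi_i,\pi_{i+1})\le\sum_{(l,m)\in S_i}\chi(2^{k-l}m,2^{k-l}(m+1))$ would require monotonicity \eqref{cond:Monotonicity}, not just subadditivity (contradicting your own correct remark that monotonicity is not needed), and the cross-gap multiplicity $2$ would cost an extra $2^{1/r}$ on top of $2^{(r-1)/r}$.

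The alternative midpoint induction that you call the cleanest route does not close as sketched. After splitting the (single) straddling gap via $(a+b)^r\le 2^{r-1}(a^r+b^r)$, the only way to invoke the inductive hypothesis is to dominate the left and right partition sums by $\bigl(V^r\chi_{\bbN\cap[0,2^{k-1}]}\bigr)^r$ and $\bigl(V^r\chi_{\bbN\cap[2^{k-1},2^k]}\bigr)^r$, and at that point the factor $2^{r-1}$ multiplies the entire inductive bound, not just one term; the resulting recursion for the constant is $C_k=2^{(r-1)/r}C_{k-1}$, i.e.\ $C_k=2^{k(r-1)/r}$, so the $(a+b)^r$ inequality is paid once per scale rather than ``exactly once''. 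Repairing this requires a strengthened inductive statement (e.g.\ tracking the two boundary gaps separately), so you should present the direct decomposition argument, which already yields $2^{(r-1)/r}$. Finally, your observation that only levels $l\le k$ are needed and that the terms with $l>k$ in $\sum_{l\ge 0}$ are harmless nonnegative extras is correct.
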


\begin{proof}
The proof is the same as in \cite[Lemma 2.5]{MEZK2020}.
\end{proof}

\noindent A simple rescaling argument gives us the following.

\begin{corollary} \label{cor:VrDyadicExpansion}
Let $\chi:\Delta_T\to\bbR_+$ fulfill \eqref{cond:Subadditivity} and let $r\in[1,\infty)$. Then, we have for all $n\in\bbN$:
\begin{equation}\label{ineq:VrDyadicExpansion1}
 	V^r\chi_{[0,T]\cap 2^{-n}T\bbN}\le 2^{\frac{r-1}r}\sum_{l=0}^n\left(\sum_{m=0}^{2^l-1}\chi(2^{-l}Tm,2^{-l}T(m+1))^r\right)^{\frac 1r}\,.
\end{equation}
For continuous $\chi$, it follows that
\begin{equation}\label{ineq:VrDyadicExpansion2}
	V^r\chi_{[0,T]}\le 2^{\frac{r-1}r}\sum_{l=0}^\infty\left(\sum_{m=0}^{2^l-1}\chi(2^{-l}Tm,2^{-l}T(m+1))^r\right)^{\frac 1r}\,.
\end{equation}
\end{corollary}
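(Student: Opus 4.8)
The plan is to reduce both inequalities to Lemma~\ref{lem:VrDyadicExpansion} by an affine change of variables, and then to pass to a limit in $n$ for the second one.

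\textbf{Step 1 (rescaling).} Fix $n\in\bbN$ and set $\psi:\Delta_{2^n}\to\bbR_+$, $\psi(s,t):=\chi(2^{-n}Ts,\,2^{-n}Tt)$; this $\psi$ inherits subadditivity \eqref{cond:Subadditivity} from $\chi$. The map $x\mapsto 2^nx/T$ is an order isomorphism between $[0,T]\cap 2^{-n}T\bbN$ and $[0,2^n]\cap\bbN$, and it sends the variation sum of $\chi$ over a partition $\{2^{-n}Tj_0<\dots<2^{-n}Tj_N\}$ to the variation sum of $\psi$ over $\{j_0<\dots<j_N\}$; taking suprema over partitions gives $V^r\chi_{[0,T]\cap 2^{-n}T\bbN}=V^r\psi_{\bbN\cap[0,2^n]}$. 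I would then apply Lemma~\ref{lem:VrDyadicExpansion} to $\psi$ with horizon $2^n$ and $k=n$. Since its proof exhausts the integer grid $\bbN\cap[0,2^n]$ using only the dyadic levels $l=0,\dots,n$, the sum on its right-hand side may be taken finite, from $l=0$ to $l=n$; substituting $\psi(2^{n-l}m,2^{n-l}(m+1))=\chi(2^{-l}Tm,2^{-l}T(m+1))$ then gives exactly \eqref{ineq:VrDyadicExpansion1}.

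\textbf{Step 2 (limit).} For continuous $\chi$ the right-hand side of \eqref{ineq:VrDyadicExpansion1} is nondecreasing in $n$ (the extra level $l=n+1$ contributes a nonnegative term), so it converges in $[0,\infty]$ to $2^{(r-1)/r}\sum_{l\ge0}\bigl(\sum_{m=0}^{2^l-1}\chi(2^{-l}Tm,2^{-l}T(m+1))^r\bigr)^{1/r}$. It then remains to check $V^r\chi_{[0,T]}=\sup_n V^r\chi_{[0,T]\cap 2^{-n}T\bbN}$; the inequality ``$\ge$'' is immediate. For ``$\le$'', given a partition $\pi=\{t_0<\dots<t_N\}$ of $[0,T]$, I would round each $t_i$ to a nearest point $t_i^{(n)}\in[0,T]\cap 2^{-n}T\bbN$; since the $t_i$ are finitely many and distinct, for $n$ large the $t_i^{(n)}$ remain distinct and increasing, and by continuity of $\chi$ the corresponding variation sums converge to $\sum_i\chi(t_i,t_{i+1})^r$, which is therefore bounded by $\sup_n V^r\chi_{[0,T]\cap 2^{-n}T\bbN}^r$. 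Supremizing over $\pi$ gives the identity, and letting $n\to\infty$ in \eqref{ineq:VrDyadicExpansion1} yields \eqref{ineq:VrDyadicExpansion2}.

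\textbf{Main obstacle.} The combinatorial heart of the statement is already contained in Lemma~\ref{lem:VrDyadicExpansion}, so no genuine difficulty remains. The two points that need a little care are: ensuring the finite-sum form $\sum_{l=0}^k$ of that lemma is available, so that the truncated sum $\sum_{l=0}^n$ in \eqref{ineq:VrDyadicExpansion1} is legitimate rather than the weaker $\sum_{l\ge0}$; and the approximation argument in Step 2, which is the only place continuity of $\chi$ is actually used.
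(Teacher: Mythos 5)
Your proof is correct and follows essentially the same route as the paper: \eqref{ineq:VrDyadicExpansion1} by rescaling $\chi$ to $\tilde\chi(s,t)=\chi(2^{-n}Ts,2^{-n}Tt)$ and invoking Lemma~\ref{lem:VrDyadicExpansion}, and \eqref{ineq:VrDyadicExpansion2} by a limit/approximation argument using continuity. You are in fact slightly more careful than the written proof, since you justify why the sum may be truncated at $l=n$ (the dyadic decomposition of integer intervals in $[0,2^n]$ only uses levels $l\le n$), a point the paper leaves implicit.
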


\begin{proof}
\eqref{ineq:VrDyadicExpansion1} follows from applying Lemma \ref{lem:VrDyadicExpansion} to $\tilde \chi(s,t) := \chi(2^{-n}T s, 2^{-n}T t)$. A simple limit argument then gives \eqref{ineq:VrDyadicExpansion2}.
\end{proof}

\noindent Let us now show the main result of this section.

\begin{theorem}\label{theo:VrBesovEstimate}
Let $\chi:[0,T]\to\bbR_+$ be continuous and fulfill the subadditivity condition \eqref{cond:Subadditivity}. Let $r\in[1,\infty)$ and $\alpha>0, p,q\in(0,\infty]$ such that
\[
	\frac 1\alpha < r\le p\,.
\]
Then there exists a constant $c=c(\alpha, r,p,q)$ such that
\begin{equation*}
\norm{V^r\chi}_{B^\alpha_{p,q}}\le c\norm{\chi}_{B^\alpha_{p,q}}\,.
\end{equation*}
\end{theorem}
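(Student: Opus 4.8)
The goal is the nested estimate $\norm{V^r\chi}_{B^\alpha_{p,q}} \lesssim \norm{\chi}_{B^\alpha_{p,q}}$ under $\frac{1}{\alpha} < r \le p$. The plan is to combine the dyadic expansion of the $V^r$ norm (Corollary~\ref{cor:VrDyadicExpansion}) with the sum-approximation of the Besov norm, reducing everything to a double sum over dyadic scales that can be summed geometrically. First I would reduce to the case $q \le p$ and $q$ small: by Lemma~\ref{lem:Besov-Embedding-Q} we have $\norm{V^r\chi}_{B^\alpha_{p,q}} \lesssim \norm{V^r\chi}_{B^\alpha_{p,\tilde q}}$ for $\tilde q \le q$ (note $V^r\chi$ is subadditive by Remark~\ref{rem:VariationIsSubadditive}, so that lemma applies), and $\norm{\chi}_{B^\alpha_{p,\tilde q}} \lesssim \norm{\chi}_{B^\alpha_{p,q}}$ would go the wrong way — instead I would observe it is enough to prove the theorem for $q = \min(p, q_0)$ with $q_0$ as small as I like, since increasing $q$ on the left is controlled (subadditivity) but I actually want to be careful: the clean route is to prove the bound for a single convenient pair, e.g. $q$ replaced by $\min(1,p,q)$, using that $\norm{\cdot}_{B^\alpha_{p,q}}$ is a $\min(1,p,q)$-(quasi)norm so the triangle-type inequality for sums holds with that exponent.

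Second, I would apply Corollary~\ref{cor:VrDyadicExpansion} pointwise on each subinterval. For a dyadic interval $[a, a+2^{-n}T]$, rescaling gives
\begin{equation*}
V^r\chi_{[a,a+2^{-n}T]} \lesssim \sum_{l \ge 0} \Bigl( \sum_{m=0}^{2^l-1} \chi(a + 2^{-n-l}Tm,\, a + 2^{-n-l}T(m+1))^r \Bigr)^{1/r}.
\end{equation*}
Then I would estimate $\Omega_p^{V^r\chi}(2^{-n}T)$: take $L^p$ in the base point $a$ (or rather, bound the sup over $h \le 2^{-n}T$ using monotonicity of $V^r\chi$ in the interval), use Minkowski's inequality in $L^p(da)$ to pull the $L^p$ norm inside the sum over $l$, and for the inner sum over $m$ use that $r \le p$ together with the embedding $\ell^r \hookrightarrow \ell^p$ (or Minkowski again) so that the $p$-th moment of the $\ell^r$-sum over $m$ is controlled by $\Omega_p^\chi(2^{-n-l}T)^r$ type quantities — here the key elementary fact is that for $r \le p$, $\bigl(\int (\sum_m c_m(s)^r)^{p/r} ds\bigr)^{1/p} \le \bigl(\sum_m (\int c_m(s)^p ds)^{r/p}\bigr)^{1/r}$ after suitable reindexing, which lets the shifted copies recombine into a single integral $\int_0^{T-h}$. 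This produces a bound of the shape $\Omega_p^{V^r\chi}(2^{-n}T) \lesssim \sum_{l\ge 0} \Omega_p^\chi(2^{-n-l}T)$, or more precisely with an $\ell^r$-sum structure over $l$.

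Third, I would plug this into the sum-approximation \eqref{ineq:BesovSumApproximation2}: $\norm{V^r\chi}_{B^\alpha_{p,q}}^q \lesssim \sum_n (2^{n\alpha} \Omega_p^{V^r\chi}(2^{-n}T))^q \lesssim \sum_n \bigl(2^{n\alpha} \sum_{l\ge 0} \Omega_p^\chi(2^{-n-l}T)\bigr)^q$. Writing $\Omega_p^\chi(2^{-n-l}T) = 2^{-(n+l)\alpha} \cdot \bigl(2^{(n+l)\alpha}\Omega_p^\chi(2^{-(n+l)}T)\bigr)$ and setting $b_k := 2^{k\alpha}\Omega_p^\chi(2^{-k}T)$, the inner sum becomes $2^{n\alpha}\sum_{l\ge0} 2^{-(n+l)\alpha} b_{n+l} = \sum_{l\ge 0} 2^{-l\alpha} b_{n+l}$, a discrete convolution of $(b_k)$ with the summable geometric kernel $(2^{-l\alpha})_{l\ge0}$ — summable precisely because $\alpha > 0$. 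By Young's inequality for sequences (in $\ell^q$, using the $\min(1,p,q)$-quasinorm version if $q < 1$), $\norm{(\sum_l 2^{-l\alpha}b_{n+l})_n}_{\ell^q} \lesssim (\sum_l 2^{-l\alpha}) \norm{(b_k)}_{\ell^q} \lesssim \norm{\chi}_{B^\alpha_{p,q}}$ again by \eqref{ineq:BesovSumApproximation2}.

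**Main obstacle.** The delicate point is Step 2: correctly interchanging the $L^p(ds)$ integral with the sums over $l$ and $m$ while keeping track of which integration domain appears, and verifying that after recombining the $2^l$ shifted sub-integrals over intervals of length $2^{-n-l}T$ one genuinely recovers (a constant times) $\int_0^{T - 2^{-n-l}T}\chi(s, s+2^{-n-l}T)^p\,ds$, i.e. that the overlaps and the $L^p$-vs-$\ell^r$ juggling cost only a constant. This is exactly where $r \le p$ is used (it makes $\ell^r \subset \ell^p$ contractively, so Minkowski goes the right way), and where $\alpha r > 1$ enters once more if one wants the sum over $l$ inside to converge before taking the outer $\ell^q$ norm — though the cleaner organization above defers all convergence to the single geometric kernel $2^{-l\alpha}$ and only needs $\alpha > 0$ there, with $\alpha r > 1$ (equivalently $1/\alpha < r$) guaranteeing that $V^r$ is finite in the first place via Corollary~\ref{cor:EmbeddingVrInBesov}. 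I would handle $q = \infty$ and $p = \infty$ by the usual sup-modifications, noting monotonicity of $\chi$ is not assumed here so all estimates must rely only on subadditivity.
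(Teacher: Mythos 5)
Your overall architecture (dyadic expansion of $V^r$ via Corollary~\ref{cor:VrDyadicExpansion}, Minkowski to interchange the $L^p$ average with the sums over dyadic levels, then a geometric summation over levels) is the same as the paper's, but your Step 2 contains a genuine quantitative error that breaks the argument. After Minkowski in $L^p(\dif a)$, the level-$l$ contribution is
\[
\Bigl(\sum_{m=0}^{2^l-1}\Bigl(\int_0^{T-h}\chi(a+2^{-l}hm,\,a+2^{-l}h(m+1))^p\,\dif a\Bigr)^{r/p}\Bigr)^{1/r},
\]
and each of the $2^l$ inner integrals is, after the shift $s=a+2^{-l}hm$, an integral over a window of length $T-h$, hence individually comparable to $\Omega_p^\chi(2^{-l}h)^p$; these windows overlap with multiplicity of order $2^l$, so they do \emph{not} ``recombine into a single integral'' at constant cost. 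The $\ell^r$-sum over $m$ therefore produces the factor $2^{l/r}$, and the correct intermediate bound is $\Omega_p^{V^r\chi}(2^{-n}T)\lesssim\sum_{l\ge0}2^{l/r}\,\Omega_p^\chi(2^{-n-l}T)$, not $\sum_{l\ge0}\Omega_p^\chi(2^{-n-l}T)$ as you claim. Consequently the convolution kernel in your Step 3 is $2^{l(1/r-\alpha)}$, and its summability is exactly the hypothesis $\frac 1\alpha<r$; your assertion that only $\alpha>0$ is needed there, with $\alpha r>1$ serving merely to make $V^r\chi$ finite, cannot be right: for $\chi(s,t)=\abs{B_t-B_s}$ with $B$ a Brownian-type path one has finite $B^{1/2}_{p,\infty}$ norm while $V^r\chi=\infty$ for $r<2$, so the estimate itself fails without $\frac 1\alpha<r$, and no reorganization of the bookkeeping can demote that condition to a finiteness remark.

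Once the factor $2^{l/r}$ is restored, your argument becomes essentially the paper's proof: there, the $\ell^r$-sum over the $2^l$ congruent subintervals at level $l$ is bounded by $2^{l/r}$ times the supremum over $0\le h\le 2^{-l}\tau$ of the $L^p$-average of $\chi(x,x+h)$, and the substitution $\tau\mapsto 2^l\tau$ then yields the weight $2^{l(1/r-\alpha)}$, which is summed using $\frac 1\alpha<r$. (Your Step 1 detour about reducing $q$ is also unnecessary: as in the paper, one simply uses the $\min(1,p,q)$-power triangle inequality for the outer $L^q(\dif\tau/\tau)$ norm when pulling the sum over $l$ outside.)
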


\begin{proof}
We introduce some notation for this proof. 
\[
\ell_l^r \chi := \left(\sum_{m=0}^{2^l-1} \chi(2^{-l} m, 2^{-l}(m+1))^r\right)^{\frac 1r}\,,
\]
and for some function $f:[0,T]\to \bbR$, density $\lambda$ on $[0,T]$ and $0\le s\le t\le T$, we write
\[
L^q_{\lambda(\tau)\dif\tau,[s,t]} f(\tau) := \left(\int_s^t \abs{f(\tau)}^q \lambda(\tau)\dif\tau\right)^{\frac 1q}\,.
\]
We apply \eqref{ineq:VrDyadicExpansion2} to $\chi$ and calculate
\begin{align*}
\norm{V^{r}\chi}_{B^{\alpha}_{p,q}} &\lesssim L^{q}_{d\tau/\tau,[0,T]}\tau^{-\alpha} \sup_{0\le h\le \tau} L_{\dif x,[0,T-h]}^{p}\sum_{l\ge 0} \ell^{r}_l \chi(x+h\cdot,x+h\cdot) \\
&\lesssim L^{q}_{\dif\tau/\tau,[0,T]}\sum_{l\ge 0} \tau^{-\alpha}\ell^{r}_l \sup_{0\le h\le \tau} L_{\dif x,[0,T-h]}^{p}\chi(x+h\cdot,x+h\cdot) \\
&= L^{q}_{\dif\tau/\tau,[0,T]}\sum_{l\ge 0} \tau^{-\alpha}\\ &\qquad\times\left( \sum_{j=0}^{2^l-1}\left(\sup_{0\le h\le 2^{-l}\tau} L_{\dif x,[0,T-2^{l}h]}^{p}\chi(x+hj,x+h(j+1))\right)^r\right)^{1/r}\\
&\le L^{q}_{\dif\tau/\tau,[0,T]}\sum_{l\ge 0} \tau^{-\alpha}2^{l/r}\sup_{0\le h\le 2^{-l}\tau} L_{\dif x,[0,T-h]}^{p}\chi(x,x+h)\\
(\text{substitute }\tau\mapsto2^l\tau)&\lesssim \sum_{l\ge 0} 2^{l(1/r-\alpha)} L^{q}_{\dif\tau/\tau,[0,2^{-l}T]} \tau^{-\alpha}\sup_{0\le h\le\tau} L_{\dif x,[0,T-h]}^{p} \chi(x,x+h)\\
&\lesssim \norm{\chi}_{B^{\alpha}_{p,q}}\,,
\end{align*}
where $\lesssim$ hides constants depending on $\alpha,r,p,q$.
\end{proof}

\begin{remark}\label{rem:VrBesovEstimate}
Theorem~\ref{theo:IntroVrBesovEstimate} follows from Theorem~\ref{theo:VrBesovEstimate} applied with $\chi(s,t)=d(f(s),f(t))$.
\end{remark}

\section{Rough paths and nested norms}\label{sec:RoughPaths}

In this section, we introduce rough paths and controlled rough paths along with some basic properties. We then develop metric groups relevant to rough paths, which enables us to reduce several two-parameter processes to paths within these groups. This reduction directly leads to Theorem \ref{theo:IntroVrBesovEstimateRP} as a consequence of Theorem \ref{theo:VrBesovEstimate}. Additionally, we present several Besov embeddings for rough paths.

\subsection{Rough paths}

Rough paths were introduced in \cite{Lyo98} and controlled paths in \cite{MR2091358}. For a long time, the theory concentrated on Hölder continuous paths; a good exposition of this case is in the book \cite{FH20}. The treatment of $V^{r}$ paths is adapted from \cite{MR3770049}, for more details see \cite{FV10}. For an introduction to Besov rough paths, see \cite{FS22}.

Let $(E,\abs{\cdot})$ be a normed space, and assume that the tensor product $E\otimes E$ is equipped with a compatible norm, i.e. a norm which fulfills
\begin{equation*}
\abs{x\otimes y} = \abs{x}\abs{y}
\end{equation*}
for all $x,y\in E$. For example, $\bbR^d, (\bbR^d)^{\otimes 2} \cong \bbR^{d\times d}$ equipped with the usual $p$-norm for any $p\ge 1$ fulfill this. A level $2$ rough path is then defined as follows:

\begin{definition}
Let $r\in[2,3)$. An $r$-rough path consists of a map $(s,t)\mapsto \bfX_{s,t} := (X_{s,t},\bbX_{s,t})\in E\oplus E^{\otimes 2}$, such that:
\begin{itemize}
\item $X$ is of finite $r$-variation and $\bbX$ is of finite $r/2$-variation.
\item We have for all $s\le u\le t$ that
\begin{equation}\label{eq:Chen}
	(1 + X_{s,u} + \bbX_{s,u})\otimes (1 + X_{u,t} + \bbX_{u,t}) = 1+X_{s,t}+\bbX_{s,t}\,,
\end{equation}
\end{itemize}
where $\otimes$ is understood as the truncated tensor product in $E\oplus E^{\otimes 2}$.
\end{definition}

\noindent Writing out the second property in components gives the following:
\begin{align*}
	X_{s,t} &= X_{s,u}+ X_{u,t}\\
	\bbX_{s,t} &= \bbX_{s,u}+\bbX_{u,t} + X_{s,u}\otimes X_{u,t}\,.
\end{align*}
where the second identity is called Chen's identity. Let us recall the definition of a controlled rough path:
\begin{definition}[Controlled path]
Let $\bfX=(X,\bbX)$ be an $r$-rough path in $E$ and let $F$ be some normed space.
An \emph{$\bfX$-controlled $r$-rough path} $\bfY = (Y,Y')$ consists of $Y : [0,T] \to F$, $Y' : [0,T] \to L(E, F)$ such that $Y' \in V^{r}$ and
\[
R^{\bfY,\bfX}_{s,t} := Y_{s,t} - Y'_{s} X_{s,t} \in V^{r/2}.
\]
\end{definition}

\noindent Note that we do not require $Y\in V^r$, as this implicitly follows from the bounds on $Y', R^{\bfY,\bfX}$. We further introduce the notion of a controlled $(r,\alpha)$-rough path, which consists of $Y: [0,T]\to F,Y':[0,T]\to L(E,F)$, such that
\[
	Y\in V^r, \qquad Y'\in V^{r/\alpha},\qquad R^{\bfY,\bfX}\in V^{r/(\alpha+1)}\,.
\]
This is motivated by the controlled rough path $\phi(\bfY):= (\phi(Y),D\phi(Y)Y')$ for a $1+\alpha$-Hölder function $\phi$, see Lemma \ref{lem:controlled-composition} for details. The case $\alpha =1$ recovers classical controlled $r$-rough paths. We denote the supremum norm by
\[
	\norm{Y}_{\sup} := \sup_{s\in[0,T]} \abs{Y(s)}\,.
\]
If we have two rough paths $\bfX,\tilde \bfX$, we denote by
\[
	\Delta X := X-\tilde X\,,
\]
with similar notation extending to $\Delta\bbX$, as well as $\Delta Y,\Delta Y',\Delta R^{\bfY,\bfX}$ if we have an $\bfX$-controlled rough path $\bfY$ and $\tilde\bfX$-controlled rough path $\tilde\bfY$.

\begin{lemma}[Implicit bound]
\label{lem:controlled-implicit}
Let $\bfX$ be an $r$-rough path and $\bfY$ an $\bfX$-controlled path.
Then
\begin{equation}\label{ineq:implicitBound}
V^r Y_I \leq \norm{Y'}_{\sup} V^r X_I + V^{r/2}(R^{\bfY,\bfX})_I.
\end{equation}
for all intervals $I\subset[0,T]$. Furthermore, if we have another rough path $\tilde\bfX$ and controlled rough path $\tilde\bfY$, we have the stability estimate
\begin{equation}\label{ineq:implicitBoundStability}
V^r(\Delta Y)_{I} \leq
\norm{\Delta Y'}_{\sup} V^r X_{I}
+ \norm{\tY'}_{\sup} V^r(\Delta X)_{I}
+ V^{r/2}(\Delta R^{\bfY,\bfX})_{I}.
\end{equation}
\end{lemma}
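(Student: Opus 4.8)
The plan is to expand the increments of $Y$ through the defining relation of the remainder $R^{\bfY,\bfX}$, apply Minkowski's inequality in $\ell^{r}$ over an arbitrary partition, and estimate the resulting sums term by term. Concretely, fix an interval $I$ and a finite increasing sequence $\pi = \{\pi_{1}<\dots<\pi_{N}\}\subset I$. From $R^{\bfY,\bfX}_{u,v} = Y_{u,v} - Y'_{u}X_{u,v}$ we have $Y_{\pi_{i},\pi_{i+1}} = Y'_{\pi_{i}}X_{\pi_{i},\pi_{i+1}} + R^{\bfY,\bfX}_{\pi_{i},\pi_{i+1}}$, so the triangle inequality for the $\ell^{r}$ norm on the index set $\{1,\dots,N-1\}$ yields
\[
\Big(\sum_{i}\abs{Y_{\pi_{i},\pi_{i+1}}}^{r}\Big)^{1/r}
\le
\Big(\sum_{i}\abs{Y'_{\pi_{i}}X_{\pi_{i},\pi_{i+1}}}^{r}\Big)^{1/r}
+ \Big(\sum_{i}\abs{R^{\bfY,\bfX}_{\pi_{i},\pi_{i+1}}}^{r}\Big)^{1/r}.
\]

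Next I would bound the two pieces. In the first, $\abs{Y'_{\pi_{i}}X_{\pi_{i},\pi_{i+1}}}\le \norm{Y'}_{\sup}\abs{X_{\pi_{i},\pi_{i+1}}}$, so the sum is at most $\norm{Y'}_{\sup}\big(\sum_{i}\abs{X_{\pi_{i},\pi_{i+1}}}^{r}\big)^{1/r}\le \norm{Y'}_{\sup}V^{r}X_{I}$. For the second, I would use that the $\ell^{p}$-norm of a nonnegative sequence is nonincreasing in $p$ together with $r\ge r/2$ (here $r\ge 2$ enters), obtaining $\big(\sum_{i}\abs{R^{\bfY,\bfX}_{\pi_{i},\pi_{i+1}}}^{r}\big)^{1/r}\le \big(\sum_{i}\abs{R^{\bfY,\bfX}_{\pi_{i},\pi_{i+1}}}^{r/2}\big)^{2/r}\le V^{r/2}(R^{\bfY,\bfX})_{I}$. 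Taking the supremum over all partitions $\pi$ of $I$ gives \eqref{ineq:implicitBound}.

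For the stability estimate I would first record the algebraic identity
\[
\Delta Y_{s,t} = \Delta Y'_{s}\,X_{s,t} + \tilde Y'_{s}\,\Delta X_{s,t} + \Delta R^{\bfY,\bfX}_{s,t},
\]
which follows by writing $Y'_{s}X_{s,t}-\tilde Y'_{s}\tilde X_{s,t} = \Delta Y'_{s}X_{s,t}+\tilde Y'_{s}\Delta X_{s,t}$ (add and subtract $\tilde Y'_{s}X_{s,t}$) and using $\Delta R^{\bfY,\bfX}=R^{\bfY,\bfX}-R^{\tilde\bfY,\tilde\bfX}$. Then I would repeat the argument with three terms: Minkowski in $\ell^{r}$ over a partition, then bound the $\Delta Y'_{s}X_{s,t}$ term by $\norm{\Delta Y'}_{\sup}V^{r}X_{I}$, the $\tilde Y'_{s}\Delta X_{s,t}$ term by $\norm{\tilde Y'}_{\sup}V^{r}(\Delta X)_{I}$, and the remainder term by $V^{r/2}(\Delta R^{\bfY,\bfX})_{I}$ exactly as before, and finally take the supremum over partitions of $I$.

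There is no genuine obstacle here: the proof is finite-partition algebra combined with Minkowski's inequality. The only points requiring a little care are choosing the telescoping decomposition of $\Delta Y_{s,t}$ so that each product carries exactly one ``difference'' factor, and invoking the embedding $\ell^{r/2}\hookrightarrow\ell^{r}$ (valid precisely because $r\in[2,3)$ forces $r/2\le r$) in order to upgrade the sums of remainder increments to the $V^{r/2}$ norm.
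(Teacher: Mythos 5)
Your proof is correct and follows essentially the same route as the paper: the paper likewise writes $\Delta Y_{s,t}=\Delta Y'_{s}X_{s,t}+\tilde Y'_{s}\Delta X_{s,t}+\Delta R^{\bfY,\bfX}_{s,t}$ and reads off the bounds, while you simply spell out the partition-wise Minkowski and $\ell^{r/2}\hookrightarrow\ell^{r}$ details that the paper leaves implicit. (Only a cosmetic remark: the embedding $\ell^{r/2}\hookrightarrow\ell^{r}$ needs nothing from $r\in[2,3)$, since $r/2\le r$ for every $r>0$.)
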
 

\begin{proof}
\eqref{ineq:implicitBound} follows directly from the definition of $R^{\bfY,\bfX}$. To see \eqref{ineq:implicitBoundStability}, one writes
\begin{multline*}
Y_{s,t} - \tY_{s,t}
=
(Y'_{s} X_{s,t} + R^{\bfY,\bfX}_{s,t}) - (\tY'_{s} \tX_{s,t} + R^{\tilde\bfY,\tilde\bfX}_{s,t})
\\=
(Y'_{s}-\tY_{s}) X_{s,t}
+
\tY_{s} (X_{s,t} - \tX_{s,t})
+ (\Delta R^{\bfY,\bfX}_{s,t}).
\end{multline*}
\end{proof}

\begin{example}\label{ex:RPAreControlledRP}
For every rough path $(X,\bbX)$, the path $Y_t := \bbX_{0,t}$ is an $\bfX$-controlled rough path with $Y'_t = X_t$, as the following shows:
\[
R^{\bfY,\bfX}_{s,t} = \bbX_{0,t}-\bbX_{0,s} - X_{0,s}\otimes X_{s,t} = \bbX_{s,t}\in V^{r/2}\,.
\]
Note that we use the embedding $E$ embedds into $L(E,E\otimes E)$ via $x\mapsto (x\otimes\cdot)$.
\end{example}

\subsection{Groups relevant to rough paths}

One can equip the space $E\oplus E^{\otimes 2}$ with a group product $(x,y)\mapsto x*y$, such that for every rough path $\bfX$ we have
\[
	\bfX_{s,t} = \bfX_{0,s}^{-1}*\bfX_{0,t}\,,
\]
turning $\bfX$ into a one-parameter process. See e.g. \cite[Chapter 2]{FH20}. We want to construct similar groups to estimate the two-parameter processes $\Delta \bbX$ and $\Delta R^{\bfX,\bfY}$ for two rough paths $\bfX,\tilde\bfX$ and controlled rough paths $\bfY,\tilde\bfY$.

To this end, consider two linear spaces $X_1,X_2$ equipped with a linear map $B: X_1\times X_1 \to X_2$. We claim that $X_1\times X_2$ can be seen as a metric group:

\begin{lemma}\label{lem:homogeneous-subadditive-functional}
Let $X_{1},X_{2}$ be vector spaces and $B:X_{1}\times X_{1} \to X_{2}$ a bilinear map.
Then the set $X_{1}\times X_{2}$ equipped with the product
\begin{equation*}
(a,r) *_{B} (\tilde a,\tilde r) = (a+\tilde a, r+\tilde r+B(a,\tilde a))
\end{equation*}
is a group. Suppose that $X_{1},X_{2}$ are equipped with norms and that the bilinear map $B$ is bounded with respect to these norms with $\norm{B} \leq 2$, where $\norm{B}$ is the operator norm.
Then,
\begin{equation}\label{eq:defNorm}
\norm{(a,r)} = \max\left(\norm{a}_{X_{1}},\norm{r}_{X_{2}}^{\frac 12}\right)
\end{equation}
is subadditive in the sense
\[
\forall w,\tilde w \in X_{1}\times X_{2}
\quad \norm{w *_{B} \tilde w} \leq \norm{w} + \norm{\tilde w}.
\]
\end{lemma}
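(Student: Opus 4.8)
The plan is to verify the group axioms by a direct computation and then establish subadditivity of $\norm{\cdot}$ by bounding the two coordinates of a product separately.

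First I would check that $*_{B}$ defines a group. Associativity reduces to expanding both $\bigl((a,r)*_{B}(\tilde a,\tilde r)\bigr)*_{B}(\hat a,\hat r)$ and $(a,r)*_{B}\bigl((\tilde a,\tilde r)*_{B}(\hat a,\hat r)\bigr)$: both have first coordinate $a+\tilde a+\hat a$, and bilinearity of $B$ forces the second coordinates to coincide, each being equal to $r+\tilde r+\hat r+B(a,\tilde a)+B(a,\hat a)+B(\tilde a,\hat a)$. The neutral element is $(0,0)$, since $B(a,0)=B(0,a)=0$ for every $a$, and I would then verify by a one-line computation that $(-a,\,-r+B(a,a))$ is a two-sided inverse of $(a,r)$.

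Next, for subadditivity, write $w=(a,r)$ and $\tilde w=(\tilde a,\tilde r)$, so that $w*_{B}\tilde w=(a+\tilde a,\,r+\tilde r+B(a,\tilde a))$. Since $\norm{\cdot}$ is the maximum of $\norm{\cdot}_{X_{1}}$ applied to the first coordinate and the square root of $\norm{\cdot}_{X_{2}}$ applied to the second, it suffices to bound each coordinate of $w*_{B}\tilde w$ by $\norm{w}+\norm{\tilde w}$. The first coordinate is controlled by the triangle inequality in $X_{1}$: $\norm{a+\tilde a}_{X_{1}}\le\norm{a}_{X_{1}}+\norm{\tilde a}_{X_{1}}\le\norm{w}+\norm{\tilde w}$. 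For the second, I would use $\norm{B}\le 2$ together with the elementary bounds $\norm{a}_{X_{1}}\le\norm{w}$, $\norm{r}_{X_{2}}\le\norm{w}^{2}$ (and likewise for $\tilde w$) to get
\[
\norm{r+\tilde r+B(a,\tilde a)}_{X_{2}}
\le \norm{r}_{X_{2}}+\norm{\tilde r}_{X_{2}}+\norm{B}\,\norm{a}_{X_{1}}\norm{\tilde a}_{X_{1}}
\le \norm{w}^{2}+2\norm{w}\norm{\tilde w}+\norm{\tilde w}^{2}=(\norm{w}+\norm{\tilde w})^{2},
\]
and taking square roots finishes the estimate; the maximum of the two bounds then gives $\norm{w*_{B}\tilde w}\le\norm{w}+\norm{\tilde w}$.

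There is no genuine obstacle here; the only point to watch is the quantitative role of the constant $\norm{B}\le 2$, which is precisely what turns $\norm{w}^{2}+\norm{\tilde w}^{2}+\norm{B}\norm{w}\norm{\tilde w}$ into the perfect square $(\norm{w}+\norm{\tilde w})^{2}$. With a larger operator norm one would only obtain a $\rho$-subadditivity statement with $\rho<1$, so the hypothesis $\norm{B}\le 2$ is exactly calibrated to the conclusion.
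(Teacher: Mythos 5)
Your proof is correct and follows essentially the same route as the paper: the group axioms are checked by direct expansion (the paper simply declares them clear), and the subadditivity is exactly the paper's estimate $\norm{r+\tilde r+B(a,\tilde a)}_{X_2}\le\norm{w}^2+\norm{\tilde w}^2+2\norm{w}\norm{\tilde w}=(\norm{w}+\norm{\tilde w})^2$ followed by taking the maximum with the triangle inequality in $X_1$. Your closing remark on the role of $\norm{B}\le 2$ matches the paper's intent as well.
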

\begin{proof}
It is clear that $X_1\times X_2$ is a group, so let us show the subadditivity of $\norm{\cdot}$. Let $w = (a,r),\tilde w= (\tilde a,\tilde r)\in X_{1}\times X_{2}$. Observe that
\begin{align*}
\norm{r+\tilde r+ B(a,\tilde a)}_{X_2} &\le \norm{r}_{X_2} + \norm{\tilde r}_{X_2} + \norm{B(a,\tilde a)}_{X_2} \\
&\le \norm{w}^2 + \norm{\tilde w}^2 + 2\norm{w}\norm{\tilde w} = (\norm{w}+\norm{\tilde w})^2\,.
\end{align*}
The claim easily follows.
\end{proof}

\noindent As stated above, this space can be equipped with a metric as follows.

\begin{corollary}\label{cor:MetricInG}
In the situation of Lemma~\ref{lem:homogeneous-subadditive-functional}, the functional
\[
\norm{w}_{\mathrm{sym}} = \max(\norm{w}, \norm{w^{-1}})
\]
is subadditive and
\[
d(w,\tilde w) := \norm{w * \tilde w^{-1}}_{\mathrm{sym}}
\]
is a left-invariant metric on $(X_{1}\times X_{2}, *_{B})$.
\end{corollary}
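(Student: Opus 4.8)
The plan is to verify the three assertions of Corollary~\ref{cor:MetricInG} in turn, all of them essentially formal consequences of Lemma~\ref{lem:homogeneous-subadditive-functional}.

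\textbf{Subadditivity of $\norm{\cdot}_{\mathrm{sym}}$.} First I would note the elementary fact that the inversion map is an anti-automorphism, so that $(w *_B \tilde w)^{-1} = \tilde w^{-1} *_B w^{-1}$. Then for any $w,\tilde w$,
\[
\norm{w *_B \tilde w} \le \norm{w} + \norm{\tilde w} \le \norm{w}_{\mathrm{sym}} + \norm{\tilde w}_{\mathrm{sym}},
\]
by Lemma~\ref{lem:homogeneous-subadditive-functional}, and likewise
\[
\norm{(w *_B \tilde w)^{-1}} = \norm{\tilde w^{-1} *_B w^{-1}} \le \norm{\tilde w^{-1}} + \norm{w^{-1}} \le \norm{w}_{\mathrm{sym}} + \norm{\tilde w}_{\mathrm{sym}}.
\]
Taking the maximum of the two right-hand sides (which coincide) gives $\norm{w *_B \tilde w}_{\mathrm{sym}} \le \norm{w}_{\mathrm{sym}} + \norm{\tilde w}_{\mathrm{sym}}$.

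\textbf{$d$ is a left-invariant metric.} Symmetry of $d$ follows from $\norm{v}_{\mathrm{sym}} = \norm{v^{-1}}_{\mathrm{sym}}$ applied to $v = w *_B \tilde w^{-1}$, whose inverse is $\tilde w *_B w^{-1}$. For the triangle inequality, write $w *_B v^{-1} = (w *_B \tilde w^{-1}) *_B (\tilde w *_B v^{-1})$ using associativity, and apply subadditivity of $\norm{\cdot}_{\mathrm{sym}}$ just established. For left-invariance, observe that $(g *_B w) *_B (g *_B \tilde w)^{-1} = g *_B w *_B \tilde w^{-1} *_B g^{-1}$; this is not literally $w *_B \tilde w^{-1}$, so one must check that $\norm{\cdot}_{\mathrm{sym}}$ is conjugation-invariant, or — more cleanly — restructure: $d(g*w, g*\tilde w) = \norm{(g*w)*(g*\tilde w)^{-1}}_{\mathrm{sym}} = \norm{g*(w*\tilde w^{-1})*g^{-1}}_{\mathrm{sym}}$. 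The cleanest route is instead to define things so that left-invariance is automatic, i.e.\ note $g*w*(g*\tilde w)^{-1}$ simplifies once one recalls that in these Heisenberg-type groups the norm is actually a homogeneous norm invariant under inversion but \emph{not} obviously under conjugation. So I would verify conjugation-invariance directly from the explicit product formula: for $g=(b,s)$ and $v=(a,r)$, compute $g *_B v *_B g^{-1} = (a, r + B(b,a) - B(a+b, b) + B(b,b)) = (a, r + B(b,a) - B(a,b))$, whose $X_1$-component is unchanged and whose $X_2$-component differs from $r$ by the antisymmetric term $B(b,a)-B(a,b)$; this does \emph{not} in general preserve the norm, which suggests the intended metric is left-invariant by construction rather than via conjugation.

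\textbf{Finiteness/definiteness.} Finally, $d(w,\tilde w) = 0$ forces $\norm{w *_B \tilde w^{-1}} = 0$, hence by \eqref{eq:defNorm} both $\norm{a}_{X_1} = 0$ and $\norm{r}_{X_2} = 0$ for $w *_B \tilde w^{-1} = (a,r)$, so $w *_B \tilde w^{-1}$ is the identity and $w = \tilde w$; here one uses that the $X_i$-norms are genuine norms.

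\textbf{Main obstacle.} The one genuinely delicate point is left-invariance: the naive computation $d(g*w, g*\tilde w) = \norm{g*(w*\tilde w^{-1})*g^{-1}}_{\mathrm{sym}}$ reduces the claim to conjugation-invariance of $\norm{\cdot}_{\mathrm{sym}}$, which, as the explicit formula above shows, fails in general because of the antisymmetric correction $B(b,a)-B(a,b)$ in the $X_2$-slot. I expect the resolution is that the statement should read \emph{right}-invariant, or that one should use the conjugation-invariance of the symmetrized norm in the specific groups arising from rough paths where $B$ is in fact symmetric (so $B(b,a)=B(a,b)$ and the correction vanishes); I would check which hypothesis the downstream applications actually need and, if $B$ is not assumed symmetric, replace ``left-invariant'' by the invariance that genuinely holds. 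Everything else is a routine unwinding of the group axioms together with Lemma~\ref{lem:homogeneous-subadditive-functional}.
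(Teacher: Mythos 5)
Your verification is correct on every point that is actually true, and the paper offers no written proof of Corollary~\ref{cor:MetricInG} to compare against: it is stated as an immediate consequence of Lemma~\ref{lem:homogeneous-subadditive-functional}, and the intended argument is exactly the routine one you carry out — subadditivity of $\norm{\cdot}_{\mathrm{sym}}$ via $\norm{(w*_{B}\tilde w)^{-1}}=\norm{\tilde w^{-1}*_{B}w^{-1}}$, symmetry of $d$ from inversion-invariance, the triangle inequality from $w*_{B}v^{-1}=(w*_{B}\tilde w^{-1})*_{B}(\tilde w*_{B}v^{-1})$, and definiteness from \eqref{eq:defNorm} together with the fact that the $X_i$-norms are genuine norms.

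Your sticking point is a correct diagnosis of a slip in the statement, not a gap in your argument. With $d(w,\tilde w)=\norm{w*_{B}\tilde w^{-1}}_{\mathrm{sym}}$ one obtains a \emph{right}-invariant metric, since $(w*_{B}g)*_{B}(\tilde w*_{B}g)^{-1}=w*_{B}\tilde w^{-1}$; left-invariance would need conjugation-invariance of $\norm{\cdot}_{\mathrm{sym}}$, and your computation $g*_{B}v*_{B}g^{-1}=(a,\,r+B(b,a)-B(a,b))$ shows this fails whenever $B$ has a nontrivial antisymmetric part — which is the case in every group the paper actually uses ($B(x,y)=x\otimes y$ in Example~\ref{ex:GroupForX}, and the analogous non-symmetric maps in Examples~\ref{ex:GroupForDeltaX} and~\ref{ex:GroupForR}), so the ``assume $B$ symmetric'' escape you float is not available here. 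The resolution is the other one you name: either relabel the displayed $d$ as right-invariant, or (better, and evidently what is intended) define $d(w,\tilde w)=\norm{w^{-1}*_{B}\tilde w}_{\mathrm{sym}}$, which is left-invariant by the same one-line cancellation. The left-invariant version is the one the paper needs downstream, because the two-parameter objects in Examples~\ref{ex:GroupForX}--\ref{ex:GroupForR} satisfy Chen-type relations $F_{s,u}*_{B}F_{u,t}=F_{s,t}$, i.e.\ with $F_t:=F_{0,t}$ they are increments $F_{s,t}=F_s^{-1}*_{B}F_t$, so that $d(F_s,F_t)=\norm{F_{s,t}}_{\mathrm{sym}}$ and the subadditivity required for Theorem~\ref{theo:VrBesovEstimate} holds verbatim. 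With that one swap your proof is complete and is the intended one; everything else in Section~\ref{sec:RoughPaths} is unaffected.
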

\noindent Note that the construction of our norm is similar to \cite[Section 2.2]{TapiaZambotti2020}. See also \cite[Theorem 2]{MR1067309} for an alternative construction of homogeneous metrics. Since $(a,r)^{-1} = (-a,-r+B(a,a))$ and
\[
\norm{-r+B(a,a)}_{X_2} \leq \norm{r}_{X_2} + 2 \norm{a}_{X_2}^{2} \leq 3\norm{(a,r)}^{2},
\]
we have
\[
\norm{(a,r)} \leq \norm{(a,r)}_{\mathrm{sym}} \leq \sqrt{3} \norm{(a,r)}.
\]

\begin{example}\label{ex:GroupForX}
If we set $X_1 = E$, $X_2 = E^{\otimes 2}$ with the bilinear map $B(x,y) = x\otimes y$, we get the typical group for rough paths, see also \cite[Section 2.3]{FH20}. This shows that $\bfX \in E\oplus E^{\otimes 2}$ is indeed a one-parameter process in $G_1 = E\oplus E^{\otimes 2}$. By setting $\norm{x}_{X_1} = \abs{x}$ for all $x\in E$ and $\norm{y}_{X_2} = 2 \abs{y}$ for all $y\in E\otimes E$, we recover the subadditive norm
\[
	\norm{(X,\bbX)} = \max\left( \abs X, \left(2\abs{\bbX}\right)^{\frac 12}\right)
\] 
constructed in \cite[Section 2.2]{TapiaZambotti2020}.
\end{example}

\begin{example}\label{ex:GroupForDeltaX}
Now consider two rough paths $\bfX,\tilde\bfX$ over $E$. We claim that \\$F_{s,t} = (X_{s,t},\tilde X_{s,t}, \Delta X_{s,t},\Delta \bbX_{s,t})$ is a one-parameter process in the group
\[
	G_2 = (E\oplus E\oplus E) \oplus E^{\otimes 2}\,.
\]
To this end, recall that the $\delta$-operator of a two-parameter process $A:\Delta_T\to E$ is defined by
\begin{equation}\label{eq:DeltaOperator}
	\delta A_{s,u,t} = A_{s,t}-A_{s,u}-A_{u,t}\,.
\end{equation}
Direct calculation leads to
\[
	\delta(\Delta \bbX)_{s,u,t} = \Delta X_{s,u}\otimes X_{u,t} + \tilde X_{s,u}\otimes \Delta X_{u,t}\,, 
\]
showing that $F_{s,u} * F_{u,t} = F_{s,t}$ for the product
\[
	(x,\tilde x,\bar x, X) * (y,\tilde y,\bar y, Y) = (x+y, \tilde x+\tilde y, \bar x+\bar y, X + Y + \bar x\otimes y + \tilde x\otimes\bar y)\,.
\]
This fits our setting with 
\[
X_1 = E\oplus E\oplus E, \qquad \hspace{20pt}X_2 = E^{\otimes 2},\qquad B((x,\tilde x,\bar x),(y,\tilde y,\bar y)) = \bar x\otimes y+\tilde x\otimes\bar y\,.
\]
It will be useful for the proof of Lemma \ref{lem:BesovRPAreVariationRP} and Proposition \ref{prop:VrBesovEmbeddingDeltaX2} in Section \ref{sec:NestedNormsRP} to incorporate two constants $N,N_\Delta > 0$ into our norms, so let us note here that for any $N,N_\Delta > 0$, we can chose
\[
\norm{(x,\tilde x,\bar x)}_{X_1} = \max\left(\frac{\abs{x}} N, \frac{\abs{\tilde x}}N,\frac{\abs{\bar x}}{N_\Delta}\right) \qquad \norm{X}_{X_2} = \frac{\abs{X}}{NN_\Delta}\,.
\]
\end{example}

\begin{example}\label{ex:GroupForR}
Let us build a group for $\Delta R^{\bfY,\bfX}_{s,t}$. To this end, let $\bfX,\tilde\bfX$ be rough paths and let $\bfY$ be a $\bfX$-controlled rough path and $\tilde\bfY$ a $\tilde\bfX$-controlled rough path in $F$. We look at
\[
	Z_{s,t} = ((\tilde Y_{s,t}',\Delta Y_{s,t}',X_{s,t},\Delta X_{s,t}),\Delta R^{\bfY,\bfX}_{s,t})\,.		
\]
This lies in
\[
	G_3 = (L(E,F)\oplus L(E,F)\oplus E\oplus E) \oplus F\,.
\]
This can be turned into a group by using
\[
	X_1 = (L(E,F)\oplus L(E,F)\oplus E\oplus E), \qquad X_2 = F, \qquad B((a,b,c,d),(\tilde a,\tilde b,\tilde c,\tilde d)) = b\tilde c + a\tilde d\,.
\]
One then uses the $\delta$-operator
\[
	\delta \Delta R^{\bfY,\bfX}_{s,u,t} = \Delta Y'_{s,u}X_{u,t} + \tilde Y'_{s,u}\Delta X_{u,t}\,,
\]
together with direct calculation to show $Z_{s,u}*Z_{u,t} = Z_{s,t}$ for all $s\le u\le t$. Thus, $Z$ is a one-parameter process in $G_3$. As in Example \ref{ex:GroupForDeltaX}, it will be useful for the proof of Proposition \ref{prop:Vr-Besov-Embedding-RYX} to incorporate some constants $N_a, N_b,N_c, N_d >0$ into the norms. We set $N_r = N_a N_d+N_b N_c$ and set
\[
	\norm{(a,b,c,d)}_{X_1} = \max\left(\frac{\abs a}{N_a},\frac{\abs b}{N_b},\frac{\abs c}{N_c},\frac{\abs d}{N_d}\right), \qquad \norm{r}_{X_2} = \frac{\abs r}{N_r}\,.
\]
Direct calculation gives $\norm{B} \le 2$.
\end{example}

\subsection{Besov and variation estimates for rough paths}\label{sec:NestedNormsRP}

Let us now apply some of the estimates from Section \ref{sec:BesovSpaces} to the above examples. We start by applying Corollary \ref{cor:EmbeddingVrInBesov} to Example \ref{ex:GroupForX} to show that Besov rough paths à la \cite{FS22}, that is maps $\bfX:(s,t)\mapsto (X_{s,t},\bbX_{s,t})$ fulfilling \eqref{eq:Chen} as well as
\[
	\norm{X}_{B^\alpha_{p,q}} <\infty\,,\qquad \norm{\bbX}_{B^{2\alpha}_{p/2,q/2}}<\infty\,
\] 
for certain parameters $\alpha>0, p,q\in(0,\infty]$, are rough paths in the variation setting. 

\begin{lemma}\label{lem:BesovRPAreVariationRP}
	Let $r\in[2,3)$ and $\bfX = (X,\bbX)$ be an $r$-rough path. Assume that $\frac 1\alpha<r\le p\le\infty$ and $0<q\le\infty$. Then there exists a $c(r,\alpha,p,q,T)>0$ such that
	\begin{align*}
		V^r X &\le c(r,\alpha,p,q,T) \norm{X}_{B^\alpha_{p,q}}\\
		V^{r/2} \bbX &\le c(r,\alpha,p,q,T) (\norm{X}_{B^\alpha_{p,q}}^2 + \norm{\bbX}_{B^{2\alpha}_{p/2,q/2}})\,.
	\end{align*}
	If $\tilde\bfX$ is another $r$-rough path, we have (possibly for another constant $c(r,\alpha,p,q,T)>0$)
	\begin{align*}
	V^r(\Delta X) &\le c(r,\alpha,p,q,T) \norm{\Delta X}_{B^\alpha_{p,q}} \\ 
	V^{r/2}(\Delta \bbX) &\le c(r,\alpha,p,q,T) \bigg((\norm{X}_{B^\alpha_{p,q}}+ \norm{\tilde X}_{B^\alpha_{p,q}})\norm{\Delta X}_{B^\alpha_{p,q}} + \norm{\Delta\bbX}_{B^{2\alpha}_{p/2,q/2}}\bigg)\,. 
	\end{align*}
\end{lemma}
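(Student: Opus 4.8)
The plan is to realise each of the two-parameter objects in the statement as the increments of a path in one of the metric groups constructed in Section~\ref{sec:RoughPaths}, and then to invoke the embedding $V^rf\lesssim\norm{f}_{B^\alpha_{p,q}}$ of Corollary~\ref{cor:EmbeddingVrInBesov}, whose hypotheses $\tfrac1\alpha<r\le p$ are exactly those assumed here. One may assume throughout that the right-hand sides are finite (otherwise there is nothing to prove), that all paths are continuous (as is standard), and that $E,E^{\otimes 2},F,L(E,F)$ are complete (else pass to completions), so that the groups $G_1,G_2$ of Examples~\ref{ex:GroupForX} and~\ref{ex:GroupForDeltaX} are complete metric spaces. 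The estimates for $V^rX$ and $V^r(\Delta X)$ are then immediate: apply Corollary~\ref{cor:EmbeddingVrInBesov} directly to the $E$-valued paths $X$ and $\Delta X=X-\tilde X$.

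For $V^{r/2}\bbX$ one regards $\bfX$ as a path in $G_1$, whose metric satisfies $d(\bfX_s,\bfX_t)=\norm{\bfX_{s,t}}_{\mathrm{sym}}\approx\max(\abs{X_{s,t}},\abs{\bbX_{s,t}}^{1/2})$ by Corollary~\ref{cor:MetricInG} and Example~\ref{ex:GroupForX}. From $\abs{\bbX_{s,t}}^{r/2}\lesssim\norm{\bfX_{s,t}}_{\mathrm{sym}}^{r}$ one obtains $V^{r/2}\bbX\lesssim(V^r\bfX)^2$, where $V^r\bfX$ denotes the variation of $\bfX$ as a $G_1$-valued path, and Corollary~\ref{cor:EmbeddingVrInBesov} gives $V^r\bfX\lesssim\norm{\bfX}_{B^\alpha_{p,q}}$. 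It then remains to estimate $\norm{\bfX}_{B^\alpha_{p,q}}$; here one uses that $\norm{\cdot}_{B^\alpha_{p,q}}$ is a $\min(1,p,q)$-quasinorm, together with the scaling identity $\Omega_p^{\abs{\bbX}^{1/2}}(t)=\Omega_{p/2}^{\abs{\bbX}}(t)^{1/2}$ read off from Definition~\ref{def:2Besov}, which yields $\norm{\abs{\bbX}^{1/2}}_{B^\alpha_{p,q}}=\norm{\bbX}_{B^{2\alpha}_{p/2,q/2}}^{1/2}$. Hence $\norm{\bfX}_{B^\alpha_{p,q}}\lesssim\norm{X}_{B^\alpha_{p,q}}+\norm{\bbX}_{B^{2\alpha}_{p/2,q/2}}^{1/2}$, and squaring gives the claim.

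The same scheme handles $V^{r/2}(\Delta\bbX)$, now via the group $G_2$ along which $F_{s,t}=(X_{s,t},\tilde X_{s,t},\Delta X_{s,t},\Delta\bbX_{s,t})$ is an increment (Example~\ref{ex:GroupForDeltaX}), except that one should exploit the rescaling freedom built into that example. Choosing the constants $N:=\norm{X}_{B^\alpha_{p,q}}+\norm{\tilde X}_{B^\alpha_{p,q}}$ and $N_\Delta:=\norm{\Delta X}_{B^\alpha_{p,q}}$ (both positive, after a harmless $\epsilon$-perturbation), the associated weighted norm gives $\abs{\Delta\bbX_{s,t}}^{1/2}\le(NN_\Delta)^{1/2}\norm{F_{s,t}}_{\mathrm{sym}}$, hence $V^{r/2}(\Delta\bbX)\lesssim NN_\Delta(V^rF)^2$, while $V^rF\lesssim\norm{F}_{B^\alpha_{p,q}}$ by Corollary~\ref{cor:EmbeddingVrInBesov}; by the same quasinorm and scaling arguments,
\[
\norm{F}_{B^\alpha_{p,q}}\lesssim\frac{\norm{X}_{B^\alpha_{p,q}}+\norm{\tilde X}_{B^\alpha_{p,q}}}{N}+\frac{\norm{\Delta X}_{B^\alpha_{p,q}}}{N_\Delta}+\frac{\norm{\Delta\bbX}_{B^{2\alpha}_{p/2,q/2}}^{1/2}}{(NN_\Delta)^{1/2}}\lesssim1+\frac{\norm{\Delta\bbX}_{B^{2\alpha}_{p/2,q/2}}^{1/2}}{(NN_\Delta)^{1/2}}.
\]
Substituting back and using $2(NN_\Delta)^{1/2}\norm{\Delta\bbX}_{B^{2\alpha}_{p/2,q/2}}^{1/2}\le NN_\Delta+\norm{\Delta\bbX}_{B^{2\alpha}_{p/2,q/2}}$ yields $V^{r/2}(\Delta\bbX)\lesssim NN_\Delta+\norm{\Delta\bbX}_{B^{2\alpha}_{p/2,q/2}}$, which is exactly the asserted bound since $NN_\Delta=(\norm{X}_{B^\alpha_{p,q}}+\norm{\tilde X}_{B^\alpha_{p,q}})\norm{\Delta X}_{B^\alpha_{p,q}}$.

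The one genuinely delicate point is this last balancing: it is precisely the coordinate rescaling of $G_2$ that forces the mixed term $(\norm{X}_{B^\alpha_{p,q}}+\norm{\tilde X}_{B^\alpha_{p,q}})\norm{\Delta X}_{B^\alpha_{p,q}}$ to appear, rather than the crude square of a sum of norms that a naive application of Corollary~\ref{cor:EmbeddingVrInBesov} would produce. The remaining steps---the $\min(1,p,q)$-quasi-triangle inequalities with the correct exponents, the bound $\norm{B}\le2$ for the weighted norms (already recorded in Examples~\ref{ex:GroupForX} and~\ref{ex:GroupForDeltaX}), and the degenerate cases $N=0$ or $N_\Delta=0$ handled by letting the $\epsilon$-perturbation tend to $0$---are routine.
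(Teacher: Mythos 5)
Your proposal is correct and follows essentially the same route as the paper: the direct application of Corollary~\ref{cor:EmbeddingVrInBesov} to $X$ and $\Delta X$, the group $G_1$ of Example~\ref{ex:GroupForX} for $\bbX$, and the rescaled group $G_2$ of Example~\ref{ex:GroupForDeltaX} with the choices $N=\norm{X}_{B^\alpha_{p,q}}+\norm{\tilde X}_{B^\alpha_{p,q}}$, $N_\Delta=\norm{\Delta X}_{B^\alpha_{p,q}}$ for $\Delta\bbX$, followed by the same rearrangement. The extra details you supply (the scaling identity $\norm{\abs{\bbX}^{1/2}}_{B^\alpha_{p,q}}=\norm{\bbX}_{B^{2\alpha}_{p/2,q/2}}^{1/2}$, completeness, and the degenerate cases $N=0$ or $N_\Delta=0$) are consistent with, and slightly more careful than, the paper's write-up.
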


\begin{proof}
$V^r X \lesssim \norm{X}_{B^\alpha_{p,q}}$ and $V^r(\Delta X) \lesssim \norm{\Delta X}_{B^\alpha_{p,q}}$ are direct applications of Corollary \ref{cor:EmbeddingVrInBesov} to $X, \Delta X$. By Example \ref{ex:GroupForX}, $\bfX$ is a path in the metric space $G_1$ with metric $d(\bfX_s,\bfX_t) = \norm{\bfX_{s,t}}$ and $ \norm{(X_{s,t},\bbX_{s,t})}\sim \abs{X_{s,t}}+\abs{\bbX_{s,t}}^{\frac 12}$ for all $s\le t$. Applying Corollary \ref{cor:EmbeddingVrInBesov} to $\bfX$ gives us
\begin{align*}
(V^{r/2} \bbX)^{\frac 12} &= V^r(\abs{\bbX_{\cdot,\cdot}}^{\frac 12}) \lesssim V^r(\bfX) \lesssim \norm{\bfX}_{B^\alpha_{p,q}} \lesssim \norm{X}_{B^\alpha_{p,q}} + \norm{\bbX}_{B^{2\alpha}_{p/2,q/2}}^\frac 12\,.
\end{align*}
For our last inequality, we consider the path $F_{s,t} = (X_{s,t}, \tilde X_{s,t}, \Delta X_{s,t}, \Delta\bbX_{s,t})$ in $G_2$ from Example \ref{ex:GroupForDeltaX}. 
Applying Corollary \ref{cor:EmbeddingVrInBesov} to $F$ gives for any $N,N_\Delta > 0$:
\begin{align*}
V^{r/2} \left(\frac{\Delta\bbX}{NN_\Delta}\right)^{\frac 12} &\le V^r F \\
&\lesssim \norm{F}_{B^\alpha_{p,q}}\\
&\lesssim \frac{\norm{X}_{B^\alpha_{p,q}}}N +\frac{\norm{\tilde X}_{B^\alpha_{p,q}}}N + \frac{\norm{\Delta X}_{B^\alpha_{p,q}}}{N_\Delta} + \left(\frac{\norm{\Delta\bbX}_{B^{2\alpha}_{p/2,q/2}}}{NN_\Delta}\right)^{\frac 12}\,.  
\end{align*}
Choosing the constants
\[
N := \norm{X}_{B^\alpha_{p,q}}+\norm{\tilde X}_{B^\alpha_{p,q}},
\quad
N_{\Delta} := \norm{\Delta X}_{B^\alpha_{p,q}}
\]
gives
\[
V^{r/2} \left(\frac{\Delta\bbX}{N N_\Delta}\right)^{\frac 12} \lesssim 1 + \norm{\frac{\Delta \bbX}{N N_\Delta}}^{\frac 12}_{B^{2\alpha}_{p/2,q/2}}\,.
\]
Rearranging this gives
\[
	V^{r/2} (\Delta\bbX) \lesssim N N_\Delta +\norm{\Delta \bbX}_{B^{2\alpha}_{p/2,q/2}}\,,
\]
showing the claim by our choice of $N N_\Delta$.
\end{proof}

\noindent The same logic can be applied to controlled rough paths:
\begin{lemma}\label{lem:BesovControlledRPAreVariationControlledRP}
Let $\bfY= (Y,Y')$ be an $\bfX$-controlled rough path, $r\in[2,3)$ and assume that $\frac 1\alpha<r\le p\le \infty, 0<q\le \infty$. Then there exists a $c(r,\alpha,p,q,T)>0$ such that
	\begin{align*}
		V^r Y' &\le c(r,\alpha,p,q,T) \norm{Y'}_{B^\alpha_{p,q}}\\
		V^{r/2} R^{\bfY,\bfX} &\le c(r,\alpha,p,q,T) (\norm{Y'}_{B^\alpha_{p,q}}^2 + \norm{X}_{B^\alpha_{p,q}}^2 + \norm{R^{\bfY,\bfX}}_{B^{2\alpha}_{p/2,q/2}})\,.
	\end{align*}
\end{lemma}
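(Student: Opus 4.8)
The plan is to follow the template of Lemma~\ref{lem:BesovRPAreVariationRP}. The bound $V^rY'\le c\,\norm{Y'}_{B^\alpha_{p,q}}$ is immediate: $Y'$ takes values in the normed space $L(E,F)$ and $\norm{Y'}_{B^\alpha_{p,q}}<\infty$, so Corollary~\ref{cor:EmbeddingVrInBesov} applied to the path $t\mapsto Y'_t$ gives the claim directly (after identifying $Y'$ with its continuous representative). The work is in the bound on $V^{r/2}R^{\bfY,\bfX}$, and the idea there is again to realise the two-parameter object $R^{\bfY,\bfX}$ as an \emph{increment} of a one-parameter path in a suitable metric group and then invoke Corollary~\ref{cor:EmbeddingVrInBesov}.

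Concretely, I would apply Lemma~\ref{lem:homogeneous-subadditive-functional} with $X_1=L(E,F)\oplus E$, $X_2=F$, and bilinear map $B\big((a,c),(\tilde a,\tilde c)\big)=a\tilde c$, obtaining the group $G:=(L(E,F)\oplus E)\oplus F$ with product $(a,c,\mathbf r)*_B(\tilde a,\tilde c,\tilde{\mathbf r})=(a+\tilde a,\;c+\tilde c,\;\mathbf r+\tilde{\mathbf r}+a\tilde c)$; this is the analogue of the group in Example~\ref{ex:GroupForR} with the tilde/$\Delta$ components removed. Writing $Y'_{s,t}=Y'_t-Y'_s$ and $X_{s,t}=X_t-X_s$, the Chen-type identity $\delta R^{\bfY,\bfX}_{s,u,t}=Y'_{s,u}X_{u,t}$ (a one-line computation from $R^{\bfY,\bfX}_{s,t}=Y_{s,t}-Y'_sX_{s,t}$ and the additivity of the increments of $Y$ and $X$) says precisely that $Z_{s,t}:=\big(Y'_{s,t},X_{s,t},R^{\bfY,\bfX}_{s,t}\big)$ satisfies $Z_{s,u}*_BZ_{u,t}=Z_{s,t}$, so $Z$ is the increment process of a continuous path $g$ in $G$. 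To balance the homogeneities I would use the rescaled norms $\norm{(a,c)}_{X_1}=\max(\abs a/N',\abs c/N)$ and $\norm{\mathbf r}_{X_2}=\abs{\mathbf r}/(N'N)$ for free parameters $N',N>0$; a direct check gives $\norm B\le 1\le 2$, so Lemma~\ref{lem:homogeneous-subadditive-functional} and Corollary~\ref{cor:MetricInG} apply and $d(g_s,g_t)=\norm{Z_{s,t}}_{\mathrm{sym}}\sim\max\big(\abs{Y'_{s,t}}/N',\;\abs{X_{s,t}}/N,\;(\abs{R^{\bfY,\bfX}_{s,t}}/(N'N))^{1/2}\big)$.

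Now apply Corollary~\ref{cor:EmbeddingVrInBesov} to $g$. Using $V^r(\abs{A}^{1/2})=(V^{r/2}\abs{A})^{1/2}$ and the elementary identity $\norm{\abs{A}^{1/2}}_{B^\alpha_{p,q}}=\norm{A}_{B^{2\alpha}_{p/2,q/2}}^{1/2}$ (both from unwinding the definitions), together with the quasi-triangle inequality for $\norm{\cdot}_{B^\alpha_{p,q}}$, one gets
\[
\Big(\tfrac{V^{r/2}R^{\bfY,\bfX}}{N'N}\Big)^{1/2}\le V^r g\lesssim\norm{g}_{B^\alpha_{p,q}}\lesssim\frac{\norm{Y'}_{B^\alpha_{p,q}}}{N'}+\frac{\norm{X}_{B^\alpha_{p,q}}}{N}+\Big(\frac{\norm{R^{\bfY,\bfX}}_{B^{2\alpha}_{p/2,q/2}}}{N'N}\Big)^{1/2}.
\]
Choosing $N'=\norm{Y'}_{B^\alpha_{p,q}}$ and $N=\norm{X}_{B^\alpha_{p,q}}$ (the degenerate cases where one vanishes are handled by a limiting argument, since then $Y'$ resp.\ $X$ is constant), the first two terms on the right become $O(1)$; multiplying through by $(N'N)^{1/2}$, squaring, and using $N'N\le\tfrac12(N'^2+N^2)$ yields $V^{r/2}R^{\bfY,\bfX}\lesssim \norm{Y'}_{B^\alpha_{p,q}}^2+\norm{X}_{B^\alpha_{p,q}}^2+\norm{R^{\bfY,\bfX}}_{B^{2\alpha}_{p/2,q/2}}$, as claimed.

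I do not expect a genuine obstacle here: the argument is a routine variant of Lemma~\ref{lem:BesovRPAreVariationRP} once the metric group from Section~\ref{sec:RoughPaths} is in place. The only points requiring care are (i) checking that $\delta R^{\bfY,\bfX}_{s,u,t}=Y'_{s,u}X_{u,t}$ and that it is exactly the $*_B$-multiplicativity of $Z$, so that $Z$ really is a path increment; and (ii) the bookkeeping with the scaling constants $N',N$ so that the quadratic term in $R^{\bfY,\bfX}$ can be absorbed into the left-hand side after squaring.
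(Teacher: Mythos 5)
Your proposal is correct and follows essentially the same route as the paper: the paper also applies Corollary~\ref{cor:EmbeddingVrInBesov} to $Y'$ directly and, for $R^{\bfY,\bfX}$, to the path $Z_{s,t}=(Y'_{s,t},X_{s,t},R^{\bfY,\bfX}_{s,t})$ in the group of Example~\ref{ex:GroupForR} with $\tilde\bfX=\tilde\bfY=0$. The only cosmetic difference is that the paper fixes the normalisation constants to $1$ and squares at the end, whereas you carry free scales $N',N$ (as in Lemma~\ref{lem:BesovRPAreVariationRP}); both yield the stated bound, yours even giving the slightly sharper product term $\norm{Y'}_{B^\alpha_{p,q}}\norm{X}_{B^\alpha_{p,q}}$ before applying AM--GM.
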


\noindent We call $\bfY$ a Besov-controlled rough path with parameters $\alpha,p,q$, if
\[
\norm{Y}_{B^\alpha_{p,q}}<\infty\,,\qquad \norm{Y'}_{B^\alpha_{p,q}}<\infty\,,\qquad \norm{R^{\bfY,\bfX}}_{B^{2\alpha}_{p/2,q/2}} <\infty\,.
\]

\begin{proof}
$V^r Y' \lesssim \norm{Y'}_{B^\alpha_{p,q}}$ is again a simple application of Corollary \ref{cor:EmbeddingVrInBesov} to $t\mapsto Y'_t$. For the second inequality, we use the group $G_3$ from Example \ref{ex:GroupForR}. By setting $\tilde \bfY = \tilde\bfX = 0$, we simplify it to $(L(E,F)\oplus E) \oplus F$,
\[
	Z_{s,t} = (Y'_{s,t}, X_{s,t}, R^{\bfY,\bfX}_{s,t})\,.
\]
We use the norm
\[
	\norm{(y,x,r)} = \max\left(\abs{y},\abs{x},\left(\frac 12\abs{r}\right)^{\frac 12}\right)
\]
on this space, which is the norm of $G_3$ with constants $N_a = N_b = N_c = N_d = 1$. Applying Corollary \ref{cor:EmbeddingVrInBesov} to $Z$ gives
\[
	(V^{r/2} R^{\bfY,\bfX})^{\frac 12}\lesssim V^r(Z) \lesssim \norm{Z}_{B^\alpha_{p,q}} \lesssim \norm{Y'}_{B^\alpha_{p,q}} + \norm{X}_{B^\alpha_{p,q}} + \norm{R^{\bfY,\bfX}}_{B^{2\alpha}_{p/2,q/2}}^{\frac 12}\,.
\]
\end{proof}

\noindent We now move on to the main result of this section, namely the application of Theorem \ref{theo:VrBesovEstimate} to rough paths. By applying this theorem to Example \ref{ex:GroupForR}, we get the following.

\begin{proposition}\label{prop:Vr-Besov-Embedding-RYX}
Consider two $r$-rough paths $\bfX,\tilde\bfX$ with controlled rough paths $\bfY,\tilde\bfY$ for $r\in[2,3)$. Assume $0<\frac 1\alpha<r\le p\le\infty$ and $0<q\le\infty$. Then:
\[
	\norm{V^{r/2} \Delta R^{\bfY,\bfX}}_{B^{2\alpha}_{p/2,q/2}} \lesssim \norm{\Delta R^{\bfY,\bfX}}_{B^{2\alpha}_{p/2,q/2}} + \norm{\Delta Y'}_{B^\alpha_{p,q}} \norm{X}_{B^\alpha_{p,q}} + \norm{\tilde Y'}_{B^\alpha_{p,q}} \norm{\Delta X}_{B^\alpha_{p,q}}\,.
\]
\end{proposition}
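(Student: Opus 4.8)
The plan is to exhibit $\Delta R^{\bfY,\bfX}$ as one coordinate of the increment of a path in the metric group $G_{3}$ from Example~\ref{ex:GroupForR}, and then to feed that path into Theorem~\ref{theo:IntroVrBesovEstimate} (equivalently Theorem~\ref{theo:VrBesovEstimate}). Concretely, fix free parameters $N_{a},N_{b},N_{c},N_{d}>0$, put $N_{r}=N_{a}N_{d}+N_{b}N_{c}$, and set $\bar Z_{t}:=Z_{0,t}$ where $Z_{s,t}=((\tilde Y'_{s,t},\Delta Y'_{s,t},X_{s,t},\Delta X_{s,t}),\Delta R^{\bfY,\bfX}_{s,t})$. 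By Example~\ref{ex:GroupForR} we have $Z_{s,u}*Z_{u,t}=Z_{s,t}$, so $\bar Z$ is a path in $G_{3}$ with increments $Z_{s,t}$, and by Lemma~\ref{lem:homogeneous-subadditive-functional} and Corollary~\ref{cor:MetricInG} the associated left-invariant metric $d$ satisfies $\norm{Z_{s,t}}\le d(\bar Z_{s},\bar Z_{t})\le\sqrt3\,\norm{Z_{s,t}}$ with
\[
\norm{Z_{s,t}}=\max\Bigl(\tfrac{\abs{\tilde Y'_{s,t}}}{N_{a}},\tfrac{\abs{\Delta Y'_{s,t}}}{N_{b}},\tfrac{\abs{X_{s,t}}}{N_{c}},\tfrac{\abs{\Delta X_{s,t}}}{N_{d}},\bigl(\tfrac{\abs{\Delta R^{\bfY,\bfX}_{s,t}}}{N_{r}}\bigr)^{1/2}\Bigr).
\]
We may assume the right-hand side of the claim is finite, so $\tilde Y',\Delta Y',X,\Delta X\in B^{\alpha}_{p,q}$ and hence (Corollary~\ref{cor:EmbeddingVrInBesov}) agree with continuous representatives; thus $\bar Z$ may be taken continuous, and $\chi(s,t):=d(\bar Z_{s},\bar Z_{t})$ is continuous and subadditive in the sense of \eqref{cond:Subadditivity}.

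Since $\tfrac1\alpha<r\le p$, Theorem~\ref{theo:IntroVrBesovEstimate} applied to $\bar Z$ gives $\norm{V^{r}\bar Z}_{B^{\alpha}_{p,q}}\lesssim\norm{\bar Z}_{B^{\alpha}_{p,q}}$, with implied constant depending only on $\alpha,r,p,q$ (and $T$) and, crucially, \emph{not} on $N_{a},N_{b},N_{c},N_{d}$. Two elementary identities pass this to the $(2\alpha,p/2,q/2)$ scale: for nonnegative $\chi$ one has $V^{r}(\chi^{1/2})_{I}=(V^{r/2}\chi_{I})^{1/2}$ and $\norm{\chi^{1/2}}_{B^{\alpha}_{p,q}}=\norm{\chi}_{B^{2\alpha}_{p/2,q/2}}^{1/2}$. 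For the lower bound, $d(\bar Z_{s},\bar Z_{t})\ge\norm{Z_{s,t}}\ge(\abs{\Delta R^{\bfY,\bfX}_{s,t}}/N_{r})^{1/2}$ pointwise, so by monotonicity of $V^{r}$ and of $\norm{\cdot}_{B^{\alpha}_{p,q}}$ together with the identities above, $\norm{V^{r}\bar Z}_{B^{\alpha}_{p,q}}\ge N_{r}^{-1/2}\norm{V^{r/2}\Delta R^{\bfY,\bfX}}_{B^{2\alpha}_{p/2,q/2}}^{1/2}$. For the upper bound, $d(\bar Z_{s},\bar Z_{t})\le\sqrt3\,\norm{Z_{s,t}}$ is a constant times a maximum of five terms, and since $\norm{\cdot}_{B^{\alpha}_{p,q}}$ obeys a $\min(1,p,q)$-triangle inequality (Remark~\ref{rem:reduceQ}), using once more $\norm{(\abs{\Delta R^{\bfY,\bfX}}/N_{r})^{1/2}}_{B^{\alpha}_{p,q}}=N_{r}^{-1/2}\norm{\Delta R^{\bfY,\bfX}}_{B^{2\alpha}_{p/2,q/2}}^{1/2}$ we get
\[
N_{r}^{-1/2}\norm{V^{r/2}\Delta R^{\bfY,\bfX}}_{B^{2\alpha}_{p/2,q/2}}^{1/2}\lesssim\tfrac{\norm{\tilde Y'}_{B^{\alpha}_{p,q}}}{N_{a}}+\tfrac{\norm{\Delta Y'}_{B^{\alpha}_{p,q}}}{N_{b}}+\tfrac{\norm{X}_{B^{\alpha}_{p,q}}}{N_{c}}+\tfrac{\norm{\Delta X}_{B^{\alpha}_{p,q}}}{N_{d}}+\tfrac{\norm{\Delta R^{\bfY,\bfX}}_{B^{2\alpha}_{p/2,q/2}}^{1/2}}{N_{r}^{1/2}}.
\]

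It remains to pick the free constants. Multiplying through by $N_{r}^{1/2}$ and taking $N_{a}=\norm{\tilde Y'}_{B^{\alpha}_{p,q}}$, $N_{d}=\norm{\Delta X}_{B^{\alpha}_{p,q}}$, $N_{b}=\norm{\Delta Y'}_{B^{\alpha}_{p,q}}$, $N_{c}=\norm{X}_{B^{\alpha}_{p,q}}$ makes each of the first four terms equal to $N_{r}^{1/2}=(\norm{\tilde Y'}_{B^{\alpha}_{p,q}}\norm{\Delta X}_{B^{\alpha}_{p,q}}+\norm{\Delta Y'}_{B^{\alpha}_{p,q}}\norm{X}_{B^{\alpha}_{p,q}})^{1/2}$, so that $\norm{V^{r/2}\Delta R^{\bfY,\bfX}}_{B^{2\alpha}_{p/2,q/2}}^{1/2}\lesssim N_{r}^{1/2}+\norm{\Delta R^{\bfY,\bfX}}_{B^{2\alpha}_{p/2,q/2}}^{1/2}$; squaring gives the assertion. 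If one of these four Besov norms vanishes, run the argument with each $N_{\bullet}$ replaced by $\norm{\cdot}+\epsilon$ and let $\epsilon\downarrow0$, or simply drop the corresponding coordinate of $G_{3}$.

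\textbf{Main obstacle.} The substantive point is the handling of the free constants $N_{a},N_{b},N_{c},N_{d}$: they must be built into the group norm of $G_{3}$ \emph{before} invoking Theorem~\ref{theo:IntroVrBesovEstimate} (whose constant is blind to them) and optimized only at the very end, so that the four cross terms collapse precisely into the bilinear expression $\norm{\tilde Y'}_{B^{\alpha}_{p,q}}\norm{\Delta X}_{B^{\alpha}_{p,q}}+\norm{\Delta Y'}_{B^{\alpha}_{p,q}}\norm{X}_{B^{\alpha}_{p,q}}$ imposed by the form of the bilinear map $B$ in Example~\ref{ex:GroupForR}. Everything else — subadditivity and continuity of $\chi=d(\bar Z_{\cdot},\bar Z_{\cdot})$, the two half-power identities, and the quasi-norm bound for a maximum — is routine.
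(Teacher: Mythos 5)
Your proposal is correct and follows essentially the same route as the paper: represent $Z_{s,t}=((\tilde Y'_{s,t},\Delta Y'_{s,t},X_{s,t},\Delta X_{s,t}),\Delta R^{\bfY,\bfX}_{s,t})$ as increments of a path in the group $G_{3}$ of Example~\ref{ex:GroupForR} with the weights $N_{a},N_{b},N_{c},N_{d}$ built into the homogeneous norm, apply Theorem~\ref{theo:VrBesovEstimate}, and only then optimize the weights so that $N_{r}=N_{a}N_{d}+N_{b}N_{c}$ becomes the bilinear expression in the statement. Your additional remarks on the half-power identities, continuity of the representative, and the degenerate case of vanishing norms are consistent with (and slightly more explicit than) the paper's argument.
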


\begin{proof}
By Exampel \ref{ex:GroupForR}, $Z_{s,t} = ((\tilde Y'_{s,t}, \Delta Y'_{s,t},X_{s,t},\Delta X_{s,t}),\Delta R^{\bfY,\bfX}_{s,t})\in G_3$ is a path in a metric space. Thus, Theorem \ref{theo:VrBesovEstimate} gives us
\begin{equation}\label{ineq:VrBesovZ}
	\norm{V^r Z}_{B^\alpha_{p,q}} \lesssim \norm{Z}_{B^\alpha_{p,q}}\,.
\end{equation}
We chose the following constants for the norm of $G_3$:
\begin{align*}
N_a &= \norm{\tilde Y'}_{B^\alpha_{p,q}} \\
N_b &= \norm{\Delta Y'}_{B^\alpha_{p,q}} \\
N_a &= \norm{X}_{B^\alpha_{p,q}} \\
N_a &= \norm{\Delta X}_{B^\alpha_{p,q}}\,.
\end{align*}
Recall that $N_r = N_a N_d + N_b N_c$. By the definition of $\norm{\cdot}$ in $G_3$ and \eqref{ineq:VrBesovZ}, we have
\begin{align*}
\norm{V^r (\abs{\frac{R^{\bfY,\bfX}}{N_r}}^{\frac 12})}_{B^\alpha_{p,q}} &\lesssim \frac{\norm{\tilde Y'}_{B^\alpha_{p,q}}}{N_a} + \frac{\norm{\Delta Y'}_{B^\alpha_{p,q}}}{N_b} + \frac{\norm{X}_{B^\alpha_{p,q}}}{N_c} + \frac{\norm{\Delta X}_{B^\alpha_{p,q}}}{N_d} + \norm{\abs{\frac{R^{\bfY,\bfX}}{N_r}}^\frac 12}_{B^\alpha_{p,q}}\\
&\lesssim 1 + \norm{\abs{\frac{R^{\bfY,\bfX}}{N_r}}^\frac 12}_{B^\alpha_{p,q}}\,.
\end{align*}
Thus, we conclude
\begin{equation*}
\norm{V^{r/2} R^{\bfY,\bfX}}_{B^{2\alpha}_{p/2,q/2}} \lesssim N_r + \norm{R^{\bfY,\bfX}}_{B^{2\alpha}_{p/2,q/2}}\,.
\end{equation*}
The claim then follows from $N_r = \norm{\Delta Y'}_{B^\alpha_{p,q}} \norm{X}_{B^\alpha_{p,q}} + \norm{\tilde Y'}_{B^\alpha_{p,q}} \norm{\Delta X}_{B^\alpha_{p,q}}$.
\end{proof}

\noindent From this result, one immediately gets estimates for the nested norms of $\bbX,R^{\bfY,\bfX}$ as well as $\Delta\bbX$.

\begin{corollary}\label{cor:Vr_Besov_Embedding_Rough_Paths}
Let $\bfX,\tilde \bfX$ be $r$-rough paths and $\bfY,\tilde \bfY$ be controlled $r$-rough paths. Let $r\in[2,3)$ and assume $0<\frac 1\alpha \le p\le\infty$, $0<q\le \infty$.
Then:
\begin{align}
\norm{V^{r/2} \bbX}_{B^{2\alpha}_{p/2,q/2}} &\lesssim \norm{\bbX}_{B^{2\alpha}_{p/2,q/2}} + \norm{X}_{B^\alpha_{p,q}}^2 \label{ineq:VrBesovEmbeddingX}\\
\norm{V^{r/2} R^{\bfY,\bfX}}_{B^{2\alpha}_{p/2,q/2}} &\lesssim \norm{R^{\bfY,\bfX}}_{B^{2\alpha}_{p/2,q/2}} + \norm{Y'}_{B^\alpha_{p,q}}\norm{X}_{B^\alpha_{p,q}} \label{ineq:VrBesovEmbeddingR}\\
\norm{V^{r/2}\Delta\bbX}_{B^{2\alpha}_{p/2,q/2}} &\lesssim \norm{\Delta\bbX}_{B^{2\alpha}_{p/2,q/2}} + \norm{\Delta X}_{B^\alpha_{p,q}}(\norm{X}_{B^{\alpha}_{p,q}} + \norm{\tilde X}_{B^\alpha_{p,q}}) \label{ineq:VrBesovEmbeddingDeltaX}
\end{align}
\end{corollary}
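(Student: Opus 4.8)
The plan is to follow exactly the template of Proposition~\ref{prop:Vr-Besov-Embedding-RYX}: for each left-hand side I would realize the two-parameter function in question as the increment functional $s,t\mapsto\norm{Z_{s,t}}_{\mathrm{sym}}=d(Z_s,Z_t)$ of an honest path $Z$ in one of the metric groups of Examples~\ref{ex:GroupForX}--\ref{ex:GroupForR}, apply Theorem~\ref{theo:VrBesovEstimate} to the continuous subadditive function $\chi(s,t)=\norm{Z_{s,t}}_{\mathrm{sym}}$ to obtain $\norm{V^r\chi}_{B^\alpha_{p,q}}\lesssim\norm{\chi}_{B^\alpha_{p,q}}$, and then unwind this bound componentwise. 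The unwinding needs only three facts, all already available: that $\norm{\cdot}_{B^\alpha_{p,q}}$ is a $\min(1,p,q)$-quasinorm (Remark~\ref{rem:reduceQ}), so the $\max$ defining the group norm can be split into a sum of the (scaled) component norms at the cost of a constant; that $V^r$ is monotone under pointwise domination of increment functionals; and the elementary identities $V^r(\abs{A}^{1/2})_{s,t}=(V^{r/2}A_{s,t})^{1/2}$ and $\norm{\abs{A}^{1/2}}_{B^\alpha_{p,q}}=\norm{A}^{1/2}_{B^{2\alpha}_{p/2,q/2}}$, which transfer the level-one output of Theorem~\ref{theo:VrBesovEstimate} to the level-two scale $(2\alpha,p/2,q/2)$ and justify the final squaring step.

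Concretely, for \eqref{ineq:VrBesovEmbeddingX} I would take $Z=\bfX=(X,\bbX)$ in $G_1$ (Example~\ref{ex:GroupForX}), where $\norm{\bfX_{s,t}}_{\mathrm{sym}}\sim\abs{X_{s,t}}+\abs{\bbX_{s,t}}^{1/2}$: Theorem~\ref{theo:VrBesovEstimate} gives $\norm{V^r\bfX}_{B^\alpha_{p,q}}\lesssim\norm{\bfX}_{B^\alpha_{p,q}}\lesssim\norm{X}_{B^\alpha_{p,q}}+\norm{\bbX}^{1/2}_{B^{2\alpha}_{p/2,q/2}}$, the left side dominates $\norm{(V^{r/2}\bbX)^{1/2}}_{B^\alpha_{p,q}}=\norm{V^{r/2}\bbX}^{1/2}_{B^{2\alpha}_{p/2,q/2}}$, and squaring yields the claim (no rescaling of the $G_1$-norm is needed here, since the target $\norm{X}^2+\norm{\bbX}$ is precisely what squaring produces). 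For \eqref{ineq:VrBesovEmbeddingR} I would specialize Proposition~\ref{prop:Vr-Besov-Embedding-RYX} to $\tilde\bfX=\tilde\bfY=0$, i.e.\ run its proof with the reduced group $(L(E,F)\oplus E)\oplus F$ of Example~\ref{ex:GroupForR} carrying the path $Z_{s,t}=(Y'_{s,t},X_{s,t},R^{\bfY,\bfX}_{s,t})$, with scaling constants $N_a=\norm{Y'}_{B^\alpha_{p,q}}$, $N_c=\norm{X}_{B^\alpha_{p,q}}$, $N_r=N_aN_c$ (so $\norm{B}\le 1$), and then perform the normalize--apply--rearrange--square step exactly as at the end of the proof of Proposition~\ref{prop:Vr-Besov-Embedding-RYX}. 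For \eqref{ineq:VrBesovEmbeddingDeltaX} I would take $Z=F_{s,t}=(X_{s,t},\tilde X_{s,t},\Delta X_{s,t},\Delta\bbX_{s,t})$ in $G_2$ (Example~\ref{ex:GroupForDeltaX}) with the scalings $N:=\norm{X}_{B^\alpha_{p,q}}+\norm{\tilde X}_{B^\alpha_{p,q}}$ and $N_\Delta:=\norm{\Delta X}_{B^\alpha_{p,q}}$ built into the norm (so that $\norm{B}\le 2$), mirroring the proof of the last inequality of Lemma~\ref{lem:BesovRPAreVariationRP} verbatim but with Theorem~\ref{theo:VrBesovEstimate} replacing Corollary~\ref{cor:EmbeddingVrInBesov}.

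The step I expect to need the most care — the main (minor) obstacle — is the degenerate case of \eqref{ineq:VrBesovEmbeddingR} and \eqref{ineq:VrBesovEmbeddingDeltaX} in which one of the normalizing Besov norms ($\norm{Y'}_{B^\alpha_{p,q}}$ or $\norm{X}_{B^\alpha_{p,q}}$ in the first, $\norm{\Delta X}_{B^\alpha_{p,q}}$ or $\norm{X}_{B^\alpha_{p,q}}+\norm{\tilde X}_{B^\alpha_{p,q}}$ in the second) vanishes, so that the corresponding component of $Z$ is constant and the scaling is ill-defined. Because the constant furnished by Theorem~\ref{theo:VrBesovEstimate} and the bound $\norm{B}\le 2$ are scale-invariant, I would handle this by carrying out the argument with $N$, $N_\Delta$, $N_a$, $N_c$ replaced by $N+\varepsilon$, etc., and letting $\varepsilon\downarrow 0$; alternatively, when e.g.\ $\norm{X}_{B^\alpha_{p,q}}=0$ the continuous representative of $X$ is constant by Corollary~\ref{cor:EmbeddingVrInBesov}, hence $\bbX$ is additive by Chen's identity and $\abs{\bbX}$ subadditive, so the inequality collapses to $\norm{V^{r/2}\bbX}_{B^{2\alpha}_{p/2,q/2}}\lesssim\norm{\bbX}_{B^{2\alpha}_{p/2,q/2}}$, which is Theorem~\ref{theo:VrBesovEstimate} applied with exponent $r/2$ and parameters $(2\alpha,p/2,q/2)$. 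Everything else is bookkeeping already done in this subsection.
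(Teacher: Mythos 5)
Your proposal is correct and follows essentially the same route as the paper: both rest on the metric-group constructions of Examples~\ref{ex:GroupForX}--\ref{ex:GroupForR} together with Theorem~\ref{theo:VrBesovEstimate}, the rescaling by constants $N$, $N_\Delta$ (resp.\ $N_a,\dots,N_d$), and the normalize--apply--rearrange--square step, your $\varepsilon$-regularization remark for vanishing normalizing norms being a harmless refinement the paper leaves implicit. The only cosmetic difference is that the paper obtains all three inequalities in one line by specializing Proposition~\ref{prop:Vr-Besov-Embedding-RYX}, using that $\bbX_{s,t}=R^{\bbX,\bfX}_{s,t}$ (Example~\ref{ex:RPAreControlledRP}) and setting $\tilde X,\tilde Y=0$ where needed, whereas you re-run the argument for \eqref{ineq:VrBesovEmbeddingX} and \eqref{ineq:VrBesovEmbeddingDeltaX} directly in $G_1$ and $G_2$ --- same machinery, equally valid.
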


\begin{proof}
This is a direct consequence of Proposition \ref{prop:Vr-Besov-Embedding-RYX} using that $\bbX$ is an $\bfX$-controlled rough path with $\bbX_{s,t} = R^{\bbX,\bfX}_{s,t}$ (see Example~\ref{ex:RPAreControlledRP}), and setting $\tilde X,\tilde Y = 0$ whenever necessary.
\end{proof}

\noindent We conclude this section by showing a technical inequality necessary for Theorem \ref{thm:LipschitzBesovEstimateRDE}. In the proof of said proposition, we are in the situation that we need to estimate $V^{r/2} \Delta\bbX$ in the $B^{\alpha}_{p,q}$-norm instead of the $B^{2\alpha}_{p/2,q/2}$-norm, as the scaling of $\Delta\bbX$ would imply. Thanks to Example \ref{ex:GroupForDeltaX}, this can easily be done with classical Besov embeddings.

\begin{proposition}\label{prop:VrBesovEmbeddingDeltaX2}
Let $\bfX,\tilde\bfX$ be $r$-rough paths for $r\in[2,3)$ and assume that $0<\frac 1\alpha <r\le p\le \infty$ and $1\le p\le \infty, \frac 12\le q\le\infty$. Then:
\begin{equation}\label{ineq:VrBesovEmbeddingDeltaX2}
\norm{V^{r/2} \Delta \bbX}_{B^\alpha_{p,q}} \lesssim (\norm{X}_{B^\alpha_{p,q}} + \norm{\tilde X}_{B^\alpha_{p,q}})\norm{\Delta X}_{B^\alpha_{p,q}} + \norm{\Delta \bbX}_{B^{2\alpha}_{p/2,q/2}}
\end{equation}
\end{proposition}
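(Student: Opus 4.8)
The plan is to turn the two-parameter map $(s,t)\mapsto V^{r/2}\Delta\bbX_{[s,t]}$ into a \emph{subadditive} object by passing through the metric group $G_2$ of Example~\ref{ex:GroupForDeltaX}, and then to combine the classical Besov embeddings (Lemmas~\ref{lem:Besov-Embedding-Q-Alpha}, \ref{lem:Besov-Embedding-P}, \ref{lem:Besov-Embedding-Q}, Proposition~\ref{prop:Besov-Embedding-P-Alpha}) with Theorem~\ref{theo:VrBesovEstimate}. Fix constants $N,N_\Delta>0$, to be chosen at the end, and equip $G_2$ with the norm of Example~\ref{ex:GroupForDeltaX} carrying these constants; let $\hat h_{s,t}:=d(F_s,F_t)=\norm{F_{s,t}}_{\mathrm{sym}}$ be the increment metric of the $G_2$-valued path $F_{s,t}=(X_{s,t},\tilde X_{s,t},\Delta X_{s,t},\Delta\bbX_{s,t})$. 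By Corollary~\ref{cor:MetricInG}, $\hat h$ is subadditive, hence so is $(s,t)\mapsto V^r\hat h_{[s,t]}$, which is moreover monotone (Remark~\ref{rem:VariationIsSubadditive}); write $G:=V^r\hat h$. Since $\norm{F_{s,t}}\ge(\abs{\Delta\bbX_{s,t}}/(NN_\Delta))^{1/2}$ and $\norm{\cdot}\le\norm{\cdot}_{\mathrm{sym}}$, we get $\abs{\Delta\bbX_{s,t}}^{1/2}\le(NN_\Delta)^{1/2}\hat h_{s,t}$, and using $V^r(\abs{\Delta\bbX}^{1/2})_{[s,t]}=(V^{r/2}\Delta\bbX_{[s,t]})^{1/2}$ this gives the pointwise bound
\[
V^{r/2}\Delta\bbX_{[s,t]}\le NN_\Delta\,\big(G_{[s,t]}\big)^2 .
\]

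Next I would pass to Besov norms. The functional $\norm{\cdot}_{B^\alpha_{p,q}}$ is monotone with respect to the pointwise order on nonnegative two-parameter functions and satisfies the elementary identity $\norm{g^2}_{B^\alpha_{p,q}}=\norm{g}^2_{B^{\alpha/2}_{2p,2q}}$ (immediate from $\Omega_p^{g^2}(t)=(\Omega_{2p}^g(t))^2$), so the above bound yields $\norm{V^{r/2}\Delta\bbX}_{B^\alpha_{p,q}}\le NN_\Delta\,\norm{G}^2_{B^{\alpha/2}_{2p,2q}}$. Now set $\tilde\alpha:=\min\big(\alpha,\tfrac\alpha2-\tfrac1{2p}+\tfrac1r\big)$ and let $\tilde p\in[r,p]$ be the exponent with $\tfrac1{\tilde p}=\tilde\alpha-\tfrac\alpha2+\tfrac1{2p}$; using $\tfrac1\alpha<r\le p$ (and that we may assume $\alpha\le1$, the regime relevant for level-$2$ rough paths) one checks $\tfrac\alpha2\le\tilde\alpha\le\alpha\le1$, $\tfrac1{\tilde\alpha}<r$, and the scaling relation $\tfrac\alpha2-\tfrac1{2p}=\tilde\alpha-\tfrac1{\tilde p}$. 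Since $G$ is subadditive and monotone and $2q\ge1$, Proposition~\ref{prop:Besov-Embedding-P-Alpha} — together with the equivalence of $\norm{\cdot}_{B^\alpha_{p,q}}$ and $\norm{\cdot}^*_{B^\alpha_{p,q}}$ for subadditive functions — gives $\norm{G}_{B^{\alpha/2}_{2p,2q}}\lesssim\norm{G}_{B^{\tilde\alpha}_{\tilde p,2q}}$; then Lemma~\ref{lem:Besov-Embedding-Q} lowers the third index to $\norm{G}_{B^{\tilde\alpha}_{\tilde p,q}}$, and Theorem~\ref{theo:VrBesovEstimate} applied to $G=V^r\hat h$ (legitimate since $\tfrac1{\tilde\alpha}<r\le\tilde p$) gives $\norm{G}_{B^{\tilde\alpha}_{\tilde p,q}}\lesssim\norm{\hat h}_{B^{\tilde\alpha}_{\tilde p,q}}=\norm{F}_{B^{\tilde\alpha}_{\tilde p,q}}$.

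It remains to estimate $\norm{F}_{B^{\tilde\alpha}_{\tilde p,q}}$. From $\norm{F_{s,t}}_{\mathrm{sym}}\lesssim\tfrac{\abs{X_{s,t}}+\abs{\tilde X_{s,t}}}N+\tfrac{\abs{\Delta X_{s,t}}}{N_\Delta}+\tfrac{\abs{\Delta\bbX_{s,t}}^{1/2}}{(NN_\Delta)^{1/2}}$ and the quasi-triangle inequality for $\norm{\cdot}_{B^{\tilde\alpha}_{\tilde p,q}}$, this splits into four pieces: the three $V^r$-level pieces are bounded by $\norm{X}_{B^\alpha_{p,q}}/N$, $\norm{\tilde X}_{B^\alpha_{p,q}}/N$, $\norm{\Delta X}_{B^\alpha_{p,q}}/N_\Delta$ via Lemmas~\ref{lem:Besov-Embedding-P} and \ref{lem:Besov-Embedding-Q-Alpha} (using $\tilde p\le p$, $\tilde\alpha\le\alpha$), while the $\bbX$-level piece is treated via $\norm{\abs{\Delta\bbX}^{1/2}}_{B^{\tilde\alpha}_{\tilde p,q}}=\norm{\Delta\bbX}^{1/2}_{B^{2\tilde\alpha}_{\tilde p/2,q/2}}$ followed by the same two lemmas (now $\tilde p/2\le p/2$, $2\tilde\alpha\le2\alpha$, with third index $q/2$ already matching), giving $\lesssim\norm{\Delta\bbX}^{1/2}_{B^{2\alpha}_{p/2,q/2}}/(NN_\Delta)^{1/2}$. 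Assembling,
\[
\norm{V^{r/2}\Delta\bbX}_{B^\alpha_{p,q}}\lesssim NN_\Delta\Big(\tfrac{\norm{X}_{B^\alpha_{p,q}}+\norm{\tilde X}_{B^\alpha_{p,q}}}N+\tfrac{\norm{\Delta X}_{B^\alpha_{p,q}}}{N_\Delta}+\tfrac{\norm{\Delta\bbX}^{1/2}_{B^{2\alpha}_{p/2,q/2}}}{(NN_\Delta)^{1/2}}\Big)^2 ,
\]
and the choice $N:=\norm{X}_{B^\alpha_{p,q}}+\norm{\tilde X}_{B^\alpha_{p,q}}$, $N_\Delta:=\norm{\Delta X}_{B^\alpha_{p,q}}$ (the degenerate cases where one of these vanishes being disposed of directly, or by an $\varepsilon$-perturbation as in Lemma~\ref{lem:BesovRPAreVariationRP}) collapses the right-hand side to $NN_\Delta+\norm{\Delta\bbX}_{B^{2\alpha}_{p/2,q/2}}$, which is \eqref{ineq:VrBesovEmbeddingDeltaX2}.

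The main obstacle is the device in the first paragraph: $\Delta\bbX$, and hence $V^{r/2}\Delta\bbX$, fails to be subadditive because Chen's identity for the difference carries the cross terms $\Delta X_{s,u}\otimes X_{u,t}$ and $\tilde X_{s,u}\otimes\Delta X_{u,t}$, so Proposition~\ref{prop:Besov-Embedding-P-Alpha} cannot be applied to it directly; one must first dominate $(V^{r/2}\Delta\bbX)^{1/2}$ by the increment metric of $G_2$, whose $\mathrm{sym}$-norm \emph{is} subadditive, so that $G=V^r\hat h$ becomes an admissible input for the monotone–subadditive Sobolev-type embedding. The remaining delicate point is the bookkeeping of Besov exponents: Proposition~\ref{prop:Besov-Embedding-P-Alpha} needs a third index $\ge1$, which is why it is applied while the third index is still $2q$ and only afterwards reduced to $q$; and the auxiliary exponent $\tilde\alpha$ forced by the scaling constraint must simultaneously satisfy $\tilde\alpha\le\alpha$ (to compare with $\norm{X}_{B^\alpha_{p,q}}$ and $\norm{\Delta\bbX}_{B^{2\alpha}_{p/2,q/2}}$) and $\tfrac1{\tilde\alpha}<r$ (to invoke Theorem~\ref{theo:VrBesovEstimate}), and verifying that such $\tilde\alpha,\tilde p$ exist is exactly where the hypothesis $\tfrac1\alpha<r\le p$ is used.
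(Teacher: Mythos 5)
Your argument is correct, and its skeleton is the paper's: you pass through the group $G_2$ of Example~\ref{ex:GroupForDeltaX} with the weighted norm carrying $N,N_\Delta$, exploit that the increment process is subadditive and its local variation monotone, apply Proposition~\ref{prop:Besov-Embedding-P-Alpha} at the halved scale $B^{\alpha/2}_{2p,2q}$ (which is exactly where the hypotheses $p\ge1$, $q\ge\frac12$ enter), and finish with the same choice $N=\norm{X}_{B^\alpha_{p,q}}+\norm{\tilde X}_{B^\alpha_{p,q}}$, $N_\Delta=\norm{\Delta X}_{B^\alpha_{p,q}}$. The one genuine divergence is the middle of the chain: the paper moves from $\norm{V^rF}_{B^{\alpha/2}_{2p,2q}}$ to $\norm{V^rF}_{B^{\frac12(\alpha+1/p)}_{p,2q}}$ and then up to $\norm{V^rF}_{B^{\alpha}_{p,q}}$, keeps the \emph{nested} norms of the components, and closes by re-invoking Corollary~\ref{cor:Vr_Besov_Embedding_Rough_Paths} to control $\norm{V^{r/2}\Delta\bbX}_{B^{2\alpha}_{p/2,q/2}}$; you instead choose intermediate exponents $(\tilde\alpha,\tilde p)$ satisfying the scaling relation together with $\frac1{\tilde\alpha}<r\le\tilde p$, strip the outer variation there via Theorem~\ref{theo:VrBesovEstimate}, and land directly on the plain Besov norms of $X,\tilde X,\Delta X,\Delta\bbX$, so Corollary~\ref{cor:Vr_Besov_Embedding_Rough_Paths} is never needed. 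What your route buys is a more self-contained proof (only the embeddings plus Theorem~\ref{theo:VrBesovEstimate} are used) at the cost of the extra exponent bookkeeping for $\tilde\alpha,\tilde p$, which the paper sidesteps by staying with nested norms; your verification of $\frac\alpha2\le\tilde\alpha\le\alpha$, $\tilde p\in[r,p]$ and $\frac1{\tilde\alpha}<r$ is sound (note that in the case $\tilde\alpha=\alpha$ one automatically has $\alpha\le\frac2r-\frac1p<1$, so the explicit restriction $\alpha\le1$ is only needed in the other case, where it matches the implicit restriction $\tilde\alpha\le1$ that the paper's application of Proposition~\ref{prop:Besov-Embedding-P-Alpha} also requires). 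The remaining ingredients you use — the identity $\norm{g^2}_{B^\alpha_{p,q}}=\norm{g}^2_{B^{\alpha/2}_{2p,2q}}$, the pointwise domination of $(V^{r/2}\Delta\bbX)^{1/2}$ by $V^r$ of the group metric, and the equivalence of $\norm{\cdot}_{B^\alpha_{p,q}}$ with $\norm{\cdot}^*_{B^\alpha_{p,q}}$ for subadditive inputs — are all legitimate and match the paper's usage.
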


\begin{proof}
The proof works similarly to the one of Lemma \ref{lem:BesovRPAreVariationRP}. We consider the process $F_{s,t} = ( X_{s,t},\tilde X_{s,t},\Delta X_{s,t}, \Delta\bbX_{s,t})$ in $G_2$ from Example \ref{ex:GroupForDeltaX}. Note that $V^r F$ is monotone in the sense of \eqref{cond:Monotonicity} and subadditive in the sense of \eqref{cond:Subadditivity}. Thus, we can use the Besov embedding given by Proposition \ref{prop:Besov-Embedding-P-Alpha} to see
\[
\norm{V^r F}_{B^{\alpha/2}_{2p,2q}} \lesssim \norm{V^r F}_{B^{\frac 12(\alpha+1/p)}_{p,2q}}\,.
\]
Using $\frac 12(\alpha+1/p)\le \alpha$ as well as Proposition \ref{lem:Besov-Embedding-Q} and \ref{lem:Besov-Embedding-Q-Alpha}, we see that
\[
\norm{V^r F}_{B^{\alpha/2}_{2p,2q}} \lesssim \norm{V^r F}_{B^{\alpha}_{p,q}}\,.
\]
We then have for any $N,N_\Delta > 0$
\begin{align*}
\norm{V^{r/2} \Delta\bbX/(NN_\Delta)}_{B^\alpha_{p,q}}^{\frac 12} &\le \norm{V^r F}_{B^{\alpha/2}_{2p,2q}} \\
&\lesssim \norm{V^r F}_{B^\alpha_{p,q}}\\
&\lesssim \frac{\norm{V^r X}_{B^\alpha_{p,q}}}N +\frac{\norm{V^r \tilde X}_{B^\alpha_{p,q}}}N + \frac{\norm{V^r \Delta X}_{B^\alpha_{p,q}}}{N_\Delta} + \left(\frac{\norm{V^{r/2} \Delta\bbX}_{B^{2\alpha}_{p/2,q/2}}}{NN_\Delta}\right)^{\frac 12}\,.  
\end{align*}
We chose the constants
\[
N := \norm{X}_{B^\alpha_{p,q}}+\norm{\tilde X}_{B^\alpha_{p,q}},
\quad
N_{\Delta} := \norm{\Delta X}_{B^\alpha_{p,q}}\,.
\]
By this choice and Corollary \ref{cor:Vr_Besov_Embedding_Rough_Paths}, we have
\[
	\frac{\norm{V^r X}_{B^\alpha_{p,q}}}N +\frac{\norm{V^r \tilde X}_{B^\alpha_{p,q}}}N + \frac{\norm{V^r \Delta X}_{B^\alpha_{p,q}}}{N_\Delta}\lesssim 1\,,
\]
implying
\begin{equation}\label{ineq:BesovEmbeddingForDeltaX}
\norm{V^{r/2} \Delta\bbX}_{B^\alpha_{p,q}} \lesssim N N_\Delta +\norm{V^{r/2}\Delta\bbX}_{B^{2\alpha}_{p/2,q/2}}\,.
\end{equation}
Applying Corollary \ref{cor:Vr_Besov_Embedding_Rough_Paths} one more time to estimate $\norm{V^{r/2}\Delta\bbX}_{B^{2\alpha}_{p/2,q/2}}$ shows the claim.
\end{proof}

\begin{remark}
Note that the right-hand side of \eqref{ineq:VrBesovEmbeddingDeltaX} and \eqref{ineq:VrBesovEmbeddingDeltaX2} are the same. By \eqref{ineq:BesovEmbeddingForDeltaX}, one can replace the $B^{2\alpha}_{p/2,q/2}$-norm of $V^{r/2} \Delta\bbX$ with $B^\alpha_{p,q}$ at the cost of adding $N N_\Delta$, which gets absorbed in the constant in $\lesssim$ in \eqref{ineq:VrBesovEmbeddingDeltaX}.
\end{remark}

\section{Sewing Lemma}\label{sec:Sewing}

The sewing lemma was originally introduced by Feyel and de La Pradelle \cite{MR2261056, Feyel2008} as well as Gubinelli \cite{MR2091358}.
It gives a criterion for convergence of Riemann sums and generalizes earlier work by Young \cite{MR1555421}.
It is widely used in rough paths theory \cite{Lyo98} to construct and estimate rough integrals.

In this section, we extend this lemma to the case of $p$-metrics.
This allows one to recover the Besov sewing lemma formulated in \cite{FS22} as a simple corollary of the variation sewing lemma.
We show this fact in Theorem \ref{theo:BesovSewing}.
We also recover a version of the sewing lemma from \cite{MR3770049}, which we present in Theorem \ref{thm:sewing}.

\subsection{Sewing in the $r$-variation setting}

Let $A:\Delta_T\to E$ be a two-parameter process into some Banach space $E$. For any $r>0$, we define the $r$-variation of its $\delta$-operator defined in \eqref{eq:DeltaOperator} by
\[
V^r \delta A := \sup_\pi \left(\sum_{j=1}^{k-2} \abs{\delta A_{\pi_j,\pi_{j+1},\pi_{j+2}}}^r\right)^{\frac 1r}\,,
\]
where the supremum runs over all partitions $\pi = \{0=\pi_1<\dots<\pi_k = T\}$ of $[0,T]$. Note that we explicitly allow $r<1$. Given a partition $\pi$, we consider the Riemann sum
\[
\calI^\pi A_{0,T} := \sum_{j=1}^{k-1} A_{\pi_j,\pi_{j+1}}\,.
\]
We present a sewing lemma with a short proof. As noted already by Young in \cite{MR1555421}, any $2$-parameter process whose $\delta$-operator has finite $r$-variation for $r<1$ satisfies a bound on its Riemann sums. Before we show this, let us recall the definition of a $p$-metric.

\begin{definition}\label{def:pNorm}
Let $(E,+)$ be a commutative group and $p\in(0,1]$. Then we call a map $\abs{\cdot}: E\to\bbR_+$ a $p$-norm, if for all $x,y\in E$
\begin{align*}
	\abs{x} &= 0 \qquad\Leftrightarrow \quad x = 0\\
	\abs{x+y}^p &\le \abs{x}^p + \abs{y}^p\,.
\end{align*} 
\end{definition}

\noindent By the Aoki--Rolewicz theorem, see e.g.\ \cite[Theorem 1.2]{MR808777}, any quasinorm is equivalent to a $p$-norm for some $p \in (0,1]$. We set for any $\theta > 0$
\begin{equation}\label{eq:Zeta}
	\zeta(\theta) := \sum_{k=1}^{\infty} k^{-\theta}\,.
\end{equation}

\begin{lemma}[Sewing, cf.\ {\cite[\textsection 5]{MR1555421}}]
\label{lem:sewing-bound}
Let $0<r<p\leq 1$.
Let $E$ be a commutative group equipped with a $p$-norm $\abs{\cdot}$.
Let $A : \Delta_T \to E$ with
\[
V^r\delta A  < \infty.
\]
Then, for any partition $\pi$ of $[0,T]$, we have
\[
\abs{ \calI^{\pi}A_{0,T}-A_{0,T}}
\leq
\zeta(p/r)^{1/p} V^{r}\delta A\,.
\]
\end{lemma}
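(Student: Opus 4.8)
The plan is to run the classical Young ``coarsening'' argument: starting from the given partition $\pi = \{0 = t_{0} < t_{1} < \dots < t_{N} = T\}$ (which I assume contains the endpoints), I would delete interior points one at a time until only $\{0,T\}$ remains, and control the change in the Riemann sum at each deletion. The trivial cases $N \le 2$ are handled directly (for $N=1$ there is nothing to prove, and for $N=2$ the single deletion contributes at most $V^{r}\delta A$), so assume $N \ge 2$ and proceed inductively.

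First I would record the basic \emph{one-point deletion identity}: if $\{0 = v_{0} < v_{1} < \dots < v_{m} = T\}$ is a partition and $1 \le i \le m-1$, then deleting $v_{i}$ changes the Riemann sum by exactly $\pm\delta A_{v_{i-1},v_{i},v_{i+1}}$, since $A_{v_{i-1},v_{i}} + A_{v_{i},v_{i+1}}$ is replaced by $A_{v_{i-1},v_{i+1}}$. The key step is then a pigeonhole argument: at a stage with $m$ subintervals there are $m-1$ interior points, and the corresponding defects $\delta A_{v_{i-1},v_{i},v_{i+1}}$, $i = 1,\dots,m-1$, are precisely the consecutive-triple increments of the \emph{current} partition. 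Since each intermediate partition is again a partition of $[0,T]$, the defining supremum in $V^{r}\delta A$ applies and yields $\sum_{i=1}^{m-1}\abs{\delta A_{v_{i-1},v_{i},v_{i+1}}}^{r} \le (V^{r}\delta A)^{r}$, so some interior point $v_{i^{*}}$ satisfies $\abs{\delta A_{v_{i^{*}-1},v_{i^{*}},v_{i^{*}+1}}} \le (m-1)^{-1/r}\, V^{r}\delta A$. I would delete that point and repeat.

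Finally I would accumulate the errors. Reducing from $N$ subintervals down to $1$ involves one deletion at each value $m = N, N-1, \dots, 2$, so by $p$-subadditivity of the $p$-norm and the bound from the previous step,
\[
\abs{\calI^{\pi}A_{0,T} - A_{0,T}}^{p}
\le \sum_{m=2}^{N} \bigl((m-1)^{-1/r}\, V^{r}\delta A\bigr)^{p}
= (V^{r}\delta A)^{p} \sum_{k=1}^{N-1} k^{-p/r}
\le \zeta(p/r)\,(V^{r}\delta A)^{p},
\]
where the series converges because $p/r > 1$. Taking $p$-th roots gives the claimed estimate, uniformly in $N$ and $\pi$.

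The argument is mostly routine; the one point that genuinely needs care is the pigeonhole step, namely the observation that after each deletion the coarsened partition is still a legitimate partition of $[0,T]$, so that the supremum defining $V^{r}\delta A$ continues to bound the sum of $r$-th powers of its consecutive-triple increments — this is exactly what feeds the $(m-1)^{-1/r}$ estimate at every stage and, after summation, produces the constant $\zeta(p/r)^{1/p}$.
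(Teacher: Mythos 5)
Your proposal is correct and follows essentially the same route as the paper: delete one interior point at a time, chosen by pigeonhole over the consecutive-triple defects of the current partition (which is still a partition of $[0,T]$, so the supremum defining $V^r\delta A$ applies), and accumulate the deletion errors via $p$-subadditivity into $\sum_{k\ge 1}k^{-p/r}=\zeta(p/r)$. The paper organizes this as an induction on the number of partition points rather than a descending deletion loop, but the mechanism and the resulting constant $\zeta(p/r)^{1/p}$ are identical (your count $(m-1)^{-1/r}$ of the available triples is in fact the more careful one).
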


\begin{proof}
By induction on the partition size $k\geq 2$, we will show that, for every partition $0=\pi_{1}<\dotsb<\pi_{k}=T$, we have
\[
\abs{A_{0,T} - \calI^{\pi}A_{0,T}}^{p}
\leq
\bigl(\sum_{j=1}^{k-1} j^{-p/r}\bigr) (V^{r}\delta A)^{p},
\]
For $k=2$, we have $\calI^{\pi} A_{0,T} = A_{0,T}$, which serves as the induction base.

Suppose that the claim is known for all partitions of size $k$ and let $0=\pi_{0}<\dotsb<\pi_{k}=T$ be a partition of size $k+1$.
Since
\[
\sum_{j=0}^{k-1} \abs{\delta A_{\pi_{j},\pi_{j+1},\pi_{j+2}}}^{r}
\leq
(V^{r}\delta A)^{r},
\]
there exists $j$ such that
\[
\abs{\delta A_{\pi_{j},\pi_{j+1},\pi_{j+2}}}
\leq
k^{-1/r} V^{r}\delta A.
\]
Let $\pi' := \pi \setminus \Set{\pi_{j+1}}$, so that
\[
\calI^{\pi} A_{0,T} - \calI^{\pi'} A_{0,T}
=
\delta A_{\pi_{j},\pi_{j+1},\pi_{j+2}}.
\]
Then, by the inductive hypothesis,
\begin{align*}
\abs{A_{0,T} - \calI^{\pi} A_{0,T}}^{p}
&\leq
\abs{A_{0,T} - \calI^{\pi'} A_{0,T}}^{p}
+
\abs{\calI^{\pi'} A_{0,T} - \calI^{\pi} A_{0,T}}^{p}
\\ &\leq
\bigl(\sum_{j=1}^{k-1} j^{-p/r}\bigr) (V^{r}\delta A)^{p}
+
k^{-p/r} (V^{r}\delta A)^{p}
\\ &=
\bigl(\sum_{j=1}^{k} j^{-p/r}\bigr) (V^{r}\delta A)^{p}.
\end{align*}
This finishes the inductive step.
\end{proof}

\noindent Recall that partitions $\pi$ of $[0,T]$ are a directed set with the preorder given by refinements. Thus, $\pi\mapsto \calI^\pi A_{0,T}$ is a net. We denote the net limit over partitions (also called RRS limit in \cite{MR3770049}) with $\lim_{\pi}$. One can show the following sewing result for net limits:

\begin{lemma}
\label{lem:sewing-limit}
Let $0 < r < p \leq 1$ and let $E$ be a complete $p$-metric group.
Let $A : \Delta_T \to E$ be such that $V^{r}\delta A < \infty$ and
\begin{equation}\label{eq:continuityDeltaA}
\lim_{\pi}\sup_{j, \pi_{j}\leq s<t<u \leq \pi_{j+1}} \abs{\delta A_{s,t,u}} = 0.
\end{equation}
Then, the following net limit exists,
\[
\calI A_{0,T}:=\lim_{\pi} \calI^\pi A_{0,T},
\]
and one has the estimate
\[
\abs{ \calI A_{0,T}- A_{0,T}}
\leq
\zeta(p/r)^{1/p} V^{r}\delta A.
\]
\end{lemma}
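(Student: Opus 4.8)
The plan is to show that the net $\pi\mapsto\calI^\pi A_{0,T}$ over the directed set of partitions of $[0,T]$ is Cauchy; since $E$ is complete, this produces the net limit $\calI A_{0,T}$, and the stated bound then follows by passing to the limit in Lemma~\ref{lem:sewing-bound}. For the latter one only needs that $w\mapsto|w-A_{0,T}|$ is continuous for the $p$-metric, which is immediate from the $p$-norm inequality $\bigl||w_1-A_{0,T}|^p-|w_2-A_{0,T}|^p\bigr|\le|w_1-w_2|^p$.

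To prove the Cauchy property, fix $\eta>0$ and choose, by \eqref{eq:continuityDeltaA}, a partition $\pi_\eta$ such that $|\delta A_{s,t,u}|<\eta$ whenever $s<t<u$ all lie in one subinterval of $\pi_\eta$. Because any two partitions have a common refinement and $|\calI^\pi A_{0,T}-\calI^{\pi'}A_{0,T}|^p\le|\calI^\pi A_{0,T}-\calI^{\pi\cup\pi'}A_{0,T}|^p+|\calI^{\pi\cup\pi'}A_{0,T}-\calI^{\pi'}A_{0,T}|^p$, it suffices to bound $|\calI^{\tilde\pi}A_{0,T}-\calI^\pi A_{0,T}|$ for $\pi_\eta\subseteq\pi\subseteq\tilde\pi$. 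Writing $[a,b]$ for the consecutive pairs of $\pi$, one has
\[
\calI^{\tilde\pi}A_{0,T}-\calI^\pi A_{0,T}=\sum_{[a,b]}\bigl(\calI^{\tilde\pi\cap[a,b]}A_{a,b}-A_{a,b}\bigr).
\]
Since $\pi$ refines $\pi_\eta$, each $[a,b]$ lies inside a single subinterval of $\pi_\eta$, so \emph{every} triple $\delta A_{s,t,u}$ with $s,t,u\in[a,b]$ has $|\delta A_{s,t,u}|<\eta$. Re-running the induction from the proof of Lemma~\ref{lem:sewing-bound} on $[a,b]$ — at the step removing a point from a partition of size $k+1$, the removed increment can be bounded by $\min(\eta,\,k^{-1/r}V^r\delta A_{[a,b]})$ — gives
\[
|\calI^{\tilde\pi\cap[a,b]}A_{a,b}-A_{a,b}|^p\le\sum_{k\ge1}\min\bigl(\eta,\,k^{-1/r}V^r\delta A_{[a,b]}\bigr)^p\le C(p/r)\,\eta^{p-r}\,\bigl(V^r\delta A_{[a,b]}\bigr)^r,
\]
the last step by splitting the sum at $k\asymp(V^r\delta A_{[a,b]}/\eta)^r$ and using $p/r>1$.

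It remains to sum over $[a,b]\in\pi$. The map $[s,t]\mapsto(V^r\delta A_{[s,t]})^r$ is superadditive: the triples of any partition of $[s,u]$ together with those of any partition of $[u,t]$ form a subfamily of the triples of their union, which is a partition of $[s,t]$ containing $u$. Hence $\sum_{[a,b]\in\pi}(V^r\delta A_{[a,b]})^r\le(V^r\delta A)^r$, and therefore $|\calI^{\tilde\pi}A_{0,T}-\calI^\pi A_{0,T}|^p\le C(p/r)\,\eta^{p-r}\,(V^r\delta A)^r$. Since $V^r\delta A<\infty$ and $p>r$, the right-hand side tends to $0$ as $\eta\to0$, which establishes the Cauchy property and completes the proof. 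The only delicate point is the single-interval estimate: a priori one only controls the remainder on $[a,b]$ by $V^r\delta A_{[a,b]}$, whose maximum over $[a,b]\in\pi$ need not vanish (the variation may have atoms), so one must use \eqref{eq:continuityDeltaA} to see that the individual increments inside $[a,b]$ are uniformly below $\eta$ and feed this into the sewing induction; the two-regime split then converts what would be a non-vanishing $\max_{[a,b]}$ into the vanishing factor $\eta^{p-r}$.
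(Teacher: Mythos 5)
Your argument is correct, and it has the same skeleton as the paper's proof: reduce the Cauchy property of the net to nested partitions via a common refinement, decompose the difference over the subintervals $[a,b]$ of the coarser partition, bound each local term by a sewing estimate that combines the uniform smallness from \eqref{eq:continuityDeltaA} with the $r$-variation, and sum using superadditivity of $[s,t]\mapsto (V^r\delta A_{[s,t]})^r$ before invoking completeness and a final application of Lemma~\ref{lem:sewing-bound}. The one place where you diverge is how the vanishing factor is extracted. The paper keeps Lemma~\ref{lem:sewing-bound} as a black box: it picks an intermediate exponent $r'\in(r,p)$, applies the lemma at exponent $r'$ on each $[a,b]$, and then interpolates $V^{r'}\delta A \le \bigl(\sup\abs{\delta A}\bigr)^{1-r/r'}\bigl(V^{r}\delta A\bigr)^{r/r'}$, so the mesh term enters through the supremum raised to the power $p(1-r/r')$. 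You instead re-open the induction of Lemma~\ref{lem:sewing-bound}, bound each removed increment by $\min\bigl(\eta, k^{-1/r}V^r\delta A_{[a,b]}\bigr)$, and split the resulting series at $k\asymp (V^r\delta A_{[a,b]}/\eta)^{r}$ to get the factor $\eta^{p-r}$; this avoids the auxiliary exponent at the cost of not being a black-box use of the lemma, and it requires (as you note) justifying that every triple removed during the induction lies in a single subinterval of $\pi_\eta$. Both routes exploit $p>r$ in an essentially equivalent way; a small bonus of your write-up is that it spells out the superadditivity of $(V^r\delta A_{[\cdot,\cdot]})^r$, which the paper uses without comment, and the continuity of $w\mapsto\abs{w-A_{0,T}}$ needed to pass to the limit in the final estimate.
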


\begin{remark}
For the Sections \ref{sec:LipschitzYoung} and \ref{sec:LipschitzRP}, it suffices to show Lemma \ref{lem:sewing-limit} for $p=1$, i.e. for metric groups. The extension to $p$-metric groups allows us to recover \cite[Theorem 3.1]{FS22}, which we show in Theorem \ref{theo:BesovSewing}.
\end{remark}

\begin{proof}
Let $r < r' < p$.
For partitions $\pi' \subseteq \pi$, by Lemma~\ref{lem:sewing-bound} with $r'$ in place of $r$, we estimate
\begin{align*}
\abs{\calI^{\pi} A_{0,T} - \calI^{\pi'} A_{0,T}}^{p}
&\leq
\sum_{j} \abs{\calI^{\pi} A_{\pi'_{j},\pi'_{j+1}} - A_{\pi'_{j},\pi'_{j+1}}}^{p}
\\ &\leq
\zeta(p/r') \sum_{j} (V^{r'}_{\pi'_{j},\pi'_{j+1}} \delta A)^{p}
\\ &\leq
\zeta(p/r') \Bigl(\sup_{j, \pi'_{j} \leq s < t < u \leq \pi'_{j+1}} \abs{\delta A_{s,t,u}}^{p(1-r/r')} \Bigr)
\sum_{j} (V^{r}_{\pi'_{j},\pi'_{j+1}} \delta A)^{pr/r'}.
\end{align*}
The supremum converges to $0$ by the hypothesis \eqref{eq:continuityDeltaA}, and
\[
\sum_{j} (V^{r}_{\pi'_{j},\pi'_{j+1}} \delta A)^{pr/r'}
\leq
\bigl( \sum_{j} (V^{r}_{\pi'_{j},\pi'_{j+1}} \delta A)^{r} \bigr)^{p/r'}
\leq
\bigl( (V^{r} \delta A)^{r} \bigr)^{p/r'}
\]
is bounded.
This shows that $\pi \to \calI^{\pi} A_{0,T}$ is a Cauchy net, so it convergence by the completeness hypothesis.
Finally, the estimate follows from a separate application of Lemma~\ref{lem:sewing-bound}.
\end{proof}

\noindent We finally introduce the sewing lemma we use throughout Sections \ref{sec:LipschitzYoung} and \ref{sec:LipschitzRP}.
Recall that a \emph{control} is a function $\omega : \Delta_{T} \to [0,\infty)$
 that is \emph{super-additive} in the sense $\omega(s,t)+\omega(t,u) \le \omega(s,u)$ for all $s\leq t\leq u$ (setting $s=t=u$, this implies in particular that $\omega(s,s)=0$).

\begin{theorem}[Sewing, cf.\ {\cite[Theorem 2.5]{MR3770049}}]
\label{thm:sewing}
Let $N\geq 0$, $\theta>1$, and $\alpha_{1,n},\alpha_{2,n} \geq 0$ with $\alpha_{1,n}+\alpha_{2,n}\geq \theta$ for all $n \in \Set{1,\dotsc,N}$.
Let $\omega_{1,n}, \omega_{2,n}$ be controls and $A : \Delta_T \to E$ with a Banach space $E$.
Assume
\begin{equation}
\label{eq:sewing:delta-bd}
\abs{\delta A_{s,u,t}}
\leq
\sum_{n=1}^N \omega_{1,n}^{\alpha_{1,n}}(s,u) \omega_{2,n}^{\alpha_{2,n}}(u,t).
\end{equation}
Then the following net limit exists,
\[
\calI A_{0,T}:=\lim_{\pi} \calI^\pi A_{0,T},
\]
and one has the estimate
\begin{equation}
\label{eq:2}
\abs{ \calI A_{0,T}- A_{0,T}}
\leq
\zeta(\theta) \Bigl( \sum_{n=1}^N \omega_{1,n}^{\alpha_{1,n}/\theta}(0,T-) \omega_{2,n}^{\alpha_{2,n}/\theta}(0+,T) \Bigr)^{\theta},
\end{equation}
where $\zeta(\theta) = \sum_{k=1}^{\infty} k^{-\theta}$.
\end{theorem}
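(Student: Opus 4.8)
The plan is to reduce Theorem~\ref{thm:sewing} to the $p$-variation sewing result of Lemma~\ref{lem:sewing-limit} (with $p=1$) by exhibiting a control that dominates the right-hand side of \eqref{eq:sewing:delta-bd} in the appropriate sense. The key observation is that the hypothesis \eqref{eq:sewing:delta-bd} is a finite sum of terms, each of the product form $\omega_{1,n}^{\alpha_{1,n}}(s,u)\,\omega_{2,n}^{\alpha_{2,n}}(u,t)$ with $\alpha_{1,n}+\alpha_{2,n}\ge\theta>1$. For a single such term, one uses the standard fact that if $\omega_1,\omega_2$ are controls and $\alpha_1+\alpha_2\ge\theta$, then $(s,t)\mapsto \omega_1^{\alpha_1/\theta}(s,t)\,\omega_2^{\alpha_2/\theta}(s,t)$ is itself (the $\theta$-th root of) a superadditive quantity, or more precisely one has the submultiplicativity-type bound
\[
\omega_{1,n}^{\alpha_{1,n}}(s,u)\,\omega_{2,n}^{\alpha_{2,n}}(u,t)
\le
\bigl(\omega_{1,n}^{\alpha_{1,n}/\theta}(s,t)\,\omega_{2,n}^{\alpha_{2,n}/\theta}(s,t)\bigr)^{\theta},
\]
since $\omega_{i,n}(s,u),\omega_{i,n}(u,t)\le\omega_{i,n}(s,t)$ by superadditivity and nonnegativity, and $\alpha_{1,n}/\theta+\alpha_{2,n}/\theta\ge 1$. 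Summing over $n$ and setting $\omega(s,t) := \bigl(\sum_{n=1}^N \omega_{1,n}^{\alpha_{1,n}/\theta}(s,t)\,\omega_{2,n}^{\alpha_{2,n}/\theta}(s,t)\bigr)^{\theta}$, one gets a function with $\abs{\delta A_{s,u,t}}\le \omega(s,u)^{1/\theta+\dotsb}$—more carefully, one checks that $\omega^{1/\theta}$ is superadditive, so $\omega$ plays the role of a control raised to a power $>1$.

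Concretely, I would set $r := 1/\theta < 1 = p$ and verify that $\abs{\delta A_{s,u,t}} \le \tilde\omega(s,t)^{1/r}$ where $\tilde\omega(s,t) := \sum_{n=1}^N \omega_{1,n}^{\alpha_{1,n}/\theta}(s,t)\,\omega_{2,n}^{\alpha_{2,n}/\theta}(s,t)$ is superadditive (this uses superadditivity of each $\omega_{i,n}$ together with the fact that a sum of products $\phi_n\psi_n$ of superadditive functions with exponents summing to $\ge 1$ is superadditive—a short Hölder/Young-type computation). This gives $V^r\delta A \le \tilde\omega(0,T)$ on every subinterval, and more importantly $V^r_{[a,b]}\delta A \le \tilde\omega(a,b)$, so the continuity hypothesis \eqref{eq:continuityDeltaA} follows from $\tilde\omega(s,t)\to 0$ as $t-s\to 0$ along the relevant subintervals—which in turn holds because each $\omega_{i,n}$ is a control, hence continuous on the diagonal in the one-sided sense needed. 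Then Lemma~\ref{lem:sewing-limit} applies directly: the net limit $\calI A_{0,T}$ exists and $\abs{\calI A_{0,T}-A_{0,T}} \le \zeta(p/r)^{1/p} V^r\delta A = \zeta(\theta)\, V^{1/\theta}\delta A$.

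Finally I would estimate $V^{1/\theta}\delta A$ from above by the desired quantity. Using the bound on $\abs{\delta A}$ and superadditivity of $\tilde\omega$, for any partition $\pi$ one has $\sum_j \abs{\delta A_{\pi_j,\pi_{j+1},\pi_{j+2}}}^{1/\theta} \le \sum_j \tilde\omega(\pi_j,\pi_{j+2})$; the overlapping intervals $[\pi_j,\pi_{j+2}]$ split into two families of disjoint intervals, each summing to at most $\tilde\omega(0,T)$, giving $2\tilde\omega(0,T)$—but the sharper bookkeeping in \cite{MR3770049} (skipping to endpoints $T-$ and $0+$ to avoid the factor $2$) yields $V^{1/\theta}\delta A \le \bigl(\sum_{n=1}^N \omega_{1,n}^{\alpha_{1,n}/\theta}(0,T-)\,\omega_{2,n}^{\alpha_{2,n}/\theta}(0+,T)\bigr)^{\theta}$, which is exactly the expression in \eqref{eq:2}. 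Combined with the constant $\zeta(\theta)$ from the previous step, this gives \eqref{eq:2}.

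\textbf{Main obstacle.} The delicate point is not the sewing estimate itself—that is now a black box—but the verification that $\tilde\omega$ (or rather $\tilde\omega^{1/\theta}$, equivalently $\tilde\omega$ as a control to a power) is genuinely superadditive when it is a \emph{sum} of products $\omega_{1,n}^{\alpha_{1,n}/\theta}\omega_{2,n}^{\alpha_{2,n}/\theta}$. One must check that $\phi\psi$ is superadditive when $\phi,\psi$ are controls to powers $a,b$ with $a+b\ge 1$, and that superadditivity is preserved under addition; the first is a Young-inequality argument ($\phi(s,u)^{1/a}\psi... $ splitting), the second is immediate. A secondary subtlety is matching the exact constant and the one-sided endpoint convention ($T-$, $0+$) so that the final bound is \eqref{eq:2} rather than a version with an extra factor of $2$; this requires reproducing the endpoint-avoidance trick from \cite[Theorem 2.5]{MR3770049} in the variation-norm bookkeeping.
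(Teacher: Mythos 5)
Your high-level strategy (reduce to Lemma~\ref{lem:sewing-limit} with $p=1$, $r=1/\theta$) is the same as the paper's, but the way you bound $V^{1/\theta}\delta A$ has a genuine gap that costs exactly the content of the stated estimate. By replacing $\omega_{1,n}^{\alpha_{1,n}}(s,u)\,\omega_{2,n}^{\alpha_{2,n}}(u,t)$ with $\bigl(\omega_{1,n}^{\alpha_{1,n}/\theta}(s,t)\,\omega_{2,n}^{\alpha_{2,n}/\theta}(s,t)\bigr)^{\theta}$ you throw away the fact that the two controls are evaluated on \emph{adjacent disjoint} intervals. After that, summing $\tilde\omega(\pi_j,\pi_{j+2})$ over the overlapping triples unavoidably splits into two interleaved families and produces $2\tilde\omega(0,T)$, i.e.\ an extra factor $2^{\theta}$ in \eqref{eq:2}, and with two-sided endpoints $\omega_{i,n}(0,T)$ rather than $(0,T-)$, $(0+,T)$. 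Your fix — invoking ``the sharper bookkeeping in \cite{MR3770049}'' — is not available: that reference's bound itself carries the $2^{\theta}$ (the paper explicitly remarks that its proof \emph{saves} this factor compared to \cite{MR3770049}), and within your majorization the factor cannot be removed. The paper avoids it by never merging the two intervals: it bounds $\sum_k \abs{\delta A_{\pi_k,\pi_{k+1},\pi_{k+2}}}^{1/\theta}$ directly, applies H\"older's inequality \emph{across the sum over $k$} (with exponents $\frac{\alpha_{1,n}+\alpha_{2,n}}{\alpha_{1,n}}$, $\frac{\alpha_{1,n}+\alpha_{2,n}}{\alpha_{2,n}}$), and then lets the $\omega_{1,n}$-terms telescope over the left intervals $(\pi_k,\pi_{k+1})$ and the $\omega_{2,n}$-terms over the right intervals $(\pi_{k+1},\pi_{k+2})$; these are two disjoint families, which is precisely where the sharp constant and the one-sided endpoints $(0,T-)$, $(0+,T)$ come from.

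A secondary but real flaw is your verification of the hypothesis \eqref{eq:continuityDeltaA}: controls are only assumed superadditive with $\omega(s,s)=0$ and need not be continuous at the diagonal (e.g.\ $\omega(s,t)=1$ if $s<1/2\le t$ and $0$ otherwise is a control), so ``each $\omega_{i,n}$ is a control, hence continuous on the diagonal'' is false, and indeed $\sup_j\tilde\omega(\pi_j,\pi_{j+1})$ need not tend to $0$ along refinements for your majorant $\tilde\omega$. The condition \eqref{eq:continuityDeltaA} does hold, but only because of the product structure of \eqref{eq:sewing:delta-bd} (the two factors sit on opposite sides of the middle point), which is what \cite[Lemma 2.1]{MR3770049} — cited by the paper at exactly this step — exploits; your crude bound $\abs{\delta A_{s,u,t}}\le\tilde\omega(s,t)^{\theta}$ cannot see this. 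Your superadditivity claim for $\tilde\omega^{1/\theta}$ (products of controls raised to powers summing to at least one) is fine, but with the two gaps above the argument as written only yields \eqref{eq:2} with an extra $2^{\theta}$ and with an unjustified verification of the continuity hypothesis.
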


\noindent Our proof saves a factor $2^{\theta}$ on the right-hand side of \eqref{eq:2} compared to \cite{MR3770049}.

\begin{proof}
For any partition $\pi$, by hypothesis \eqref{eq:sewing:delta-bd}, H\"older's inequality, monotonicity of $\ell^{q}$ norms, and superadditivity of controls, we have
\begin{align*}
\sum_{k=0}^{J-1} \abs{\delta A_{\pi_{k},\pi_{k+1},\pi_{k+2}}}^{1/\theta}
&\leq
\sum_{k=0}^{J-1} \Bigl( \sum_{n=1}^N
\omega_{1,n}^{\alpha_{1,n}}(\pi_{k},\pi_{k+1}) \omega_{2,n}^{\alpha_{2,n}}(\pi_{k+1},\pi_{k+2}) \Bigr)^{1/\theta}
\\ &\leq
\sum_{k=0}^{J-1} \sum_{n=1}^N
\bigl( \omega_{1,n}^{\alpha_{1,n}}(\pi_{k},\pi_{k+1}) \omega_{2,n}^{\alpha_{2,n}}(\pi_{k+1},\pi_{k+2}) \bigr)^{1/\theta}
\\ &\leq
\sum_{n=1}^N \Bigl( \sum_{k=0}^{J-1} \omega_{1,n}^{\alpha_{1,n}+\alpha_{2,n}}(\pi_{k},\pi_{k+1}) \Bigr)^{\frac{\alpha_{1,n}}{\alpha_{1,n}+\alpha_{2,n}}/\theta}
\\ &\qquad\qquad\times
\Bigl( \sum_{k=0}^{J-1} \omega_{2,n}^{\alpha_{1,n}+\alpha_{2,n}}(\pi_{k+1},\pi_{k+2}) \Bigr)^{\frac{\alpha_{2,n}}{\alpha_{1,n}+\alpha_{2,n}}/\theta}
\\ &\leq
\sum_{n=1}^N \Bigl( \sum_{k=0}^{J-1} \omega_{1,n}(\pi_{k},\pi_{k+1}) \Bigr)^{\alpha_{1,n}/\theta}
\Bigl( \sum_{k=0}^{J-1} \omega_{2,n}(\pi_{k+1},\pi_{k+2}) \Bigr)^{\alpha_{2,n}/\theta}
\\ &\leq
\sum_{n=1}^N \omega_{1,n}(\pi_{0},\pi_{J})^{\alpha_{1,n}/\theta}
\omega_{2,n}(\pi_{1},\pi_{J+1})^{\alpha_{2,n}/\theta}.
\end{align*}
Moreover, \eqref{eq:sewing:delta-bd} implies \eqref{eq:continuityDeltaA} by \cite[Lemma 2.1]{MR3770049}. 
The claim follows from Lemma~\ref{lem:sewing-limit} with $r=1/\theta$.
\end{proof}

\subsection{Connection to Besov sewing}

In this section, we want to show that $r$-variation norms can be used to quickly recover the Besov sewing lemma from \cite{FS22}. To do so, we recall the Besov norm of a three-parameter process:
\begin{definition}
Let $\delta A: \{(s,u,t)\in[0,T]~\vert~ s\le u\le t\}\to E$ for some normed space $E$, $0<p,q\le\infty$, $\alpha>0$. We then set
\begin{align*}
	\bar\Omega_p^{\delta A}(t) :=& \sup_{0\le u\le v\le t} \left(\int_0^{T-v} \abs{\delta A_{s,s+u,s+v}}^p\dif s\right)^{\frac 1p}\\
	\norm{\delta A}_{\bar B^\alpha_{p,q}} :=& \left(\int_0^T \left(\frac{\bar\Omega_p^{\delta A}(t)}{t^\alpha}\right)^q\frac{\dif t}t\right)^{\frac 1q}\,,
\end{align*}
with the usual extension to $p=\infty$ and $q=\infty$.
\end{definition}

\noindent We can then show the main statement of \cite{FS22}, Theorem 3.1:

\begin{theorem}\label{theo:BesovSewing}
Let $A:\Delta_T\to \bbR^m$ and let $0<p,q\le\infty, \gamma > \max(1,1/p,1/q)$.
Assume that
\[
	\norm{A}_{B^\gamma_{p,q}}, \norm{\delta A}_{\bar B^\gamma_{p,q}} <\infty\,.
\]
Then the sewing $\calI A = \lim_{\pi} \calI^\pi A$ exists and fulfills for some $c(\gamma,p,q,T) > 0$
\[
	\norm{A-\cal I A}_{B^\gamma_{p,q}} \le c(\gamma,p,q,T) \norm{\delta A}_{\bar B^\gamma_{p,q}}\,.
\]
\end{theorem}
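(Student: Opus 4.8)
The plan is to follow the paper's leitmotif — localise with the variation sewing bound, then expand the sewing remainder dyadically and estimate it scale by scale, in essentially the same way as in the proof of Theorem~\ref{theo:VrBesovEstimate}. I would first pass to the continuous version of $A$ (available since $\gamma>\max(1,1/p)$, by the same reasoning as in Corollary~\ref{cor:EmbeddingVrInBesov}), so that $\delta A$ is continuous and the RRS-continuity condition \eqref{eq:continuityDeltaA} holds. Next I would secure $V^{r}\delta A<\infty$ for some fixed $r<1$ with $\gamma r>1$ (possible because $\gamma>1$); granting this, Lemma~\ref{lem:sewing-limit} applies on $[0,T]$, and repeating it on every subinterval produces an additive two-parameter process $(s,t)\mapsto\calI A_{s,t}$ with $\abs{A_{s,t}-\calI A_{s,t}}\le\zeta(1/r)\,V^{r}(\delta A)_{[s,t]}$. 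Write $R_{s,t}:=A_{s,t}-\calI A_{s,t}$.

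For the dyadic expansion, fix an interval $[s,t]$, let $\pi^{(n)}$ be its partition into $2^{n}$ equal pieces with nodes $s^{(l)}_{m}:=s+2^{-l}(t-s)m$, and note the telescoping identity (immediate by induction on $n$)
\[
A_{s,t}-\calI^{\pi^{(n)}}A_{s,t}=\sum_{l=0}^{n-1}\sum_{m=0}^{2^{l}-1}\delta A_{\,s^{(l)}_{m},\,s^{(l+1)}_{2m+1},\,s^{(l)}_{m+1}}.
\]
Running the Cauchy estimate from the proof of Lemma~\ref{lem:sewing-limit} along the nested sequence $(\pi^{(n)})$ gives $\calI^{\pi^{(n)}}A_{s,t}\to\calI A_{s,t}$, so letting $n\to\infty$ yields the convergent series
\[
R_{s,t}=\sum_{l\ge0}\sum_{m=0}^{2^{l}-1}\delta A_{\,s^{(l)}_{m},\,s^{(l+1)}_{2m+1},\,s^{(l)}_{m+1}}.
\]

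Now put $\rho:=\min(1,p)$. Fixing a scale $h\in(0,T]$ and applying the $\rho$-triangle inequality for $\norm{\cdot}_{L^{p}([0,T-h])}$ first to the $l$-sum and then to the inner $m$-sum in the display for $R_{s,s+h}$, while translating the integration variable by $2^{-l}hm$ in each of the $2^{l}$ inner terms — each of which is thereby bounded in $L^{p}$ by $\bar\Omega_p^{\delta A}(2^{-l}h)$, directly from the definition of $\bar\Omega_p^{\delta A}$ — gives
\[
\Bigl(\int_0^{T-h}\abs{R_{s,s+h}}^{p}\,\dif s\Bigr)^{1/p}\lesssim\Bigl(\sum_{l\ge0}2^{l}\,\bar\Omega_p^{\delta A}(2^{-l}h)^{\rho}\Bigr)^{1/\rho}.
\]
Monotonicity of $\bar\Omega_p^{\delta A}$ turns this into the same bound for $\Omega_p^{R}(h)$ (sup over scales $\le h$). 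I would then insert it into $\norm{R}_{B^{\gamma}_{p,q}}=\norm{t^{-\gamma}\Omega_p^{R}(t)}_{L^{q}(\dif t/t)}$, pull the $l$-sum out of the $L^{q}$-norm using the $\nu$-triangle inequality with $\nu:=\min(\rho,q)=\min(1,p,q)$, and substitute $\tau=2^{-l}t$ in each summand, obtaining
\[
\norm{R}_{B^{\gamma}_{p,q}}\lesssim\Bigl(\sum_{l\ge0}2^{\,l(1/\rho-\gamma)\nu}\Bigr)^{1/\nu}\norm{\delta A}_{\bar B^{\gamma}_{p,q}},
\]
where the geometric series converges precisely because $\gamma>1/\rho=\max(1,1/p)$; this is the claimed estimate.

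The main obstacle is the input $V^{r}\delta A<\infty$ (with $r<1$) together with \eqref{eq:continuityDeltaA}: this is the one ingredient that does not fall out of the estimate above, and it is where the full hypothesis $\gamma>\max(1,1/p,1/q)$ is consumed. I would derive it by the three-parameter, $L^{p}$-averaged analogue of the embedding theory of Section~\ref{sec:BesovSpaces}: from $\norm{\delta A}_{\bar B^{\gamma}_{p,q}}<\infty$ and monotonicity of $\bar\Omega_p^{\delta A}$ one first gets the pointwise bound $\bar\Omega_p^{\delta A}(t)\lesssim t^{\gamma}\norm{\delta A}_{\bar B^{\gamma}_{p,q}}$, and then a dyadic reduction of the partition sums defining $V^{r}\delta A$ (the same ``merge the smallest increment'' bookkeeping as in Lemma~\ref{lem:sewing-bound}, now summed against $L^{p}$-in-$s$ norms) gives the finiteness. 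The remaining estimate is routine but fiddly in that one must track which triangle inequality is available — $\rho$ versus $1$ in the $s$-integral and $\nu$ versus $1$ in the $t$-integral — across the regimes $p<1$ versus $p\ge1$ and $q<\min(1,p)$ versus $q\ge\min(1,p)$; fixing $\rho=\min(1,p)$ and $\nu=\min(1,p,q)$ at the outset keeps this uniform.
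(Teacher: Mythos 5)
The back end of your argument -- the dyadic telescoping of the remainder and the scale-by-scale estimate of its Besov norm, with $\rho=\min(1,p)$ and $\nu=\min(1,p,q)$ -- is correct and is indeed in the spirit of Theorem~\ref{theo:VrBesovEstimate}. The gap is in the input you need to even start: the claim that $\norm{\delta A}_{\bar B^\gamma_{p,q}}<\infty$ (together with $\norm{A}_{B^\gamma_{p,q}}<\infty$) yields a continuous version of $A$, the condition \eqref{eq:continuityDeltaA}, and $V^{r}\delta A<\infty$ for some $r<1$. This cannot work for $p<\infty$: the norms $\norm{A}_{B^\gamma_{p,q}}$ and $\norm{\delta A}_{\bar B^\gamma_{p,q}}$ only control $L^{p}$-averages in the left endpoint $s$, whereas $V^{r}\delta A$ and the Riemann sums $\calI^{\pi}A$ are built from pointwise evaluations at arbitrary partition points. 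Concretely, take $A_{s,t}:=f(s)$ with $f=M\,1_{\{s_{0}\}}$ for some $s_{0}\in(0,T)$; then $\delta A_{s,u,t}=-f(u)$, so both $\norm{A}_{B^\gamma_{p,q}}$ and $\norm{\delta A}_{\bar B^\gamma_{p,q}}$ vanish, while $V^{r}\delta A\geq M$ is arbitrarily large (and by stacking spikes, infinite). Your proposed fix via the pointwise bound $\bar\Omega_{p}^{\delta A}(t)\lesssim t^{\gamma}$ does not help, because that is again only an averaged statement; and Corollary~\ref{cor:EmbeddingVrInBesov} gives continuous modifications only for one-parameter paths whose increments are controlled by the Besov norm, not for a general two-parameter germ $A$ (and modifying $A$ on null sets changes the Riemann sums, so ``passing to a version'' is not innocent here).

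This is exactly the difficulty the paper's proof is built to avoid: instead of sewing the scalar germ, it sews the rescaled germ $\Xi_{s,t}(a,b)=A_{a+s(b-a),\,a+t(b-a)}$, viewed as a path $\Delta_{1}\to E$ with $E=B^{\gamma}_{p,q}$ (a $\min(1,p,q)$-normed space), so that the only bound ever required on $\delta\Xi$ is the averaged one, $\norm{\delta\Xi_{s,t,u}}_{E}\leq (u-s)^{\gamma}\norm{\delta A}_{\bar B^{\gamma}_{p,q}}$. Then Lemma~\ref{lem:sewing-limit} is applied with $r=1/\gamma$, and this is where the full hypothesis $\gamma>\max(1,1/p,1/q)$ is used (one needs $r<\min(1,p,q)$). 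Your route, by contrast, needs a pointwise-in-partition control that simply is not implied by the hypotheses; so as written the existence of $\calI A$ and the series representation of $R_{s,t}$ are unjustified, and the subsequent (correct) estimate has nothing to act on. If you want to salvage your more direct argument, you would have to either add a continuity/pointwise hypothesis on $\delta A$ not present in the statement, or reformulate existence in the vector-valued (equivalence-class) sense -- which is essentially the paper's construction.
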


\begin{proof}
The key technique of the proof is to define for any $0\le a \le b\le T$ the $2$-parameter object
\[
\Xi_{s,t}(a,b) := A_{a+s(b-a),a+t(b-a)}\,,
\]
such that $\Xi(a,b):\Delta_1\to B^\gamma_{p,q} =: E$. Note that the Besov-norm $\norm{\cdot}_{B^\alpha_{p,q}}$ is a $\min(1,p,q)$-norm, so we can apply Lemma \ref{lem:sewing-limit} to $\Xi$. By construction, it holds for any partition $\pi= \{0<\pi_1<\dots<\pi_k =1\}$ of $[0,1]$, that $\Xi_{0,1}(a,b) = A_{a,b}$ and
\[
	\calI^\pi \Xi(a,b) = \calI^{\pi'(a,b)}A\,,
\]
where $\pi'(a,b) = \{a+\pi_j(b-a)~\vert~\pi_j\in\pi\}$ is the corresponding partition of $(a,b)$. Thus, $\calI A$ exists if and only if $\calI\Xi$ exists and we have
\[
	\calI \Xi_{s,t}(a,b) = \calI A_{a+s(b-a), a+t(b-s)}\,.
\]
It holds that
\begin{align*}
\delta\Xi_{s,t,u}(a,b)
&=
A_{a+s(b-a),a+u(b-a)} - A_{a+s(b-a),a+t(b-a)} - A_{a+t(b-a),a+u(b-a)}
\\ &=
\delta A_{a+s(b-a),a+t(b-a),a+u(b-a)}\,.
\end{align*}
Hence, we have the following for all $s\le t\le u$.
\begin{align*}
\Omega_{p}^{\delta\Xi_{s,t,u}}(\tau)
&=
\sup_{0\leq h \leq \tau}
\left(\int_0^{T-h}\abs{\delta\Xi_{s,t,u}(r,r+h)}^p\dif r\right)^{\frac 1p}
\\ &=
\sup_{0\leq h \leq \tau}
\left(\int_0^{T-h} \abs{\delta A_{r+sh,r+th,r+uh}}^p\dif r\right)^{\frac 1p}
\\ &\leq
\bar{\Omega}_{p}^{\delta A}(\tau(u-s)).
\end{align*}
Thus,
\begin{align*}
\norm{\delta\Xi_{s,t,u}}_{E}
&=
\left(\int_0^T \left(\tau^{-\gamma} \Omega_{p}^{\delta\Xi_{s,t,u}}(\tau)\right)^q \frac{\dif \tau}\tau\right)^{\frac 1q}
\\ &\leq
\left(\int_0^T \left(\tau^{-\gamma} \bar{\Omega}_{p}^{\delta A}(\tau(u-s))\right)^q\frac{\dif\tau}\tau\right)^{\frac 1q}
\\ &\leq
(u-s)^{\gamma} \left(\int_0^T \left(\tau^{-\gamma} \bar{\Omega}_{p}^{\delta A}(\tau)\right)^q \frac{\dif\tau}\tau\right)^{\frac 1q}
\\ &=
(u-s)^{\gamma} \norm{\delta A}_{\bar B^\gamma_{p,q}}\,.
\end{align*}
This especially shows \eqref{eq:continuityDeltaA}. It further follows that
\begin{align*}
V^{r}\delta\Xi
&\leq
\sup_{\pi} \left(\sum_{j=1}^{k-2}\abs{\pi_{j+2}-\pi_{j}}^{\gamma r}\right)^{\frac 1r}
\norm{\delta A}_{\bar B^\gamma_{p,q}}
\\ &\lesssim
\norm{\delta A}_{\bar B^\gamma_{p,q}} <\infty
\end{align*}
provided that $r\gamma\geq 1$. By choosing $r = \frac 1\gamma <\max(p, 1)$, we can thus apply Lemma \ref{lem:sewing-limit} to get the existence of $\calI \Xi$ (and thus $\calI A$) as well as
\begin{equation*}
\norm{A-\calI A}_{B^\gamma_{p,q}} = \norm{\calI \Xi_{0,1}-\Xi_{0,1}}_{B^\gamma_{p,q}} \lesssim \norm{\delta A}_{\bar B^\gamma_{p,q}}\,.
\end{equation*}
\end{proof}

\section{Lipschitz estimates for Young differential equations}\label{sec:LipschitzYoung}

Consider the Young equation
\begin{equation}\label{eq:YoungODE}
	Y_t = y_0 + \int_0^t \phi(Y_s) \dif X_s
\end{equation}
where $Y_0 = y_0\in \bbR^d$ is the initial condition, $\phi$ is a given, smooth enough function and we assume that the driving noise $X\in V^r$ for some $r\in[1,2)$. $\int_0^t \phi(Y_s) dX_s$ is given by the Young integral originally introduced in \cite{MR1555421}. The goal of this section is to establish the Lispchitz dependence of the solution $Y$ on the data $X,y_0$ and $\phi$. To this end, we consider a second solution
\begin{equation*}
\tilde Y_t = \tilde y_0 +\int_0^t \tilde\phi(\tilde Y_s)\dif\tilde X_s
\end{equation*} 
for some data $\tilde X,\tilde y_0,\tilde \phi$ and show an estimate of the form
\begin{equation}\label{ineq:LipscitzEstimateYoungForm}
	V^r \Delta Y_{s,t} \le c_1(s,t) (\abs{\Delta y_0} + V^r \Delta X_{0,T}+\norm{\Delta\phi}_{C^\alpha}+ \norm{\Delta\phi}_{\sup}) +c_2(s,t)V^r\Delta X_{s,t}\,.
\end{equation}
Here, $\Delta X = X-\tilde X$ with similar notation extending to $\Delta Y, \Delta y_0$ and $\Delta \phi$. Estimates of the form \eqref{ineq:LipscitzEstimateYoungForm} are by no means new, see for example \cite{Galeati2023, LEJAY20101777}. We improve on these by giving precise forms of $c_1(s,t), c_2(s,t)$ for $0\le s\le t\le T$. This is necessary to estimate the Besov-norm of $(s,t)\mapsto V^r\Delta Y_{s,t}$, as the Besov-norm of the right-hand side of \eqref{ineq:LipscitzEstimateYoungForm} might explode otherwise.

We derive our estimate by constructing $Y$ in a solution space that dynamically depends on $X$. That is, we consider the solution space
\begin{equation}\label{eq:SolutionSpaceYoung}
	\calY(X) = \{Y~\vert~ V^r Y_I \le 2\Phi_0 V^r X_I\text{ for all intervals }I\subset[0,T]\}\,.
\end{equation}
for a constant $\Phi_0 > 0$ such that $\norm{\phi}_{\sup}\le\Phi_0$. This allows us to easily get bounds on $V^r Y_I$ for any $I\subset[0,T]$. 

In this section, we show an a priori estimate in Proposition \ref{prop:Young-apriori}, local existence and uniqueness (Proposition \ref{prop:Picard-contraction}) of the solution of \eqref{eq:YoungODE}, and finally the desired Lipschitz estimate in Theorem \ref{thm:Lipschitz-stability}.

Let us start by fixing some notation. We fix an $r\in[1,2)$, $\alpha\in(0,1]$ with $r<1+\alpha$ and set $\theta = (1+\alpha)/r$. Recall that $C^\alpha$ denotes the space of Hölder functions with seminorm
\begin{equation}\label{equ:Calpha}
	\norm{\phi}_{C^\alpha} = \sup_{s<t}\frac{\abs{\phi(t)-\phi(s)}}{\abs{t-s}^\alpha}\,,
\end{equation}
where we make the usual extension that for all $k\in\bbN,\alpha\in(0,1]$, $C^{k,\alpha}_b$ denotes the space of $k$-times bounded, differentiable functions with bounded derivatives and $D^k\phi\in C^\alpha$. This space is equipped with the usual norm
\[
	\norm{\phi}_{C^{k,\alpha}_b} := \sum_{n\le k} \norm{D^n\phi}_{\sup}+\norm{D^k\phi}_{C^\alpha}\,.
\]
Throughout this section, we assume $X:[0,T]\to \bbR^n, X\in V^r$ and $\phi\in C^{1,\alpha}_b$. We further assume $Y:[0,T]\to \bbR^d$, that is $\phi:\bbR^d\to L(\bbR^n,\bbR^d)$. We allow jumps, as in \cite{MR3770049}.
All estimates are of Davie form \cite{MR2387018} (see also \cite[Chapter 10]{FV10}).

\subsection{A $4$-point Taylor formula with fractional reminder}

Before we solve \eqref{eq:YoungODE}, we give the following 4-point Taylor formula as a preliminary result. Similar formulas can be found in \cite[Lemma 15]{Bechtold2023} and \cite[Lemma 10.22]{FV10}.

\begin{lemma}
\label{lem:4point-Taylor-ord-1}
For every $\alpha \in (0,1]$, every $\phi \in C^{1,\alpha}_{b}$, every $\tphi\in C^{0,\alpha}_{b}$, and any $Y_{s},Y_{t},\tY_{s},\tY_{t}\in\bbR^d$, we have
\begin{multline}
\label{eq:4point-Taylor-ord-1}
\abs{ \Delta \Bigl( \phi(Y_{t}) - \phi(Y_{s}) \Bigr) }
\leq
\norm{D\phi}_{\sup} \abs{\Delta Y_{s,t}}
+
\frac{\norm{D\phi}_{C^{\alpha}}}{1+\alpha}
(\abs{Y_{s,t}}^{\alpha} + \abs{\tilde Y_{s,t}}^{\alpha}) \abs{\Delta Y_{s}}
+ \norm{\Delta \phi}_{C^{\alpha}} \abs{\tY_{s,t}}^{\alpha}.
\end{multline}
\end{lemma}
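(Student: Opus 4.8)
The plan is to isolate the part of $\Delta(\phi(Y_t)-\phi(Y_s))$ coming from the nonlinearity difference $\Delta\phi=\phi-\tphi$, and then handle the ``same $\phi$, different path'' part by a first-order Taylor expansion taken in the $\Delta$-direction rather than in the time direction. First I would write $\tphi=\phi-\Delta\phi$ and split
\[
\Delta\bigl(\phi(Y_{t})-\phi(Y_{s})\bigr)
=\bigl(\phi(Y_{t})-\phi(Y_{s})-\phi(\tY_{t})+\phi(\tY_{s})\bigr)+\bigl(\Delta\phi(\tY_{t})-\Delta\phi(\tY_{s})\bigr).
\]
The second bracket is bounded directly by $\norm{\Delta\phi}_{C^{\alpha}}\abs{\tY_{s,t}}^{\alpha}$, which is exactly the last term in \eqref{eq:4point-Taylor-ord-1}, so it remains to treat the first bracket.

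For the first bracket I would \emph{not} expand $\phi(Y_{t})-\phi(Y_{s})$ along the time direction; instead, using that $\phi\in C^{1,\alpha}_{b}$ is $C^{1}$, apply the fundamental theorem of calculus in the $\Delta$-direction at each of the two times,
\[
\phi(Y_{r})-\phi(\tY_{r})=A_{r}\,\Delta Y_{r},\qquad A_{r}:=\int_{0}^{1}D\phi(\tY_{r}+u\,\Delta Y_{r})\,\dif u,\qquad r\in\{s,t\},
\]
and then use the elementary identity $\Delta Y_{t}-\Delta Y_{s}=\Delta Y_{s,t}$ to write
\[
\phi(Y_{t})-\phi(Y_{s})-\phi(\tY_{t})+\phi(\tY_{s})=A_{t}\,\Delta Y_{s,t}+(A_{t}-A_{s})\,\Delta Y_{s}.
\]
Since $\abs{D\phi}\le\norm{D\phi}_{\sup}$ pointwise, the term $A_{t}\,\Delta Y_{s,t}$ contributes $\norm{D\phi}_{\sup}\abs{\Delta Y_{s,t}}$, the first term in \eqref{eq:4point-Taylor-ord-1}.

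Finally I would estimate $A_{t}-A_{s}$. For fixed $u\in[0,1]$, the two arguments of $D\phi$ appearing in the integrals defining $A_t$ and $A_s$ differ by $(\tY_{t}+u\Delta Y_{t})-(\tY_{s}+u\Delta Y_{s})=\tY_{s,t}+u\,\Delta Y_{s,t}=(1-u)\tY_{s,t}+u\,Y_{s,t}$. Hence, using $\alpha$-Hölder continuity of $D\phi$ and the subadditivity of $\lambda\mapsto\lambda^{\alpha}$ on $[0,\infty)$ (valid for $\alpha\in(0,1]$),
\[
\abs{A_{t}-A_{s}}\le\norm{D\phi}_{C^{\alpha}}\int_{0}^{1}\bigl((1-u)^{\alpha}\abs{\tY_{s,t}}^{\alpha}+u^{\alpha}\abs{Y_{s,t}}^{\alpha}\bigr)\,\dif u=\frac{\norm{D\phi}_{C^{\alpha}}}{1+\alpha}\bigl(\abs{Y_{s,t}}^{\alpha}+\abs{\tY_{s,t}}^{\alpha}\bigr),
\]
so $(A_{t}-A_{s})\,\Delta Y_{s}$ contributes precisely the middle term, and adding the three contributions yields \eqref{eq:4point-Taylor-ord-1}.

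There is no genuine analytic difficulty in this argument; the only point requiring care — and the reason it goes through with the sharp constant $1/(1+\alpha)$ and without any spurious $\abs{\Delta Y_{s,t}}^{\alpha}$ term — is the choice to expand ``across $\Delta$'' at the two fixed times $s$ and $t$, which makes the mismatch of the arguments of $D\phi$ in $A_{t}-A_{s}$ equal to the convex combination $(1-u)\tY_{s,t}+u\,Y_{s,t}$ rather than something involving $\Delta Y$ at a fractional power. One should also note that $\tphi\in C^{0,\alpha}_{b}$ enters only through the finiteness (or harmless infiniteness) of $\norm{\Delta\phi}_{C^{\alpha}}$ on the right-hand side.
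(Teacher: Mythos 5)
Your proof is correct and follows essentially the same route as the paper: the same split into a ``same $\phi$, two paths'' part and a $\Delta\phi(\tY)$ part, the same fundamental-theorem-of-calculus representation $\phi(Y_r)-\phi(\tY_r)=A_r\Delta Y_r$, and the same regrouping into $A_t\Delta Y_{s,t}+(A_t-A_s)\Delta Y_s$ with the convex-combination argument $(1-u)\tY_{s,t}+uY_{s,t}$ and subadditivity of $\lambda\mapsto\lambda^\alpha$ producing the constant $\tfrac{1}{1+\alpha}$. Nothing to add.
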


\begin{proof}
We split
\begin{align}
\abs{ \Delta \Bigl( \phi(Y_{t}) - \phi(Y_{s}) \Bigr) }
&\leq
\abs{ ( \phi(Y_{t}) - \phi(Y_{s}) ) - ( \phi(\tY_{t}) - \phi(\tY_{s}) ) }
\label{eq:4}
\\ &+
\abs{ ( \phi(\tY_{t}) - \phi(\tY_{s}) ) - ( \tilde\phi(\tY_{t}) - \tilde\phi(\tY_{s}) ) }.
\label{eq:5}
\end{align}
Note that \eqref{eq:5} is easily bound by $\norm{\Delta \phi}_{C^\alpha}\abs{\tilde Y_{s,t}}^\alpha$. To estimate \eqref{eq:4}, we use the fundamental theorem of calculus to get
\begin{align}
\abs{ ( \phi(Y_{t}) - \phi(Y_{s}) ) - ( \phi(\tY_{t}) - \phi(\tY_{s}) ) } &= \abs{\Delta Y_t \int_0^1 D\phi(\tilde Y_t + u\Delta Y_t)\dif u + \Delta Y_s \int_0^1 D\phi(\tilde Y_s + u\Delta Y_s)\dif u}\nonumber\\
&\le \abs{\Delta Y_{s,t}} \int_0^1 \abs{D\phi(\tilde Y_t+u\Delta Y_t)} du \label{eq:Taylor6}\\
&\qquad+ \abs{\Delta Y_s}\int_0^1 \abs{D\phi(\tilde Y_t +u\Delta Y_t) - D\phi(\tilde Y_s + u\Delta Y_s)} du\label{eq:Taylor7}
\end{align}
\eqref{eq:Taylor6} is bounded by $\norm{D\phi}_{\sup} \abs{\Delta Y_{s,t}}$.
The difference inside the integral of \eqref{eq:Taylor7} is bounded by
\begin{align*}
\MoveEqLeft
\norm{D\phi}_{C^{\alpha}} \abs{(\tY_{t}+u \Delta Y_{t}) - (\tY_{s}+u \Delta Y_{s})}^{\alpha}
\\ &\leq
\norm{D\phi}_{C^{\alpha}} (\abs{u Y_{s,t}} + (1-u) \abs{\tilde Y_{s,t}})^{\alpha}.
\\ &\leq
\norm{D\phi}_{C^{\alpha}} ( \abs{u}^{\alpha}\abs{Y_{s,t}}^{\alpha} + (1-u)^{\alpha} \abs{\tilde Y_{s,t}}^{\alpha}).
\end{align*}
Integrating this estimate in $u$, we obtain the claimed estimate for \eqref{eq:4}.
\end{proof}

\subsection{A priori estimate}
We show an a priori estimate for the solution of \eqref{eq:YoungODE}. This is based on the following estimate for Young integrals.

\begin{lemma}\label{lem:1}
Let $r_1,r_2 \ge 1$ with $\frac 1{r_1} + \frac 1{r_2} =: \theta > 1$. Let $Y\in V^{r_1}, X\in V^{r_2}$. 
Then, the Young integral $Z_{t} := \int_{0}^{t} Y \dif X, t\in[0,T]$ exists and satisfies
\begin{equation}\label{ineq:YoungEstimateRVar1}
\abs{Z_{s,t}- Y_{s} X_{s,t}}
\leq
\zeta(\theta)  V^{r_1} Y_{[s,t)} V^{r_2} X_{(s,t]}\,,
\end{equation}
where $\zeta(\theta)= \sum_{k=1}^\infty k^{-\theta}$.
\end{lemma}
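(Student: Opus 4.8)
The plan is a direct application of the sewing lemma (Theorem~\ref{thm:sewing}) to the germ $A_{s,t} := Y_{s}X_{s,t}$, whose Riemann sums $\calI^{\pi}A_{0,t} = \sum_{j} Y_{\pi_{j}} X_{\pi_{j},\pi_{j+1}}$ are exactly the Riemann--Stieltjes sums defining the Young integral. First I would compute the defect: using additivity of increments $X_{s,t}=X_{s,u}+X_{u,t}$,
\[
\delta A_{s,u,t}
= Y_{s}X_{s,t} - Y_{s}X_{s,u} - Y_{u}X_{u,t}
= -(Y_{u}-Y_{s})X_{u,t}
= -Y_{s,u}X_{u,t},
\]
so that $\abs{\delta A_{s,u,t}} \le \abs{Y_{s,u}}\abs{X_{u,t}} \le V^{r_{1}}Y_{[s,u)}\,V^{r_{2}}X_{(u,t]}$, the last inequality using the half-open interval conventions for the variation seminorms as in \cite{MR3770049}.

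Next I would take $N=1$ in Theorem~\ref{thm:sewing}, with controls $\omega_{1,1}(s,t) := (V^{r_{1}}Y_{[s,t)})^{r_{1}}$ and $\omega_{2,1}(s,t) := (V^{r_{2}}X_{(s,t]})^{r_{2}}$ --- superadditive by superadditivity of the $r$-variation --- and exponents $\alpha_{1,1}:=1/r_{1}$, $\alpha_{2,1}:=1/r_{2}$, so that $\alpha_{1,1}+\alpha_{2,1} = 1/r_{1}+1/r_{2} = \theta > 1$ and the bound above is precisely hypothesis~\eqref{eq:sewing:delta-bd}. Theorem~\ref{thm:sewing} then yields the existence of the net limit $Z_{t} := \lim_{\pi}\calI^{\pi}A_{0,t}$ --- the Young integral $\int_{0}^{t}Y\,\dif X$, agreeing with Young's original construction when $X$ is continuous --- together with
\[
\abs{\calI A_{0,T}-A_{0,T}} \le \zeta(\theta)\,\omega_{1,1}^{\alpha_{1,1}}(0,T-)\,\omega_{2,1}^{\alpha_{2,1}}(0+,T) = \zeta(\theta)\,V^{r_{1}}Y_{[0,T)}\,V^{r_{2}}X_{(0,T]},
\]
where $\omega_{1,1}(0,T-)=(V^{r_{1}}Y_{[0,T)})^{r_{1}}$ and $\omega_{2,1}(0+,T)=(V^{r_{2}}X_{(0,T]})^{r_{2}}$ by monotone convergence of the variation in the endpoint. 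Since Theorem~\ref{thm:sewing} applies verbatim on any subinterval $[s,t]\subset[0,T]$ and the RRS limit is additive over adjacent intervals (so $\calI A_{0,t}-\calI A_{0,s}=\calI A_{s,t}$), the same estimate on $[s,t]$ gives $\abs{Z_{s,t}-Y_{s}X_{s,t}} = \abs{\calI A_{s,t}-A_{s,t}} \le \zeta(\theta)\,V^{r_{1}}Y_{[s,t)}\,V^{r_{2}}X_{(s,t]}$, which is \eqref{ineq:YoungEstimateRVar1}.

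There is no real obstacle here: the statement is a textbook consequence of the sewing lemma just proved. The only points needing care are the endpoint bookkeeping in the jump setting --- matching the outputs $(0,T-)$, $(0+,T)$ of Theorem~\ref{thm:sewing} with the half-open intervals $[s,t)$, $(s,t]$ and checking $\abs{Y_{s,u}}\le V^{r_{1}}Y_{[s,u)}$, $\abs{X_{u,t}}\le V^{r_{2}}X_{(u,t]}$ under those conventions --- together with the routine fact that the sewing limit is additive in the interval, which lets the estimate on $[0,T]$ transfer to arbitrary $[s,t]$.
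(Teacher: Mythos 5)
Your proposal is correct and is essentially the paper's own proof: both apply the sewing lemma (Theorem~\ref{thm:sewing}) to the germ $A_{s,t}=Y_sX_{s,t}$, using $\delta A_{s,u,t}=-Y_{s,u}X_{u,t}$ and the bound $\abs{\delta A_{s,u,t}}\le V^{r_1}Y\,V^{r_2}X$ on the respective subintervals, with $\theta=1/r_1+1/r_2>1$. Your extra bookkeeping (the explicit superadditive controls $(V^{r_1}Y)^{r_1}$, $(V^{r_2}X)^{r_2}$, the half-open endpoint conventions, and localization to $[s,t]$ via additivity of the sewing) just fills in details the paper leaves implicit.
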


\begin{proof}
Recall that the Young integral $Z_t$ is given as the sewing of the germ
\[
	(s,t)\mapsto A_{s,t} = Y_s X_{s,t}\,.
\]
With
\[
	\abs{\delta A_{s,u,t}} = \abs{Y_{s,u}}\abs{X_{s,u}} \le V^{r_1} Y_{[s,u]} V^{r_2} X_{[u,t]}\,. 
\]
Since $\theta > 1$, the sewing Lemma (Theorem \ref{thm:sewing}) immediately gives us that $Z$ exists and \eqref{ineq:YoungEstimateRVar1}.
\end{proof}

\noindent We are especially interested in the case in which $Y$ is composed with a Hölder continuous function $\phi$:

\begin{corollary}
\label{cor:1}
Let $r \in [1,2)$ and $\alpha \in (0,1]$ with $r < 1+\alpha$. Let $\theta = (1+\alpha)/r$.
Let $X,Y \in V^{r}$ and $\phi \in C^{\alpha}$.
Then, the Young integral $Z_{T} := \int_{0}^{T} \phi(Y) \dif X$ exists and satisfies
\begin{equation}\label{ineq:YoungEstimateRVar}
\abs{Z_{s,t}- \phi(Y_{s}) X_{s,t}}
\leq
\zeta(\theta) \norm{\phi}_{C^{\alpha}} V^r Y_{[s,t)}^{\alpha} V^r X_{(s,t]}\,,
\end{equation}
where $\zeta(\theta)= \sum_{k=1}^\infty k^{-\theta}$.
\end{corollary}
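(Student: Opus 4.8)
The plan is to reduce the statement to Lemma~\ref{lem:1} by observing that composing a path of finite $r$-variation with an $\alpha$-H\"older function produces a path of finite $(r/\alpha)$-variation. The first step is the elementary bound, valid for every interval $I\subset[0,T]$,
\[
V^{r/\alpha}(\phi(Y))_{I} \le \norm{\phi}_{C^{\alpha}} \bigl(V^{r}Y_{I}\bigr)^{\alpha}.
\]
This follows directly from the definition \eqref{eq:VariationNorm}: for any finite increasing sequence $\pi=\{\pi_1<\dots<\pi_N\}\subset I$, the H\"older estimate gives $\abs{\phi(Y_{\pi_{i+1}})-\phi(Y_{\pi_i})}^{r/\alpha} \le \norm{\phi}_{C^{\alpha}}^{r/\alpha}\abs{Y_{\pi_{i+1}}-Y_{\pi_i}}^{r}$; summing over $i$, taking the power $\alpha/r$, and passing to the supremum over $\pi$ yields the claim.

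Next I would apply Lemma~\ref{lem:1} with $r_1 = r/\alpha$, $r_2 = r$, using $\phi(Y)$ as the integrand. The hypotheses hold: since $r\ge 1$ and $\alpha\le 1$ we have $r_1 = r/\alpha \ge 1$ and $r_2 = r \ge 1$, and $\frac1{r_1}+\frac1{r_2} = \frac{1+\alpha}{r} = \theta > 1$ by assumption; moreover $\phi(Y)\in V^{r/\alpha}$ by the previous step. Lemma~\ref{lem:1} then gives the existence of the Young integral $Z_t := \int_0^t \phi(Y)\,\dif X$ together with
\[
\abs{Z_{s,t} - \phi(Y_{s}) X_{s,t}} \le \zeta(\theta)\, V^{r/\alpha}(\phi(Y))_{[s,t)}\, V^{r}X_{(s,t]}.
\]
Inserting the bound from the first step on $V^{r/\alpha}(\phi(Y))_{[s,t)}$ produces exactly \eqref{ineq:YoungEstimateRVar}.

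There is no real obstacle: the only point needing care is the variation-norm estimate for $\phi\circ Y$ above, and the precise bookkeeping of the half-open intervals $[s,t)$ and $(s,t]$, which is inherited verbatim from Lemma~\ref{lem:1} and requires no additional argument.
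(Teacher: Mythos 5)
Your proposal is correct and is essentially the paper's own argument: the composition bound $V^{r/\alpha}(\phi(Y))_I \le \norm{\phi}_{C^{\alpha}}(V^r Y_I)^{\alpha}$ followed by an application of Lemma~\ref{lem:1} with $r_1 = r/\alpha$, $r_2 = r$. The extra details you supply (the H\"older estimate along partitions and the verification that $\theta>1$) are exactly the routine checks the paper leaves implicit.
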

\begin{proof}
For any path $Y$ and interval $I\subset[0,T]$, it is easy to see that
\begin{equation*}
V^{r/\alpha}(\phi(Y))_I
\leq
\norm{\phi}_{C^{\alpha}} (V^r Y_I)^\alpha.
\end{equation*}
The claim is then a direct consequence of Lemma \ref{lem:1}.
\end{proof}

\begin{proposition}[A priori estimate]
\label{prop:Young-apriori}
Let $r,\alpha$ be as above.
Let $\phi\in C^{0,\alpha}_{b}$ with two constants $\Phi_0,\Phi_\alpha>0$, such that
\begin{equation}
\label{eq:C^0,alpha-bounds}
\norm{\phi}_{\sup} \leq \Phi_{0},
\quad
\norm{\phi}_{C^{\alpha}} \leq \Phi_{\alpha}.
\end{equation}
For any $0\le s\le t\le T$, let $X,Y \in V^{r}_{[s,t]}$ with
\[
Y_{u} = Y_{s} + \int_{s}^{u} \phi(Y_z) \dif X_z,
\quad
\text{for all }u\in [s,t].
\]
Then, assuming that there is an $\epsilon > 0$ with
\begin{equation}
\label{eq:VpX-small}
V^r X_{(s,t]} \leq \epsilon, \quad
2^{\alpha} \zeta(\theta) \Phi_\alpha \epsilon^{\alpha} \leq \Phi_0^{1-\alpha},
\end{equation}
we have
\begin{equation}
\label{eq:Young-apriori:difference}
V^r Y_{[s,t]}
\leq
2 \Phi_{0} V^r X_{[s,t]}.
\end{equation}
\end{proposition}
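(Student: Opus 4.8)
The plan is to apply the a~priori Young bound of Corollary~\ref{cor:1} on every subinterval of $[s,t]$, to sum the resulting pointwise estimates against an arbitrary partition so as to obtain a single self-referential inequality for $V^rY_{[s,t]}$, and then to run a short bootstrap using the smallness hypotheses \eqref{eq:VpX-small} in order to linearise that inequality into \eqref{eq:Young-apriori:difference}.

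First I would record the flow identity: since $Y$ solves the Young equation on $[s,t]$ and the Young integral is additive over abutting intervals, $Y_{a,b}=\int_a^b\phi(Y_z)\dif X_z$ for every $[a,b]\subseteq[s,t]$. Applying Corollary~\ref{cor:1} on $[a,b]$, together with $\norm{\phi}_{\sup}\le\Phi_0$, $\norm{\phi}_{C^\alpha}\le\Phi_\alpha$, and the monotonicity $V^rY_{[a,b)}\le V^rY_{[s,t]}=:M$, gives
\[
\abs{Y_{a,b}}\le\Phi_0\abs{X_{a,b}}+\zeta(\theta)\Phi_\alpha M^\alpha\,V^rX_{(a,b]},\qquad [a,b]\subseteq[s,t].
\]
Inserting this into $\bigl(\sum_i\abs{Y_{u_i,u_{i+1}}}^r\bigr)^{1/r}$ for an arbitrary partition $s=u_0<\dots<u_N=t$, then using Minkowski's inequality in $\ell^r$ (valid since $r\ge1$), the definition of $V^rX_{[s,t]}$, and the superadditivity of $(V^rX_{(\cdot,\cdot]})^r$ across the intervals $(u_i,u_{i+1}]$ tiling $(s,t]$, and finally taking the supremum over partitions, yields the key inequality
\[
M\le\Phi_0\,V^rX_{[s,t]}+\zeta(\theta)\Phi_\alpha\,M^\alpha\,V^rX_{(s,t]},\qquad M=V^rY_{[s,t]}<\infty.
\]

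The substantive step is the closing bootstrap. Suppose, for contradiction, that $M>2\Phi_0\,V^rX_{[s,t]}$, and write $v:=V^rX_{[s,t]}$ and $w:=V^rX_{(s,t]}$, so that $0\le w\le v$ and $w\le\epsilon$. Then $M>0$, and the key inequality forces $\zeta(\theta)\Phi_\alpha\,w\,M^\alpha\ge M-\Phi_0 v>M/2$, whence $w>0$. Since $M>2\Phi_0 v\ge2\Phi_0 w$ and $\alpha\le1$, we have $M^{1-\alpha}\ge(2\Phi_0 w)^{1-\alpha}$; writing $M/2=\tfrac12M^{1-\alpha}M^\alpha$, cancelling $M^\alpha>0$, and dividing by $w^{1-\alpha}>0$, we obtain $2^\alpha\zeta(\theta)\Phi_\alpha\,w^\alpha>\Phi_0^{1-\alpha}$, and since $w\le\epsilon$ this contradicts the second inequality of \eqref{eq:VpX-small}. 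Hence $M\le2\Phi_0\,V^rX_{[s,t]}$, which is \eqref{eq:Young-apriori:difference}. The one point requiring care is precisely this last step: one must arrange the contradiction so that it uses \emph{only} $V^rX_{(s,t]}\le\epsilon$, because \eqref{eq:VpX-small} does not bound $V^rX_{[s,t]}$ (which genuinely appears through $\abs{X_{a,b}}$, whose initial increment is not controlled by the half-open variation $V^rX_{(s,t]}$) — this is why one estimates $M$ from below by $2\Phi_0 w$ rather than by $2\Phi_0 v$.
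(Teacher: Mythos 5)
Your proposal is correct, and it closes the argument by a genuinely different mechanism than the paper. The paper's proof applies Corollary~\ref{cor:1} to bound the $V^{r/(1+\alpha)}$-variation of the two-parameter remainder $Y-\phi(Y)\delta X$ over $[s,t]$, then uses the smallness hypothesis \eqref{eq:VpX-small} to replace $\zeta(\theta)\Phi_\alpha V^rX_{(s,t]}$ by $2^{-\alpha}\Phi_0^{1-\alpha}V^rX_{(s,t]}^{1-\alpha}$, linearizes the resulting product $V^rY^{\alpha}(\Phi_0V^rX)^{1-\alpha}$ by the AM--GM inequality, and absorbs the term $\tfrac{\alpha}{2}V^rY$ into the left-hand side, obtaining the constant $2$ as $(2-\alpha)/(1-\alpha/2)$. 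You instead sum the pointwise sewing estimate over an arbitrary partition (Minkowski in $\ell^r$ for $r\ge 1$, plus superadditivity of $(V^rX_{(\cdot,\cdot]})^r$ over the tiling half-open intervals, and the crude bound $V^rY_{[a,b)}\le M$), which gives the scalar inequality $M\le\Phi_0 V^rX_{[s,t]}+\zeta(\theta)\Phi_\alpha M^\alpha V^rX_{(s,t]}$, and you then close it by contradiction using only $V^rX_{(s,t]}\le\epsilon$ and the second bound in \eqref{eq:VpX-small}; this also yields exactly the constant $2$. Both arguments need $M=V^rY_{[s,t]}<\infty$, which is part of the hypothesis $Y\in V^r_{[s,t]}$, and both correctly exploit that the smallness assumption only concerns the half-open variation $V^rX_{(s,t]}$ while $V^rX_{[s,t]}$ enters linearly through the germ term $\phi(Y_a)X_{a,b}$ --- the caveat you flag at the end is exactly the same point the paper's splitting into $V^{r/(1+\alpha)}(\text{remainder})+V^r(\phi(Y)\delta X)$ takes care of. What each approach buys: yours is slightly more elementary (no AM--GM, no variation seminorm of the remainder, purely algebraic bootstrap on a scalar fixed-point inequality); the paper's absorption-style linearization is the version that carries over to the substantially more delicate rough a priori estimate of Proposition~\ref{prop:Young-apriori-RDE}, where a continuity/open-closed argument in $t$ replaces your one-shot contradiction.
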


\begin{proof}
We write $\phi(Y)\delta X$ for the $2$-parameter proccess $(s,t)\mapsto \phi(Y_s) X_{s,t}$. By Corollary~\ref{cor:1}, we have
\begin{equation}
\label{eq:7}
V^{r/(1+\alpha)}(Y - \phi(Y) \delta X)_{[s,t]}
\leq
\zeta(\theta) \Phi_{\alpha} V^r Y_{[s,t)}^{\alpha} V^r X_{(s,t]}.
\end{equation}
By \eqref{eq:7}, the hypothesis \eqref{eq:VpX-small}, and the AMGM inequality, we find that
\begin{align*}
V^{r/(1+\alpha)}(Y - \phi(Y) \delta X)_{[s,t]}
&\leq
2^{-\alpha}V^r Y_{[s,t)}^{\alpha} (\Phi_{0} V^r X_{(s,t]})^{1-\alpha}
\\ &\leq
\alpha 2^{-1} V^r Y_{[s,t)} + (1-\alpha) \Phi_{0} V^r X_{(s,t]}.
\end{align*}
Hence,
\begin{align*}
V^r Y_{[s,t]}
&\leq
V^{r/(1+\alpha)} (Y - \phi(Y) \delta X)_{[s,t]}
+ V^r (\phi(Y) \delta X)_{[s,t]}
\\ &\leq
\alpha 2^{-1} V^r Y_{[s,t)} + (1-\alpha) \Phi_{0} V^r X_{(s,t]}
+ \Phi_{0} V^r X_{[s,t]},
\end{align*}
so that
\[
(1-\alpha/2) V^r Y_{[s,t]}
\leq
(2-\alpha) \Phi_{0} V^r X_{[s,t]}.
\]
This shows \eqref{eq:Young-apriori:difference}.
\end{proof}

\begin{lemma}[Solution space]
\label{lem:solution-space}
Let $\phi\in C^{0,\alpha}_b, \Phi_0,\Phi_\alpha > 0$ be such that \eqref{eq:C^0,alpha-bounds} holds and $X\in V^r$ such that \eqref{eq:VpX-small} holds for some $\epsilon > 0$.
Recall our solution space
\begin{equation}
\label{eq:3}
\calY(X) := \Set{ Y \given \forall I \quad V^r Y_{I} \leq 2 \Phi_{0} V^r X_{I} }.
\end{equation}
Then, every solution to the Young ODE \eqref{eq:YoungODE} lies in $\calY(X)$, and $\calY(X)$ is invariant under
\begin{equation}
\label{eq:Picard-step}
Y \mapsto \Bigl( y_{0} + \int_{0}^{t} \phi(Y) \dif X \Bigr)_{t}.
\end{equation}
\end{lemma}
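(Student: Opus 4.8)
The plan is to read off both assertions from the a priori estimate (Proposition~\ref{prop:Young-apriori}) and the Young-integral bound (Corollary~\ref{cor:1}); essentially no new work is needed. For the first assertion, let $Y$ solve \eqref{eq:YoungODE} and fix an arbitrary interval $I=[s,t]\subset[0,T]$. Restricting the equation gives $Y_u=Y_s+\int_s^u\phi(Y_z)\dif X_z$ for $u\in[s,t]$, and by monotonicity of the $r$-variation, $V^rX_{(s,t]}\le V^rX_{(0,T]}\le\epsilon$, so both smallness conditions in \eqref{eq:VpX-small} continue to hold on $[s,t]$. Proposition~\ref{prop:Young-apriori} then yields $V^rY_{[s,t]}\le 2\Phi_{0} V^rX_{[s,t]}$; as $I$ was arbitrary, $Y\in\calY(X)$.

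For the invariance under the Picard step \eqref{eq:Picard-step}, let $Y\in\calY(X)$ and put $\bar Y_t:=y_{0}+\int_0^t\phi(Y)\dif X$. Fix $I=[s,t]\subset[0,T]$ and write $\phi(Y)\delta X$ for the two-parameter process $(u,v)\mapsto\phi(Y_u)X_{u,v}$. Since $\bar Y_{u,v}=\int_u^v\phi(Y)\dif X$, the two-parameter process $\bar Y-\phi(Y)\delta X$ is exactly the germ--sewing difference estimated in the proof of Proposition~\ref{prop:Young-apriori}, so Corollary~\ref{cor:1} (together with super-additivity of the controls $(u,v)\mapsto(V^rY_{[u,v]})^r$ and $(u,v)\mapsto(V^rX_{[u,v]})^r$, exactly as in the derivation of \eqref{eq:7}) gives
\[
V^{r/(1+\alpha)}\bigl(\bar Y-\phi(Y)\delta X\bigr)_{[s,t]}
\le
\zeta(\theta)\Phi_{\alpha}\,(V^rY_{[s,t)})^{\alpha}\,V^rX_{(s,t]}.
\]
Using $Y\in\calY(X)$ to bound $V^rY_{[s,t)}\le 2\Phi_{0}V^rX_{[s,t]}$, then $V^rX_{[s,t]}\le V^rX_{(0,T]}\le\epsilon$, and finally $2^{\alpha}\zeta(\theta)\Phi_{\alpha}\epsilon^{\alpha}\le\Phi_{0}^{1-\alpha}$ from \eqref{eq:VpX-small}, the right-hand side is at most $\Phi_{0}V^rX_{[s,t]}$. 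On the other hand $V^r(\phi(Y)\delta X)_{[s,t]}\le\norm{\phi}_{\sup}V^rX_{[s,t]}\le\Phi_{0}V^rX_{[s,t]}$. Since $r/(1+\alpha)<r$ we have $V^r\le V^{r/(1+\alpha)}$, so subadditivity of $V^r$ yields $V^r\bar Y_{[s,t]}\le 2\Phi_{0}V^rX_{[s,t]}$; as $I$ was arbitrary, $\bar Y\in\calY(X)$.

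There is no genuine obstacle here. The only points worth watching are: the smallness assumptions \eqref{eq:VpX-small} must be verified on every subinterval, which is immediate from monotonicity of $r$-variation; $\bar Y$ is \emph{not} itself a solution of \eqref{eq:YoungODE}, so Proposition~\ref{prop:Young-apriori} cannot be quoted verbatim for it, but the single inequality \eqref{eq:7} from its proof suffices and, since here $\bar Y$ is estimated in terms of a \emph{different} path $Y$, no AMGM step is needed; and, because jumps are permitted, one should keep the half-open endpoints straight, e.g.\ bounding $V^rY_{[s,t)}\le V^rY_{[s,t]}$ and $V^rX_{(s,t]}\le V^rX_{[s,t]}$ when convenient.
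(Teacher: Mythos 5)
Your proof is correct and follows essentially the same route as the paper: Proposition~\ref{prop:Young-apriori} (applied on each subinterval) for the first assertion, and Corollary~\ref{cor:1} combined with the smallness condition \eqref{eq:VpX-small} for invariance under \eqref{eq:Picard-step}. The only difference is presentational: the paper records just the pointwise Davie-form bound $\abs{\int_s^t\phi(Y)\dif X-\phi(Y_s)X_{s,t}}\le \Phi_0 V^rX_{[s,t]}$ and leaves the passage to the $V^r$ bound implicit, whereas you spell it out via the $V^{r/(1+\alpha)}$ estimate and the decomposition into $\bar Y-\phi(Y)\delta X$ plus $\phi(Y)\delta X$, which is exactly the intended completion.
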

\begin{proof}
Proposition~\ref{prop:Young-apriori} shows that every solution lies in $\calY(X)$.
By Corollary~\ref{cor:1} and the hypothesis \eqref{eq:VpX-small}, for every $Y \in \calY(X)$ and any $s\leq t$, we have
\[
\abs{\int_{s}^{t}\phi(Y)\dif X - \phi(Y_{s}) X_{s,t}}
\leq
2^{\alpha} \zeta(\theta) \Phi_\alpha \Phi_{0}^{\alpha}
V^r X_{[s,t)}^{\alpha} V^r X_{(s,t]}
\leq
\Phi_{0} V^r X_{[s,t]}.
\]
This implies that \eqref{eq:Picard-step} maps $\calY(X)$ to $\calY(X)$.
\end{proof}

\subsection{Convergence of Picard iteration}

Before we show that the map \eqref{eq:Picard-step} is a contraction, we need the following estimate:

\begin{lemma}[Stability of composition]
\label{lem:composition-stable}
Let $r\geq 1$ and $\alpha \in (0,1]$.
Suppose $\phi \in C^{1,\alpha}_{b}$ and $\tphi \in C^{0,\alpha}_{b}$.
Let $Y,\tY\in V^r$ be paths.
Then we have for any interval $I\subset[0,T]$:
\begin{equation}
\label{eq:composition-Delta-phi}
V^{r/\alpha}(\phi(Y)-\tphi(\tY))_I
\leq
\norm{D\phi}_{\sup} V^{r/\alpha}\Delta Y_I
+ \frac{\norm{D\phi}_{C^{\alpha}}}{1+\alpha} (V^rY^{\alpha}_I + V^r\tY^{\alpha}_I) \norm{\Delta Y}_{\sup}
+ \norm{\Delta\phi}_{C^{\alpha}} V^r\tY^{\alpha}_I.
\end{equation}
\end{lemma}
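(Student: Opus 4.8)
The plan is to lift the pointwise four-point Taylor estimate of Lemma~\ref{lem:4point-Taylor-ord-1} to a variation-norm estimate by testing against an arbitrary partition and using Minkowski's inequality in $\ell^{r/\alpha}$. Note that since $r\ge 1$ and $\alpha\in(0,1]$ we have $r/\alpha\ge 1$, so all the $\ell^{r/\alpha}$ (quasi-)triangle inequalities below are genuine triangle inequalities.

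Concretely, I would fix a finite increasing sequence $\pi=\{t_0<\dots<t_N\}\subset I$, apply \eqref{eq:4point-Taylor-ord-1} with $(s,t)=(t_j,t_{j+1})$ for each $j$, and take the $\ell^{r/\alpha}$ norm in $j$, obtaining
\begin{align*}
\Bigl(\sum_j \abs{(\phi(Y)-\tphi(\tY))_{t_j,t_{j+1}}}^{r/\alpha}\Bigr)^{\alpha/r}
&\le \norm{D\phi}_{\sup}\Bigl(\sum_j\abs{\Delta Y_{t_j,t_{j+1}}}^{r/\alpha}\Bigr)^{\alpha/r}\\
&\quad + \frac{\norm{D\phi}_{C^\alpha}}{1+\alpha}\Bigl(\sum_j\bigl(\abs{Y_{t_j,t_{j+1}}}^\alpha+\abs{\tY_{t_j,t_{j+1}}}^\alpha\bigr)^{r/\alpha}\abs{\Delta Y_{t_j}}^{r/\alpha}\Bigr)^{\alpha/r}\\
&\quad+\norm{\Delta\phi}_{C^\alpha}\Bigl(\sum_j\abs{\tY_{t_j,t_{j+1}}}^{r}\Bigr)^{\alpha/r}.
\end{align*}
The first sum is at most $V^{r/\alpha}\Delta Y_I$ by definition, and the last is at most $(V^r\tY_I)^\alpha$ (raise $(\sum_j\abs{\tY_{t_j,t_{j+1}}}^r)^{1/r}\le V^r\tY_I$ to the $\alpha$). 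For the middle term I would bound $\abs{\Delta Y_{t_j}}\le\norm{\Delta Y}_{\sup}$, pull the supremum out, and then split via Minkowski in $\ell^{r/\alpha}$ the two summands $\abs{Y_{t_j,t_{j+1}}}^\alpha$ and $\abs{\tY_{t_j,t_{j+1}}}^\alpha$; since $\bigl(\sum_j\abs{Y_{t_j,t_{j+1}}}^{r}\bigr)^{\alpha/r}\le (V^rY_I)^\alpha$ and similarly for $\tY$, this gives $\norm{\Delta Y}_{\sup}\bigl((V^rY_I)^\alpha+(V^r\tY_I)^\alpha\bigr)$. Taking the supremum over all partitions $\pi\subset I$ on the left, the right-hand side being independent of $\pi$, yields exactly \eqref{eq:composition-Delta-phi}.

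I expect no serious obstacle here; the only point demanding care is the middle term. One must \emph{not} apply Minkowski before isolating the factor $\abs{\Delta Y_{t_j}}$, because that factor carries the ``wrong'' homogeneity for a $V^{r/\alpha}$ bound and only admits the crude $\sup$ estimate; and one must check that the remaining two $\alpha$-th powers can still be separated in $\ell^{r/\alpha}$, which is precisely where $r/\alpha\ge 1$ is used.
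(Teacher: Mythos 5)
Your proposal is correct and is exactly the paper's argument: the paper proves this lemma by "applying $V^{r/\alpha}$ to both sides of the 4-point Taylor formula \eqref{eq:4point-Taylor-ord-1}," and your partition-wise application of that formula followed by Minkowski in $\ell^{r/\alpha}$ (using $r/\alpha\ge 1$), with the crude $\sup$ bound on $\abs{\Delta Y_{t_j}}$ pulled out before splitting the middle term, is precisely the intended elaboration of that one-line proof.
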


\begin{proof}
This is a direct consequence of applying $V^{r/\alpha}$ to both sides of the 4 point Taylor formula \eqref{eq:4point-Taylor-ord-1}.
\end{proof}

\begin{proposition}[Contractivity of Picard iteration]
\label{prop:Picard-contraction}
Let $\Phi_{\alpha},\Phi_{0},\Phi_1,\Phi_{1,\alpha} > 0$.
Let $\phi \in C^{1,\alpha}_{b}$ with \eqref{eq:C^0,alpha-bounds} and
\begin{equation}
\label{eq:C^1,alpha-bounds}
\norm{D\phi}_{\sup} \leq \Phi_{1},
\quad
\norm{D\phi}_{C^{\alpha}} \leq \Phi_{1,\alpha}.
\end{equation}
Let $\epsilon>0$ with \eqref{eq:VpX-small} and
\begin{equation}
\label{eq:Young-Picard-interval1}
\zeta(\theta) \Phi_{1} \epsilon \leq 1/8,
\quad
\zeta(\theta) 2^\alpha \Phi_0^{\alpha}\frac{\Phi_{1,\alpha}}{1+\alpha} \epsilon^{1+\alpha} \leq 1/8.
\end{equation}
Then, for any $X\in V^r$ with $V^rX_{[0,T]} < \epsilon$ and any $y_{0}\in \bbR^d$, the map \eqref{eq:Picard-step} is a strict contraction on the set
\[
\Set{ Y \in \calY(X) \given Y_{0} = y_{0}}
\]
with the metric
\[
d(Y,\tY) = \inf\Set{ \delta\ge 0 \given \forall I \subseteq [0,T] \quad V^r\Delta Y_{I} \leq \delta V^rX_{I} }\,.
\]
\end{proposition}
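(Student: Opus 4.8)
The plan is to establish the quantitative contraction estimate directly. Write $Z,\tilde Z$ for the images of $Y,\tilde Y$ under the Picard map \eqref{eq:Picard-step}. I will show that
\[
V^r\Delta Z_I \le c\, d(Y,\tilde Y)\, V^rX_I \qquad\text{for every interval }I\subseteq[0,T],
\]
with a constant $c<1$ depending only on $\epsilon$ and the $\Phi$'s; by the definition of the metric this is exactly $d(Z,\tilde Z)\le c\, d(Y,\tilde Y)$. First I would check that \eqref{eq:Picard-step} maps the set $\Set{Y\in\calY(X)\given Y_0=y_0}$ into itself: invariance of $\calY(X)$ is Lemma~\ref{lem:solution-space}, and $Z_0=\tilde Z_0=y_0$ is immediate. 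I would also record two elementary facts used repeatedly below: for any two elements of $\calY(X)$ one has $V^r\Delta Y_I\le V^rY_I+V^r\tilde Y_I\le4\Phi_0 V^rX_I$, so $d(Y,\tilde Y)\le4\Phi_0<\infty$; and, since $\Delta Y_0=0$, $\abs{\Delta Y_s}=\abs{\Delta Y_{0,s}}\le V^r\Delta Y_{[0,T]}\le d(Y,\tilde Y)\,V^rX_{[0,T]}\le d(Y,\tilde Y)\,\epsilon$ for all $s$. This uniform-in-$I$ control of $\norm{\Delta Y}_{\sup}$ is where the shared initial datum does its work.

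The key observation is that both Picard steps use the same $\phi$ and the same driver $X$, so $\Delta Z_{s,t}=\int_s^t W\,\dif X$ with $W:=\phi(Y)-\phi(\tilde Y)$, and no $\Delta X$ or $\Delta\phi$ cross-terms appear. I would then split $\Delta Z=(\Delta Z-W\,\delta X)+W\,\delta X$ on each $I$ and bound the two pieces. For the algebraic piece, $V^r(W\,\delta X)_I\le\norm{W}_{\sup}V^rX_I\le\Phi_1\norm{\Delta Y}_{\sup}V^rX_I\le\Phi_1\,\epsilon\, d(Y,\tilde Y)\,V^rX_I$, using $\abs{\phi(Y_s)-\phi(\tilde Y_s)}\le\Phi_1\abs{\Delta Y_s}$. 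For the sewing remainder, Lemma~\ref{lem:1} with exponents $r/\alpha$ and $r$ (so $\theta=(1+\alpha)/r$), combined with the superadditive-control argument already used for \eqref{eq:7}, gives $V^{r/(1+\alpha)}(\Delta Z-W\,\delta X)_I\le\zeta(\theta)\,V^{r/\alpha}W_I\,V^rX_I$, which also bounds $V^r(\Delta Z-W\,\delta X)_I$ since $r/(1+\alpha)\le r$. To control $V^{r/\alpha}W_I$ I would apply Lemma~\ref{lem:composition-stable} with $\tilde\phi=\phi$ (killing the $\norm{\Delta\phi}_{C^\alpha}$ term), getting $V^{r/\alpha}W_I\le\Phi_1 V^{r/\alpha}\Delta Y_I+\tfrac{\Phi_{1,\alpha}}{1+\alpha}(V^rY_I^{\alpha}+V^r\tilde Y_I^{\alpha})\norm{\Delta Y}_{\sup}$, and then use $V^{r/\alpha}\Delta Y_I\le V^r\Delta Y_I\le d(Y,\tilde Y)V^rX_I$ (as $\alpha\le1$), $V^rY_I,V^r\tilde Y_I\le2\Phi_0 V^rX_I$ from $Y,\tilde Y\in\calY(X)$, and $\norm{\Delta Y}_{\sup}\le d(Y,\tilde Y)\,\epsilon$.

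Substituting these and repeatedly using $V^rX_I\le V^rX_{[0,T]}\le\epsilon$ to turn the surplus factors $V^rX_I^{1+\alpha}$ into $\epsilon^\alpha V^rX_I$, every contribution ends up of the form $(\text{const})\cdot d(Y,\tilde Y)\cdot V^rX_I$ with the constant a fixed multiple of $\zeta(\theta)\Phi_1\epsilon$ or of $\zeta(\theta)2^\alpha\Phi_0^\alpha\tfrac{\Phi_{1,\alpha}}{1+\alpha}\epsilon^{1+\alpha}$; the smallness conditions \eqref{eq:VpX-small} and \eqref{eq:Young-Picard-interval1} then bound the sum by a constant strictly below $1$, proving the contraction. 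I expect the only delicate points to be bookkeeping: (i) upgrading the pointwise remainder estimate of Lemma~\ref{lem:1} to a $V^{r/(1+\alpha)}$-bound on arbitrary subintervals, via superadditivity of $(V^{r/\alpha}W)^{r/\alpha}$ and $(V^rX)^r$, exactly as in the proof of Proposition~\ref{prop:Young-apriori}; and (ii) verifying that every term genuinely carries an extra factor $\epsilon$ or $\epsilon^{1+\alpha}$ — which relies on $\Delta Y_0=0$, i.e.\ on both paths sharing the initial value $y_0$ — so that \eqref{eq:Young-Picard-interval1} really does force the Lipschitz constant below $1$.
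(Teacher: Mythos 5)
Your proposal is correct and follows essentially the same route as the paper: the identity $\Delta Z=\int W\,\dif X$ with $W=\phi(Y)-\phi(\tilde Y)$, the composition estimate of Lemma~\ref{lem:composition-stable}, the sewing bound with $\theta=(1+\alpha)/r$, the solution-space bounds $V^rY_I,V^r\tilde Y_I\le 2\Phi_0V^rX_I$, and the shared initial datum giving $\norm{\Delta Y}_{\sup}\le d(Y,\tilde Y)\,\epsilon$, leading to the same contraction constant $1/2$ under \eqref{eq:Young-Picard-interval1}. The only difference is presentational: you bound $V^r$ of the remainder and germ pieces separately, whereas the paper bounds the increments $\abs{Z_{s,u}-\tilde Z_{s,u}}$ pointwise via Lemma~\ref{lem:integration-stable} (with $\tilde X=X$) and then passes to the variation norm.
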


\noindent Proposition~\ref{prop:Picard-contraction} and Lemma~\ref{lem:solution-space} imply that the Young differential equation locally has a unique solution for $\phi \in C^{1,\alpha}_{b}$. Under some natural conditions on our constants, one can reduce some of our assumptions. Classical estimates give
\begin{align*}
\norm{D\phi}_{\sup}^{1+\alpha} &\lesssim \norm{\phi}_{\sup}^{\alpha} \norm{D\phi}_{C^{\alpha}}\\
\norm{\phi}_{C^\alpha} &\le \norm{D\phi}_{\sup}^\alpha (2\norm{\phi}_{\sup})^{1-\alpha}\,,
\end{align*}
allowing us to choose $\Phi_0,\Phi_\alpha,\Phi_1,\Phi_{1,\alpha}$ in such a way that
\begin{align*}
\Phi_1^{1+\alpha}&\lesssim \Phi_0^\alpha \Phi_{1,\alpha}\\
\Phi_\alpha &\leq \Phi_{1}^{\alpha} (2\Phi_0)^{1-\alpha}\,.
\end{align*}
Under this condition, the second bound in \eqref{eq:Young-Picard-interval1} implies the first up to a multiplicative constant, and the first bound in \eqref{eq:Young-Picard-interval1} implies \eqref{eq:VpX-small} up to a multiplicative constant.

\begin{proof}
Let $Y,\tY \in \calY(X)$ with $Y_0 = \tilde Y_0 = y_0$ and $d(Y,\tY)=\delta$.
Then,
\[
\abs{\Delta Y_{s}}
=
\abs{\Delta Y_{0,s}}
\leq
\delta \epsilon.
\]
By Lemma~\ref{lem:composition-stable}, we have
\begin{align*}
V^{r/\alpha}(\phi(Y) - \phi(\tY))_I
&\leq
\Phi_{1} V^{r/\alpha}\Delta Y_I
+ \frac{\Phi_{1,\alpha}}{1+\alpha} (V^rY_{I}^{\alpha} + V^r\tY_{I}^{\alpha}) \norm{\Delta Y}_{\sup,I}
\\ \text{(by \eqref{eq:3})}
&\leq
\Phi_{1} V^r\Delta Y_{I}
+ 2^{1+\alpha} \Phi_0^{\alpha}\frac{\Phi_{1,\alpha}}{1+\alpha} V^rX_{I}^{\alpha} \norm{\Delta Y}_{\sup,I}
\\ &\leq
\Phi_{1} \delta V^rX_{I}
+ 2^{1+\alpha}\Phi_0^\alpha \frac{\Phi_{1,\alpha}}{1+\alpha} V^rX_{I}^{\alpha} \delta \epsilon
\\ &\leq
\delta \epsilon \left( \Phi_{1}
+ 2^{1+\alpha}\Phi_0^{\alpha} \frac{\Phi_{1,\alpha}}{1+\alpha}  \epsilon^{\alpha} \right).
\end{align*}
Here, \[
\norm{\Delta Y}_{\sup,I} := \sup_{x\in I} \abs{\Delta Y_x}\,.
\]
Let $Z_{T} := y_{0} + \int_{0}^{T} \phi(Y) \dif X$, $\tZ_{T} := y_{0} + \int_{0}^{T} \phi(\tY) \dif X$.
By the sewing lemma (Theorem~\ref{thm:sewing}, cf.\ Lemma~\ref{lem:integration-stable} with $\tX=X$), we obtain
\begin{align*}
\abs{Z_{s,u}-\tZ_{s,u}}
&\leq
\zeta(\theta) V^{r/\alpha}(\phi(Y) - \phi(\tY))_{[s,u) }  V^rX_{(s,u]}
+  \Phi_{1} \abs{Y_s-\tY_s} \abs{X_{s,u}}
\\ &\leq
\zeta(\theta) \delta \epsilon ( \Phi_{1}
+ 2^{1+\alpha}\Phi_0^{\alpha} \frac{\Phi_{1,\alpha}}{1+\alpha}  \epsilon^{\alpha} )
V^r X_{(s,u]}
+ \delta \epsilon \Phi_{1} \abs{X_{s,u}}.
\end{align*}
This implies
\[
d(Z,\tZ)
\leq
\zeta(\theta)\delta\epsilon (\Phi_{1} + 2^{1+\alpha} \Phi_0^{\alpha}\frac{\Phi_{1,\alpha}}{1+\alpha} \epsilon^{\alpha}) + \delta \Phi_{1}\epsilon
\leq \delta/2,
\]
where we used the hypothesis \eqref{eq:Young-Picard-interval1} in the last step.
This shows the strict contractivity of the map \eqref{eq:Picard-step}.
\end{proof}

\subsection{Lipschitz dependence on data}
Before we show the desired Lipschitz estimate in this section, we show a preliminary result.

\begin{lemma}[Stability of Young integration]
\label{lem:integration-stable}
Let $X,\tX, \in V^{r}, Y,\tilde Y\in V^{r/\alpha}$ for $r\in [1,2),\alpha\in(0,1]$ such that $r<1+\alpha$.
Then,
\begin{multline}
\label{eq:integral-Delta}
\abs{\int_{s}^{t} Y \dif X - \int_{s}^{t} \tY \dif \tX - (Y_{s}X_{s,t} - \tY_{s}\tX_{s,t})}
\leq
\\
2^{\theta-1}\zeta(\theta) V^{r/\alpha}\Delta Y_{[s,t)} V^rX_{(s,t]}
+ 2^{\theta-1}\zeta(\theta) V^{r/\alpha}\tY_{[s,t)} V^r\Delta X_{(s,t]}\,,
\end{multline}
where $\theta = (1+\alpha)/r$ and $\zeta(\theta) = \sum_{k=1}^\infty k^{-\theta}$.
\end{lemma}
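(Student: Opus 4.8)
The plan is to recognize the left-hand side as the defect of a sewing. Set $A_{u,v} := Y_u X_{u,v} - \tY_u \tX_{u,v}$ for $(u,v)\in\Delta_T$. Both Young integrals $\int_s^t Y\dif X$ and $\int_s^t \tY\dif\tX$ exist by Lemma~\ref{lem:1}, since $Y,\tY\in V^{r/\alpha}$, $X,\tX\in V^r$ and $\frac\alpha r+\frac 1r=\theta>1$; by definition they are the sewings of $(u,v)\mapsto Y_u X_{u,v}$ and $(u,v)\mapsto \tY_u\tX_{u,v}$ respectively. By linearity of the net limit defining the sewing, $\calI A_{s,t}=\int_s^t Y\dif X-\int_s^t\tY\dif\tX$, while $A_{s,t}$ is precisely the quantity subtracted on the left-hand side; hence the claim amounts to an estimate on $\abs{\calI A_{s,t}-A_{s,t}}$.

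The next step is the defect computation. Using additivity of the increments of $X$ and $\tX$ together with the telescoping ``add and subtract $\tY_{s,u}X_{u,t}$'', one obtains
\begin{equation*}
\delta A_{s,u,t}=-Y_{s,u}X_{u,t}+\tY_{s,u}\tX_{u,t}=-\Delta Y_{s,u}\,X_{u,t}-\tY_{s,u}\,\Delta X_{u,t}.
\end{equation*}
This particular splitting avoids any $\Delta Y\cdot\Delta X$ cross term, which is what makes the two variation factors come out exactly as in the statement. Bounding each increment by the variation over the relevant subinterval,
\begin{equation*}
\abs{\delta A_{s,u,t}}\le V^{r/\alpha}\Delta Y_{[s,u]}\,V^r X_{[u,t]}+V^{r/\alpha}\tY_{[s,u]}\,V^r\Delta X_{[u,t]}.
\end{equation*}

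This is exactly the hypothesis of the sewing lemma, Theorem~\ref{thm:sewing}, applied on $[s,t]$ with $N=2$, the controls $\omega_{1,1}=(V^{r/\alpha}\Delta Y)^{r/\alpha}$, $\omega_{2,1}=(V^r X)^{r}$, $\omega_{1,2}=(V^{r/\alpha}\tY)^{r/\alpha}$, $\omega_{2,2}=(V^r\Delta X)^{r}$ (each super-additive since $r/\alpha\ge1$ and $r\ge1$), and exponents $\alpha_{1,n}=\alpha/r$, $\alpha_{2,n}=1/r$, so that $\alpha_{1,n}+\alpha_{2,n}=\theta>1$. Theorem~\ref{thm:sewing} then yields
\begin{equation*}
\abs{\calI A_{s,t}-A_{s,t}}\le\zeta(\theta)\Bigl((V^{r/\alpha}\Delta Y_{[s,t)})^{1/\theta}(V^rX_{(s,t]})^{1/\theta}+(V^{r/\alpha}\tY_{[s,t)})^{1/\theta}(V^r\Delta X_{(s,t]})^{1/\theta}\Bigr)^{\theta}.
\end{equation*}
Since $\theta\ge1$, convexity of $x\mapsto x^{\theta}$ gives $(a^{1/\theta}+b^{1/\theta})^{\theta}\le 2^{\theta-1}(a+b)$, and applying this to the two summands produces the stated bound. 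There is no genuine obstacle here; the only points requiring a little care are the linearity/existence bookkeeping for the two integrals in the first step and choosing, in the second step, the telescoping that produces a defect with no cross term so that the right-hand side has the precise form claimed.
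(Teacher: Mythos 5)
Your proposal is correct and follows essentially the same route as the paper: take the germ $A_{u,v}=Y_uX_{u,v}-\tY_u\tX_{u,v}$, compute $\delta A_{s,u,t}=-\Delta Y_{s,u}X_{u,t}-\tY_{s,u}\Delta X_{u,t}$, and apply the sewing lemma (Theorem~\ref{thm:sewing}) followed by $(a^{1/\theta}+b^{1/\theta})^{\theta}\le 2^{\theta-1}(a+b)$. Your write-up even makes explicit two points the paper leaves implicit (the linearity/existence bookkeeping for $\calI A$ and the choice of superadditive controls), and your sign for $\delta A$ is the accurate one.
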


\begin{proof}
Let $A_{s,t} := Y_s X_{s,t}-\tY_s \tX_{s,t}$.
Then,
\begin{equation*}
\abs{\int_s^t Y \dif X - \int_s^t \tilde Y \dif\tilde X - (Y_{s}X_{s,t} - \tY_{s}\tX_{s,t}) }
=
\abs{\calI A_{s,t} - A_{s,t} }.
\end{equation*}
We have
\[
\delta A_{\tau,u,\nu}
= Y_{\tau,u}X_{u,\nu} - \tY_{\tau,u}\tX_{u,\nu}
= \Delta Y_{\tau,u} X_{u,\nu} + \tY_{\tau,\nu} \Delta X_{u,\nu}.
\]
By the sewing lemma (Theorem~\ref{thm:sewing}), this gives the claimed bound.
\end{proof}

\noindent For our main result, we need to strengthen the smallness conditions on $\epsilon$ by the constant $2^{\theta-1}$. To this end, we strengthen \eqref{eq:VpX-small} to
\begin{equation}\label{eq:VpX-small1}
2^{\alpha+\theta-1} \zeta(\theta) \Phi_\alpha \epsilon^{\alpha} \leq \Phi_0^{1-\alpha},
\end{equation}
and \eqref{eq:Young-Picard-interval1} to
\begin{equation}\label{eq:Young-Picard-interval2}
2^{\theta-1}\zeta(\theta) \Phi_{1} \epsilon \leq 1/8,
\quad
2^{\theta-1}\zeta(\theta) 2^\alpha \Phi_0^{\alpha}\frac{\Phi_{1,\alpha}}{1+\alpha} \epsilon^{1+\alpha} \leq 1/8.
\end{equation}
With this in hand, we can show this section's main result:

\begin{theorem}[Lipschitz stability]
\label{thm:Lipschitz-stability}
Let $\phi \in C^{1,\alpha}_{b}$ satisfy \eqref{eq:C^0,alpha-bounds} and \eqref{eq:C^1,alpha-bounds}.
Let $\tphi \in C^{0,\alpha}_{b}$ satisfy \eqref{eq:C^0,alpha-bounds}.
Suppose that $\epsilon$ satisfies the smallness conditions \eqref{eq:VpX-small1} and \eqref{eq:Young-Picard-interval2}.

Let $X,\tX$ be paths with $V^r X_{[0,T]} < \epsilon$ and $V^r\tX_{[0,T]} < \epsilon$.
Let $y_{0}, \tilde{y}_{0}\in\bbR^d$ be initial data.
Let $Y$, $\tY$ be solutions of Young ODEs with respective drivers, coefficients, and initial data.
Then, for every interval $I \subseteq [0,T]$, we have
\begin{equation}
\label{eq:Young-loc-Lip1}
V^r\Delta Y_{I}
\leq
\gamma V^rX_{I} + 2\Phi_{0} V^r\Delta X_I
\end{equation}
with
\begin{align*}
\gamma \lesssim &
((\Csew+1)\Phi_{1} + \Csew 2^{\alpha+1} \frac{\Phi_{1,\alpha}}{1+\alpha} \Phi_0^{\alpha} \epsilon^{\alpha}) \Phi_{0} V^r \Delta X_{[0,T]}  +
(\Csew 2^{\alpha+1} \frac{\Phi_{1,\alpha}}{1+\alpha} \Phi_0^{\alpha} \epsilon^{\alpha} + \Phi_{1}) \abs{y_{0}-\tilde{y}_{0}}
\\&\quad+ \Csew 2^\alpha \Phi_0^\alpha \epsilon^\alpha \norm{\Delta\phi}_{C^{\alpha}}
+ \norm{\Delta\phi}_{\sup}\,,
\end{align*}
where $\Csew = 2^{\theta-1}\zeta(\theta)$.
\end{theorem}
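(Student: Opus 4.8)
The plan is to first establish the single-increment bound
\[
\abs{\Delta Y_{s,t}} \leq \gamma_{0}\, V^{r}X_{[s,t]} + 2\Phi_{0}\, V^{r}\Delta X_{[s,t]}
\]
for a \emph{global} constant $\gamma_{0}$ that depends only on $\norm{\Delta Y}_{\sup}$, $V^{r}\Delta Y_{[0,T]}$ and the data (not on the subinterval $[s,t]$), then to upgrade this to the $V^{r}$-estimate on every interval $I$ by Minkowski's inequality together with superadditivity of the controls $(s,t)\mapsto(V^{r}X_{[s,t]})^{r}$ and $(s,t)\mapsto(V^{r}\Delta X_{[s,t]})^{r}$, and finally to bootstrap on $I=[0,T]$ in order to remove the dependence of $\gamma_{0}$ on $\Delta Y$.

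For the increment bound, Proposition~\ref{prop:Young-apriori} (via Lemma~\ref{lem:solution-space}) puts both solutions in their solution spaces, so $V^{r}Y_{J}\le 2\Phi_{0}V^{r}X_{J}$ and $V^{r}\tY_{J}\le 2\Phi_{0}V^{r}\tX_{J}$ for every interval $J$; in particular $V^{r}Y_{[0,T]},V^{r}\tY_{[0,T]}\le 2\Phi_{0}\epsilon$. Both integrands $\phi(Y)$ and $\tphi(\tY)$ lie in $V^{r/\alpha}$ (by $V^{r/\alpha}(\phi(Y))_{J}\le\norm{\phi}_{C^{\alpha}}(V^{r}Y_{J})^{\alpha}$), so from $\Delta Y_{s,t}=\int_{s}^{t}\phi(Y)\,\dif X-\int_{s}^{t}\tphi(\tY)\,\dif\tX$ and Lemma~\ref{lem:integration-stable},
\[
\abs{\Delta Y_{s,t}-\phi(Y_{s})X_{s,t}+\tphi(\tY_{s})\tX_{s,t}}
\le
\Csew\, V^{r/\alpha}(\phi(Y)-\tphi(\tY))_{[s,t)}\, V^{r}X_{(s,t]}
+\Csew\, V^{r/\alpha}(\tphi(\tY))_{[s,t)}\, V^{r}\Delta X_{(s,t]}.
\]
Writing $\phi(Y_{s})X_{s,t}-\tphi(\tY_{s})\tX_{s,t}=(\phi(Y_{s})-\tphi(\tY_{s}))X_{s,t}+\tphi(\tY_{s})\Delta X_{s,t}$, I bound $\abs{\phi(Y_{s})-\tphi(\tY_{s})}\le\Phi_{1}\norm{\Delta Y}_{\sup}+\norm{\Delta\phi}_{\sup}$ and $\abs{\tphi(\tY_{s})}\le\Phi_{0}$. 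The two ``$\tphi(\tY)$'' contributions are handled together: since $V^{r/\alpha}(\tphi(\tY))_{[s,t)}\le\Phi_{\alpha}(V^{r}\tY_{[s,t)})^{\alpha}\le 2^{\alpha}\Phi_{\alpha}\Phi_{0}^{\alpha}\epsilon^{\alpha}$, the strengthened smallness condition \eqref{eq:VpX-small1} (equivalently $\Csew 2^{\alpha}\Phi_{\alpha}\epsilon^{\alpha}\le\Phi_{0}^{1-\alpha}$) yields $\Csew\, V^{r/\alpha}(\tphi(\tY))_{[s,t)}\le\Phi_{0}$, so these two terms together are $\le 2\Phi_{0}V^{r}\Delta X_{[s,t]}$. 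For $V^{r/\alpha}(\phi(Y)-\tphi(\tY))_{[s,t)}$, Lemma~\ref{lem:composition-stable} combined with $V^{r/\alpha}\Delta Y_{[s,t)}\le V^{r}\Delta Y_{[0,T]}$, $(V^{r}Y_{[s,t)})^{\alpha}+(V^{r}\tY_{[s,t)})^{\alpha}\le 2^{1+\alpha}\Phi_{0}^{\alpha}\epsilon^{\alpha}$ and $\norm{\Delta Y}_{\sup,[s,t)}\le\norm{\Delta Y}_{\sup}$ gives a bound by the global constant $\Phi_{1}V^{r}\Delta Y_{[0,T]}+\tfrac{\Phi_{1,\alpha}}{1+\alpha}2^{1+\alpha}\Phi_{0}^{\alpha}\epsilon^{\alpha}\norm{\Delta Y}_{\sup}+2^{\alpha}\Phi_{0}^{\alpha}\epsilon^{\alpha}\norm{\Delta\phi}_{C^{\alpha}}$. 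Multiplying this by $\Csew V^{r}X_{(s,t]}\le\Csew V^{r}X_{[s,t]}$ and adding $\abs{\phi(Y_{s})-\tphi(\tY_{s})}\abs{X_{s,t}}\le(\Phi_{1}\norm{\Delta Y}_{\sup}+\norm{\Delta\phi}_{\sup})V^{r}X_{[s,t]}$ produces the increment bound with
\[
\gamma_{0}=\Phi_{1}\norm{\Delta Y}_{\sup}+\norm{\Delta\phi}_{\sup}+\Csew\Phi_{1}V^{r}\Delta Y_{[0,T]}+\Csew\tfrac{\Phi_{1,\alpha}}{1+\alpha}2^{1+\alpha}\Phi_{0}^{\alpha}\epsilon^{\alpha}\norm{\Delta Y}_{\sup}+\Csew 2^{\alpha}\Phi_{0}^{\alpha}\epsilon^{\alpha}\norm{\Delta\phi}_{C^{\alpha}}.
\]

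Since $\gamma_{0}$ is independent of $[s,t]$, applying Minkowski's inequality over a partition of $I$ and using superadditivity of the above controls upgrades the increment bound to $V^{r}\Delta Y_{I}\le\gamma_{0}V^{r}X_{I}+2\Phi_{0}V^{r}\Delta X_{I}$ for every interval $I$. Evaluating at $I=[0,T]$ (where $V^{r}X_{[0,T]}<\epsilon$) gives $V^{r}\Delta Y_{[0,T]}\le\gamma_{0}\epsilon+2\Phi_{0}V^{r}\Delta X_{[0,T]}$, and $\norm{\Delta Y}_{\sup}\le\abs{y_{0}-\tilde y_{0}}+V^{r}\Delta Y_{[0,T]}$ because $\Delta Y_{t}=(y_{0}-\tilde y_{0})+\Delta Y_{0,t}$. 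Substituting both into $\gamma_{0}$ and collecting the $V^{r}\Delta Y_{[0,T]}$-terms leaves the coefficient $c_{1}:=(\Csew+1)\Phi_{1}+\Csew\tfrac{\Phi_{1,\alpha}}{1+\alpha}2^{1+\alpha}\Phi_{0}^{\alpha}\epsilon^{\alpha}$; the strengthened conditions \eqref{eq:Young-Picard-interval2} give $\Csew\Phi_{1}\epsilon\le 1/8$ and $\Csew 2^{\alpha}\Phi_{0}^{\alpha}\tfrac{\Phi_{1,\alpha}}{1+\alpha}\epsilon^{1+\alpha}\le 1/8$, hence $c_{1}\epsilon\le 1/2$, so the resulting implicit inequality for $V^{r}\Delta Y_{[0,T]}$ can be solved to give $V^{r}\Delta Y_{[0,T]}\lesssim\epsilon\cdot(\text{data})+\Phi_{0}V^{r}\Delta X_{[0,T]}$. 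Inserting this back into $\gamma_{0}$ and absorbing the bounded factor $c_{1}\epsilon$ yields $\gamma\lesssim c_{1}\Phi_{0}V^{r}\Delta X_{[0,T]}+(\Phi_{1}+\Csew 2^{\alpha+1}\tfrac{\Phi_{1,\alpha}}{1+\alpha}\Phi_{0}^{\alpha}\epsilon^{\alpha})\abs{y_{0}-\tilde y_{0}}+\Csew 2^{\alpha}\Phi_{0}^{\alpha}\epsilon^{\alpha}\norm{\Delta\phi}_{C^{\alpha}}+\norm{\Delta\phi}_{\sup}$, which is the claimed estimate for $\gamma$ once one notes $c_{1}\Phi_{0}=((\Csew+1)\Phi_{1}+\Csew 2^{\alpha+1}\tfrac{\Phi_{1,\alpha}}{1+\alpha}\Phi_{0}^{\alpha}\epsilon^{\alpha})\Phi_{0}$.

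The bulk of the work is the bookkeeping: carrying all constants through Lemmas~\ref{lem:integration-stable} and \ref{lem:composition-stable} and through the bootstrap so that exactly the strengthened smallness conditions \eqref{eq:VpX-small1}, \eqref{eq:Young-Picard-interval2} are what gets consumed — the extra factor $2^{\theta-1}$ hidden in $\Csew$, relative to the $\zeta(\theta)$ used in the a priori estimate, is precisely what forces these to be stronger than \eqref{eq:VpX-small}, \eqref{eq:Young-Picard-interval1}. The only genuinely structural point is the observation that $\gamma_{0}$ may be chosen independent of the subinterval $[s,t]$; this is what makes the Minkowski/superadditivity upgrade work and hence delivers the precise $I$-dependence needed for the subsequent Besov estimates.
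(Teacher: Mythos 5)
Your proof is correct and follows essentially the same route as the paper: the a priori estimate puts both solutions in their solution spaces, Lemmas~\ref{lem:composition-stable} and~\ref{lem:integration-stable} yield an increment bound whose constant is uniform over subintervals (upgraded to $V^r$ by Minkowski plus superadditivity, exactly as in the paper's passage to \eqref{eq:6}), and the strengthened conditions \eqref{eq:VpX-small1}, \eqref{eq:Young-Picard-interval2} supply the factor $\le 1/2$ used to close the estimate. The only (cosmetic) difference is the absorption variable: the paper bootstraps on the minimal constant $\gamma$ for which \eqref{eq:Young-loc-Lip1} holds (finite since $\gamma=4\Phi_0$ works), while you bootstrap on $V^r\Delta Y_{[0,T]}$ and $\norm{\Delta Y}_{\sup}$ (finite by the same a priori estimate), and both give the stated form of $\gamma$.
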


\noindent Setting $X=0$, $\phi=0$, this qualitatively recovers the a priori estimate in Proposition~\ref{prop:Young-apriori} (up to a larger sewing constant).
\begin{proof}
By Proposition~\ref{prop:Young-apriori}, we have $Y \in \calY(X)$ and $\tY \in \calY(\tX)$.
It follows that the conclusion \eqref{eq:Young-loc-Lip1} holds with $\gamma=4\Phi_{0}$.

Let $\gamma$ be the smallest number such that \eqref{eq:Young-loc-Lip1} holds.
Let $\gamma' = 2\Phi_{0}$.
Let
\[
\beta := V^r\Delta X_{[0,T]},
\quad
\beta' := \abs{y_{0}-\tilde{y}_{0}},
\quad
\beta'' := \norm{\Delta\phi}_{C^{\alpha}},
\quad
\beta''' := \norm{\Delta\phi}_{\sup}.
\]
By our choice of $\gamma,\gamma'$, we have
\[
\delta'' := \norm{\Delta Y}_{\sup}
\leq \beta' + \gamma \epsilon + \gamma' \beta.
\]
Note
\begin{align*}
\abs{\Delta \phi(Y_{s}) X_{s,t}}
&\leq
\abs{\phi(Y_{s}) X_{s,t} - \phi(\tY_{s}) X_{s,t}} + \abs{\phi(\tY_{s}) X_{s,t} - \tphi(\tY_{s}) X_{s,t}} + \abs{\tphi(\tY_{s}) X_{s,t} - \tphi(\tY_{s}) \tX_{s,t}}
\\ &\leq
\Phi_{1} \abs{\Delta Y_{s}} \abs{X_{s,t}}
+
\beta''' \abs{X_{s,t}}
+
\tPhi_0 \abs{\Delta X_{s,t}},
\end{align*}
and thus
\begin{align*}
V^r(\Delta \phi(Y_{s}) X_{s,t})_{I}
&\leq
(\Phi_{1} \norm{\Delta Y}_{\sup}+\beta''') V^rX_{I}
+
\tPhi_0 V^r\Delta X_{I}
\\ &\leq
(\Phi_{1} \gamma \epsilon + \Phi_{1} \gamma' \beta + \Phi_{1}\beta' + \beta''') V^r X_{I}
+
\tPhi_0 V^r\Delta X_{I}.
\end{align*}
By Lemma~\ref{lem:composition-stable}, we have
\begin{align*}
V^{r/\alpha} (\phi(Y) - \tphi(\tY))_{I}
&\leq
\Phi_{1} V^{r/\alpha}\Delta Y_{I}
+ \frac{\Phi_{1,\alpha}}{1+\alpha} (V^r Y_{I}^{\alpha} + V^r\tY_{I}^{\alpha}) \norm{\Delta Y}_{\sup,I}
+ \norm{\Delta\phi}_{C^{\alpha}} V^r\tY_{I}^{\alpha}
\\ \text{(by \eqref{eq:3}) } &\leq
\Phi_{1} V^r\Delta Y_{I}
+ 2^{\alpha}\frac{\Phi_{1,\alpha}}{1+\alpha} (\Phi_0^{\alpha}V^rX_{I}^{\alpha} + \tPhi_0^{\alpha}V^r \tX_{I}^{\alpha}) \delta''
+ 2^{\alpha} \beta'' \tPhi_0^{\alpha} V^r\tX_{I}^{\alpha}
\\ \text{(by \eqref{eq:Young-loc-Lip1}) } &\leq
\Phi_{1} ( \gamma \epsilon + \gamma' \beta )
+
2^{1+\alpha} \frac{\Phi_{1,\alpha}}{1+\alpha} \Phi_0^{\alpha} \epsilon^{\alpha} \delta''
+ 2^{\alpha} \beta'' \tPhi_0^{\alpha} \epsilon^{\alpha}.
\end{align*}
By \eqref{eq:3} and \eqref{eq:VpX-small1}, we have
\[
V^{r/\alpha}\tphi(\tY)_{I}
\leq
\tPhi_{\alpha} V^r\tY_{I}^{\alpha}
\leq
2^{\alpha} \tPhi_{\alpha} \tPhi_{0}^{\alpha} V^r\tX_{I}^{\alpha}
\leq
2^{\alpha} \tPhi_{\alpha} \tPhi_{0}^{\alpha} \epsilon^{\alpha}
\leq
\Csew^{-1} \Phi_{0}\,.
\]
By Lemma~\ref{lem:integration-stable}, we have
\begin{align*}
\MoveEqLeft
\abs{\Delta Y_{s,t} - \Delta \phi(Y_{s}) X_{s,t}}
\\ & \leq
\Csew V^{r/\alpha}\Delta \phi(Y)_{[s,t)} V^rX_{(s,t]}
+ \Csew V^{r/\alpha}\tphi(\tY)_{[s,t)} V^r\Delta X_{(s,t]}
\\ &\leq
\Csew (\Phi_{1} ( \gamma \epsilon + \gamma' \beta ) +
2^{\alpha+1} \frac{\Phi_{1,\alpha}}{1+\alpha} \Phi_0^{\alpha} \epsilon^{\alpha} \delta''
+ 2^{\alpha} \beta'' \tPhi_0^{\alpha} \epsilon^{\alpha}) V^rX_{(s,t]}
+ \Phi_0 V^r\Delta X_{(s,t]}.
\end{align*}
Hence, using \eqref{eq:VpX-small1} and \eqref{eq:Young-Picard-interval2} together with $\Csew \ge 1$,
\begin{equation}
\label{eq:6}
\begin{aligned}
V^r\Delta Y_{I}
&\leq
V^r(\Delta (Y_{s,t} - \phi(Y_{s})X_{s,t}))_{I} + V^r (\Delta\phi(Y_{s})X_{s,t})_{I}
\\ & \leq
\Csew (\Phi_{1} ( \gamma \epsilon + \gamma' \beta ) +
2^{\alpha+1} \frac{\Phi_{1,\alpha}}{1+\alpha} \Phi_0^{\alpha} \epsilon^{\alpha} (\beta' + \gamma\epsilon + \gamma'\beta)
+ 2^{\alpha} \beta'' \tPhi_0^{\alpha} \epsilon^{\alpha}) V^rX_{I}
\\ &\quad +
\Phi_0 V^r(\Delta X)_{I}
\\ &\quad +
(\Phi_{1} \gamma \epsilon + \Phi_{1} \gamma' \beta + \Phi_{1}\beta' + \beta''') V^r X_{I}
+
\tPhi_0 V^r \Delta X_{I}
\\ & \leq
(\gamma/2 + \Csew \Phi_1\gamma' \beta  +
\Csew 2^{\alpha+1} \frac{\Phi_{1,\alpha}}{1+\alpha} \Phi_0^{\alpha} \epsilon^{\alpha} (\beta'+\gamma'\beta)
+ \Csew 2^\alpha \Phi_0^\alpha \epsilon^\alpha \beta''
\\ & \quad + \Phi_{1} \gamma' \beta + \Phi_{1} \beta' + \beta''') V^rX_{I}
+
2 \tPhi_0 V^r\Delta X_{I}.
\end{aligned}
\end{equation}
So, we obtain
\begin{multline*}
\gamma \leq
\gamma/2 + \Csew \Phi_1\gamma' \beta  +
\Csew 2^{\alpha+1} \frac{\Phi_{1,\alpha}}{1+\alpha} \Phi_0^{\alpha} \epsilon^{\alpha} (\beta'+\gamma'\beta)
+ \Csew 2^\alpha \Phi_0^\alpha \epsilon^\alpha \beta''
+ \Phi_{1} \gamma' \beta + \Phi_{1} \beta' + \beta'''.
\end{multline*}
This implies
\[
\gamma \lesssim
\Csew \Phi_{0}\Phi_1 \beta  +
\Csew 2^{\alpha+1} \frac{\Phi_{1,\alpha}}{1+\alpha} \Phi_0^{\alpha} \epsilon^{\alpha} (\beta'+\Phi_{0}\beta)
+ \Csew 2^\alpha \Phi_0^\alpha \epsilon^\alpha \beta''
+ \Phi_{1} \Phi_{0} \beta + \Phi_{1} \beta' + \beta'''.
\]
\end{proof}

\section{Lipschitz estimates for RDEs}\label{sec:LipschitzRP}

The goal of this section is to derive a similar estimate to \eqref{eq:Young-loc-Lip1} for solutions of rough differential equations
\begin{equation}\label{eq:RDESec6}
Y_t = y_0 + \int_0^t \phi(\bfY_s)\dif \bfX_s\,.
\end{equation}
Although similar estimates can be found in textbooks \cite{FH20, FV10}, the existing versions do not seem sufficiently strong to recover Besov norm estimates from variation norm estimates.
For instance, unpacking \cite[Corollary 10.27]{FV10} leads to quotients of the form $V^{r/2}(\Delta \bbX) /(V^r X + (V^{r/2}\bbX)^{\frac 12} + V^r \tilde X + (V^{r/2}\tilde\bbX)^{\frac 12})$, which we have been unable to bound in Besov norm.

Throughout this section, we fix an $r\in [2,3)$ and an $\alpha\in(0,1]$. Let $\bfX = (X,\bbX)$ be an $r$-rough path over $\bbR^n$ and we assume that $\phi:\bbR^d\to L(\bbR^n,\bbR^d)$. Eventually, we will assume that $\phi\in C^{2,1}$, but we will clarify for each Lemma the minimal regularity for $\phi$.

To keep this section more concise, we omit the dependence on $\Delta\phi$ by assuming $\phi =\tilde\phi$ and only track the dependences on $\Delta X, \Delta\bbX$ and $\Delta y_0$. We further use a solution space depending on $V^r X_I, V^{r/2}\bbX_I$ for each $I\subset[0,T]$, similar to \eqref{eq:SolutionSpaceYoung}. The precise definition can be found in \eqref{eq:RDE-solution-space}.

As in Section \ref{sec:LipschitzYoung}, we show an a priori estimate (Proposition \ref{prop:Young-apriori-RDE}), local existence and uniqueness (Theorem \ref{thm:RDE-solution-local-uniqueness}) and the desired Lipschitz estimate (Theorem \ref{thm:LipschitzDependenceRDE}).

\subsection{Operations on controlled rough paths}

Let us recall some stability estimates on controlled rough paths. Similar estimates can be found in Gubinelli's original paper \cite{MR2091358}, as well as \cite{FH20} in the Hölder setting.

Throughout this subsection, we fix two $r$-rough paths $\bfX,\tilde\bfX$ with controlled rough paths $\bfY,\tilde\bfY$. As in the Young case, we use the notation
\begin{multline*}
\Delta Y = Y - \tY,
\quad
\Delta Y' = Y' - \tY',
\quad
\Delta R^{\bfY,\bfX} = R^{\bfY,\bfX} - R^{\tilde\bfY,\tilde\bfX},
\quad
\Delta X = X - \tX,
\quad
\Delta \bbX = \bbX - \tilde{\bbX},
\\ \quad
\Delta \phi(Y) = \phi(Y) - \phi(\tY),
\quad
\Delta R^{\phi(\bfY),\bfX} = R^{\phi(\bfY),\bfX} - R^{\phi(\tilde\bfY),\tilde\bfX}.
\end{multline*}
The next lemma defines $\phi(\bfY)$ and shows some stability estimates for the expression:

\begin{lemma}[Composition with a smooth function]
\label{lem:controlled-composition}
Let $r\in[2,3)$, $\alpha\in(0,1]$ and let  $\phi \in C^{1,\alpha}_{b}$.
Then $\phi(\bfY) := (\phi(Y), D\phi(Y)Y')$ is an $\bfX$-controlled $(r,\alpha)$-rough path, and we have for each interval $I\subset[0,T]$:
\begin{equation}
\label{eq:controlled-composition-phi'}
V^{r/\alpha}(\phi(\bfY)')_{I}
=
V^{r/\alpha}(D\phi(Y)Y')_{I}
\leq
\norm{D\phi}_{\sup} V^{r/\alpha}(Y')_{I} + \norm{D\phi}_{C^{\alpha}} V^rY_I^{\alpha} \norm{Y'}_{\sup},
\end{equation}
\begin{equation}
\label{eq:controlled-composition-R}
V^{r/(1+\alpha)} (R^{\phi(\bfY),\bfX} )_{I}
\leq
\norm{D\phi}_{\sup} V^{r/(1+\alpha)}(R^{\bfY,\bfX})_{I}
+
\frac{1}{1+\alpha} \norm{D \phi}_{C^{\alpha}} V^rY_I^{1+\alpha}.
\end{equation}
Furthermore, if $\phi\in C^{2,1}$, we have the following stabillity estimates for any interval $I\subset[0,T]$:
\begin{equation}
\label{eq:controlled-composition-Delta-phi}
V^r(\Delta \phi(Y))_{I}
\leq
\norm{D\phi}_{\sup} V^r(\Delta Y)_{I} + \norm{D\phi}_{\Lip} \norm{\Delta Y}_{\sup} V^r\tY_{I},
\end{equation}
\begin{multline}
\label{eq:controlled-composition-Delta-phi'}
V^r(\Delta\phi(\bfY)')_{I}
\leq
\norm{D\phi}_{\sup} V^r(\Delta Y')_{I}
+ \norm{D\phi}_{\Lip} V^r Y_I \norm{\Delta Y'}_{\sup}
+ \norm{D\phi}_{\Lip} \norm{\Delta Y}_{\sup} V^r (\tY')_{I}\\
+ \norm{D^{2}\phi}_{\sup} V^r(\Delta Y)_{I} \norm{\tY'}_{\sup}
+ \norm{D^{2}\phi}_{\Lip} \norm{\Delta Y}_{\sup} V^r(\tY)_{I} \norm{\tY'}_{\sup},
\end{multline}
\begin{multline}
\label{eq:controlled-composition-Delta-R}
V^{r/2}(\Delta R^{\phi(\bfY),\bfX})_{I}
\leq
\norm{D\phi}_{\sup} V^{r/2} (\Delta R^{\bfY,\bfX})_{I}
+ \norm{D\phi}_{\Lip} \norm{\Delta Y}_{\sup} V^{r/2}(R^{\tilde\bfY,\tilde\bfX})_{I}
\\+\frac12 \norm{D^{2}\phi}_{\sup} (V^r Y_I+V^r\tY_I) V^r(\Delta Y)_I
+\frac12 \norm{D^{2}\phi}_{\Lip} \norm{\Delta Y}_{\sup} V^r(\tY)_I^2.
\end{multline}
\end{lemma}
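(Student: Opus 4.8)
The plan is to reduce every assertion to an algebraic increment identity combined with a handful of elementary variation‑norm inequalities. The building blocks are: for a path $f$ and $\beta\in(0,2]$, $V^{r/\beta}(\abs{f_{\cdot,\cdot}}^{\beta})_I=(V^rf_I)^{\beta}$; for paths $f,g$ the Cauchy--Schwarz bound $V^{r/2}(\abs{f_{\cdot,\cdot}}\abs{g_{\cdot,\cdot}})_I\le V^rf_I\,V^rg_I$; the product rule $V^p(fg)_I\le\norm{f}_{\sup,I}V^pg_I+\norm{g}_{\sup,I}V^pf_I$ for paths and $p\ge1$, coming from $(fg)_{s,t}=f_sg_{s,t}+f_{s,t}g_t$; and $V^p\bigl((s,t)\mapsto f_sB_{s,t}\bigr)_I\le\norm{f}_{\sup,I}V^pB_I$ for a path $f$ and a two‑parameter $B$. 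Since $r\ge2$ and $\alpha\in(0,1]$, all the exponents that occur ($r$, $r/2$, $r/(1+\alpha)$, $r/\alpha$) are $\ge1$, so these $V^p$ obey the triangle inequality.

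First I would record the two increment identities underlying the ``$R$'' and ``$\phi(\bfY)'$'' quantities. By Taylor's theorem $\phi(Y)_{s,t}=D\phi(Y_s)Y_{s,t}+Q^Y_{s,t}$ with $Q^Y_{s,t}=\int_0^1\bigl(D\phi(Y_s+uY_{s,t})-D\phi(Y_s)\bigr)\dif u\,Y_{s,t}$, so $\abs{Q^Y_{s,t}}\le\frac{\norm{D\phi}_{C^\alpha}}{1+\alpha}\abs{Y_{s,t}}^{1+\alpha}$; inserting $Y_{s,t}=Y'_sX_{s,t}+R^{\bfY,\bfX}_{s,t}$ gives
\[
R^{\phi(\bfY),\bfX}_{s,t}=Q^Y_{s,t}+D\phi(Y_s)R^{\bfY,\bfX}_{s,t}.
\]
Likewise $(D\phi(Y)Y')_{s,t}=D\phi(Y_s)Y'_{s,t}+(D\phi(Y))_{s,t}Y'_t$. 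Inserting $\abs{(D\phi(Y))_{s,t}}\le\norm{D\phi}_{C^\alpha}\abs{Y_{s,t}}^{\alpha}$ and the bound on $Q^Y$, then taking $V^{r/\alpha}$ and $V^{r/(1+\alpha)}$ respectively and using the building blocks, yields \eqref{eq:controlled-composition-phi'} and \eqref{eq:controlled-composition-R}; together with $V^r\phi(Y)_I\le\norm{D\phi}_{\sup}V^rY_I$ and $Y\in V^r$ (Lemma~\ref{lem:controlled-implicit}) this shows $\phi(\bfY)$ is an $\bfX$‑controlled $(r,\alpha)$‑rough path. For \eqref{eq:controlled-composition-Delta-phi} I would telescope through $g(x,y):=\phi(x+y)-\phi(x)$: since $\phi(Y)-\phi(\tY)=g(\tY,\Delta Y)$ pointwise,
\[
\Delta\phi(Y)_{s,t}=\bigl[g(\tY_t,\Delta Y_t)-g(\tY_s,\Delta Y_t)\bigr]+\bigl[g(\tY_s,\Delta Y_t)-g(\tY_s,\Delta Y_s)\bigr],
\]
where the first bracket equals $\int_0^1\bigl(D\phi(\tY_s+\Delta Y_t+u\tY_{s,t})-D\phi(\tY_s+u\tY_{s,t})\bigr)\dif u\,\tY_{s,t}$, of size $\le\norm{D\phi}_{\Lip}\norm{\Delta Y}_{\sup}\abs{\tY_{s,t}}$, and the second equals $\int_0^1 D\phi(\tY_s+\Delta Y_s+u\Delta Y_{s,t})\dif u\,\Delta Y_{s,t}$, of size $\le\norm{D\phi}_{\sup}\abs{\Delta Y_{s,t}}$; applying $V^r$ finishes it. Then \eqref{eq:controlled-composition-Delta-phi'} follows from $D\phi(Y)Y'-D\phi(\tY)\tY'=D\phi(Y)\Delta Y'+\bigl(D\phi(Y)-D\phi(\tY)\bigr)\tY'$, the product rule, $\norm{D\phi(Y)-D\phi(\tY)}_{\sup}\le\norm{D\phi}_{\Lip}\norm{\Delta Y}_{\sup}$, and \eqref{eq:controlled-composition-Delta-phi} applied with $D\phi$ in place of $\phi$ (legitimate since $\phi\in C^{2,1}$, so $D\phi\in C^{1,1}$).

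The estimate \eqref{eq:controlled-composition-Delta-R} is the heart of the lemma and I expect it to be the main obstacle. Using $R^{\phi(\bfY),\bfX}_{s,t}=Q^Y_{s,t}+D\phi(Y_s)R^{\bfY,\bfX}_{s,t}$ I would split $\Delta R^{\phi(\bfY),\bfX}$ into $(Q^Y-Q^{\tilde Y})$ plus $D\phi(Y_s)\Delta R^{\bfY,\bfX}_{s,t}+\bigl(D\phi(Y_s)-D\phi(\tY_s)\bigr)R^{\tilde\bfY,\tilde\bfX}_{s,t}$; applying $V^{r/2}\bigl((s,t)\mapsto f_sB_{s,t}\bigr)_I\le\norm{f}_{\sup,I}V^{r/2}B_I$ to the latter two produces $\norm{D\phi}_{\sup}V^{r/2}(\Delta R^{\bfY,\bfX})_I$ and $\norm{D\phi}_{\Lip}\norm{\Delta Y}_{\sup}V^{r/2}(R^{\tilde\bfY,\tilde\bfX})_I$. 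For the $Q$‑difference I now use $\phi\in C^{2,1}$ and the integral Taylor form $Q^Y_{s,t}=M^Y_{s,t}(Y_{s,t},Y_{s,t})$ with $M^Y_{s,t}:=\int_0^1(1-u)D^2\phi(Y_s+uY_{s,t})\dif u$, $\norm{M^Y_{s,t}}\le\tfrac12\norm{D^2\phi}_{\sup}$, and telescope trilinearly:
\[
Q^Y_{s,t}-Q^{\tilde Y}_{s,t}=M^Y_{s,t}(\Delta Y_{s,t},Y_{s,t})+M^Y_{s,t}(\tY_{s,t},\Delta Y_{s,t})+\bigl(M^Y_{s,t}-M^{\tilde Y}_{s,t}\bigr)(\tY_{s,t},\tY_{s,t}),
\]
where $\norm{M^Y_{s,t}-M^{\tilde Y}_{s,t}}\le\tfrac12\norm{D^2\phi}_{\Lip}\norm{\Delta Y}_{\sup}$ because $(Y_s+uY_{s,t})-(\tY_s+u\tY_{s,t})=(1-u)\Delta Y_s+u\Delta Y_t$ is a convex combination of $\Delta Y_s,\Delta Y_t$. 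Taking $V^{r/2}$ and applying the Cauchy--Schwarz bound to the first two terms and $V^{r/2}(\abs{\tY_{\cdot,\cdot}}^2)_I=(V^r\tY_I)^2$ to the last gives the remaining terms $\tfrac12\norm{D^2\phi}_{\sup}(V^rY_I+V^r\tY_I)V^r(\Delta Y)_I$ and $\tfrac12\norm{D^2\phi}_{\Lip}\norm{\Delta Y}_{\sup}(V^r\tY_I)^2$. The delicate bookkeeping is to make the trilinear telescoping and the convex‑combination estimate on $M^Y-M^{\tilde Y}$ yield exactly $\norm{\Delta Y}_{\sup}$ (not a sum of three supremum norms) and to route each term to the correct norm of $\phi$ among $\norm{D\phi}_{\sup}$, $\norm{D\phi}_{\Lip}$, $\norm{D^2\phi}_{\sup}$, $\norm{D^2\phi}_{\Lip}$.
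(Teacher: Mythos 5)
Your proposal is correct and follows essentially the same route as the paper: the same Taylor/increment identities ($R^{\phi(\bfY),\bfX}_{s,t}=Q^Y_{s,t}+D\phi(Y_s)R^{\bfY,\bfX}_{s,t}$, the first-order integral form for $\Delta\phi(Y)$, and the second-order integral remainder with the trilinear splitting of $Y_{s,t}^{\otimes 2}-\tY_{s,t}^{\otimes 2}$ and the convex-combination bound), yielding the same terms and constants. Your handling of \eqref{eq:controlled-composition-Delta-phi'} via the pointwise product identity, the $V^r$ product rule, and \eqref{eq:controlled-composition-Delta-phi} applied to $D\phi$ is only a cosmetic variant of the paper's four-term expansion, which likewise reuses the $\Delta\phi$ argument with $D\phi$ in place of $\phi$.
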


\noindent Interestingly, these estimates do not depend on $\bfX$.

\begin{proof}
Adding and subtracting $D\phi(Y_t)Y'_s$, we obtain
\begin{equation}
\label{eq:4RDE}
D\phi(Y_t)Y'_t- D\phi(Y_s)Y'_s
=
D\phi(Y_t)(Y'_t-Y'_{s}) + (D\phi(Y_t)-D\phi(Y_s)) Y'_s.
\end{equation}
This implies \eqref{eq:controlled-composition-phi'}. By Taylor's formula,
\begin{align*}
\abs{R^{\phi(\bfY),\bfX}_{s,t}}
&=
\abs{\phi(Y_t)- \phi(Y_s) -D \phi(Y_s) Y_s' X_{s,t}}
\\ &\leq
\abs{\phi(Y_t)- \phi(Y_s) -D \phi(Y_s) Y_{s,t}} + \abs{D\phi(Y_s) R^{\bfY,\bfX}_{s,t}}
\\ &\leq
\frac{1}{1+\alpha} \norm{D \phi}_{C^{\alpha}} \abs{Y_{s,t}}^{1+\alpha} + \norm{D\phi}_{\sup} \abs{R^{\bfY,\bfX}_{s,t}},
\end{align*}
which implies \eqref{eq:controlled-composition-R}.

Let us now tackle the stability estimates \eqref{eq:controlled-composition-Delta-phi} - \eqref{eq:controlled-composition-Delta-R}, starting with \eqref{eq:controlled-composition-Delta-phi}.
\begin{align}
\begin{split}
\label{eq:77}
(\phi(Y_t)-\phi(Y_s)) &- (\phi(\tY_t)-\phi(\tY_s))
\\&=
\int_{0}^{1} D\phi(Y_{s}+r Y_{s,t}) Y_{s,t} \dif r
-
\int_{0}^{1} D\phi(\tY_{s}+r\tY_{s,t}) \tY_{s,t} \dif r
\\&=
\int_{0}^{1} \Bigl(
D\phi(Y_{s}+rY_{s,t}) (Y_{s,t} - \tY_{s,t})
\\&\quad +
(D\phi(Y_{s}+rY_{s,t}) - D\phi(\tY_{s}+r\tY_{s,t})) \tY_{s,t} \Bigr) \dif r\,.
\end{split}
\end{align}
Then
\begin{align*}
\abs{D\phi(Y_{s}+r Y_{s,t}) - D\phi(\tY_{s}+r\tY_{s,t})}
 &\leq
\norm{D\phi}_{\Lip} \abs{(Y_{s}+rY_{s,t}) - (\tY_{s}+r\tY_{s,t})}
\\ &\leq
\norm{D\phi}_{\Lip} (r\abs{\Delta Y_t} + (1-r) \abs{\Delta Y_s})
\\ &\leq
\norm{D\phi}_{\Lip} \norm{\Delta Y}_{\sup}\,,
\end{align*}
shows \eqref{eq:controlled-composition-Delta-phi}. In order to show \eqref{eq:controlled-composition-Delta-phi'}, we write
\begin{multline*}
(D\phi(Y_t)Y'_t-D\phi(\tY_t)\tY'_t) - (D\phi(Y_s)Y'_s-D\phi(\tY_s)\tY'_s)
\\ =
D\phi(Y_t) (\Delta Y'_t-\Delta Y'_{s})
+ (D\phi(Y_t)-D\phi(Y_s)) \Delta Y'_s
\\ + (D\phi(Y_{t})-D\phi(\tY_t))\tY'_{s,t}
+ ((D\phi(Y_t)-D\phi(Y_s))-(D\phi(\tY_t)-D\phi(\tY_s))) \tY'_s.
\end{multline*}
The first 3 terms contribute the first 3 terms to \eqref{eq:controlled-composition-Delta-phi'}.
The 4-fold difference in the last term can be written as in \eqref{eq:77} with $D\phi$ in place of $\phi$, which gives the last 2 terms in \eqref{eq:controlled-composition-Delta-phi'}.

Now we show \eqref{eq:controlled-composition-Delta-R}.
By adding and subtracting $D\phi(Y_s)Y_{s,t}$ and $ D\phi(\tY_s) \tY_{s,t},$ one has
\begin{equation}
\label{eq:55}
\begin{split}
R^{\phi(\bfY),\bfX}_{s,t}- R^{\phi(\tilde\bfY),\tilde\bfX}_{s,t}
&= \phi(Y)_{s,t} - D \phi(Y_s) Y_s' X_{s,t} - (\phi(\tY)_{s,t} -D \phi(\tY_s) \tY_s' \tX_{s,t})
\\ &=
\phi(Y)_{s,t}-D\phi(Y_s)Y_{s,t} - (\phi(\tY)_{s,t}- D\phi(\tY_s) \tY_{s,t})
\\ &+ D\phi(Y_s)R^{\bfY,\bfX}_{s,t}- D\phi(\tY_s)R^{\tilde\bfY,\tilde\bfX}_{s,t}
\end{split}
\end{equation}
By Taylor's formula, the first line in \eqref{eq:55} equals
\[
\int_0^1 (1-r) \bigl( D^2\phi(Y_s + r Y_{s,t}) Y_{s,t}^{\otimes 2} - D^2\phi(\tY_s + r \tY_{s,t}) \tY_{s,t}^{\otimes 2} \bigr) \dif r.
\]
The integrand can be written in the form
\begin{equation*}
D^2\phi(Y_s + r Y_{s,t}) (Y_{s,t}^{\otimes 2} - \tY_{s,t}^{\otimes 2})
+
(D^2\phi(Y_s + r Y_{s,t}) - D^2\phi(\tY_s + r \tY_{s,t})) \tY_{s,t}^{\otimes 2},
\end{equation*}
and this contributes the last line to \eqref{eq:controlled-composition-Delta-R}.

The second line in \eqref{eq:55} equals
\begin{align*}
D\phi(Y_s)R^{\bfY,\bfX}_{s,t}- D\phi(\tY_s)R^{\tilde\bfY,\tilde\bfX}_{s,t}
&=
D\phi(Y_s) (R^{\bfY,\bfX}_{s,t}-R^{\tilde\bfY,\tilde\bfX}_{s,t})
+ (D\phi(Y_s)-D\phi(\tY_s)) R^{\tilde\bfY,\tilde\bfX}_{s,t}
\end{align*}
which completes the proof.
\end{proof}

\noindent Recall that the rough path integral $\int_0^t \bfY_z\dif\bfX_z$ of a $\bfX$-controlled rough path $\bfY$ is given by the sewing of
\[
	\Xi_{s,t} = Y_s X_{s,t} +Y'_s\bbX_{s,t}\,.
\]
The next lemma shows that said integral exists and gives some useful estimates.

\begin{lemma}[Rough path integration]
\label{lem:integral-bound}
Let $r \in [2,3)$ and $\alpha \in (0,1]$ with $r < 2+\alpha$. We set $\theta = (2+\alpha)/r > 1$.
Let $\bfX$ be a $r$-rough path and $\bfY$ an $\bfX$ controlled $(r,\alpha)$-rough path.
Then, $Z_{T} := \int_{0}^{T} \bfY \dif \bfX$ exists and satisfies for $\Csew = 2^{\theta-1} \zeta(\theta)$ with $\zeta(\theta) = \sum_{k=1}^\infty k^{-\theta}$
\begin{equation}\label{ineq:integral-bound}
\Csew^{-1}\abs{Z_{s,t} - Y_{s} X_{s,t} - Y'_{s} \bbX_{s,t}}
\leq
V^{r/(1+\alpha)}(R^{\bfY,\bfX})_{[s,t)} V^{r}X_{(s,t]}
+ V^{r/\alpha}(Y')_{[s,t)} V^{r/2}\bbX_{(s,t]}\,.
\end{equation}
 This especially shows that $\bfZ_t = (Z_t,Y_t)$ is an $\bfX$-controlled $r$-rough path. Furthermore, if we set $\tilde \bfZ_t := \int_0^t \tilde\bfY_s\dif\tilde\bfX_s$, we have the stability estimate
\begin{multline}
\label{eq:controlled-integral-remainder-Delta}
V^{r/2}(\Delta R^{\bfZ,\bfX})_I
\le
\norm{\Delta Y'}_{\sup} V^{r/2}\bbX_{I}
+ \norm{\tilde{Y}'}_{\sup} V^{r/2}(\Delta \bbX)_{I}
+ \zeta(\theta) 4^{\theta-1}\bigg(V^r(\Delta Y')_I V^{r/2}\bbX_I
\\ 
+ V^{r/2}(\Delta R^{\bfY,\bfX})_{I} V^r X_{I}
+ V^r(\tY')_{I} V^{r/2}(\Delta \bbX)_I
+ V^{r/2}(R^{\tilde \bfY,\tilde\bfX})_{I} V^r(\Delta X)_I\bigg)\,,
\end{multline}
for all intervals $I\subset[0,T]$.
\end{lemma}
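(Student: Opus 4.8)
The plan is to realise the rough integral as a sewing and read off all three assertions from the sewing lemma (Theorem~\ref{thm:sewing}). Writing $\Xi_{s,t}=Y_{s}X_{s,t}+Y'_{s}\bbX_{s,t}$, I would first use $X_{s,t}=X_{s,u}+X_{u,t}$, Chen's identity $\bbX_{s,t}=\bbX_{s,u}+\bbX_{u,t}+X_{s,u}\otimes X_{u,t}$ and the identification $L(E,L(E,\cdot))\cong L(E\otimes E,\cdot)$ to compute $\delta\Xi_{s,u,t}=-R^{\bfY,\bfX}_{s,u}X_{u,t}-Y'_{s,u}\bbX_{u,t}$, so that
\[
\abs{\delta\Xi_{s,u,t}}\le V^{r/(1+\alpha)}(R^{\bfY,\bfX})_{[s,u]}V^{r}X_{[u,t]}+V^{r/\alpha}(Y')_{[s,u]}V^{r/2}\bbX_{[u,t]}.
\]
This has the form \eqref{eq:sewing:delta-bd} with $N=2$, controls $(V^{r/(1+\alpha)}R^{\bfY,\bfX})^{r/(1+\alpha)}$, $(V^{r}X)^{r}$, $(V^{r/\alpha}Y')^{r/\alpha}$, $(V^{r/2}\bbX)^{r/2}$, both pairs of exponents summing to $\theta=(2+\alpha)/r>1$. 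Theorem~\ref{thm:sewing}, applied on $[s,t]$, then yields existence of $Z:=\calI\Xi$ and $\abs{Z_{s,t}-\Xi_{s,t}}\le\zeta(\theta)(a^{1/\theta}+b^{1/\theta})^{\theta}$ where $a,b$ are the two products above over $[s,t)$ and $(s,t]$; the elementary inequality $(a^{1/\theta}+b^{1/\theta})^{\theta}\le 2^{\theta-1}(a+b)$ gives \eqref{ineq:integral-bound} with $\Csew=2^{\theta-1}\zeta(\theta)$. For the controlled-path claim, $R^{\bfZ,\bfX}_{s,t}=Z_{s,t}-Y_{s}X_{s,t}=Y'_{s}\bbX_{s,t}+(Z_{s,t}-\Xi_{s,t})$; the first summand lies in $V^{r/2}$ since $V^{r/2}(Y'_{\cdot}\bbX_{\cdot,\cdot})_{I}\le\norm{Y'}_{\sup}V^{r/2}\bbX_{I}$, and the right-hand side of \eqref{ineq:integral-bound} is (by weighted AM--GM) bounded by $\Omega(s,t)^{\theta}$ for a control $\Omega$, hence has finite $V^{1/\theta}\subseteq V^{r/2}$ norm; together with $Y\in V^{r}$ this makes $\bfZ=(Z,Y)$ an $\bfX$-controlled $r$-rough path.

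For the stability estimate (taking $\bfY,\tilde\bfY$ to be classical $\bfX$- resp.\ $\tilde\bfX$-controlled $r$-rough paths, i.e.\ $\alpha=1$), set $\mathcal{E}:=Z-\Xi$ and $\tilde{\mathcal{E}}:=\tilde Z-\tilde\Xi$, so that $R^{\bfZ,\bfX}_{s,t}=Y'_{s}\bbX_{s,t}+\mathcal{E}_{s,t}$ and similarly with tildes, giving
\[
\Delta R^{\bfZ,\bfX}_{s,t}=\Delta Y'_{s}\bbX_{s,t}+\tilde Y'_{s}\Delta\bbX_{s,t}+\bigl(\mathcal{E}_{s,t}-\tilde{\mathcal{E}}_{s,t}\bigr).
\]
Applying $V^{r/2}$ and $V^{r/2}(Y'_{\cdot}A_{\cdot,\cdot})_{I}\le\norm{Y'}_{\sup}V^{r/2}(A)_{I}$ to the first two terms produces the first two terms of \eqref{eq:controlled-integral-remainder-Delta}, so it remains to estimate $V^{r/2}(\mathcal{E}-\tilde{\mathcal{E}})_{I}$. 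The key point is that the sewing map is linear, so $\mathcal{E}-\tilde{\mathcal{E}}=\calI(\Delta\Xi)-\Delta\Xi$ with $\Delta\Xi:=\Xi-\tilde\Xi$, i.e.\ it is itself the sewing remainder of the single germ $\Delta\Xi$. Subtracting the two formulas for $\delta\Xi$ and inserting $\pm$ terms yields
\[
\delta(\Delta\Xi)_{s,u,t}=-\Delta R^{\bfY,\bfX}_{s,u}X_{u,t}-R^{\tilde\bfY,\tilde\bfX}_{s,u}\Delta X_{u,t}-\Delta Y'_{s,u}\bbX_{u,t}-\tilde Y'_{s,u}\Delta\bbX_{u,t},
\]
and each of the four terms is dominated by a product of two $V$-norms whose regularity exponents add to $3/r$, which is $>1$ precisely because $r\in[2,3)$.

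Applying Theorem~\ref{thm:sewing} with $N=4$ and exponent $\theta=3/r$ (coinciding with $(2+\alpha)/r$ at $\alpha=1$), followed by $(\sum_{i=1}^{4}a_{i}^{1/\theta})^{\theta}\le 4^{\theta-1}\sum_{i}a_{i}$, gives a genuinely pointwise bound $\abs{\mathcal{E}_{s,t}-\tilde{\mathcal{E}}_{s,t}}\le\zeta(\theta)4^{\theta-1}(a_{1}+\dots+a_{4})$, with $a_{1}=V^{r/2}(\Delta R^{\bfY,\bfX})_{[s,t)}V^{r}X_{(s,t]}$ and $a_{2},a_{3},a_{4}$ matching the remaining three terms of \eqref{eq:controlled-integral-remainder-Delta}. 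It then remains to pass from this two-parameter ``controlled'' bound to a $V^{r/2}$ estimate over $I$: each $a_{i}$ has the form $V^{p_{1}}(A)_{[s,t)}V^{p_{2}}(B)_{(s,t]}$ with $\tfrac r2\bigl(\tfrac1{p_{1}}+\tfrac1{p_{2}}\bigr)=\tfrac32$, so raising to the power $r/2$ turns it into $\omega_{1}^{a}\omega_{2}^{b}$ with $a+b=\tfrac32\ge 1$ and controls $\omega_{i}$, and Hölder's inequality together with superadditivity of controls over a partition of $I$ yields $V^{r/2}(a_{i})_{I}\le V^{p_{1}}(A)_{I}V^{p_{2}}(B)_{I}$; summing the four contributions gives \eqref{eq:controlled-integral-remainder-Delta}. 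I expect the main obstacle to be exactly this stability part: recognising $\mathcal{E}-\tilde{\mathcal{E}}$ as the sewing remainder of $\Delta\Xi$ so that the full linear sewing lemma applies directly with exponent $3/r$, and then carrying out the Hölder/superadditivity bookkeeping that distributes the supremum and the sum correctly over the two factors of each product of variation norms. The algebra behind $\delta\Xi$ and $\delta(\Delta\Xi)$, resting only on Chen's identity, is routine.
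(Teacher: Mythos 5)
Your proposal is correct and follows essentially the same route as the paper: the same germ $\Xi_{s,t}=Y_sX_{s,t}+Y'_s\bbX_{s,t}$, the same Chen-identity computation of $\delta\Xi$, Theorem~\ref{thm:sewing} plus $(a^{1/\theta}+b^{1/\theta})^\theta\le 2^{\theta-1}(a+b)$ for \eqref{ineq:integral-bound}, and for the stability bound the difference germ $\Xi-\tilde\Xi$ (your linearity-of-sewing observation is exactly the paper's device) with four product terms and the $\zeta(\theta)4^{\theta-1}$ constant, followed by the H\"older/superadditivity step to pass from the pointwise sewing bound to $V^{r/2}$ over $I$, which the paper leaves implicit. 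The only cosmetic deviation is your specialization to $\alpha=1$ (constant $\zeta(3/r)4^{3/r-1}$ rather than the stated $\zeta(\theta)4^{\theta-1}$); this is harmless, since Theorem~\ref{thm:sewing} only requires the exponent sums ($=3/r$) to be at least $\theta=(2+\alpha)/r$, or alternatively one runs the argument with the $(r,\alpha)$ exponents as the paper does and relaxes $V^{r/\alpha}\le V^{r}$, $V^{r/(1+\alpha)}\le V^{r/2}$ at the end.
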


\noindent Note that \eqref{eq:controlled-integral-remainder-Delta} with $\tilde\bfY=0$ gives:
\[
	V^{r/2}(R^{\bfZ,\bfX})_{I} \le \norm{Y'}_{\sup} V^{r/2}(\bbX)_{I} + c\bigg(V^r(Y')_I V^{r/2}(\bbX)_{I} + V^{r/2}(R^{\bfY,\bfX})_{I} V^r X_I\bigg)\,,
\]
where $c= \zeta(\theta) 4^{\theta-1}$. Although going through the proof, it is easy to see that one could also use $c=\zeta(\theta) 2^{\theta-1}$. Further observe that by Lemma \ref{lem:controlled-implicit}, we immediately get a stability estimate on $\Delta Z$:
\[
	V^r(\Delta Z)_I \le \norm{\Delta Y}_{\sup} V^r X_I + \norm{\tilde Y}_{\sup} V^r(\Delta X)_I + V^{r/2}(\Delta R^{\bfZ,\bfX})_I\,.
\]

\begin{proof}
Let
\[
\Xi_{s,t} := Y_{s}X_{s,t} + Y'_{s} \bbX_{s,t}.
\]
Then
\begin{equation}
\label{eq:delta-Xi-2-rough-integral}
\begin{split}
\delta \Xi_{s,t,u}
&=
Y_{s}  X_{s,u} + Y_{s}' \bbX_{s,u}
- (Y_{s} X_{s,t} + Y_{s}' \bbX_{s,t}) - (Y_{t} X_{t,u} + Y_{t}' \bbX_{t,u})
\\ &=
Y_{s} X_{t,u} + Y_{s}' (\bbX_{s,u} - \bbX_{s,t}) - (Y_{t} X_{t,u} + Y_{t}' \bbX_{t,u})
\\ &=
-Y_{s,t} X_{t,u} + Y_{s}' (\bbX_{t,u} + X_{s,t} \otimes X_{t,u}) - Y_{t}' \bbX_{t,u}
\\ &=
-Y_{s,t} X_{t,u} + Y'_{s,t} \bbX_{t,u} + Y_{s}' (X_{s,t} \otimes X_{t,u})
\\ &=
-R^{\bfY,\bfX}_{s,t} X_{t,u} + Y'_{s,t} \bbX_{t,u}.
\end{split}
\end{equation}
Then, since
\[
1/r + 1/(r/(1+\alpha)) = 1/(r/\alpha) + 1/(r/2) = \theta > 1,
\]
the sewing lemma (Theorem~\ref{thm:sewing}) directly shows the existence of $Z$ as well as 
\[
	\abs{Z_{s,t} - Y_{s} X_{s,t} - Y'_{s} \bbX_{s,t}}
\leq
\zeta(\theta)\left(V^{r/(1+\alpha)}(R^{\bfY,\bfX})_{[s,t)}^{\frac 1\theta} V^{r}X_{(s,t]}^{\frac 1\theta}
+ V^{r/\alpha}(Y')_{[s,t)}^{\frac 1\theta} V^{r/2}\bbX_{(s,t]}^{\frac 1\theta}\right)^\theta\,.
\]
\eqref{ineq:integral-bound} Then follows from $(a+b)^\theta \le 2^{\theta-1}(a^\theta + b^\theta)$ for all $a,b > 0$.

We show the stability estimate \eqref{eq:controlled-integral-remainder-Delta}. We do this by applying the sewing lemma to the germ
\[
\Xi_{s,t} := Y_s X_{s,t} + Y'_s \bbX_{s,t}-(\tY_s \tX_{s,t} + \tY'_s \tilde{\bbX}_{s,t})\,.
\]
Observe that
\begin{align*}
\abs{ R^{\bfZ,\bfX}_{s,t} - R^{\tilde\bfZ,\tilde\bfX}_{s,t} }
&= \abs{\int_s^t \bfY \dif\bfX - Y_s X_{s,t} -(\int_s^t \tilde\bfY \dif\tilde\bfX - \tY_s \tX_{s,t})}
\\ &\leq
\abs{Y'_s\bbX_{s,t} - \tY'_s \tilde{\bbX}_{s,t} } + \abs{\calI(\Xi)_{s,t} - \Xi_{s,t} }.
\end{align*}
We estimate the first term by
\[
\abs{\Delta Y'_{s} \bbX_{s,t}} + \abs{\tilde{Y}'_{s} \Delta \bbX_{s,t}}\,,
\]
and use the sewing lemma for the second term. Note that by \eqref{eq:delta-Xi-2-rough-integral}, $\delta\Xi$ is of the following form:
\[
\delta(\Xi)_{\tau,u,\nu}
= -R^{\bfY,\bfX}_{\tau,u} X_{u,\nu} + Y'_{\tau,u} \bbX_{u,\nu} -(-R^{\tilde\bfY,\tilde\bfX}_{\tau,u} \tX_{u,\nu} + \tY'_{\tau,u} \tilde{\bbX}_{u,\nu}).
\]
and
\begin{align*}
\abs{\delta (\Xi)_{\tau,u,\nu}} &\leq
\abs{\Delta Y'_{\tau,u} \bbX_{u,\nu } } +\abs{\tY'_{\tau,u} \Delta \bbX_{u,\nu}} +\abs{\Delta R^{\bfY,\bfX}_{\tau,u} X_{u,\nu}} + \abs{R^{\tilde \bfY,\tilde\bfX}_{\tau,u} \Delta X_{u,\nu}}\\
&\le V^{r/\alpha}(\Delta Y')_{[\tau,u]}V^{r/2}(\bbX)_{[u,\nu]} + V^{r/\alpha}(\tilde Y')_{[\tau,u]} V^{r/2}(\Delta \bbX)_{[u,\nu]} \\&\qquad+ V^{r/(1+\alpha)}(\Delta R^{\bfY,\bfX})_{[\tau, u]}V^r X_{[u,\nu]} + V^{r/(1+\alpha)}(R^{\tilde\bfY,\tilde\bfX})_{[\tau,u]}V^r(\Delta X)_{[u,\nu]}\,.
\end{align*}
Thus, the sewing lemma gives us
\begin{align}
\begin{split}\label{ineq:stabilityRoughIntegral}
\abs{I(\Xi)_{s,t} -\Xi_{s,t}} &\le \zeta(\theta) \bigg(V^{r/\alpha}(\Delta Y')_{[s,t)}^{\frac 1\theta} V^r\bbX_{(s,t]}^{\frac 1\theta} + V^{r/\alpha}(\tilde Y')_{[s,t)}^{\frac 1\theta} V^{r/2}(\Delta \bbX)_{(s,t]}^{\frac 1\theta} \\&\qquad+ V^{r/(1+\alpha)}(\Delta R^{\bfY,\bfX})_{[s, t)}^{\frac 1\theta}V^rX_{(s,t]}^{\frac 1\theta} + V^{r/(1+\alpha)}(R^{\tilde\bfY,\tilde\bfX})_{[s,t)}^{\frac 1\theta}V^r(\Delta X)_{(s,t]}^{\frac 1\theta}\bigg)^\theta\\
&\le \zeta(\theta) 4^{\theta-1}  \bigg(V^{r/\alpha}(\Delta Y')_{[s,t)} V^{r/2}\bbX_{(s,t]} + V^{r/\alpha}(\tilde Y')_{[s,t)} V^{r/2}(\Delta \bbX)_{(s,t]} \\&\qquad+ V^{r/(1+\alpha)}(\Delta R^{\bfY,\bfX})_{[s, t)} V^r X_{(s,t]} + V^{r/(1+\alpha)}(R^{\tilde\bfY,\tilde\bfX})_{[s,t)}V^r(\Delta X)_{(s,t]}\bigg)
\end{split}
\end{align}
\end{proof}

\begin{remark}\label{rem:integral-bound}
Recall $\theta = (2+\alpha)/r$. We can apply the $1/\theta$-variation norm to inequality \eqref{ineq:integral-bound} to get
\begin{align*}
	&\Csew^{-1}V^{r/(2+\alpha)}(Z_{s,t} - Y_{s} X_{s,t} - Y'_{s} \bbX_{s,t})_{[s,t]}\\
	&\qquad\leq
	2^{\theta-1}\left(V^{r/(1+\alpha)}(R^{\bfY,\bfX})_{[s,t)} V^{r}X_{(s,t]}
+ V^{r/\alpha}(Y')_{[s,t)} V^{r/2}\bbX_{(s,t]}\right)\,,
\end{align*}
for any $0\le s\le t\le T$. Here, we used the quasi-additivity
\[
V^{1/\theta}(X+Y) \le 2^{\theta - 1}(V^{1/\theta} X+V^{1/\theta} Y)
\]
for all $\theta>1$.
\end{remark}

\subsection{A priori estmate}

Let us show an apriori estimate similar to Proposition \ref{prop:Young-apriori} in the Young case. In the Hölder case, a similar estimate for RDEs can be found in \cite[Section 8.4]{FH20}. We also want to mention the similar estimate in \cite[Theorem 2.9]{MR3957994} as well as \cite[Lemma 2.4]{MR2387018}, which corresponds to piecewise constant $X$.

We will require some bounds for the function $\phi$. Let $\Phi_0,\Phi_\alpha,\Phi_1,\Phi_{1,\alpha},\Phi_2, \Phi_{2,\alpha},\Phi_{DD}>0$ fulfill
\begin{align}
\begin{split}\label{ineq:boundsPhiRP}
\norm{\phi}_{\sup} &\le \Phi_0, \qquad \hspace{14pt}\norm{\phi}_{C^\alpha} \le \Phi_\alpha, \\
\norm{D\phi}_{\sup} &\le \Phi_1, \qquad \hspace{5pt}\norm{D\phi}_{C^\alpha} \le \Phi_{1,\alpha}, \\
\norm{D^2\phi}_{\sup} &\le \Phi_2, \qquad \norm{D^2\phi}_{C^\alpha} \le \Phi_{2,\alpha}, \\
\norm{D\phi\cdot\phi}_{\sup} &\le \Phi_{DD}\,.
\end{split}
\end{align}
If $\alpha =1$, we use the notation $\Phi_\alpha = \Phi_{0,1}$ to avoid confusion with $\Phi_1$.

\begin{remark}
Since $\norm{D\phi\cdot\phi}_{\sup} \leq \Phi_0\Phi_1$, we can (and will) always assume $\Phi_{DD}\le \Phi_0\Phi_1$.
\end{remark}

\noindent Then one can show the following estimate:

\begin{proposition}[A priori estimate, cf.\ {\cite[Remark 3]{MR2387018}}]
\label{prop:Young-apriori-RDE}
Let $r\in[2,3),\alpha\in(0,1]$ with $r<(2+\alpha)$ and $\bfX$ be an $r$-rough path and set $\theta = (2+\alpha)/r$. Recall the sewing constant $\Csew = \zeta(\theta)2^{2\theta-2}$ from Remark \ref{rem:integral-bound} and
let $s \leq s'$ with
\begin{equation}
\label{eq:10}
\Csew \Phi_{1,\alpha}\Phi_0^\alpha V^r X_{[s,s')}^{\alpha}V^rX_{(s,s')} < \frac 1{8\cdot 3^\alpha}
\end{equation}
\begin{equation}
\label{eq:11}
\Csew \Phi_{1,\alpha} \Phi_{DD}^\alpha V^{r/2}\bbX_{[s,s')}^\alpha V^r X_{(s,s')} < \frac 1{8(8-4\alpha)^\alpha}\,.
\end{equation}
\begin{equation}
\label{eq:12}
\Csew (\Phi_{1}\Phi_{\alpha}+\Phi_{1,\alpha}\Phi_{0})
V^{r/2}\bbX_{(s,s')}^{\alpha}
\leq (4\alpha)^{-\alpha}
\Phi_{DD}^{1-\alpha}
\end{equation}
\begin{equation}
\label{eq:13}
\Csew \Phi_{1} V^r X_{(s,s')} \leq 1/4,
\end{equation}
Suppose that $\bfY = (Y,Y')$ with $Y \in V^{r}([s,s'])$ and $Y'=\phi(Y)$ satisfies
\[
Y_{t} = Y_{s} + \int_{s}^{t} \phi(\bfY) \dif \bfX,
\quad
t \in [s,s'],
\]
where $\phi\in C^{1,\alpha}_b$ with bounds $\Phi$.
Then, we have the following for each $s\le t < s'$:
\begin{align}
\label{eq:lvl2-apriori:variation}
\frac{1}{4} V^rY_{[s,t]}
&\leq
\Phi_{0} V^r X_{[s,t]}
+
2 \Phi_{DD}V^{r/2}\bbX_{[s,t]}\\
\frac 34 V^{r/2}(R^{\bfY,\bfX})_{[s,t]} &\le \frac 32\Phi_0V^r X_{[s,t]} + 3(2-\alpha)\Phi_{DD}V^{r/2}\bbX_{[s,t]}\,.\label{eq:lvl2-apriori:variation2}
\end{align}
\end{proposition}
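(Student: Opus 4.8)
The plan is to collapse both assertions into a single coupled scalar system for $A := V^rY_{[s,t]}$ and $B := V^{r/2}(R^{\bfY,\bfX})_{[s,t]}$, viewed as functions of $u := V^rX_{[s,t]}$ and $v := V^{r/2}\bbX_{[s,t]}$, and then close that system by a continuity/bootstrap argument in $t \uparrow s'$.

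First I would rewrite the RDE in integral form, $Y_{s,t} = \int_s^t\phi(\bfY)\,\dif\bfX$, so that $R^{\bfY,\bfX}_{s,t} = Y_{s,t} - \phi(Y_s)X_{s,t}$. Since $\phi\in C^{1,\alpha}_b$, Lemma~\ref{lem:controlled-composition} makes $\phi(\bfY) = (\phi(Y),D\phi(Y)\phi(Y))$ an $\bfX$-controlled $(r,\alpha)$-rough path (using $R^{\bfY,\bfX}\in V^{r/2}\subseteq V^{r/(1+\alpha)}$, valid because $\alpha\le1$). Applying Remark~\ref{rem:integral-bound} to this integrand and writing $Q_{s,t} := Y_{s,t} - \phi(Y_s)X_{s,t} - D\phi(Y_s)\phi(Y_s)\bbX_{s,t}$ gives
\[
V^{r/(2+\alpha)}(Q)_{[s,t]} \le \Csew\Bigl(V^{r/(1+\alpha)}(R^{\phi(\bfY),\bfX})_{[s,t)}\, V^rX_{(s,t]} + V^{r/\alpha}(D\phi(Y)\phi(Y))_{[s,t)}\, V^{r/2}\bbX_{(s,t]}\Bigr).
\]
Into the two right-hand factors I would insert the composition bounds \eqref{eq:controlled-composition-R} and \eqref{eq:controlled-composition-phi'}, together with $V^{r/\alpha}(\phi(Y))_I\le\Phi_\alpha(V^rY_I)^\alpha$, $\norm{\phi(Y)}_{\sup}\le\Phi_0$, $\norm{D\phi\cdot\phi}_{\sup}\le\Phi_{DD}$, and the elementary inequalities $V^{r/(1+\alpha)}(R^{\bfY,\bfX})\le V^{r/2}(R^{\bfY,\bfX})$ and $V^{r/2}(Q)\le V^{r/(2+\alpha)}(Q)$. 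Combining this with the decomposition $R^{\bfY,\bfX}_{s,t} = D\phi(Y_s)\phi(Y_s)\bbX_{s,t} + Q_{s,t}$ (hence $B \le \Phi_{DD}v + V^{r/2}(Q)_I$) and the implicit bound of Lemma~\ref{lem:controlled-implicit} yields, for every $I=[s,t]$ with $t<s'$,
\[
A \le \Phi_0 u + B, \qquad B \le \Phi_{DD}v + \Csew\Phi_1 u\,B + \Csew\tfrac{\Phi_{1,\alpha}}{1+\alpha}A^{1+\alpha}u + \Csew(\Phi_1\Phi_\alpha+\Phi_{1,\alpha}\Phi_0)A^\alpha v .
\]

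Then I would bootstrap in $t$. Using the conventions of \cite{MR3770049} for $V^r$ on half-open intervals --- which is exactly why \eqref{eq:10}--\eqref{eq:13} are phrased over $(s,s')$ and $[s,s')$ --- the set of $t\in[s,s')$ on which $B \le B^\ast := 2\Phi_0 u + 4(2-\alpha)\Phi_{DD}v$ is closed and contains $s$. On that set $A \le \Phi_0 u + B^\ast \le 3\Phi_0 u + 4(2-\alpha)\Phi_{DD}v$; substituting this into the nonlinear terms, splitting by subadditivity of $\tau\mapsto\tau^\alpha$ and one weighted AM--GM step, and then using \eqref{eq:13} to absorb $\Csew\Phi_1 u\,B\le\tfrac14 B$, \eqref{eq:10} and \eqref{eq:11} to control the $A^{1+\alpha}u$ contribution by small multiples of $\Phi_0 u$ and $\Phi_{DD}v$, and \eqref{eq:12} for the $A^\alpha v$ contribution --- here the identity $8-4\alpha = 4(2-\alpha)$ is what makes the numerical constants line up --- one reaches $\tfrac34 B \le \tfrac32\Phi_0 u + 3(2-\alpha)\Phi_{DD}v$, which is strictly below $\tfrac34 B^\ast$. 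Hence the set is open as well, so it equals $[s,s')$; this is \eqref{eq:lvl2-apriori:variation2}. Finally \eqref{eq:lvl2-apriori:variation} drops out of the implicit bound, since $\tfrac14 A \le \tfrac14\Phi_0 u + \tfrac14 B \le \tfrac34\Phi_0 u + (2-\alpha)\Phi_{DD}v \le \Phi_0 u + 2\Phi_{DD}v$.

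The main obstacle is the constant bookkeeping in the bootstrap: one must track exactly which piece of the expanded nonlinearity is absorbed by which of \eqref{eq:10}--\eqref{eq:13}, and with how much slack, and it is precisely this accounting that fixes both the form of those four smallness hypotheses and the constants $\tfrac14$, $\tfrac34$, $3(2-\alpha)$ in the conclusion. A secondary technical point is making the continuity step rigorous in the presence of jumps of $Y$, which is what the half-open-interval variations in the hypotheses are designed to handle.
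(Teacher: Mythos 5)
Your proposal follows essentially the same route as the paper: the same decomposition $Q=\tilde R^{\bfY,\bfX}:=R^{\bfY,\bfX}-D\phi(Y)\phi(Y)\bbX$, the same application of Remark~\ref{rem:integral-bound} combined with Lemma~\ref{lem:controlled-composition}, the same pairing of \eqref{eq:10}--\eqref{eq:13} with the individual terms (including the $3^\alpha$ and $(8-4\alpha)^\alpha$ splittings and the weighted AM--GM step), and the same open/closed continuity argument for cadlag paths, with the constants closing exactly as in the paper's \eqref{eq:lvl2-apriori:variation} and \eqref{eq:lvl2-apriori:variation2}. The only deviation is that you bootstrap the target bound $V^{r/2}(R^{\bfY,\bfX})_{[s,t]}\le B^{\ast}$ directly, whereas the paper bootstraps the auxiliary condition $\Csew\Phi_{1,\alpha}V^rY_{[s,t)}^{\alpha}V^rX_{(s,s']}\le 1/4$, which makes the openness/closedness step a bit more robust (your derived bound is $\tfrac34 B^{\ast}$ with equality in general, strictness coming only from \eqref{eq:10}--\eqref{eq:11}, and the degenerate case $B^{\ast}=0$ plus closedness under jumps require exactly the half-open-interval care you flag), but this is a bookkeeping variant rather than a different proof.
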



\begin{remark}
Note that \eqref{eq:lvl2-apriori:variation} and \eqref{eq:lvl2-apriori:variation2} also hold for $t= s'$, as long as we have \eqref{eq:10} to \eqref{eq:13} with closed right intervals $V^r X_{[s,s']}, V^r X_{(s,s']}, V^{r/2}\bbX_{[s,s']}, V^{r/2}\bbX_{(s,s']}$.
\end{remark}


\begin{proof}
We start with some estimates that are valid for arbitrary time intervals.
For all $(s,t)\in\Delta_T$, let
\begin{equation}
\label{eq:tildeR}
\tilde{R}^{\bfY,\bfX}_{s,t} := R^{\bfY,\bfX}_{s,t} - \phi(\bfY)_s'\bbX_{s,t} = R^{\bfY,\bfX}_{s,t} - D\phi(Y_s)Y'_s\bbX_{s,t}.
\end{equation}
Since $Y'_{t} = \phi(Y_t)$, for any time interval $I$, we have
\begin{equation}
\label{eq:Y'-p/alpha}
V^{r/\alpha}(Y')_{I}
\leq
\norm{\phi}_{C^{\alpha}} V^rY_{I}^{\alpha},
\end{equation}
\begin{equation}
\label{eq:Y'-sup}
\norm{Y'}_{\sup} \leq \norm{\phi}_{\sup},
\end{equation}
\begin{equation}
\label{eq:Y-Vp-by-tildeR}
\begin{split}
V^r Y_{I}
&\leq
V^r(R^{\bfY,\bfX})_{I} + \norm{Y'}_{\sup} V^r X_{I}
\\ &\leq
V^{r}(\tilde{R}^{\bfY,\bfX})_{I} + \Phi_{DD} V^r\bbX_{I} + \Phi_{0}V^r X_{I}.
\end{split}
\end{equation}
For any $s\leq t$, we have
\begin{equation}
\label{eq:7RDE}
\begin{split}
\MoveEqLeft
\Csew^{-1} V^{r/(2+\alpha)}(R^{\bfY,\bfX} - \phi(\bfY)'\bbX)_{[s,t]}
\\ \text{(Remark~\ref{rem:integral-bound})} &\leq
V^{r/(1+\alpha)}(R^{\phi(\bfY),\bfX}_{s,t})_{[s,t)} V^r X_{(s,t]}
+ V^{r/\alpha}(\phi(\bfY)')_{[s,t)} V^{r/2}\bbX_{(s,t]}
\\ \text{(Lemma~\ref{lem:controlled-composition})} &\leq
(\Phi_{1} V^{r/(1+\alpha)}(R^{\bfY,\bfX})_{[s,t)} + \frac{\Phi_{1,\alpha}}{1+\alpha} V^r Y_{[s,t)}^{1+\alpha})
V^r X_{(s,t]}
\\ &+
(\Phi_{1}V^{r/\alpha}(Y')_{[s,t)} +  \Phi_{1,\alpha} V^rY_{[s,t)}^{\alpha} \norm{Y'}_{\sup})V^{r/2}\bbX_{(s,t]}
\\ \text{(\eqref{eq:Y'-p/alpha}, \eqref{eq:Y'-sup})} &\leq
(\Phi_{1} V^{r/(1+\alpha)}(R^{\bfY,\bfX})_{[s,t)} + \frac{\Phi_{1,\alpha}}{1+\alpha}V^r Y_{[s,t)}^{1+\alpha})
V^r X_{(s,t]}
\\ &+
(\Phi_{1}\Phi_{\alpha}+\Phi_{1,\alpha}\Phi_{0})V^rY_{[s,t)}^{\alpha}V^{r/2}\bbX_{(s,t]}.
\end{split}
\end{equation}
By a time change, we may assume that $X,Y$ are cadlag.
For $t \in [s,s']$, we consider the condition
\begin{equation}
\label{eq:Y-small}
\Csew \Phi_{1,\alpha} V^r Y_{[s,t)}^{\alpha} V^r X_{(s,s']} \leq 1/4.
\end{equation}
The set
\begin{equation*}
\calT :=
\Set{ t\in [s,s'] \given \text{\eqref{eq:Y-small} holds}}
\end{equation*}
is a closed interval that contains the point $s$.
We will later show that $\calT$ is also an open subset of $[s,s']$, implying that $\calT = [s,s']$ and thus \eqref{eq:Y-small} holds for $t = s'$.
For the time being, fix any $t \in \calT \setminus \Set{s'}$.

Using \eqref{eq:13} and \eqref{eq:12} in the last line of \eqref{eq:7RDE}, we obtain
\begin{align*}
V^{r/2} (R^{\bfY,\bfX})_{[s,t]}
&\leq
V^{r/(2+\alpha)}(\tilde{R}^{\bfY,\bfX})_{[s,t]} + \Phi_{DD} V^{r/2}\bbX_{[s,t]}
\\ &\leq  \Csew\Phi_1V^r X_{(s,t]}V^{r/(1+\alpha)}(R^{\bfY,\bfX})_{[s,t)} + \Csew \frac{\Phi_{1,\alpha}}{1+\alpha}V^r Y_{[s,t)}^{1+\alpha}V^r X_{(s,t]} 
\\ &\qquad+\Csew (\Phi_1\Phi_\alpha + \Phi_{1,\alpha}\Phi_0)V^r Y_{[s,t)}^\alpha V^{r/2}\bbX_{(s,t]}+ \Phi_{DD}V^{r/2}\bbX_{[s,t]}
\\ &\leq
\frac{1}{4} V^{r/(1+\alpha)}(R^{\bfY,\bfX})_{[s,t)}
+
\Csew \frac{\Phi_{1,\alpha}}{1+\alpha}V^r Y_{[s,t)}^{1+\alpha} V^r X_{(s,t]}
\\ &\qquad+
(4\alpha)^{-\alpha}\Phi_{DD}^{1-\alpha} V^r Y_{[s,t)}^{\alpha} V^{r/2}\bbX_{(s,t]}^{1-\alpha}
+ \Phi_{DD} V^{r/2}\bbX_{[s,t]}.
\end{align*}
This implies
\begin{multline}
\label{eq:8}
\frac{3}{4} V^{r/2}(R^{\bfY,\bfX})_{[s,t]}
\leq
\Csew \frac{\Phi_{1,\alpha}}{1+\alpha}V^r Y_{[s,t)}^{1+\alpha} V^r X_{(s,t]}
\\ +
(4\alpha)^{-\alpha} V^r Y_{[s,t)}^{\alpha} \Phi_{DD}^{1-\alpha}V^{r/2}\bbX_{(s,t]}^{1-\alpha}
+ \Phi_{DD} V^{r/2}\bbX_{[s,t]}.
\end{multline}
By \eqref{eq:Y-small}, this implies
\begin{multline}
\label{eq:5RDE}
\frac{3}{4} V^{r/2}(R^{\bfY,\bfX})_{[s,t]}
\leq
\frac1{4(1+\alpha)} V^r Y_{[s,t)}
+
(4\alpha)^{-\alpha} V^r Y_{[s,t)}^{\alpha} \Phi_{DD}^{1-\alpha}V^{r/2}\bbX_{(s,t]}^{1-\alpha}
+ \Phi_{DD} V^{r/2}\bbX_{[s,t]}.
\end{multline}
By the AMGM inequality, this implies
\begin{align}
\frac{3}{4} V^{r/2}(R^{\bfY,\bfX})_{[s,t]}
&\leq
\left(\frac{1}{4(1+\alpha)} + \frac 14\right) V^r Y_{[s,t)}
+
(2-\alpha) \Phi_{DD} V^{r/2}\bbX_{[s,t]} \nonumber\\
&\leq
\frac 12 V^r Y_{[s,t)}
+
(2-\alpha) \Phi_{DD} V^{r/2}\bbX_{[s,t]}.\label{ineq:estimateR}
\end{align}
Using \eqref{eq:Y-Vp-by-tildeR}, we obtain
\begin{align*}
\frac{3}{4} V^r Y_{[s,t]}
&\leq
\frac 34 V^{r/2}(R^{\bfY,\bfX})_{[s,t]} + \frac 34\Phi_0V^r X_{[s,t]}
\\ &\leq \frac{1}{2} V^r Y_{[s,t)}
+
\frac{3}{4} \Phi_{0} V^r X_{[s,t]}
+
(2-\alpha) \Phi_{DD} V^{r/2}\bbX_{[s,t]}.
\end{align*}
This implies
\begin{equation}
\label{eq:6RDE}
\frac{1}{4}V^rY_{[s,t]}
\leq
\frac 34 \Phi_{0} V^r X_{[s,t]}
+
(2-\alpha) \Phi_{DD} V^{r/2}\bbX_{[s,t]}
\end{equation}
Hence,
\begin{equation}
\label{eq:1}
\begin{split}
\MoveEqLeft
4^{-\alpha} \Csew \Phi_{1,\alpha} V^r Y_{[s,t]}^{\alpha} V^r X_{(s,t]}
\\ \text{(\eqref{eq:6RDE})} &\leq
\Csew \Phi_{1,\alpha} \left(\frac 34\Phi_{0}V^r X_{[s,t]}\right)^{\alpha} V^r X_{(s,t]}
\\ & \quad+
\Csew \Phi_{1,\alpha} ((2-\alpha) \Phi_{DD}V^{r/2}\bbX_{[s,t]})^{\alpha} V^rX_{(s,t]}
\\ \text{(\eqref{eq:10}, \eqref{eq:11})} &<
4^{-\alpha-1}.
\end{split}
\end{equation}
By cadlag assumption, this implies that \eqref{eq:Y-small} continues to hold with $t$ replaced by a slightly larger time.
Therefore, $\calT = [s,s']$.
In particular, \eqref{eq:6RDE} holds for every $t \in [s,s')$, giving us \eqref{eq:lvl2-apriori:variation} as a direct consequence. Putting \eqref{eq:6RDE} into \eqref{ineq:estimateR} gives \eqref{eq:lvl2-apriori:variation2}.
\end{proof}

\subsection{Solution space}

Let us now clarify the space of solutions $\calY(\bfX)$, similar to \eqref{eq:3}. We require bounds on $V^r Y_I, V^{r/\alpha} (Y')_I, V^{r/2} (R^{\bfY,\bfX})_I$ for all intervals $I\subset[0,T]$. Note that every solution $\bfY$ of \eqref{eq:RDESec6} is of the form $Y'_t = \phi(Y_t)$, giving us a bound on $\norm{Y'}_{\sup}$ for free. The precise solution space reads as follows.
\begin{lemma}[Solution space]
Let $\phi\in C^{1,\alpha}_b$ and set for all intervals $I\subset[0,T]$
\[
\omega_I = \Phi_0 V^r X_{I} + \Phi_0\Phi_1V^{r/2}\bbX_{I}\,.
\]
Let $c>2$ and recall $\Csew = 2^{2\theta-2}\zeta(\theta)$ with $\theta = (2+\alpha)/r$. Let $T$ be small enough such that
\begin{equation}\label{ineq:assumptionOmega}
\Csew \left(\frac{\Phi_1}{\Phi_0}(c-1)\omega_{[0,T]} + \frac{\Phi_{1,\alpha}}{\Phi_0(1+\alpha)}c^{1+\alpha}\omega_{[0,T]}^{1+\alpha} +\frac{\Phi_1\Phi_\alpha+\Phi_{1,\alpha}\Phi_0}{\Phi_0\Phi_1}c^\alpha\omega_{[0,T]}^\alpha\right) \le c-2\,.
\end{equation}
Then, the set of $\bfX$-controlled paths
\begin{align}
\begin{split}
\label{eq:RDE-solution-space}
\calY(\bfX) :=
\bigg\{ \bfY \given &
 V^r Y_{I} \le c\omega_I, V^{r/2}(R^{\bfY,\bfX})_{I} \le (c-1)\omega_I,
V^{r/\alpha}(Y')_{I} \le c^\alpha\Phi_{\alpha}\omega_{I}^{\alpha},\\&
\norm{Y'}_{\sup} \leq \Phi_{0}\text{ for all intervals }I\subset[0,T]\bigg\}
\end{split}
\end{align}
is invariant under the mapping
\begin{equation}
\label{eq:RDE-iteration}
\operatorname{Step} : \bfY \mapsto \left(y_{0} + \int_{0}^{\cdot} \phi(\bfY) \dif \bfX, \phi(Y)\right)
\end{equation}
for any $y_{0} \in \bbR^{d}$.
\end{lemma}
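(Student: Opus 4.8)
The plan is to fix an arbitrary $\bfY = (Y,Y') \in \calY(\bfX)$, put $\bfZ = (Z,Z') := \operatorname{Step}(\bfY)$, so that $Z = y_{0} + \int_{0}^{\cdot}\phi(\bfY)\dif\bfX$ and $Z' = \phi(Y)$, and to verify the four defining inequalities of $\calY(\bfX)$ for $\bfZ$ on an arbitrary interval $I\subseteq[0,T]$. Two of them are immediate: $\norm{Z'}_{\sup} = \norm{\phi(Y)}_{\sup}\le\norm{\phi}_{\sup}\le\Phi_{0}$; and, using the elementary composition bound $V^{r/\alpha}(\phi(Y))_{I}\le\norm{\phi}_{C^{\alpha}}(V^rY_{I})^{\alpha}$ (as in the proof of Corollary~\ref{cor:1}) together with $V^rY_{I}\le c\omega_{I}$, one gets $V^{r/\alpha}(Z')_{I}\le\Phi_{\alpha}(c\omega_{I})^{\alpha}=c^{\alpha}\Phi_{\alpha}\omega_{I}^{\alpha}$.

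The core estimate is $V^{r/2}(R^{\bfZ,\bfX})_{I}\le(c-1)\omega_{I}$. Since $\phi\in C^{1,\alpha}_{b}$, Lemma~\ref{lem:controlled-composition} shows $\phi(\bfY)=(\phi(Y),D\phi(Y)Y')$ is an $\bfX$-controlled $(r,\alpha)$-rough path, so the germ defining $Z$ is $\Xi_{s,t}=\phi(Y_{s})X_{s,t}+D\phi(Y_{s})Y'_{s}\bbX_{s,t}$. Writing $\tilde R^{\bfZ,\bfX}_{s,t}:=R^{\bfZ,\bfX}_{s,t}-D\phi(Y_{s})Y'_{s}\bbX_{s,t}=Z_{s,t}-\Xi_{s,t}$, Remark~\ref{rem:integral-bound} applied to $\phi(\bfY)$ gives, with $\Csew = 2^{2\theta-2}\zeta(\theta)$,
\[
\Csew^{-1}\,V^{r/(2+\alpha)}(\tilde R^{\bfZ,\bfX})_{I}
\le
V^{r/(1+\alpha)}(R^{\phi(\bfY),\bfX})_{I}\,V^rX_{I}
+ V^{r/\alpha}((\phi(\bfY))')_{I}\,V^{r/2}\bbX_{I}.
\]
Into this I would insert \eqref{eq:controlled-composition-phi'}, \eqref{eq:controlled-composition-R}, the $\calY(\bfX)$-bounds $V^{r/(1+\alpha)}(R^{\bfY,\bfX})_{I}\le V^{r/2}(R^{\bfY,\bfX})_{I}\le(c-1)\omega_{I}$ (valid since $r/(1+\alpha)\ge r/2$ as $\alpha\le1$), $V^rY_{I}\le c\omega_{I}$, $V^{r/\alpha}(Y')_{I}\le c^{\alpha}\Phi_{\alpha}\omega_{I}^{\alpha}$, $\norm{Y'}_{\sup}\le\Phi_{0}$, and finally $V^rX_{I}\le\omega_{I}/\Phi_{0}$, $V^{r/2}\bbX_{I}\le\omega_{I}/(\Phi_{0}\Phi_{1})$ together with the monotonicity $\omega_{I}\le\omega_{[0,T]}$. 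The outcome is exactly
\[
\Csew^{-1}\,V^{r/(2+\alpha)}(\tilde R^{\bfZ,\bfX})_{I}
\le
\omega_{I}\Bigl(\tfrac{\Phi_{1}}{\Phi_{0}}(c-1)\omega_{[0,T]} + \tfrac{\Phi_{1,\alpha}}{\Phi_{0}(1+\alpha)}c^{1+\alpha}\omega_{[0,T]}^{1+\alpha} + \tfrac{\Phi_{1}\Phi_{\alpha}+\Phi_{1,\alpha}\Phi_{0}}{\Phi_{0}\Phi_{1}}c^{\alpha}\omega_{[0,T]}^{\alpha}\Bigr),
\]
whose bracket is $\le(c-2)/\Csew$ by the smallness hypothesis \eqref{ineq:assumptionOmega}; hence $V^{r/(2+\alpha)}(\tilde R^{\bfZ,\bfX})_{I}\le(c-2)\omega_{I}$, and a fortiori $V^{r/2}(\tilde R^{\bfZ,\bfX})_{I}\le(c-2)\omega_{I}$ since $r/2\ge r/(2+\alpha)$. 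Adding back the bilinear term and using $\abs{D\phi(Y_{s})Y'_{s}\bbX_{s,t}}\le\norm{D\phi}_{\sup}\norm{Y'}_{\sup}\abs{\bbX_{s,t}}$ yields $V^{r/2}(R^{\bfZ,\bfX})_{I}\le(c-2)\omega_{I}+\Phi_{1}\Phi_{0}V^{r/2}\bbX_{I}\le(c-1)\omega_{I}$.

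Finally, the bound $V^rZ_{I}\le c\omega_{I}$ follows at once from the implicit bound of Lemma~\ref{lem:controlled-implicit}: $V^rZ_{I}\le\norm{Z'}_{\sup}V^rX_{I}+V^{r/2}(R^{\bfZ,\bfX})_{I}\le\Phi_{0}V^rX_{I}+(c-1)\omega_{I}\le c\omega_{I}$. I expect the only real difficulty to be bookkeeping: checking that the sewing constant produced by Remark~\ref{rem:integral-bound} is precisely the $\Csew=2^{2\theta-2}\zeta(\theta)$ that appears in \eqref{ineq:assumptionOmega}, and keeping straight which variation exponent dominates which ($r/2\ge r/(2+\alpha)$ and $r/2\le r/(1+\alpha)$ for $\alpha\in(0,1]$). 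With these accounted for, the four inequalities are arranged so that \eqref{ineq:assumptionOmega} delivers exactly the slack required.
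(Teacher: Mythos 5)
Your proposal is correct and follows essentially the same route as the paper: verify the four conditions for $\operatorname{Step}(\bfY)$ using Lemma~\ref{lem:controlled-composition} for $\phi(\bfY)$, the sewing bound of Remark~\ref{rem:integral-bound} (with the enlarged constant $\Csew=2^{2\theta-2}\zeta(\theta)$) for the remainder, and Lemma~\ref{lem:controlled-implicit} for $V^rZ_I$, with \eqref{ineq:assumptionOmega} supplying exactly the slack $(c-2)\omega_I$ and the term $\Phi_0\Phi_1V^{r/2}\bbX_I\le\omega_I$ accounting for the remaining unit. Your splitting off of $\tilde R^{\bfZ,\bfX}$ and the a fortiori step $V^{r/2}\le V^{r/(2+\alpha)}$ is just a minor repackaging of the paper's single estimate chain, not a different argument.
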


\noindent While it is convenient to add the first condition in \eqref{eq:RDE-solution-space} ($V^r Y_I \le c\omega_I$) to the definition of $\calY(\bfX)$, it is not necessary as
\[
V^r Y_I \le V^r(R^{\bfY,\bfX})_I + \norm{Y'}_{\sup}V^r(X)_{I} \le c\omega_I\,,
\]
already implies it.

\begin{proof}
Suppose $\bfY \in \calY$. We call $\operatorname{Step}(\bfY) = \bfZ$. Then $Z_{s,t} = \int_s^t \phi(\bfY_r)\dif\bfX_r$, $Z' = \phi(Y)$. 
Direct estimates show
\begin{align*}
V^{r/\alpha}(Z')_{I} &= V^{r/\alpha}(\phi(Y))_{I}\le \norm{\phi}_{C^\alpha}V^r Y_{I}^\alpha\le \Phi_\alpha (c\omega_I)^\alpha\\
\norm{Z'}_{\sup} &= \norm{\phi(Y)}_{\sup}\le \Phi_0\,.
\end{align*}
By Lemma~\ref{lem:controlled-composition}, we have
\begin{align}
\label{eq:3-1}
V^{r/\alpha}(\phi(\bfY)')_{I} \leq
\Phi_{1} \Phi_{\alpha} (c\omega_{I})^{\alpha} + \Phi_{1,\alpha} \Phi_{0} (c\omega_{I})^{\alpha},
\\
V^{r/(1+\alpha)}(R^{\phi(\bfY),\bfX})_{I} \leq \Phi_{1} (c-1)\omega_{I} + \frac{1}{1+\alpha}\Phi_{1,\alpha} (c\omega_{I})^{1+\alpha}\,.\label{eq:3-2}
\end{align}
\[
\norm{\phi(\bfY)'}_{\sup}
\leq
\Phi_{1} \norm{Y'}_{\sup}
\leq
\Phi_{1} \Phi_{0}.
\]
By Remark~\ref{rem:integral-bound}, we obtain
\begin{align*}
V^{r/2}(R^{\int_{0}^{\cdot}\phi(\bfY)\dif\bfX, \bfX})_{[s,t]}
\leq&
\Csew V^{r/(1+\alpha)}(R^{\phi(\bfY),\bfX})_{[s,t)} V^r X_{(s,t]}
\\&+
\Csew V^{r/\alpha}(\phi(\bfY)')_{[s,t)} V^{r/2}\bbX_{(s,t]}
\\&+
\norm{\phi(\bfY)'}_{\sup} V^{r/2}\bbX_{[s,t]}
\\ \leq&
\Csew (\Phi_{1} (c-1)\omega_{[s,t)} + \frac{\Phi_{1,\alpha}}{1+\alpha} c^{1+\alpha}\omega_{[s,t)}^{1+\alpha}) V^r X_{(s,t]}
\\&+
\Csew (\Phi_{1} \Phi_{\alpha} + \Phi_{1,\alpha} \Phi_{0}) c^\alpha\omega_{[s,t)}^{\alpha} V^{r/2}\bbX_{(s,t]}
+ \Phi_{0}\Phi_{1} V^{r/2}\bbX_{[s,t]}
\\ \le & \Csew\bigg(\frac{\Phi_1}{\Phi_0} (c-1)\omega_{[s,t]} + \frac{\Phi_{1,\alpha}}{\Phi_0(1+\alpha)}c^{1+\alpha}\omega_{[s,t]}^{1+\alpha} \\&\qquad+  \frac{\Phi_1\Phi_\alpha+\Phi_{1,\alpha}\Phi_0}{\Phi_0\Phi_1}c^\alpha\omega^\alpha_{[s,t]}\bigg)\omega_{[s,t]}+\omega_{[s,t]}\\
\eqref{ineq:assumptionOmega}\le & (c-1)\omega_{[s,t]}
\end{align*}
By Lemma~\ref{lem:controlled-implicit}, this trivially implies
\[
V^r\left(\int_{0}^{\cdot}\phi(\bfY)\dif\bfX\right)_{I}
\leq
\norm{\phi(Y)}_{\sup}V^r X_{I} + V^{r/2}(R^{\bfZ,\bfX})_{I} \le \Phi_0 V^rX_{I} +(c-1)\omega_I \le c\omega_I\,,
\]
showing the claim.
\end{proof}

\begin{remark}
For $c\ge 9$, the a priori estimates given in Proposition~\ref{prop:Young-apriori-RDE} already gives us that all solutions $Y$ of \eqref{eq:RDESec6} need to be in $\calY(\bfX)$.
\end{remark}

\subsection{Contraction of fixed point map}

\noindent It is now straight-forward to show that $\mbox{Step}$ from \eqref{eq:RDE-iteration} is a contraction on $\calY(\bfX)$. Recall that for $\alpha = 1$, we use the notation $\Phi_{0,1} := \Phi_\alpha$ and set
\begin{equation}\label{eq:BoundA}
A := \max(1,\Phi_{0,1}) c\omega_{[0,T]}\,.
\end{equation}
This especially implies that $V^r Y\le A, V^{r/2} R^{\bfY,\bfX}\le A$ and $V^r Y'\le A$ for all $Y\in\cal Y(\bfX)$. Then, one can show the following:

\begin{lemma}[Contractivity of the iteration]
\label{lem:RDE-iteration-contractive}
For every $\phi \in C^{2,1}_{b}$, there exists $\epsilon>0$ such that, for every rough path $\bfX$ with $V^r X < \epsilon$ and $V^{r/2}\bbX < \epsilon$,
the map \eqref{eq:RDE-iteration} is strictly contractive on the set
\begin{equation}
\label{eq:66}
\Set{Y\in\calY(\bfX) \given Y_{0}=y_{0}, Y'_{0}=\phi(y_{0})}
\end{equation}
with respect to the metric
\begin{equation}
\label{eq:RDE-contraction-metric}
d(\bfY,\tilde{\bfY}) = \max( V^{r/2}(\Delta R^{\bfY,\bfX}) , V^r(\Delta Y') , CV^r(\Delta Y))\,,
\end{equation}
where $C = 2(\Phi_{1}+A\Phi_{1,1})$.
\end{lemma}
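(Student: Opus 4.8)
The plan is to prove contractivity of $\operatorname{Step}$ on the set \eqref{eq:66} by estimating $d(\operatorname{Step}(\bfY), \operatorname{Step}(\tilde\bfY))$ in terms of $d(\bfY,\tilde\bfY)$ and showing that the resulting constant is $< 1$ once $\epsilon$ is small. Write $\bfZ = \operatorname{Step}(\bfY)$, $\tilde\bfZ = \operatorname{Step}(\tilde\bfY)$, and set $\delta := d(\bfY,\tilde\bfY)$, so that $V^{r/2}(\Delta R^{\bfY,\bfX})_I \le \delta$, $V^r(\Delta Y')_I \le \delta$ and $CV^r(\Delta Y)_I \le \delta$ on every interval $I \subseteq [0,T]$. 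Note $Z' = \phi(Y)$ and $\tilde Z' = \phi(\tilde Y)$, so $\Delta Z' = \Delta\phi(Y)$. First I would bound $V^r(\Delta Z')_I = V^r(\Delta\phi(Y))_I$ using \eqref{eq:controlled-composition-Delta-phi}: this gives $\norm{D\phi}_{\sup} V^r(\Delta Y)_I + \norm{D\phi}_{\Lip}\norm{\Delta Y}_{\sup} V^r\tilde Y_I \le (\Phi_1 C^{-1} + \Phi_{1,1} A)\,\delta\cdot(\text{small})$, where $\norm{\Delta Y}_{\sup} \le V^r(\Delta Y)_{[0,T]} \le \delta/C$ and $V^r\tilde Y_{[0,T]} \le A = \max(1,\Phi_{0,1})c\omega_{[0,T]}$, which is $\lesssim \epsilon$; so this term is $\le K\epsilon\,\delta$ for a structural constant $K$. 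Since $C = 2(\Phi_1 + A\Phi_{1,1})$ was chosen precisely so that $\Phi_1 C^{-1} + \Phi_{1,1}A \le 1/2$, the relevant bound is $V^r(\Delta Z')_I \le \tfrac12\delta\cdot V^r\tilde Y_{[0,T]} + \dots$, organized so the $C V^r(\Delta Z')$ slot of the metric is controlled by $\tfrac12\delta$ plus terms that are small in $\epsilon$.

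Next I would bound $V^{r/2}(\Delta R^{\bfZ,\bfX})_I$. Here $\bfZ$ and $\tilde\bfZ$ are the rough integrals of $\phi(\bfY)$ and $\phi(\tilde\bfY)$ against the common $\bfX$ (so $\tilde X = X$, $\tilde\bbX = \bbX$, hence $\Delta X = 0$ and $\Delta\bbX = 0$), and the stability estimate \eqref{eq:controlled-integral-remainder-Delta} of Lemma~\ref{lem:integral-bound}, applied to the controlled rough paths $\phi(\bfY)$ and $\phi(\tilde\bfY)$, reduces to
\[
V^{r/2}(\Delta R^{\bfZ,\bfX})_I \le \norm{\Delta\phi(\bfY)'}_{\sup} V^{r/2}\bbX_I + \zeta(\theta)4^{\theta-1}\bigl(V^r(\Delta\phi(\bfY)')_I V^{r/2}\bbX_I + V^{r/2}(\Delta R^{\phi(\bfY),\bfX})_I V^r X_I\bigr).
\]
I would then feed in the composition estimates \eqref{eq:controlled-composition-Delta-phi'} for $V^r(\Delta\phi(\bfY)')_I$ (and $\norm{\cdot}_{\sup}$) and \eqref{eq:controlled-composition-Delta-R} for $V^{r/2}(\Delta R^{\phi(\bfY),\bfX})_I$. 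Each of the resulting terms is a product of (i) a structural constant built from $\Phi_1, \Phi_{1,1}, \Phi_2, \Phi_{2,1}$, (ii) one of $\delta, \delta/C$ or $\norm{\Delta Y'}_{\sup} \le \delta$, and (iii) at least one factor among $V^r X_I, V^{r/2}\bbX_I, V^rY_I, V^r\tilde Y_I, V^{r/2}R^{\tilde\bfY,\tilde\bfX}_I, V^rY'_I$, all of which are $\le A \lesssim \epsilon$ on $[0,T]$ by the definition of $\calY(\bfX)$ and \eqref{eq:BoundA}. Hence $V^{r/2}(\Delta R^{\bfZ,\bfX})_I \le K'\epsilon\,\delta$ for a structural $K'$. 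The estimate for $V^r(\Delta Z)_I$ follows from the implicit bound in Lemma~\ref{lem:controlled-implicit}: $V^r(\Delta Z)_I \le \norm{\Delta Z}_{\sup}V^r X_I + \norm{\tilde Z}_{\sup}V^r(\Delta X)_I + V^{r/2}(\Delta R^{\bfZ,\bfX})_I$; with $\Delta X = 0$ this is $\le \norm{\Delta Z}_{\sup}\epsilon + V^{r/2}(\Delta R^{\bfZ,\bfX})_I$, and $\norm{\Delta Z}_{\sup}$ is itself controlled by $V^r(\Delta Z)_{[0,T]}$ plus $\abs{\Delta Z_0} = 0$, so one absorbs $\norm{\Delta Z}_{\sup}\epsilon$ for small $\epsilon$ and concludes $C V^r(\Delta Z)_I \le K''\epsilon\,\delta$.

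Putting the three slots together, $d(\bfZ,\tilde\bfZ) \le \max(K'\epsilon, \tfrac12 + K\epsilon, K''\epsilon)\,\delta$, and choosing $\epsilon$ small enough (depending only on $\phi$, $c$, $r$, $\alpha$, $T$ through the structural constants) makes this maximum $\le \tfrac34 \delta < \delta$, giving strict contractivity. One also checks $\operatorname{Step}$ preserves the set \eqref{eq:66}: invariance of $\calY(\bfX)$ is the previous lemma, and the initial-data constraints $Z_0 = y_0$, $Z'_0 = \phi(Y_0) = \phi(y_0)$ are immediate from the definition of $\operatorname{Step}$. The main obstacle is bookkeeping: \eqref{eq:controlled-composition-Delta-phi'} and \eqref{eq:controlled-composition-Delta-R} each have several terms, and one must verify that in every term the factors of type (iii) really do carry a power of $\epsilon$ (using that on $[0,T]$ every variation norm appearing is bounded by $A \lesssim \epsilon$, and that $R^{\tilde\bfY,\tilde\bfX}$ is controlled through $(c-1)\omega_{[0,T]}$), so that none of them contributes to the leading $\tfrac12$ except the one the choice of $C$ was designed to absorb.
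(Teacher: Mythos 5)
Your plan is essentially the paper's own proof: the same three metric slots, with $V^r(\Delta Z')=V^r(\Delta\phi(Y))$ bounded via \eqref{eq:controlled-composition-Delta-phi} (this is exactly where the choice $C=2(\Phi_{1}+A\Phi_{1,1})$ produces the factor $\tfrac12$), $V^{r/2}(\Delta R^{\bfZ,\bfX})$ bounded via \eqref{eq:controlled-integral-remainder-Delta} with $\Delta X=\Delta\bbX=0$ fed by \eqref{eq:controlled-composition-Delta-phi'} and \eqref{eq:controlled-composition-Delta-R}, and $V^r(\Delta Z)$ via Lemma~\ref{lem:controlled-implicit}, everything except the $\tfrac12\delta$ term carrying a factor $\epsilon$. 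Two slips to correct: the opening claim that $V^r(\Delta Z')\le K\epsilon\,\delta$ is false, since the term $\Phi_{1}V^r(\Delta Y)\le\Phi_{1}\delta/C$ carries no $\epsilon$ (the correct bound, which your final maximum in fact uses, is $V^r(\Delta Z')\le(\Phi_{1}+\Phi_{1,1}A)\delta/C=\delta/2$, and this enters the metric as the plain $V^r(\Delta Y')$ slot, not a ``$CV^r(\Delta Z')$'' slot); and in the last step Lemma~\ref{lem:controlled-implicit} involves $\norm{\Delta Z'}_{\sup}$ and $\norm{\tilde Z'}_{\sup}$, not $\norm{\Delta Z}_{\sup}$, $\norm{\tilde Z}_{\sup}$ — with the primes restored, $\Delta Z'_{0}=0$ gives $\norm{\Delta Z'}_{\sup}\le V^r(\Delta Z')\le\delta/2$, so no absorption argument is needed and the step closes immediately, as in the paper.
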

\begin{proof}
Suppose $\bfY,\tilde{\bfY} \in \calY(\bfX)$ with $d(\bfY,\tilde{\bfY}) = \alpha$ and $Y_{0}=\tY_{0}=y_{0}$ as well as $Y_0' = \tilde Y_0' = \phi(y_0)$.
Then,
\[
\norm{\Delta Y}_{\sup} \leq V^r (\Delta Y) \le \frac\alpha C.
\]
By Lemma~\ref{lem:controlled-composition}, we obtain
\begin{align*}
V^r(\Delta\phi(Y))
&\leq \norm{D\phi}_{\sup} V^r(\Delta Y) + \norm{D\phi}_{\Lip} \norm{\Delta Y}_{\sup} V^r\tY\\
&\le \frac \alpha C(\Phi_{1} + A\Phi_{1,1})
\leq \alpha/2.
\end{align*}
\begin{align*}
V^r(\Delta\phi(\bfY)') &\le \Phi_{1}V^r(\Delta Y') +\Phi_{1,1} V^r(Y) \norm{\Delta Y'}_{\sup} + \Phi_{1,1} \norm{\Delta Y}_{\sup}V^r\tY' \\
&\qquad+\Phi_{2} V^r(\Delta Y) \norm{\tilde Y'}_{\sup} + \Phi_{2,1} \norm{\Delta Y}_{\sup}V^r\tY\norm{\tilde Y'}_{\sup} \\
&\le \alpha C_1\,,
\end{align*}
for $C_1 = \Phi_{1} + \Phi_{1,1} A + \frac{\Phi_{1,1} A} C + \frac{\Phi_{2}\Phi_0} C + \frac{\Phi_{2,1}\Phi_{0}A} C$. Furthermore, we have
\begin{align*}
V^{r/2}(\Delta R^{\phi(\bfY),\bfX}) &\le \Phi_{1}V^{r/2}(\Delta R^{\bfY,\bfX}) + \Phi_{1,1}\norm{\Delta Y}_{\sup} V^{r/2}(R^{\tilde\bfY,\tilde\bfX}) \\&\quad+ \frac 12\Phi_{2} (V^rY+V^r\tY)V^r(\Delta Y) + \frac 12\Phi_{2,1}\norm{\Delta Y}_{\sup}V^r\tY^2 \\
&\le \alpha C_2\,,
\end{align*}
where $C_2 = \Phi_{1} + \frac{\Phi_{1,1} A} C + \frac{\Phi_{2} A} C + \frac{\Phi_{2,1} A^2}{2C}$.

Let $\bfZ = \operatorname{Step}(\bfY)$, $\tilde\bfZ = \operatorname{Step}(\tilde\bfY)$.
Then
\[
V^r(\Delta Z') = V^r(\Delta\phi(Y))
\leq \alpha/2.
\]
By Lemma~\ref{lem:integral-bound}, where we use $\norm{\Delta\phi(\bfY)'}_{\sup} \le V^r(\Delta\phi(\bfY)')$:
\begin{align*}
V^{r/2}(\Delta R^{\bfZ,\bfX}) &\le \norm{\Delta \phi(\bfY)'}_{\sup}V^{r/2}\bbX \\&\quad+ \zeta(3/r) 4^{3/r-1}\left(V^r(\Delta \phi(\bfY)') V^{r/2}\bbX + V^{r/2}(\Delta R^{\phi(\bfY),\bfX})V^r X\right)\\
&\le \alpha\epsilon C_3\,.
\end{align*}
For $C_3 = \left(C_1 + \zeta(3/r) 4^{3/r-1}(C_1 + C_2)\right)$. By Lemma~\ref{lem:controlled-implicit}, this implies
\[
V^r(\Delta Z)
\le
\norm{\Delta Z'}_{\sup} V^rX
+ V^{r/2}(\Delta R^{\bfZ,\bfX})
\leq
V^r(\Delta Z') \epsilon
+ C_3 \epsilon \alpha
\le
\epsilon \alpha\left(\frac 12+C_3\right).
\]
Choosing $\epsilon$ small enough, we obtain the claim.
\end{proof}

\noindent This directly leads to the following local existence and uniqueness result:

\begin{theorem}[Existence of solutions]
\label{thm:RDE-solution-local-uniqueness}
For every $\phi \in C^{2,1}_{b}$, there exist $\epsilon>0$ such that, for every rough path $\bfX$ with $V^r X < \epsilon$ and $V^{r/2}\bbX < \epsilon$ and every $y_{0}\in\bbR^d$, there exists a unique $\bfX$-controlled path $\bfY \in \calY(\bfX)$ such that, for every $t\in [0,T]$, we have
\begin{equation}
\label{eq:RDE}
\bfY_{t} = y_{0} + \int_{0}^{t} \phi(\bfY) \dif\bfX,
\quad
Y'_{t} = \phi(Y_{t}).
\end{equation}
\end{theorem}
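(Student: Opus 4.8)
The plan is to obtain the solution by the Banach fixed point theorem applied to the map $\operatorname{Step}$ of \eqref{eq:RDE-iteration} on the set \eqref{eq:66}, equipped with the metric $d$ of \eqref{eq:RDE-contraction-metric}. First I would fix $\phi\in C^{2,1}_{b}$ together with constants $\Phi_0,\Phi_\alpha,\Phi_1,\Phi_{1,\alpha},\Phi_2,\Phi_{2,\alpha},\Phi_{DD}$ realising the bounds \eqref{ineq:boundsPhiRP} (here $\alpha=1$), and then choose $\epsilon>0$ small enough that, for every $r$-rough path $\bfX$ with $V^r X<\epsilon$ and $V^{r/2}\bbX<\epsilon$, the following hold simultaneously: (i) the smallness condition \eqref{ineq:assumptionOmega} of the preceding Solution space lemma is satisfied — this is possible because $\omega_{[0,T]}\le \Phi_0(1+\Phi_1)\epsilon\to 0$ as $\epsilon\to 0$; and (ii) $\epsilon$ is at most the contractivity threshold supplied by Lemma~\ref{lem:RDE-iteration-contractive}. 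For such $\epsilon$, that lemma and the Solution space lemma tell us precisely that $\operatorname{Step}$ maps $S:=\Set{\bfY\in\calY(\bfX)\given Y_0=y_0,\ Y'_0=\phi(y_0)}$ into itself and is a strict contraction on $(S,d)$.

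Next I would check that $(S,d)$ is a non-empty complete metric space. For non-emptiness, take $\bfY^{(0)}:=\bigl(y_0+\phi(y_0)X_{0,\cdot},\,\phi(y_0)\bigr)$: then $R^{\bfY^{(0)},\bfX}\equiv 0$, $Y^{(0)\prime}$ is constant so $V^{r/\alpha}(Y^{(0)\prime})=0$, $\norm{Y^{(0)\prime}}_{\sup}=\abs{\phi(y_0)}\le\Phi_0$, and $V^rY^{(0)}_I\le\Phi_0 V^rX_I\le\omega_I\le c\,\omega_I$, so $\bfY^{(0)}\in S$. For completeness, a $d$-Cauchy sequence $(\bfY^{(n)})$ is Cauchy for each of $V^r(\Delta Y)$, $V^r(\Delta Y')$ and $V^{r/2}(\Delta R^{\bfY,\bfX})$; since $Y^{(n)}_0=y_0$ and $Y^{(n)\prime}_0=\phi(y_0)$ are fixed, the bound $\abs{Y^{(n)}_t-Y^{(m)}_t}\le V^r(Y^{(n)}-Y^{(m)})_{[0,t]}$ (and likewise for $Y'$) shows that $Y^{(n)},Y^{(n)\prime}$ converge uniformly to limits $Y,Y'$, whence $R^{\bfY^{(n)},\bfX}_{s,t}=Y^{(n)}_{s,t}-Y^{(n)\prime}_s X_{s,t}\to Y_{s,t}-Y'_sX_{s,t}=:R^{\bfY,\bfX}_{s,t}$ pointwise. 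By lower semicontinuity of the $V^\rho$-seminorms under pointwise convergence, the limiting pair $\bfY=(Y,Y')$ satisfies all four inequalities defining $\calY(\bfX)$ as well as the initial conditions, so $\bfY\in S$ and $\bfY^{(n)}\to\bfY$ in $d$.

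The Banach fixed point theorem then yields a unique $\bfY\in S$ with $\operatorname{Step}(\bfY)=\bfY$, which by the definition of $\operatorname{Step}$ is exactly $Y_t=y_0+\int_0^t\phi(\bfY)\dif\bfX$ and $Y'_t=\phi(Y_t)$ for all $t\in[0,T]$, i.e.\ \eqref{eq:RDE}. To upgrade uniqueness in $S$ to uniqueness in all of $\calY(\bfX)$, note that any $\bfX$-controlled path solving \eqref{eq:RDE} automatically has $Y_0=y_0$ and $Y'_0=\phi(y_0)$, hence lies in $S$, so the fixed point is the only such path; moreover the remark following the Solution space lemma (choosing $c\ge 9$) guarantees that every solution of \eqref{eq:RDESec6} lies in $\calY(\bfX)$, so the statement indeed covers all solutions.

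The main obstacle I anticipate is the completeness step: one must verify that $d$-convergence controls $Y$, $Y'$ and the Gubinelli remainder $R^{\bfY,\bfX}$ finely enough to pass every defining inequality of $\calY(\bfX)$ to the limit, and that the additive normalisation forced by the fixed initial data $y_0,\phi(y_0)$ is precisely what converts convergence of the variation seminorms into genuine (indeed uniform) pointwise convergence; lower semicontinuity of $V^\rho$ under pointwise limits is the tool that makes this go through. The remaining ingredients — invariance and contractivity — are quoted verbatim from the lemmas above.
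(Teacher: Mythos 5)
Your proposal is correct and follows essentially the same route as the paper: both rest on the invariance of $\calY(\bfX)$ under $\operatorname{Step}$ and on the strict contractivity from Lemma~\ref{lem:RDE-iteration-contractive}, concluded by a Picard/contraction-mapping argument (the paper runs the Picard iteration starting from the constant path and asserts the Cauchy sequence converges to a solution, whereas you invoke Banach's fixed point theorem after checking non-emptiness and completeness of the set \eqref{eq:66} under the metric \eqref{eq:RDE-contraction-metric}). Your explicit verification of completeness via lower semicontinuity of the variation seminorms, and of uniqueness within all of $\calY(\bfX)$, simply fills in details the paper leaves implicit.
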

\begin{proof}
Let $A,\epsilon$ be as in Lemma~\ref{lem:RDE-iteration-contractive}.
Define a sequence of controlled paths by
\[
(\bfY_{0})_{t}=y_{0}, \quad (\bfY_{0})'_{t}=0,
\quad
(\bfY_{j+1}) = \operatorname{Step}(\bfY_{j}).
\]
For $j\geq 1$, the paths $(\bfY_{j})$ are elements of \eqref{eq:66}.
It follows from Lemma~\ref{lem:RDE-iteration-contractive} that this sequence is Cauchy with respect to the metric \eqref{eq:RDE-contraction-metric}, and its limit solves the RDE \eqref{eq:RDE}.
The uniqueness of the solution follows from the strict contractivity of \eqref{eq:RDE-iteration}.
\end{proof}

\noindent Since our a priori estimates already imply that any solution lies in $\calY(\bfX)$ given a high enough constant $c>0$, this implies that the solution is unique among all $\bfX$-controlled rough paths over small enough intervals. By subdividing a larger interval into smaller intervals, one can also extend this result to global solutions.

Large jumps of $\bfX$ have to be handled separately.
We omit the tedious details.

\subsection{Lipschitz dependence on data}

We can now tackle the main result of this section. Similar to \eqref{eq:BoundA}, one can find an $A>0$ such that
\[
V^rY,V^r\tilde Y,V^r(Y'),V^r(\tilde Y'),V^{r/2}(R^{\bfY,\bfX}), V^{r/2}(R^{\tilde\bfY,\tilde\bfX})\le A\,,
\]
holds for all $\bfY\in \calY(\bfX), \tilde\bfY\in\calY(\tilde\bfX)$. Then, one can show the following:

\begin{theorem}[Local Lipschitz dependence of RDE solution on the data]\label{thm:LipschitzDependenceRDE}
Let $\phi\in C^{2,1}_{b}$. There exists $\epsilon>0$ such that if $\bfX,\tilde\bfX$ are rough paths with norms $<\epsilon$,  $\bfY \in \calY(\bfX), \tilde\bfY\in\calY(\tilde\bfX)$ are solutions to \eqref{eq:RDE} with initial datum $y_0,\tilde y_0$, we have for all intervals $I\subset[0,T]$:
\begin{align}
V^r(\Delta Y)_I &\le  c_1\bigg((V^rX_I + V^{r/2}\bbX_{I} + V^{r/2}\tilde\bbX_{I}) \gamma + V^r(\Delta X)_I + V^{r/2}(\Delta \bbX)_{I}\bigg) \\
V^{r/2}(\Delta R^{\bfY,\bfX})_{I}&\le c_2\bigg((V^rX_I^2 + V^rX_I \cdot V^r\tilde X_I +V^{r/2}\bbX_{I} + V^{r/2}\tilde \bbX_{I})\gamma \\
&\qquad + (V^r X_I+V^r\tilde X_I +V^{r/2}\bbX_I + V^{r/2}\tilde\bbX_{I})V^r(\Delta X)_I + V^{r/2}(\Delta\bbX)_{I}\bigg)\nonumber
\end{align}
where $ c_1, c_2$ denote some constants depending on $\phi,A,\epsilon,r$, and
\[
	\gamma = \abs{\Delta y_0} + V^r{\Delta X}_{[0,T]} + V^{r/2}{\Delta\bbX}_{[0,T]}\,.
\]
\end{theorem}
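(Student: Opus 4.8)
The plan is to run the scheme of Theorem~\ref{thm:Lipschitz-stability}, now for the coupled pair $V^{r}(\Delta Y)$, $V^{r/2}(\Delta R^{\bfY,\bfX})$. The inputs are: Proposition~\ref{prop:Young-apriori-RDE}, which (for $T$ small) places $\bfY\in\calY(\bfX)$, $\tilde\bfY\in\calY(\tilde\bfX)$, so that $V^{r}Y_{I},V^{r/\alpha}Y'_{I},V^{r/2}R^{\bfY,\bfX}_{I}$ and their tilde-analogues are all $\lesssim V^{r}X_{I}+V^{r/2}\bbX_{I}$ while $V^{r}(\Delta Y)_{I},V^{r/2}(\Delta R^{\bfY,\bfX})_{I}\lesssim A$; the implicit bound \eqref{ineq:implicitBoundStability}; the rough-integral stability estimate \eqref{eq:controlled-integral-remainder-Delta}; and the composition-stability estimates \eqref{eq:controlled-composition-phi'}--\eqref{eq:controlled-composition-Delta-R}. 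I write $P_{Y}(I),P_{R}(I)$ for the two bracketed expressions on the right-hand sides of the claimed inequalities, with $\gamma$ frozen; note that $P_{R}(I)\lesssim P_{Y}(I)$, since every factor of $V^{r}X_{I},V^{r}\tilde X_{I},V^{r/2}\bbX_{I},V^{r/2}\tilde\bbX_{I}$ occurring in $P_{R}$ is $<\epsilon$.

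I would first establish the \emph{global} (interval $[0,T]$) estimate. On $[0,T]$ the drivers are globally small, $V^{r}X_{[0,T]},V^{r/2}\bbX_{[0,T]},V^{r}\tilde X_{[0,T]},V^{r/2}\tilde\bbX_{[0,T]}<\epsilon$, so combining \eqref{ineq:implicitBoundStability}, \eqref{eq:controlled-integral-remainder-Delta} (applied with integrands $\phi(\bfY),\phi(\tilde\bfY)$, so that the integral remainder equals $\Delta R^{\bfY,\bfX}$ itself) and \eqref{eq:controlled-composition-Delta-phi}--\eqref{eq:controlled-composition-Delta-R} on the full interval, every term except $\Phi_{0}V^{r}(\Delta X)_{[0,T]}$, $\Phi_{1}\Phi_{0}V^{r/2}(\Delta\bbX)_{[0,T]}$ and $\abs{\Delta y_{0}}$ (all $\lesssim\gamma$) carries a factor $<\epsilon$; an absorption argument for $\epsilon$ small then yields
\[
\norm{\Delta Y}_{\sup}+\norm{\Delta Y'}_{\sup}+\norm{\Delta\phi(\bfY)'}_{\sup}+V^{r}(\Delta Y)_{[0,T]}+V^{r/2}(\Delta R^{\bfY,\bfX})_{[0,T]}\lesssim\gamma .
\]
These serve as \emph{known} constants in the localized argument, and -- crucially -- do not involve the constant $M$ introduced next.

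For the localized estimate let $M$ be the smallest constant such that $V^{r/2}(\Delta R^{\bfY,\bfX})_{I}\le M\,P_{R}(I)$ for all $I\subseteq[0,T]$; $M<\infty$ by the a priori bound. I then re-derive a bound for $V^{r/2}(\Delta R^{\bfY,\bfX})_{I}$ via \eqref{eq:controlled-integral-remainder-Delta} with integrands $\phi(\bfY),\phi(\tilde\bfY)$, inserting \eqref{eq:controlled-composition-phi'}--\eqref{eq:controlled-composition-Delta-R} for the composition norms, replacing each supremum norm by its $\lesssim\gamma$ bound, each tilde variation norm by the corresponding local tilde-driver norm, and -- whenever $V^{r}(\Delta Y)_{I}$ occurs -- substituting \eqref{ineq:implicitBoundStability} to rewrite it as $\le\norm{\Delta Y'}_{\sup}V^{r}X_{I}+\Phi_{0}V^{r}(\Delta X)_{I}+V^{r/2}(\Delta R^{\bfY,\bfX})_{I}\lesssim\gamma V^{r}X_{I}+V^{r}(\Delta X)_{I}+M\,P_{R}(I)$. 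The point is that $V^{r}(\Delta Y)_{I}$ and $V^{r/2}(\Delta R^{\bfY,\bfX})_{I}$, after tracing through the composition formulas, always appear multiplied by at least one local driver norm $<\epsilon$, so the $M$-contributions collect into $C\epsilon\,M\,P_{R}(I)$ while everything else fits into $C\,P_{R}(I)$:
\[
V^{r/2}(\Delta R^{\bfY,\bfX})_{I}\le C\,P_{R}(I)+C\epsilon\,M\,P_{R}(I)\qquad\text{for all }I\subseteq[0,T].
\]
Taking the supremum over $I$ and choosing $\epsilon$ (hence $T$) small enough that $C\epsilon\le\tfrac12$ yields $M\lesssim C$, i.e.\ the second claimed estimate; the first then follows by substituting $V^{r/2}(\Delta R^{\bfY,\bfX})_{I}\lesssim P_{R}(I)\lesssim P_{Y}(I)$ into \eqref{ineq:implicitBoundStability}, and the resulting $c_{1},c_{2}$ depend only on $\phi,A,\epsilon,r$ as claimed.

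The main obstacle is exactly the bookkeeping behind the last two displays: one must verify that in the re-derived bound for $V^{r/2}(\Delta R^{\bfY,\bfX})_{I}$ every term still carrying $M$ (after substituting \eqref{ineq:implicitBoundStability}, these are $V^{r/2}(\Delta R^{\bfY,\bfX})_{I}$ times a local driver norm -- arising either directly from the term $V^{r/2}(\Delta R^{\bfY,\bfX})_{I}V^{r}X_{I}$ in \eqref{eq:controlled-integral-remainder-Delta} applied to $\phi(\bfY)$, or through a $V^{r}(\Delta Y)_{I}$ sitting inside a composition norm that is in turn multiplied by $V^{r/2}\bbX_{I}$ or by $(V^{r}Y_{I}+V^{r}\tilde Y_{I})V^{r}X_{I}$) genuinely acquires a factor $<\epsilon$, and symmetrically that every $M$-free remainder is dominated by $C\,P_{R}(I)$. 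This accounting is precisely what dictates the algebraic shape of $P_{R}(I)$ -- the squares $V^{r}X_{I}^{2}$, the products $V^{r}X_{I}V^{r}\tilde X_{I}$ and the mixed terms $(V^{r}X_{I}+V^{r}\tilde X_{I}+V^{r/2}\bbX_{I}+V^{r/2}\tilde\bbX_{I})V^{r}(\Delta X)_{I}$, which absorb the errors quadratic in the driver -- and forces strengthened smallness conditions on $\epsilon$, in analogy with \eqref{eq:VpX-small1}, \eqref{eq:Young-Picard-interval2}. Large jumps of $\bfX$ are handled separately, as indicated after Theorem~\ref{thm:RDE-solution-local-uniqueness}.
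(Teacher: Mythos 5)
Your overall scheme is sound and uses the same ingredients as the paper (Lemma~\ref{lem:controlled-implicit}, Lemma~\ref{lem:controlled-composition}, Lemma~\ref{lem:integral-bound}, smallness of the local driver norms, absorption), but it is organized differently. The paper never introduces a smallest constant: it first proves the two bounds with $\gamma$ replaced by $\abs{\Delta y_s}$ for $I=[s,t]$, absorbing the term $\tilde c\,\epsilon\,(V^{r/2}(\Delta R^{\bfY,\bfX})_I+V^r(\Delta Y)_I)$ directly interval by interval --- legitimate because these quantities are finite (bounded through $\calY(\bfX)$, $\calY(\tilde\bfX)$) and occur on the right only with a factor $\epsilon$ --- and then recovers $\gamma$ a posteriori from $\abs{\Delta y_s}\le\abs{\Delta y_0}+V^r(\Delta Y)_{[0,s]}\lesssim\gamma$, obtained by applying the local bound to $[0,s]$. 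Your route (a global bound $\norm{\Delta Y}_{\sup}+V^r(\Delta Y)_{[0,T]}+V^{r/2}(\Delta R^{\bfY,\bfX})_{[0,T]}\lesssim\gamma$ first, then constant-chasing over all intervals with a frozen $\gamma$) instead mirrors the paper's Young-case proof of Theorem~\ref{thm:Lipschitz-stability}; the bookkeeping you describe does produce the claimed shape of $P_R(I)$, so the two organizations end at the same inequalities.

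The one step that does not stand as written is the initialization ``$M<\infty$ by the a priori bound''. The solution-space bounds only give $V^{r/2}(\Delta R^{\bfY,\bfX})_I\lesssim V^rX_I+V^{r/2}\bbX_I+V^r\tilde X_I+V^{r/2}\tilde\bbX_I$, which is \emph{linear} in $V^rX_I$, whereas the only terms of $P_R(I)$ not involving $V^r(\Delta X)_I$ or $V^{r/2}(\Delta\bbX)_I$ are $\gamma$ times second-order quantities ($V^rX_I^2$, $V^rX_I V^r\tilde X_I$, $V^{r/2}\bbX_I$, $V^{r/2}\tilde\bbX_I$). On intervals where $V^rX_I$ dominates these and the $\Delta$-driver norms, the ratio behaves like $1/(\gamma\,V^rX_I)$, so the supremum defining $M$ is not obviously finite, and without $M<\infty$ the inequality $M\le C+C\epsilon M$ yields nothing. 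This is repairable inside your own toolkit: one preliminary, non-self-referential pass through Remark~\ref{rem:integral-bound} and Lemma~\ref{lem:controlled-composition} shows that each of $V^{r/2}(R^{\bfY,\bfX})_I$, $V^{r/2}(R^{\tilde\bfY,\tilde\bfX})_I$ is itself controlled by second-order driver quantities, which gives $M\lesssim 1+\gamma^{-1}$ for fixed data with $\gamma>0$ (the case $\gamma=0$ being trivial by uniqueness); alternatively, drop the smallest-constant device altogether and absorb per interval as the paper does. With that repair, your argument goes through and the constants depend only on $\phi,A,\epsilon,r$ as claimed.
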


\begin{proof}
Note that it suffices to show that for intervals $I = [s,t]\subset[0,T]$, we have
\begin{align}
V^r(\Delta Y)_I &\le \tilde c_1\bigg((V^rX_I + V^{r/2}\bbX_{I} + V^{r/2}\tilde\bbX_{I}) \abs{\Delta y_s} + V^r(\Delta X)_I + V^{r/2}(\Delta \bbX)_{I}\bigg) \label{ineq:LipschitzRDE3}\\
V^{r/2}(\Delta R^{\bfY,\bfX})_{I}&\le \tilde c_2\bigg((V^rX_I^2 + V^rX_I \cdot V^r\tilde X_I +V^{r/2}\bbX_{I} + V^{r/2}\tilde \bbX_{I})\abs{\Delta y_s} \label{ineq:LipschitzRDE4}\\
&\qquad + (V^r X_I+V^r\tilde X_I +V^{r/2}\bbX_I + V^{r/2}\tilde\bbX_{I})V^r(\Delta X)_I + V^{r/2}(\Delta\bbX)_{I}\bigg)\,,\nonumber
\end{align}
as this especially implies that
\[
	V^r(\Delta Y)_{[0,t]} \le c\gamma \qquad \text{for all }t\in[0,T]\,.
\]
Here, $c,\tilde c_1,\tilde c_2$ denote constants depending only on $\phi,A,\epsilon$ and $r$. It then follows that
\[
	\abs{\Delta y_s} \le \abs{\Delta y_0} + V^r(\Delta Y)_{[0,s]} \le (c+1)\gamma\,.
\]
Putting this into \eqref{ineq:LipschitzRDE3}, \eqref{ineq:LipschitzRDE4} shows the claim.

Thus, the rest of this proof is dedicated to showing \eqref{ineq:LipschitzRDE3}, \eqref{ineq:LipschitzRDE4}. Let $I= [s,t]$ for some $0\le s\le t\le T$. 
By using $Y' = \phi(Y)$, we can show some simple estimates:
\begin{align*}
\norm{\Delta Y}_{\sup,I}
&\leq \abs{\Delta y_s} + V^r(\Delta Y)_I,
\\
\norm{\Delta Y'}_{\sup,I}
&\leq \norm{\phi}_{\Lip} \norm{\Delta Y}_{\sup,I}
\le \Phi_{0,1}(\abs{\Delta y_s}+V^r(\Delta Y)_I)\\
V^r(Y')_I &\le \Phi_{0,1} V^rY_I\,,
\end{align*}
where $\norm{\Delta Y}_{\sup,I} = \sup_{u\in I}\abs{\Delta Y_u}$. By Lemma \ref{lem:controlled-composition}, we have
\begin{align*}
V^r(\Delta Y')_I &\le \Phi_{1}V^r(\Delta Y)_I + \Phi_{1,1}V^r\tilde Y_I \norm{\Delta Y}_{\sup,I} \\
&\le (\Phi_{1} + \Phi_{1,1} A) V^r(\Delta Y)_I + \Phi_{1,1}V^r\tilde Y_I \abs{\Delta y_s}\,.
\end{align*}
Further, the Lemma gives us
\begin{align*}
V^r(\Delta \phi(\bfY)')_I &\le \Phi_{1}V^r(\Delta Y')_I +\Phi_{1,1}V^r Y_I\norm{\Delta Y'}_{\sup,I} + \Phi_{1,1}V^r\tilde Y_I'\norm{\Delta Y}_{\sup,I} \\ 
&\quad + \Phi_{2}\Phi_0 V^r(\Delta Y)_I + \Phi_{2,1}\Phi_0 \norm{\Delta Y}_{\sup,I} V^r\tilde Y_I \\
&\le \tilde c_1V^r(\Delta Y)_I + \tilde c_2(V^r Y_I + V^r\tilde Y_I)\abs{\Delta y_s},
\end{align*}
where $\tilde c_1 = \Phi_{1}^2+\Phi_{1}\Phi_{1,1} A + \Phi_{1,1}(\Phi_{0,1}+1)A + \Phi_{2}\Phi_0 + \Phi_{2,1}\Phi_0 A,$\\$\tilde c_2 = (\Phi_{1}+\Phi_{0,1}+1)\Phi_{1,1}+\Phi_{2,1}\Phi_0$.  We use Lemma \ref{lem:controlled-composition} once more to calculate
\begin{align*}
V^{r/2}(\Delta R^{\phi(\bfY),\bfX})_{I} &\le \Phi_{1}V^{r/2}(\Delta R^{\bfY,\bfX})_{I}+\Phi_{1,1}(\abs{\Delta y_s}+V^r(\Delta Y)_I)V^{r/2}(R^{\tilde \bfY,\tilde\bfX})_I \\&\quad+ \frac 12 \Phi_{2}(V^rY_I+V^r \tilde Y_I)V^r(\Delta Y)_I + \frac 12\Phi_{2,1}(\abs{\Delta y_0}V^r\tilde Y_I^2 + AV^r(\Delta Y)_IV^r\tilde Y_I)\\
&\le \Phi_{1}V^{r/2}(\Delta R^{\bfY,\bfX})_I +\tilde c_3 (V^r Y_I+V^r\tilde Y_I+V^{r/2}(R^{\tilde\bfY,\tilde\bfX})_I)V^r(\Delta Y)_I \\ &\quad+ \tilde c_4(V^r\tilde Y_I^2 + V^{r/2}(R^{\tilde\bfY,\tilde\bfX})_{I})\abs{\Delta y_s}\,,
\end{align*}
where $\tilde c_3 = \max(\Phi_{1,1},\frac 12(\Phi_{2}+ \Phi_{2,1} A))$ and $\tilde c_4 = \max(\Phi_{1,1}, \frac 12\Phi_{2,1})$. By using $\phi(\bfY)' = D\phi(Y) \phi(Y)$, it is easy to see that
\begin{equation*}
\norm{\Delta\phi(\bfY)'}_{\sup,I} \le (\Phi_{1,1}\Phi_0 + \Phi_{1}\Phi_{0,1})\norm{\Delta Y}_{\sup,I} \le (\Phi_{1,1}\Phi_0 + \Phi_{1}\Phi_{0,1})(\abs{\Delta y_s} +V^r(\Delta Y)_I)\,.
\end{equation*}
It then follows from Lemma \ref{lem:integral-bound}, where we use $\tilde c_5 = \zeta(3/r) 4^{3/r-1}$ and $V^r(\phi(\tilde \bfY)')\le (\Phi_{1}+\Phi_0\Phi_{1,1})A$ as well as $V^{r/2}(R^{\phi(\tilde \bfY),\tilde\bfX})_{I}\le \Phi_{1}V^{r/2}(R^{\tilde\bfY,\tilde\bfX})_{I}+\frac 12\Phi_{1,1}V^r \tilde Y_I^2$ by Lemma \ref{lem:controlled-composition}:
\begin{align}
V^{r/2}(\Delta R^{\bfY,\bfX})_I &\le \norm{\Delta \phi(\bfY)'}_{\sup,I}V^{r/2}\bbX_{I} + \Phi_{DD}V^{r/2}(\Delta\bbX)_I \nonumber\\&\qquad +\tilde c_5\bigg(V^r(\Delta\phi(\bfY)')_I V^{r/2}\bbX_{I} + V^{r/2}(\Delta R^{\phi(\bfY),\bfX})_I V^rX_I \nonumber\\ &\qquad+ V^r(\phi(\tilde \bfY)')_IV^{r/2}(\Delta\bbX)_I + V^{r/2}(R^{\phi(\tilde \bfY),\tilde\bfX})_{I}V^r(\Delta X)_I\bigg)\nonumber\\
&\le \tilde c\bigg(\abs{\Delta y_s}(V^{r/2}\bbX_I + V^{r/2}\tilde\bbX_I + V^r X_I\cdot V^r\tilde X_I) + V^r(\Delta Y)_I(V^{r/2}\bbX_{I} + V^rX_I) \nonumber\\
&\qquad+ V^{r/2}(\Delta R^{\bfY,\bfX})_{I}V^rX_I + V^{r/2}(\Delta\bbX)_{I} + V^r(\Delta X)_I(V^r(\tilde X)_I+V^{r/2}\tilde\bbX_I)\bigg)\,.\label{ineq:lipEstimateR}
\end{align}
Here, we used that $\bfY\in\calY(\bfX)$ implies $V^r Y_I+V^{r/2}(R^{\bfY,\bfX})_{I}\lesssim V^r X_I+V^{r/2}\bbX_I$ with similar bounds for $\tilde \bfY$ and $\tilde c$ is some constant bigger than $0$, depending on $\phi, A, \epsilon, r$. In the following, we allow $\tilde c$ to change for each line (although it will always denote some constant depending only on $\phi, A, \epsilon, r$). Then we can write
\begin{align*}
V^{r/2}(\Delta R^{\bfY,\bfX})_I&\le \tilde c\epsilon(V^{r/2}(\Delta R^{\bfY,\bfX})_I + V^r  \Delta Y_I) \\
&\quad+ \tilde c\bigg(\abs{\Delta y_s}(V^{r/2}\bbX_{I}+V^{r/2}\tilde \bbX_{I} +V^r X_I\cdot V^r\tilde X_I) + V^{r/2}(\Delta\bbX)_I+V^r(\Delta X)_I\bigg)\,.
\end{align*}
Moreover, Lemma \ref{lem:controlled-implicit} implies
\begin{align*}
V^r(\Delta Y)_I &\le \norm{\Delta Y'}_{\sup,I}V^rX_I +\norm{\tilde Y'}_{\sup,I}V^r(\Delta X)_I +V^{r/2}(\Delta R^{\bfY,\bfX})_{I} \\
&\le \tilde c\epsilon(V^{r/2}(\Delta R^{\bfY,\bfX})_I + V^r \Delta Y_I) \\
&\quad+ \tilde c\bigg(\abs{\Delta y_s}(V^{r/2}\bbX_I+V^{r/2}\tilde \bbX_I +V^r X_I) + V^{r/2}(\Delta\bbX)_I+V^r(\Delta X)_I\bigg)\,.
\end{align*}
Choosing $\epsilon$ small enough, such that $\tilde c\epsilon\le \frac 12$ then gives
\begin{align*}
V^{r/2}(\Delta R^{\bfY,\bfX})_{I}& +V^r(\Delta Y)_I \\ &\le \tilde c\bigg(\abs{\Delta y_s}(V^{r/2}\bbX_I+V^{r/2}\tilde \bbX_I +V^r X_I) + V^{r/2}(\Delta\bbX)_{I}+V^r(\Delta X)_{I}\bigg)\,,
\end{align*}
especially implying \eqref{ineq:LipschitzRDE3}. Putting \eqref{ineq:LipschitzRDE3} into \eqref{ineq:lipEstimateR} gives
\begin{align*}
V^{r/2}(\Delta R^{\bfY,\bfX})_{I} &\le \tilde c\epsilon V^{r/2}(\Delta R^{\bfX,\bfY})_I + \tilde c\bigg(\abs{\Delta y_s} (V^{r/2}\bbX_I+V^{r/2}\tilde\bbX_I + V^r X_I V^r \tilde X_I + V^rX_I^2) \\ 
&\quad + V^{r/2}(\Delta \bbX)_I + V^r(\Delta X)_I(V^{r/2}\bbX_I+V^{r/2}\tilde\bbX_I+V^r X_I+V^r\tilde X_I)\bigg)\,.
\end{align*}
Choosing $\epsilon>0$ such that $\epsilon\tilde c<\frac 12$ shows the claim.
\end{proof}

\section{Lipschitz estimates in the Besov setting}\label{sec:BesovEstimates}

As an application of our results, we show in this section how one can derive Besov estimates from variation estimates. To this end, we recall some results from \cite{FS22} and show that they can be seen as corollaries of their variation version. In particular, we recover key estimates for Young and rough path-integration and show how Lipschitz estimates for Young and rough differential equation follow from Theorem \ref{thm:Lipschitz-stability} and Theorem \ref{thm:LipschitzDependenceRDE}.

\subsection{Young integration}

Let $f\in B^{\alpha_0}_{p_0,q_0}, g\in B^{\alpha_1}_{p_1,q_1}$ for some parameters $0<\alpha_0,\alpha_1\le1$ and $0<p_0,p_1,q_0,q_1\le\infty$. We also assume a fixed time horizon $[0,T]$, $T>0$. Let
\begin{equation*}
\alpha = \alpha_0+\alpha_1,\qquad \frac 1p = \frac 1{p_0} + \frac 1{p_1},\qquad \frac 1q = \frac 1{q_0}+\frac 1{q_1}\,.
\end{equation*}
We can then recall Young integration in the Besov setting:
\begin{theorem}[\cite{FS22}, Theorem 4.1]\label{theo:YoungIntegralBesov}
Let $\alpha > 1, \alpha_0 > \frac 1{p_0}, \alpha_1 > \frac 1{p_1}$ and consider the two-parameter object
\[
	\Xi_{s,t} = f(s)(g(t)-g(s))\,.
\]
Then the Young integral
\begin{equation*}
\cI\Xi_{s,t} := \int_s^t f(z) \dif g(z)
\end{equation*}
exists and fulfills the following estimates:
\begin{align}
\norm{\cI\Xi-\Xi}_{B^{\alpha}_{p,q}} &\le c_1(\alpha,p,q) \norm{f}_{B^{\alpha_0}_{p_0,q_0}}\norm{g}_{B^{\alpha_1}_{p_1,q_1}} \label{ineq:Besov_Estimate_Young}\\
\norm{\cI\Xi}_{B^{\alpha_1}_{p,q_1}} &\le c_2(\alpha_0,\alpha_1,p_0,p_1,q_0,q_1)(\norm{f}_{L^{p_0}}+\norm{f}_{B^{\alpha_0}_{p_0,q_0}})\norm{g}_{B^{\alpha_1}_{p_1,q_1}}\,,\nonumber
\end{align}
for some constants $c_1(\alpha,p,q),c_2(\alpha_0,\alpha_1,p_0,p_1,q_0,q_1) > 0$.
\end{theorem}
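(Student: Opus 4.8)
The plan is to derive Theorem~\ref{theo:YoungIntegralBesov} from the variation-norm estimate of Lemma~\ref{lem:1} combined with Theorem~\ref{theo:VrBesovEstimate}, in the same spirit as the passage from $p$-variation to Besov scale used throughout the paper. First I would reduce both target parameters to a single scale. Choose $r_0,r_1$ with $r_0<1/\alpha_0$, $r_1<1/\alpha_1$ close enough to these thresholds that $\theta:=1/r_0+1/r_1>1$; the hypotheses $\alpha_0>1/p_0$, $\alpha_1>1/p_1$ ensure $r_0\le p_0$, $r_1\le p_1$, so by Corollary~\ref{cor:EmbeddingVrInBesov} we have $V^{r_0}f\lesssim\norm{f}_{B^{\alpha_0}_{p_0,q_0}}$ and $V^{r_1}g\lesssim\norm{g}_{B^{\alpha_1}_{p_1,q_1}}$ locally on every subinterval. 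By Lemma~\ref{lem:1}, the Young integral $\cI\Xi$ exists and the ``remainder'' $\chi(s,t):=|\cI\Xi_{s,t}-\Xi_{s,t}|=|\cI\Xi_{s,t}-f(s)g_{s,t}|$ satisfies the pointwise bound
\[
\chi(s,t)\le \zeta(\theta)\,V^{r_0}f_{[s,t)}\,V^{r_1}g_{(s,t]}\,.
\]

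Next I would feed this into the Besov machinery. The key observation is that $(s,t)\mapsto V^{r_0}f_{[s,t]}$ and $(s,t)\mapsto V^{r_1}g_{[s,t]}$ are subadditive (Remark~\ref{rem:VariationIsSubadditive}) and monotone in the sense of \eqref{cond:Monotonicity}. I would bound the $B^\alpha_{p,q}$-norm of $\chi$ by a product-type estimate: using H\"older's inequality in the inner $L^p$ integral (with exponents $p_0,p_1$) and then in the outer $L^q$ integral (with exponents $q_0,q_1$), one gets
\[
\norm{\chi}_{B^\alpha_{p,q}}\lesssim \norm{V^{r_0}f}_{B^{\alpha_0}_{p_0,q_0}}\,\norm{V^{r_1}g}_{B^{\alpha_1}_{p_1,q_1}}\,,
\]
where the loss of $t^{\alpha_0}\cdot t^{\alpha_1}=t^\alpha$ in the scaling and the splitting $1/p=1/p_0+1/p_1$, $1/q=1/q_0+1/q_1$ match up exactly because $V^{r_0}f_{[s,s+h]}$ and $V^{r_1}g_{[s,s+h]}$ each contribute a factor controlled by $\Omega^{V^{r_i}\cdot}_{p_i}(h)$. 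Then Theorem~\ref{theo:VrBesovEstimate} (applied to $f$ with exponents $r_0\le p_0$, $1/\alpha_0>r_0$, and to $g$ likewise) gives $\norm{V^{r_0}f}_{B^{\alpha_0}_{p_0,q_0}}\lesssim\norm{f}_{B^{\alpha_0}_{p_0,q_0}}$ and similarly for $g$, which yields \eqref{ineq:Besov_Estimate_Young}.

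For the second estimate, I would write $\cI\Xi_{s,t}=(\cI\Xi_{s,t}-\Xi_{s,t})+f(s)g_{s,t}$ and estimate the two pieces separately in $B^{\alpha_1}_{p,q_1}$. The first piece is controlled by \eqref{ineq:Besov_Estimate_Young} together with Lemma~\ref{lem:Besov-Embedding-Q-Alpha} (lowering $\alpha$ from $\alpha$ to $\alpha_1$ and raising $q$ from $q$ to $q_1$, permissible since $\alpha_1<\alpha$ and $q\le q_1$), absorbing the $\norm{f}_{B^{\alpha_0}_{p_0,q_0}}$ factor. For the second piece, $\norm{(s,t)\mapsto f(s)g_{s,t}}_{B^{\alpha_1}_{p,q_1}}$: pulling $|f(s)|$ out and using H\"older with exponents $p_0,p_1$ in the spatial integral bounds the inner $L^p$-norm of $|f(s)||g_{s,t}|$ by $\norm{f}_{L^{p_0}}\,\Omega^{g}_{p_1}(t)$ for each fixed lag, whence the $B^{\alpha_1}_{p,q_1}$-norm is $\lesssim\norm{f}_{L^{p_0}}\norm{g}_{B^{\alpha_1}_{p_1,q_1}}$. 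Combining gives the $(\norm{f}_{L^{p_0}}+\norm{f}_{B^{\alpha_0}_{p_0,q_0}})$ prefactor.

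The main obstacle I anticipate is the bookkeeping in the product H\"older step for $\norm{\chi}_{B^\alpha_{p,q}}$: one has a product of two two-parameter quantities, and to split it cleanly one needs the monotonicity $V^{r_0}f_{[s,s+h]}\le V^{r_0}f_{[s',s'+h']}$ whenever $[s,s+h]\subset[s',s'+h']$ so that the two $h$-integrals can be decoupled after applying H\"older at fixed $h$; some care is also needed when $p$ or $q$ falls below $1$, where one should work with $\min(1,p,q)$-norms and use the $p$-norm / $q$-norm triangle inequalities rather than the ordinary ones, exactly as in the proof of Theorem~\ref{theo:VrBesovEstimate}. I do not expect any genuinely new difficulty beyond this, since all the required embeddings and the variation-to-Besov transfer are already in place.
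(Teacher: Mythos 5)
Your proposal follows essentially the same route as the paper: choose intermediate variation exponents, invoke Lemma~\ref{lem:1} for existence and the pointwise remainder bound, then apply the $B^\alpha_{p,q}$-norm, split the product by H\"older with exponents $(p_0,p_1)$ and $(q_0,q_1)$, and finish with Theorem~\ref{theo:VrBesovEstimate}; the second estimate is obtained, as in the paper, by writing $\cI\Xi=(\cI\Xi-\Xi)+\Xi$ and bounding $\norm{\Xi}_{B^{\alpha_1}_{p,q_1}}\le\norm{f}_{L^{p_0}}\norm{g}_{B^{\alpha_1}_{p_1,q_1}}$. Two corrections, though. First, your choice of exponents is written backwards: you need $\frac{1}{\alpha_i}<r_i\le p_i$ (equivalently $\frac1{r_i}<\alpha_i$), not $r_i<\frac1{\alpha_i}$; with $r_i<\frac1{\alpha_i}$ neither Corollary~\ref{cor:EmbeddingVrInBesov} nor Theorem~\ref{theo:VrBesovEstimate} applies and $V^{r_i}$ need not even be finite. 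Your phrase ``close enough to the thresholds so that $\theta>1$'' only makes sense with the correct direction, where $\theta<\alpha_0+\alpha_1=\alpha$ and the hypothesis $\alpha>1$ is what guarantees such a choice exists, so this is a sign slip rather than a conceptual error, but it must be fixed. Second, in the second estimate you cite Lemma~\ref{lem:Besov-Embedding-Q-Alpha} to pass from $B^{\alpha}_{p,q}$ to $B^{\alpha_1}_{p,q_1}$; that lemma goes the other way in the $q$-index (it bounds the norm with the \emph{smaller} $q$ by the one with the larger $q$), and raising $q$ for a general, non-subadditive two-parameter quantity such as $\cI\Xi-\Xi$ is not covered by it. A clean repair is to estimate $\norm{\cI\Xi-\Xi}_{B^{\alpha_1}_{p,q_1}}$ directly from the pointwise bound $\abs{\cI\Xi_{s,t}-\Xi_{s,t}}\lesssim V^{r_0}f_{[s,t]}V^{r_1}g_{[s,t]}$: after H\"older in $s$, bound the $f$-factor uniformly in the lag by $\Omega^{V^{r_0}f}_{p_0}(T)\lesssim\norm{V^{r_0}f}_{B^{\alpha_0}_{p_0,\infty}}\lesssim\norm{f}_{B^{\alpha_0}_{p_0,q_0}}$, using Lemma~\ref{lem:Besov-Embedding-Q} (legitimate here since $V^{r_0}f$ \emph{is} subadditive, cf.\ Remark~\ref{rem:VariationIsSubadditive}) and Theorem~\ref{theo:VrBesovEstimate}, leaving the $g$-factor to produce $\norm{g}_{B^{\alpha_1}_{p_1,q_1}}$; this matches the ``easily follows'' reduction the paper itself intends.
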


\noindent Note that the assumption $\alpha_i > \frac 1{p_i}, i=0,1$ especially imply $p_0,p_1 > 1$.

\begin{remark}
Let us mention that directly constructing this integral in Besov spaces allows one to replace $\alpha_0 > \frac 1{p_0}, \alpha_1 > \frac 1{p_1}$ with the less restrictive condition $\alpha > \frac 1p$. Our approach requires the stricter condition as constructing the Young integral in the variation setting requires $f,g$ to have some finite $r_0,r_1$-variations, which puts individual requirements on $\alpha_0,p_0$ and $\alpha_1,p_1$.
\end{remark}

\noindent 
Let us make some observations before we tackle the proof. Note that $\norm{\Xi}_{B^{\alpha_1}_{p,q_1}}\le \norm{f}_{L^{p_0}}\norm{g}_{B^{\alpha_1}_{p_1,q_1}}$ holds, so the second inequality easily follows from the first one. Also, by the assumptions on $\alpha,\alpha_0,\alpha_1,p_0,p_1$ we can choose $r_0,r_1 \ge 1$ in such a way that for $i=0,1$
\begin{align*}
\frac 1{\alpha_i}&< r_i\le p_i \\
\frac 1{r_0}+\frac 1{r_1} &=\theta > 1\,.
\end{align*}
Thus, by Corollary \ref{cor:EmbeddingVrInBesov}, it follows that $f,g$ have unique continuous representations fulfilling
\[
	V^{r_0} f \lesssim \norm{f}_{B^{\alpha_0}_{p_0,q_0}},\qquad V^{r_1} f \lesssim \norm{f}_{B^{\alpha_1}_{p_1,q_1}}\,,
\]
showing especially that these variations are finite. We can thus prove our theorem by Young integration for variation norms:

\begin{proof}[Proof of Theorem \ref{theo:YoungIntegralBesov}]
Since $\theta>1$, Lemma \ref{lem:1} gives us the existence of the Young integral $\calI\Xi_{s,t} = \int_s^t f(z)\dif g(z)$, as well as the inequality \eqref{ineq:YoungEstimateRVar1}:
\[
	\abs{\calI\Xi_{s,t} -\Xi_{s,t}} \lesssim V^{r_0} f_{[s,t)} V^{r_1} g_{(s,t]}\,.
\]
Applying the $B^\alpha_{p,q}$ norm to both sides of the inequality gives
\begin{align*}
\norm{\cI\Xi-\Xi}_{B^{\alpha}_{p,q}} &\lesssim \norm{V^{r_0} f V^{r_1} g}_{B^{\alpha}_{p,q}} \\
&\le \norm{V^{r_0} f}_{B^{\alpha_0}_{p_0,q_0}} \norm{V^{r_1} g}_{B^{\alpha_1}_{p_1,q_1}}\\
&\lesssim \norm{f}_{B^{\alpha_0}_{p_0,q_0}}\norm{g}_{B^{\alpha_1}_{p_1,q_1}}\,,
\end{align*}
where we used Hölder's inequality as well as Theorem \ref{theo:VrBesovEstimate} (or more precisely Theorem \ref{theo:IntroVrBesovEstimate}).
\end{proof}

\subsection{Rough integration}

Let us move to the rough case, in which we show how rough path integration (\cite{FS22}, Theorem 5.4) can be derived from the variation case (Lemma \ref{lem:integral-bound}). To do this, we show that the integral exists as well as a stability estimate.

We fix Besov rough paths $\bfX,\tilde\bfX$ with controlled rough paths $\bfY,\tilde\bfY$ for parameters $\alpha>\frac 13, 0<q\le\infty, 2\le p\le \infty$ with  $\frac 1\alpha < p$. We can then chose $r\in[2,3)$, such that
\[
	\frac 1\alpha < r\le p\,.
\]
By Lemma \ref{lem:BesovRPAreVariationRP} and \ref{lem:BesovControlledRPAreVariationControlledRP}, it follows that $\bfX,\tilde\bfX$ are $r$-rough paths with controlled $r$-rough paths $\bfY,\tilde\bfY$. Recall that the rough path integral of $\bfY$ against $\bfX$ is given by the sewing of
\[
	\Xi_{s,t} = Y_s X_{s,t} + Y'_s \bbX_{s,t}\,,
\]
which we denote by $Z_{s,t} = \cI\Xi_{s,t}$. We define $\tilde\Xi,\tilde Z$ analogously. With these notations, one can easily show the Besov rough integration:

\begin{proposition}[\cite{FS22}, Theorem 5.4]
Let $\alpha >\frac 13$ such that $0<\frac 1\alpha\le p\le\infty$, $2\le p\le \infty$, $0<q\le\infty$ and let $\bfX,\tilde \bfX\in B^{\alpha}_{p,q}$ be rough paths with controlled rough paths $\bfY,\tilde\bfY$ in the Besov setting. Then the rough path integrals $Z,\tilde Z$ exist and we have the stability estimates
\begin{align*}
\norm{\Delta Z-\Delta\Xi}_{B^{3\alpha}_{p/3,q/3}} \lesssim& \left(\norm{\Delta R^{\bfY,\bfX}}_{B^{2\alpha}_{p/2,q/2}} + \norm{\Delta Y'}_{B^{\alpha}_{p,q}} \norm{X}_{B^\alpha_{p,q}} + \norm{\tilde Y'}_{B^\alpha_{p,q}}\norm{\Delta X}_{B^\alpha_{p,q}}\right)\norm{X}_{B^\alpha_{p,q}} \\
&+\left(\norm{R^{\tilde\bfY,\tilde\bfX}}_{B^{2\alpha}_{p/2,q/2}}+ \norm{\tilde Y'}_{B^\alpha_{p,q}}\norm{\tilde X}_{B^\alpha_{p,q}}\right)\norm{\Delta X}_{B^\alpha_{p,q}}\\
&+\norm{\Delta Y'}_{B^\alpha_{p,q}}\left(\norm{\bbX}_{B^{2\alpha}_{p/2,q/2}} + \norm{X}_{B^\alpha_{p,q}}^2\right) \\
&+\norm{\tilde Y'}_{B^\alpha_{p,q}}\left(\norm{\Delta\bbX}_{B^{2\alpha}_{p/2,q/2}} + \norm{\Delta X}_{B^\alpha_{p,q}}(\norm{X}_{B^{\alpha}_{p,q}}+\norm{\tilde X}_{B^\alpha_{p,q}})\right)\,,
\end{align*}
where the constant in $\lesssim$ might depend on the parameters $r,\alpha, p,q$.
\end{proposition}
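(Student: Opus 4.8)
The plan is to reduce the statement to the variation-setting stability estimate of Lemma~\ref{lem:integral-bound} and then push it through the nested-norm machinery of Section~\ref{sec:NestedNormsRP}. First I would fix the variation framework: since $\alpha>\frac13$ and $\frac1\alpha<p$, one may choose $r\in[2,3)$ with $\frac1\alpha<r\le p$, so that $\theta:=3/r>1$. By Lemma~\ref{lem:BesovRPAreVariationRP} and Lemma~\ref{lem:BesovControlledRPAreVariationControlledRP}, $\bfX,\tilde\bfX$ are continuous $r$-rough paths, $\bfY,\tilde\bfY$ are $\bfX$- resp.\ $\tilde\bfX$-controlled $r$-rough paths, and all the variation norms $V^rX$, $V^{r/2}\bbX$, $V^r(\Delta X)$, $V^{r/2}(\Delta\bbX)$, $V^rY'$, $V^{r/2}R^{\bfY,\bfX}$ and their tilde-counterparts are finite. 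In particular Lemma~\ref{lem:integral-bound} applies and yields the existence of $Z=\calI\Xi$ and $\tilde Z=\calI\tilde\Xi$ as continuous $r$-rough paths (so $\bfZ=(Z,Y)$, $\tilde\bfZ=(\tilde Z,\tilde Y)$ are controlled rough paths), which settles the existence claim.

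The heart of the argument is the \emph{pointwise} stability bound. Running the proof of Lemma~\ref{lem:integral-bound} with both $\bfY,\bfX$ and $\tilde\bfY,\tilde\bfX$ present and with the lemma's internal parameter $\alpha$ equal to $1$, one works with the difference germ $\Xi_{s,t}=Y_sX_{s,t}+Y'_s\bbX_{s,t}-(\tilde Y_s\tilde X_{s,t}+\tilde Y'_s\tilde\bbX_{s,t})$, whose sewing is $\Delta Z_{s,t}$, and one identifies $\Delta Z_{s,t}-\Delta\Xi_{s,t}=\calI(\Xi)_{s,t}-\Xi_{s,t}$. Then \eqref{ineq:stabilityRoughIntegral} (together with continuity of the paths, which lets half-open and closed intervals be interchanged) gives
\begin{align*}
\abs{\Delta Z_{s,t}-\Delta\Xi_{s,t}}
&\lesssim
V^r(\Delta Y')_{[s,t]}\,V^{r/2}\bbX_{[s,t]}
+V^r(\tilde Y')_{[s,t]}\,V^{r/2}(\Delta\bbX)_{[s,t]}\\
&\quad
+V^{r/2}(\Delta R^{\bfY,\bfX})_{[s,t]}\,V^rX_{[s,t]}
+V^{r/2}(R^{\tilde\bfY,\tilde\bfX})_{[s,t]}\,V^r(\Delta X)_{[s,t]}.
\end{align*}

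Finally I would take $\norm{\cdot}_{B^{3\alpha}_{p/3,q/3}}$ of both sides. Using $3\alpha=2\alpha+\alpha$, $1/(p/3)=1/(p/2)+1/p$ and $1/(q/3)=1/(q/2)+1/q$, the multiplicative (Hölder) inequality $\norm{AB}_{B^{3\alpha}_{p/3,q/3}}\le\norm{A}_{B^{2\alpha}_{p/2,q/2}}\norm{B}_{B^\alpha_{p,q}}$ for nonnegative two-parameter functions --- two applications of Hölder's inequality, in $s$ inside $\Omega_p$ and in $t$ inside the outer $L^q(\dif t/t)$ --- splits each of the four products. For the factors $V^rX,V^r(\Delta X),V^r(\Delta Y')$ I apply Theorem~\ref{theo:VrBesovEstimate} (legitimate since $\frac1\alpha<r\le p$); for $V^{r/2}\bbX$, $V^{r/2}(\Delta\bbX)$, $V^{r/2}(R^{\tilde\bfY,\tilde\bfX})$ I apply \eqref{ineq:VrBesovEmbeddingX}, \eqref{ineq:VrBesovEmbeddingDeltaX}, \eqref{ineq:VrBesovEmbeddingR} of Corollary~\ref{cor:Vr_Besov_Embedding_Rough_Paths} (the last with $\tilde\bfY,\tilde\bfX$ in place of $\bfY,\bfX$); and for $V^{r/2}(\Delta R^{\bfY,\bfX})$ I apply Proposition~\ref{prop:Vr-Besov-Embedding-RYX}. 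The first product then contributes $\norm{\Delta Y'}_{B^\alpha_{p,q}}(\norm{\bbX}_{B^{2\alpha}_{p/2,q/2}}+\norm{X}_{B^\alpha_{p,q}}^2)$, the second $\norm{\tilde Y'}_{B^\alpha_{p,q}}(\norm{\Delta\bbX}_{B^{2\alpha}_{p/2,q/2}}+\norm{\Delta X}_{B^\alpha_{p,q}}(\norm{X}_{B^\alpha_{p,q}}+\norm{\tilde X}_{B^\alpha_{p,q}}))$, the third $(\norm{\Delta R^{\bfY,\bfX}}_{B^{2\alpha}_{p/2,q/2}}+\norm{\Delta Y'}_{B^\alpha_{p,q}}\norm{X}_{B^\alpha_{p,q}}+\norm{\tilde Y'}_{B^\alpha_{p,q}}\norm{\Delta X}_{B^\alpha_{p,q}})\norm{X}_{B^\alpha_{p,q}}$, and the fourth $(\norm{R^{\tilde\bfY,\tilde\bfX}}_{B^{2\alpha}_{p/2,q/2}}+\norm{\tilde Y'}_{B^\alpha_{p,q}}\norm{\tilde X}_{B^\alpha_{p,q}})\norm{\Delta X}_{B^\alpha_{p,q}}$ --- which together are exactly the four lines on the right-hand side of the claimed estimate.

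The genuinely delicate point is the bookkeeping around the pointwise bound: correctly identifying $\Delta Z-\Delta\Xi$ with $\calI(\Xi)-\Xi$ for the \emph{difference} germ, so that \eqref{ineq:stabilityRoughIntegral} --- derived inside the proof of Lemma~\ref{lem:integral-bound} --- applies verbatim and produces no supremum-norm terms (this is why the target has none). One should also record that each factor $(s,t)\mapsto V^r\chi_{[s,t]}$ is monotone and subadditive (Remark~\ref{rem:VariationIsSubadditive}), so that the cited embeddings apply and all Besov norms in play are finite. After that, the proof is a mechanical assembly of Theorem~\ref{theo:VrBesovEstimate}, Corollary~\ref{cor:Vr_Besov_Embedding_Rough_Paths}, Proposition~\ref{prop:Vr-Besov-Embedding-RYX} and the Besov Hölder inequality, with no further analytic input.
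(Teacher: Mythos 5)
Your proposal is correct and follows essentially the same route as the paper: choose $r\in[2,3)$ with $\frac1\alpha<r\le p$, invoke Lemma~\ref{lem:BesovRPAreVariationRP}/\ref{lem:BesovControlledRPAreVariationControlledRP} and Lemma~\ref{lem:integral-bound} for existence, apply the Besov norm to the pointwise stability bound \eqref{ineq:stabilityRoughIntegral} for the difference germ, split the products by H\"older, and conclude via Theorem~\ref{theo:VrBesovEstimate}, Proposition~\ref{prop:Vr-Besov-Embedding-RYX} and Corollary~\ref{cor:Vr_Besov_Embedding_Rough_Paths}. The only difference is that you spell out the bookkeeping (identifying $\Delta Z-\Delta\Xi$ with $\calI\Xi-\Xi$ and which embedding handles which factor) that the paper leaves implicit.
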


\begin{proof}
We choose $r\in[2,3)$, such that $\frac 1\alpha<r\le p$. Then $\bfX,\tilde\bfX$ are $r$-rough paths with controlled $r$-rough paths $\bfY,\tilde\bfY$. Thus, the existence of $Z,\tilde Z$ immediately follows from Lemma \ref{lem:integral-bound}. Applying the Besov norm $\norm{\cdot}_{B^{3\alpha}_{p/3,q/3}}$ to \eqref{ineq:stabilityRoughIntegral} gives
\begin{align*}
\norm{\Delta Z-\Delta\Xi}_{B^{3\alpha}_{p/3,q/3}} \lesssim& \norm{V^{r/2}\Delta R^{\bfY,\bfX} V^r X}_{B^{3\alpha}_{p/3,q/3}} + \norm{V^{r/2}R^{\tilde\bfY,\tilde\bfX} V^r\Delta X}_{B^{3\alpha}_{p/3,q/3}}\\
&+\norm{V^r\Delta Y' V^{r/2} \bbX}_{B^{3\alpha}_{p/3,q/3}} + \norm{V^r\tilde Y'V^{r/2}\Delta\bbX}_{B^{3\alpha}_{p/3,q/3}}\\
\lesssim & \norm{V^{r/2}\Delta R^{\bfY,\bfX}}_{B^{2\alpha}_{p/2,q/2}} \norm{V^r X}_{B^{\alpha}_{p,q}} + \norm{V^{r/2}R^{\tilde\bfY,\tilde\bfX}}_{B^{2\alpha}_{p/2,q/2}} \norm{V^r\Delta X}_{B^{\alpha}_{p,q}}\\
&+\norm{V^r\Delta Y'}_{B^\alpha_{p,q}} \norm{V^{r/2} \bbX}_{B^{2\alpha}_{p/2,q/2}} + \norm{V^r\tilde Y'}_{B^\alpha_{p,q}}\norm{V^{r/2}\Delta\bbX}_{B^{2\alpha}_{p/2,q/2}}\,,
\end{align*}
where we used Hölder's inequality for Besov norms in the last line. Proposition \ref{prop:Vr-Besov-Embedding-RYX} and Corollary \ref{cor:Vr_Besov_Embedding_Rough_Paths} now immediately give the claim.
\end{proof}

\subsection{Young ODEs}

We consider the Young equation
\begin{equation}\label{eq:YoungODE2}
	dY_t = \phi(Y_t) dX_t
\end{equation}
with $X\in B^\alpha_{p,q}, \phi\in C^{1,\beta}_b$ for $\alpha>\frac 12,\frac 1\alpha <1+\beta, 1\le p\le\infty$ with $\frac 1\alpha <p$ and $0<q\le \infty$ and show that in this regime, one can recover existence, uniqueness, and Lipschitz estimates of the solution in this case (see \cite[Theorem 4.2 and Theorem 4.3]{FS22}) from our variation analysis Theorem \ref{thm:Lipschitz-stability}. As in the previous sections, we fix an $r\in[1,2)$ such that
\begin{align*}
\frac 1\alpha <r\le p \\
r<1+\beta\,.
\end{align*}
By Corollary \ref{cor:EmbeddingVrInBesov}, we get $V^r X\lesssim \norm{X}_{B^\alpha_{p,q}}$, so the (local) existence and uniqueness of $Y$ immediately follows from Proposition \ref{prop:Picard-contraction}. We also quickly recall our Lipschitz estimate: By choosing $\Phi_0,\Phi_\beta,\Phi_1,\Phi_{1,\beta} > 0$ such that \eqref{eq:C^0,alpha-bounds}, \eqref{eq:C^1,alpha-bounds} hold and for $V^r X<\epsilon$ for small enough $\epsilon > 0$, Theorem \ref{thm:Lipschitz-stability} gives
\begin{equation}\label{eq:Young-loc-Lip}
V^{r}\Delta Y_I \leq \gamma V^{r}X_I + 2\Phi_{0} V^{r}\Delta X_I
\end{equation}
for all intervals $I\subset[0,T]$, with
\begin{align*}
\gamma \lesssim &
((\Csew+1)\Phi_{1} + \Csew 2^{\alpha+1} \frac{\Phi_{1,\alpha}}{1+\alpha} \Phi_0^{\alpha} \epsilon^{\alpha}) \Phi_{0} V^r \Delta X_{[0,T]}  +
(\Csew 2^{\alpha+1} \frac{\Phi_{1,\alpha}}{1+\alpha} \Phi_0^{\alpha} \epsilon^{\alpha} + \Phi_{1}) \abs{y_{0}-\tilde{y}_{0}}
\\&\quad+ \Csew 2^\alpha \Phi_0^\alpha \epsilon^\alpha \norm{\Delta\phi}_{C^{\alpha}}
+ \norm{\Delta\phi}_{\sup}\,,
\end{align*}

\noindent As before, Theorem \ref{theo:VrBesovEstimate} allows us to easily recover the Besov version of these results:

\begin{theorem}\label{thm:LipschitzBesovEstimateYoung}
Let  $\alpha >\frac 12, \beta > 0$ with $\frac 1\alpha < 1+\beta$ and $1 \le p\le \infty$ with $\frac 1\alpha < p$, as well as $0<q\le\infty$. Let $X\in B^\alpha_{p,q}$ and $\phi\in C^{1,\beta}_b$ with bounds $\Phi_0,\Phi_\beta,\Phi_1,\Phi_{1,\beta}$ according to \eqref{eq:C^0,alpha-bounds}, \eqref{eq:C^1,alpha-bounds}. Then, possibly for a shorter time $T>0$, the Young ODE \eqref{eq:YoungODE2} has a unique, local solution $Y\in B^\alpha_{p,q}$. Further, this solution fulfills
\begin{equation*}
\norm{\Delta Y}_{B^{\alpha}_{p,q}} \lesssim \tilde\gamma\norm{X}_{B^{\alpha}_{p,q}} + 2\phi_0\norm{\Delta X}_{B^{\alpha}_{p,q}}\,,
\end{equation*}
where
\begin{align*}
\tilde \gamma \lesssim &
((\Csew+1)\Phi_{1} + \Csew 2^{\alpha+1} \frac{\Phi_{1,\alpha}}{1+\alpha} \Phi_0^{\alpha} \epsilon^{\alpha}) \Phi_{0} \norm{\Delta X}_{B^\alpha_{p,q}}  +
(\Csew 2^{\alpha+1} \frac{\Phi_{1,\alpha}}{1+\alpha} \Phi_0^{\alpha} \epsilon^{\alpha} + \Phi_{1}) \abs{y_{0}-\tilde{y}_{0}}
\\&\quad+ \Csew 2^\alpha \Phi_0^\alpha \epsilon^\alpha \norm{\Delta\phi}_{C^{\alpha}}
+ \norm{\Delta\phi}_{\sup}\,,
\end{align*}
and $\Csew = 2^{\frac{1+\beta} r -1}\zeta(\frac{1+\beta} r)$.
\end{theorem}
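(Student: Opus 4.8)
The plan is to reduce the statement to its $V^{r}$-counterpart, Theorem~\ref{thm:Lipschitz-stability}, and then feed the resulting interval-wise estimate into the nested-norm bound of Theorem~\ref{theo:VrBesovEstimate}.

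First I would fix $r \in [1,2)$ with $\tfrac 1\alpha < r \le p$ and $r < 1+\beta$; such an $r$ exists because $\alpha > \tfrac 12$ forces $\tfrac 1\alpha < 2$, while $\tfrac 1\alpha < p$ and $\tfrac 1\alpha < 1+\beta$ leave room below $\min(2,p,1+\beta)$ and above $\max(1,\tfrac1\alpha)$. By Corollary~\ref{cor:EmbeddingVrInBesov} applied to $X$ and $\tilde X$ (same hypotheses on the comparison data $\tilde X,\tilde\phi,\tilde y_0$), both have continuous representatives with $V^{r}X \lesssim \norm{X}_{B^\alpha_{p,q}}$, $V^{r}\tilde X \lesssim \norm{\tilde X}_{B^\alpha_{p,q}}$, so in particular $X,\tilde X\in V^{r}$. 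Since a continuous path of finite $r$-variation satisfies $V^{r}X_{[0,T]}\to 0$ as $T\to 0$, after shrinking $T$ we may assume $V^{r}X_{[0,T]},V^{r}\tilde X_{[0,T]}<\epsilon$ for the $\epsilon$ attached to the smallness conditions \eqref{eq:VpX-small1}, \eqref{eq:Young-Picard-interval2} for the bounds $\Phi_0,\Phi_\beta,\Phi_1,\Phi_{1,\beta}$. Local existence and uniqueness of $Y$ (and $\tilde Y$) then follow from Lemma~\ref{lem:solution-space} and Proposition~\ref{prop:Picard-contraction}, uniqueness holding among all solutions since Lemma~\ref{lem:solution-space} places every solution in $\calY(X)$; membership $Y\in B^\alpha_{p,q}$ will drop out of the final estimate.

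For the Lipschitz bound, Theorem~\ref{thm:Lipschitz-stability} (with its Hölder exponent $\alpha$ taken to be $\beta$, so $\theta=(1+\beta)/r$ and $\Csew=2^{\theta-1}\zeta(\theta)$) gives a single constant $\gamma$, independent of the interval, with $V^{r}\Delta Y_{I} \le \gamma\,V^{r}X_{I} + 2\Phi_0\,V^{r}\Delta X_{I}$ for every $I\subset[0,T]$. The two-parameter maps $(s,t)\mapsto V^{r}\Delta Y_{s,t},\ V^{r}X_{s,t},\ V^{r}\Delta X_{s,t}$ are continuous and subadditive by Remark~\ref{rem:VariationIsSubadditive} (using that $\abs{\Delta Y_{\cdot,\cdot}},\abs{X_{\cdot,\cdot}},\abs{\Delta X_{\cdot,\cdot}}$ satisfy \eqref{cond:Subadditivity}), so the relevant $B^\alpha_{p,q}$ quasinorms are well defined. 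Taking the $B^\alpha_{p,q}$ quasinorm of the interval-wise inequality — it is a $\min(1,p,q)$-norm, so $\gamma$ factors out and the two summands combine up to a multiplicative constant — yields
\[
\norm{V^{r}\Delta Y}_{B^\alpha_{p,q}} \lesssim \gamma\,\norm{V^{r}X}_{B^\alpha_{p,q}} + \Phi_0\,\norm{V^{r}\Delta X}_{B^\alpha_{p,q}}.
\]
Then Theorem~\ref{theo:VrBesovEstimate}, applied with $\chi=\abs{X_{\cdot,\cdot}}$ and with $\chi=\abs{\Delta X_{\cdot,\cdot}}$ (the hypothesis $\tfrac1\alpha<r\le p$ is in force), bounds $\norm{V^{r}X}_{B^\alpha_{p,q}}\lesssim\norm{X}_{B^\alpha_{p,q}}$ and $\norm{V^{r}\Delta X}_{B^\alpha_{p,q}}\lesssim\norm{\Delta X}_{B^\alpha_{p,q}}$, while $\norm{\Delta Y}_{B^\alpha_{p,q}}\le\norm{V^{r}\Delta Y}_{B^\alpha_{p,q}}$ because $\abs{\Delta Y_{s,t}}\le V^{r}\Delta Y_{s,t}$. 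Finally I would use Corollary~\ref{cor:EmbeddingVrInBesov} once more to replace the factor $V^{r}\Delta X_{[0,T]}$ occurring inside $\gamma$ by $\norm{\Delta X}_{B^\alpha_{p,q}}$, turning $\gamma$ into $\tilde\gamma$; assembling these estimates gives the asserted inequality and, in particular, $\norm{Y}_{B^\alpha_{p,q}}<\infty$.

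There is essentially no analytic obstacle: the difficulty was front-loaded into Theorem~\ref{thm:Lipschitz-stability}, whose whole purpose is that $\gamma$ is an \emph{interval-independent} constant and that the interval-dependent side of the estimate is the product-free sum $\gamma V^{r}X_{I}+2\Phi_0 V^{r}\Delta X_{I}$, to which Theorem~\ref{theo:VrBesovEstimate} applies term by term. The only points needing care are (i) choosing $r$ compatible with all three constraints at once, (ii) the vanishing of $V^{r}X_{[0,T]}$ as $T\to 0$ so the smallness hypotheses of Section~\ref{sec:LipschitzYoung} can be met after shrinking the horizon, and (iii) bookkeeping the constants so the final $\tilde\gamma$ matches the displayed expression; none of these requires a new idea.
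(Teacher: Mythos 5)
Your proposal is correct and follows essentially the same route as the paper: reduce to the interval-wise estimate of Theorem~\ref{thm:Lipschitz-stability} after shrinking $T$ to meet the smallness conditions, then apply the $B^\alpha_{p,q}$ quasinorm and invoke Theorem~\ref{theo:VrBesovEstimate} together with Corollary~\ref{cor:EmbeddingVrInBesov} to pass from $\gamma$ to $\tilde\gamma$. The only cosmetic difference is the justification of smallness: the paper deduces it from the H\"older continuity of $t\mapsto V^rX_{0,t}$ implied by $\norm{V^rX}_{B^\alpha_{p,q}}<\infty$, whereas you use the standard fact that $V^rX_{[0,T]}\to 0$ as $T\to 0$ for continuous finite $r$-variation paths; both are valid.
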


\begin{proof}
By Theorem \ref{theo:VrBesovEstimate}, $\norm{V^r X}_{B^\alpha_{p,q}}\lesssim \norm{X}_{B^\alpha_{p,q}}<\infty$, which especially shows that $t\mapsto V^r X_{0,t}$ is $\alpha-\frac 1p$ Hölder continuous. This implies that for small enough $T>0$, we have $V^r X <\epsilon$ for $\epsilon > 0$ fulfilling \eqref{eq:VpX-small1} and \eqref{eq:Young-Picard-interval2}. Thus, Theorem \ref{thm:Lipschitz-stability} gives us \eqref{eq:Young-loc-Lip}. Using Theorem \ref{theo:VrBesovEstimate} (or more precisely Theorem \ref{theo:IntroVrBesovEstimate}), we conclude
\begin{align*}
\norm{\Delta Y}_{B^\alpha_{p,q}} &\le \norm{V^r\Delta Y}_{B^\alpha_{p,q}} \\
&\lesssim \gamma\norm{V^r X}_{B^{\alpha}_{p,q}} + 2\phi_0\norm{V^r\Delta X}_{B^\alpha_{p,q}} \\ 
&\lesssim \gamma\norm{X}_{B^{\alpha}_{p,q}} + 2\phi_0\norm{\Delta X}_{B^\alpha_{p,q}}\,. 
\end{align*}
Since $V^r\Delta X \lesssim \norm{\Delta X}_{B^\alpha_{p,q}}$ by the Besov embedding given by Corollary \ref{cor:EmbeddingVrInBesov}, we have
\begin{equation*}
\gamma\lesssim\tilde\gamma\,,
\end{equation*}
showing the claim.
\end{proof}

\subsection{RDEs}

We consider now the RDE
\begin{equation}
Y_t = y_0 +\int_0^t\phi(\bfY_r)\dif \bfX_r \label{eq:RDE2}\,,
\end{equation}
where $\bfX$ is an Besov rough path with $\bfX\in B^\alpha_{p,q}$ for $\alpha>\frac 13$, and $2\le p<\infty, \frac 12\le q\le\infty$ such that $\frac 1\alpha <p$. $\phi \in C^{2,1}_b$. Similar to the Young case, we want to recover existence, uniqueness and Lipschitz estimates in the Besov case (see \cite[Theorem 5.6, Theorem 5.7]{FS22}) from the variation case shown in Theorem \ref{thm:LipschitzDependenceRDE}. We choose an $r\in[2,3)$ such that
\begin{equation*}
\frac 1\alpha <r\le p \,.
\end{equation*}
Then Lemma \ref{lem:BesovRPAreVariationRP} shows that $\bfX$ is an $r$-rough path and thus, there exists a (local) solution $\bfY$ to \eqref{eq:RDE2} as a controlled $r$-rough path. We further recall the Lipschitz estimate from Theorem \ref{thm:LipschitzDependenceRDE}: For $\phi$ being bound by $\Phi_0,\Phi_{0,1}, \Phi_1,\Phi_{1,1}, \Phi_2, \Phi_{2,1}, \Phi_{DD}$ according to \eqref{ineq:boundsPhiRP} and for small enough $V^r X, V^{r/2}\bbX$, we have
\begin{align}
V^r(\Delta Y)_I &\lesssim  (V^rX_I + V^{r/2}\bbX_{I} + V^{r/2}\tilde\bbX_{I}) \gamma + V^r(\Delta X)_I + V^{r/2}(\Delta \bbX)_{I} \label{ineq:LipschitzRDE2}\\
V^{r/2}(\Delta R^{\bfY,\bfX})_{I}&\lesssim (V^rX_I^2 + V^rX_I \cdot V^r\tilde X_I +V^{r/2}\bbX_{I} + V^{r/2}\tilde \bbX_{I})\gamma \label{ineq:LipschitzRDE3-7}\\
&\qquad + (V^r X_I+V^r\tilde X_I +V^{r/2}\bbX_I + V^{r/2}\tilde\bbX_{I})V^r(\Delta X)_I + V^{r/2}(\Delta\bbX)_{I}\nonumber\,,
\end{align}
where
\[
	\gamma = \abs{\Delta y_0} + V^r{\Delta X}_{[0,T]} + V^{r/2}{\Delta\bbX}_{[0,T]}\,.
\]
As in the Young case, this can be translated into the Besov case with Proposition \ref{prop:Vr-Besov-Embedding-RYX} and Corollary \ref{cor:Vr_Besov_Embedding_Rough_Paths}. To do so, we recall the Besov norm of a rough path, see \cite[equation (5.3)]{FS22} as reference:
\[
	\nnorm{\bfX}_{B^\alpha_{p,q}} = \norm{X}_{B^\alpha_{p,q}} + \norm{\bbX}_{B^{2^\alpha}_{p/2,q/2}}^{\frac 12}\,.
\]

\begin{theorem}\label{thm:LipschitzBesovEstimateRDE}
Let $\bfX,\tilde\bfX\in B^\alpha_{p,q}$ be Besov rough paths for $\alpha >\frac 13$, $\frac 1\alpha<p$ and $2\le p\le\infty, \frac 12\le q\le\infty$. Let $\phi\in C^{2,1}_b$ be bound by $\Phi_0,\Phi_{0,1}, \Phi_1,\Phi_{1,1}, \Phi_2, \Phi_{2,1}, \Phi_{DD}$ according to \eqref{ineq:boundsPhiRP}. Then for a possibly smaller $T>0$ the RDE \eqref{eq:RDE2} has a unique solution $\bfY$ which is a Besov-controlled rough path with parameters $\alpha,p,q$ and we have the Lipschitz-estimate in the Besov scale
\begin{align}
\norm{\Delta Y}_{B^\alpha_{p,q}} &\lesssim (\nnorm{\bfX}_{B^\alpha_{p,q}} + \nnorm{\tilde\bfX}_{B^\alpha_{p,q}})\abs{\Delta y_0} \nonumber\\
&\qquad + (1+ \norm{X}_{B^\alpha_{p,q}} + \norm{\tilde X}_{B^\alpha_{p,q}}) \norm{\Delta X}_{B^\alpha_{p,q}} + \norm{\Delta \bbX}_{B^{2\alpha}_{p/2,q/2}}\label{ineq:LipschitzRDE3-SectionBesov}\\
\norm{\Delta R^{\bfY,\bfX}}_{B^{2\alpha}_{p/2,q/2}} &\lesssim (\nnorm{\bfX}_{B^\alpha_{p,q}}^2 + \nnorm{\tilde \bfX}_{B^\alpha_{p,q}}^2 + \norm{X}_{B^\alpha_{p,q}}\norm{\tilde X}_{B^\alpha_{p,q}}) \abs{\Delta y_0} \nonumber\\
&\qquad + (\nnorm{\bfX}_{B^\alpha_{p,q}} + \nnorm{\tilde\bfX}_{B^\alpha_{p,q}})\norm{\Delta X}_{B^\alpha_{p,q}} + \norm{\Delta \bbX}_{B^{2\alpha}_{p/2,q/2}}\,.\label{ineq:LipschitzRDE4-SectionBesov}
\end{align}
\end{theorem}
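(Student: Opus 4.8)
The plan is to transfer the variation-norm Lipschitz estimates of Theorem \ref{thm:LipschitzDependenceRDE} into the Besov scale by applying the Besov norms $\norm{\cdot}_{B^\alpha_{p,q}}$ and $\norm{\cdot}_{B^{2\alpha}_{p/2,q/2}}$ to both sides of \eqref{ineq:LipschitzRDE2} and \eqref{ineq:LipschitzRDE3-7}, and then using the nested-norm embeddings collected in Corollary \ref{cor:Vr_Besov_Embedding_Rough_Paths}, Proposition \ref{prop:Vr-Besov-Embedding-RYX}, and Proposition \ref{prop:VrBesovEmbeddingDeltaX2}. First I would fix $r\in[2,3)$ with $\tfrac1\alpha<r\le p$ as indicated in the run-up to the statement; by Lemma \ref{lem:BesovRPAreVariationRP} and Lemma \ref{lem:BesovControlledRPAreVariationControlledRP} the Besov rough paths $\bfX,\tilde\bfX$ are $r$-rough paths, and since $\norm{X}_{B^\alpha_{p,q}}$ controls $V^rX$ (hence gives Hölder continuity of $t\mapsto V^rX_{0,t}$ by Corollary \ref{cor:EmbeddingVrInBesov} together with Proposition \ref{prop:EmbeddingVrInBesov}), for $T$ small enough the smallness hypotheses $V^rX,V^{r/2}\bbX<\epsilon$ of Theorem \ref{thm:RDE-solution-local-uniqueness} and Theorem \ref{thm:LipschitzDependenceRDE} are met; this yields existence, uniqueness, and the variation Lipschitz bounds. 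Lemma \ref{lem:BesovControlledRPAreVariationControlledRP} also guarantees the solution is a Besov-controlled rough path with parameters $\alpha,p,q$.

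For \eqref{ineq:LipschitzRDE3-SectionBesov}, I would apply $\norm{\cdot}_{B^\alpha_{p,q}}$ to \eqref{ineq:LipschitzRDE2} on the full interval and use $\norm{Y-\tilde Y}_{B^\alpha_{p,q}}\le\norm{V^r\Delta Y}_{B^\alpha_{p,q}}$. On the right-hand side, the term $(V^rX_I + V^{r/2}\bbX_I + V^{r/2}\tilde\bbX_I)\gamma$ must be handled with Hölder's inequality for Besov norms (the factored-norm inequality $\norm{AB}_{B^\alpha_{p,q}}\le\norm{A}_{B^\alpha_{p,q}}\norm{B}_{\sup}$, since $\gamma$ is a \emph{constant} in $(s,t)$, so it simply pulls out), giving $\gamma\,(\norm{V^rX}_{B^\alpha_{p,q}}+\norm{V^{r/2}\bbX}_{B^\alpha_{p,q}}+\norm{V^{r/2}\tilde\bbX}_{B^\alpha_{p,q}})$. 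Here $\norm{V^rX}_{B^\alpha_{p,q}}\lesssim\norm{X}_{B^\alpha_{p,q}}$ by Theorem \ref{theo:VrBesovEstimate}, and for the $V^{r/2}\bbX$ terms I would \emph{not} use Corollary \ref{cor:Vr_Besov_Embedding_Rough_Paths} directly (wrong scaling, $B^{2\alpha}_{p/2,q/2}$ versus $B^\alpha_{p,q}$): in the case $\tilde X=0$, Proposition \ref{prop:VrBesovEmbeddingDeltaX2} is exactly the tool, giving $\norm{V^{r/2}\bbX}_{B^\alpha_{p,q}}\lesssim\norm{X}_{B^\alpha_{p,q}}^2+\norm{\bbX}_{B^{2\alpha}_{p/2,q/2}}$, which after a square root and Young's inequality is dominated by $\nnorm{\bfX}_{B^\alpha_{p,q}}^2$ — but one must be careful about the scaling again; in fact since $\gamma$ multiplies it and $\gamma\lesssim\abs{\Delta y_0}+\ldots$, and the target has $(\nnorm{\bfX}+\nnorm{\tilde\bfX})\abs{\Delta y_0}$ only to the first power, the cleanest route is to observe that under the smallness of $T$ we have $\norm{V^{r/2}\bbX}_{B^\alpha_{p,q}}\lesssim\nnorm{\bfX}_{B^\alpha_{p,q}}$ (absorbing the square since the norms are $<\epsilon$), so the product with $\gamma$ is acceptable. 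The remaining terms $\norm{V^r\Delta X}_{B^\alpha_{p,q}}\lesssim\norm{\Delta X}_{B^\alpha_{p,q}}$ (Theorem \ref{theo:VrBesovEstimate}) and $\norm{V^{r/2}\Delta\bbX}_{B^\alpha_{p,q}}$ — again wrong scaling if taken naively — handled by Proposition \ref{prop:VrBesovEmbeddingDeltaX2}, contributing $(\norm{X}_{B^\alpha_{p,q}}+\norm{\tilde X}_{B^\alpha_{p,q}})\norm{\Delta X}_{B^\alpha_{p,q}}+\norm{\Delta\bbX}_{B^{2\alpha}_{p/2,q/2}}$, which is exactly the shape on the right of \eqref{ineq:LipschitzRDE3-SectionBesov} (after absorbing $1$ into the coefficient of $\norm{\Delta X}$). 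Finally $\gamma$ itself is estimated in the target form by noting $V^r\Delta X_{[0,T]}\lesssim\norm{\Delta X}_{B^\alpha_{p,q}}$ and $V^{r/2}\Delta\bbX_{[0,T]}\lesssim$ (via Corollary \ref{cor:Vr_Besov_Embedding_Rough_Paths}) $\norm{\Delta X}_{B^\alpha_{p,q}}(\norm{X}_{B^\alpha_{p,q}}+\norm{\tilde X}_{B^\alpha_{p,q}})+\norm{\Delta\bbX}_{B^{2\alpha}_{p/2,q/2}}\lesssim\norm{\Delta X}_{B^\alpha_{p,q}}+\norm{\Delta\bbX}_{B^{2\alpha}_{p/2,q/2}}$ under smallness.

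For \eqref{ineq:LipschitzRDE4-SectionBesov} the same scheme applies to \eqref{ineq:LipschitzRDE3-7}: apply $\norm{\cdot}_{B^{2\alpha}_{p/2,q/2}}$, bound $\norm{R^{\bfY,\bfX}-R^{\tilde\bfY,\tilde\bfX}}_{B^{2\alpha}_{p/2,q/2}}\le\norm{V^{r/2}\Delta R^{\bfY,\bfX}}_{B^{2\alpha}_{p/2,q/2}}$, pull the constant $\gamma$ out of the products, and convert $\norm{V^rX_I^2}_{B^{2\alpha}_{p/2,q/2}}$, $\norm{V^rX_I\cdot V^r\tilde X_I}_{B^{2\alpha}_{p/2,q/2}}$, $\norm{V^{r/2}\bbX}_{B^{2\alpha}_{p/2,q/2}}$ via Hölder for Besov norms (now $\tfrac1{p/2}=\tfrac1p+\tfrac1p$, $\tfrac1{q/2}=\tfrac1q+\tfrac1q$) into products of $\norm{\cdot}_{B^\alpha_{p,q}}$-norms, then Theorem \ref{theo:VrBesovEstimate} and Corollary \ref{cor:Vr_Besov_Embedding_Rough_Paths} to land on $\nnorm{\bfX}^2+\nnorm{\tilde\bfX}^2+\norm{X}\norm{\tilde X}$ times $\abs{\Delta y_0}$, and $(\nnorm{\bfX}+\nnorm{\tilde\bfX})\norm{\Delta X}+\norm{\Delta\bbX}_{B^{2\alpha}_{p/2,q/2}}$ for the rest. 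I expect the main obstacle to be bookkeeping around the scaling indices: several right-hand side terms carry a $V^{r/2}$-object (of natural regularity $2\alpha$ at exponents $p/2,q/2$) sitting inside a $B^\alpha_{p,q}$ or being multiplied by a $V^r$-object of regularity $\alpha$, and one must consistently decide whether to invoke Corollary \ref{cor:Vr_Besov_Embedding_Rough_Paths} (correct $2\alpha$ scaling) or Proposition \ref{prop:VrBesovEmbeddingDeltaX2} (the ``down-scaled'' $\alpha$ version), and to use Hölder for Besov norms in the right pairing; getting the exponents to match and using the smallness of $T$ to absorb superfluous higher powers of $\nnorm{\bfX},\nnorm{\tilde\bfX}$ is the delicate part, but each individual step is supplied by a lemma already proved above.
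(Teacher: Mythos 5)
Your overall scheme is the same as the paper's: fix $r\in[2,3)$ with $\tfrac1\alpha<r\le p$, use Lemma~\ref{lem:BesovRPAreVariationRP} (and Corollary~\ref{cor:Vr_Besov_Embedding_Rough_Paths}) to view $\bfX,\tilde\bfX$ as $r$-rough paths with H\"older-continuous variation functions, shrink $T$ so that the smallness hypotheses of Theorems~\ref{thm:RDE-solution-local-uniqueness} and~\ref{thm:LipschitzDependenceRDE} hold, and then apply $\norm{\cdot}_{B^\alpha_{p,q}}$ resp.\ $\norm{\cdot}_{B^{2\alpha}_{p/2,q/2}}$ to \eqref{ineq:LipschitzRDE2}, \eqref{ineq:LipschitzRDE3-7} and invoke the nested-norm estimates. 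The genuine gap is in your treatment of the mis-scaled factors $V^{r/2}\bbX_I$, $V^{r/2}\tilde\bbX_I$ (as coefficients of $\gamma$ in the $\Delta Y$ estimate, and of $V^r(\Delta X)_I$ in the $\Delta R^{\bfY,\bfX}$ estimate). Bounding $\norm{V^{r/2}\bbX}_{B^\alpha_{p,q}}$ via Proposition~\ref{prop:VrBesovEmbeddingDeltaX2} with $\tilde X=0$ yields $\nnorm{\bfX}_{B^\alpha_{p,q}}^2$, and your proposed repair --- ``absorbing the square since the norms are $<\epsilon$'', resp.\ ``using the smallness of $T$ to absorb superfluous higher powers of $\nnorm{\bfX},\nnorm{\tilde\bfX}$'' --- conflates the two scales of norms: shrinking $T$ makes the \emph{variation} norms $V^rX,V^{r/2}\bbX$ smaller than $\epsilon$, but the hypotheses give no bound $\lesssim 1$ on the \emph{Besov} norms (and the implicit constants are not supposed to depend on them), so $\nnorm{\bfX}^2\lesssim\nnorm{\bfX}$ is not available. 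As written, your argument does not reach the first-power dependence $(\nnorm{\bfX}_{B^\alpha_{p,q}}+\nnorm{\tilde\bfX}_{B^\alpha_{p,q}})$ claimed in \eqref{ineq:LipschitzRDE3-SectionBesov}, nor in the $\norm{\Delta X}_{B^\alpha_{p,q}}$-coefficient of \eqref{ineq:LipschitzRDE4-SectionBesov}.

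The missing device, which the paper uses, is a pointwise interpolation \emph{before} taking Besov norms: after shrinking $T$ one has $V^{r/2}\bbX_{s,t}\le\epsilon$ for all $(s,t)$, hence $V^{r/2}\bbX_{s,t}\le\epsilon^{1/2}\bigl(V^{r/2}\bbX_{s,t}\bigr)^{1/2}$, and by the definition of the two-parameter Besov norms
\[
\norm{(V^{r/2}\bbX)^{1/2}}_{B^\alpha_{p,q}}=\norm{V^{r/2}\bbX}_{B^{2\alpha}_{p/2,q/2}}^{1/2}\lesssim\nnorm{\bfX}_{B^\alpha_{p,q}}
\]
by Corollary~\ref{cor:Vr_Besov_Embedding_Rough_Paths}. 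This restores the correct $\alpha$-scaling at the first power; it is needed both for the coefficient of $\gamma$ in the $\Delta Y$ estimate and, in the $\Delta R^{\bfY,\bfX}$ estimate, when H\"older's inequality splits $\norm{V^{r/2}\bbX\cdot V^r\Delta X}_{B^{2\alpha}_{p/2,q/2}}\le\norm{V^{r/2}\bbX}_{B^\alpha_{p,q}}\norm{V^r\Delta X}_{B^\alpha_{p,q}}$ (the quadratic coefficient is only acceptable in front of $\abs{\Delta y_0}$ in \eqref{ineq:LipschitzRDE4-SectionBesov}). With this substitution the remainder of your plan --- Theorem~\ref{theo:VrBesovEstimate} for $V^r\Delta X$, Proposition~\ref{prop:VrBesovEmbeddingDeltaX2} for $V^{r/2}\Delta\bbX$ measured in $B^\alpha_{p,q}$, and Lemma~\ref{lem:BesovRPAreVariationRP} for estimating $\gamma$ --- goes through as in the paper.
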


\begin{proof}
We quickly check the conditions of Theorem \ref{thm:RDE-solution-local-uniqueness} and Theorem \ref{thm:LipschitzDependenceRDE} regarding $\bfX,\tilde\bfX$. By Lemma \ref{lem:BesovRPAreVariationRP}, $\bfX,\tilde\bfX$ are $r$-rough paths and by Corollary \ref{cor:Vr_Besov_Embedding_Rough_Paths}, $\norm{V^r X}_{B^\alpha_{p,q}} < \infty$, \\$\norm{V^{r/2}\bbX}_{B^{2\alpha}_{p/2,q/2}}<\infty$. Thus, $V^r(X,\bbX)\in B^\alpha_{p,q}$ in the metric space $G_1$ from Example \ref{ex:GroupForX}. It especially follows that $V^r(X,\bbX)$ is $\alpha-\frac  1p$ Hölder continuous, implying that $V^rX$ is $\alpha-\frac 1p$ Hölder continuous and $V^{r/2} \bbX$ is $2(\alpha-\frac 1p)$ Hölder continuous. We conclude that $V^r X, V^{r/2}\bbX <\epsilon$ for any $\epsilon>0$ for small enough $T>0$, with the same argumentation giving one $V^r \tilde X, V^{r/2}\tilde\bbX <\epsilon$.

Thus, we can apply Theorem \ref{thm:RDE-solution-local-uniqueness} and Theorem \ref{thm:LipschitzDependenceRDE}, showing that unique solutions $\bfY,\tilde\bfY$ exist as controlled $r$-rough paths and giving us the Lipschitz estimates \eqref{ineq:LipschitzRDE2}, \eqref{ineq:LipschitzRDE3-7}. Applying the Besov norm to $\Delta Y$ gives
\begin{align*}
\norm{\Delta Y}_{B^\alpha_{p,q}} &\le \norm{V^r \Delta Y}_{B^\alpha_{p,q}} \\
&\lesssim (\norm{V^r X}_{B^\alpha_{p,q}} + \norm{V^{r/2} \bbX^\frac 12}_{B^\alpha_{p,q}} + \norm{V^{r/2} \tilde\bbX^\frac 12}_{B^\alpha_{p,q}})\gamma + \norm{V^r\Delta X}_{B^\alpha_{p,q}} + \norm{V^{r/2} \Delta \bbX}_{B^\alpha_{p,q}}\,,
\end{align*}
where we used $V^{r/2} \bbX \le \epsilon^{\frac 12} V^{r/2}\bbX^{\frac 12}$. We use $\norm{V^{r/2}\bbX^\frac 12}_{B^\alpha_{p,q}} = \norm{V^{r/2}\bbX}_{B^{2\alpha}_{p/2,q/2}}^{\frac 12}$ together with Theorem \ref{theo:VrBesovEstimate} and Corollary \ref{cor:Vr_Besov_Embedding_Rough_Paths} to get
\begin{align*}
\norm{V^r \Delta X}_{B^\alpha_{p,q}}&\lesssim \norm{\Delta X}_{B^\alpha_{p,q}}\\
\norm{V^r X}_{B^\alpha_{p,q}} + \norm{V^{r/2} \bbX^\frac 12}_{B^\alpha_{p,q}} + \norm{V^{r/2} \tilde\bbX^\frac 12}_{B^\alpha_{p,q}} & \lesssim \nnorm{\bfX}_{B^\alpha_{p,q}} + \nnorm{\bfX}_{B^\alpha_{p,q}}\,.
\end{align*}
This together with Proposition \ref{prop:VrBesovEmbeddingDeltaX2} gives:
\begin{align*}
\norm{\Delta Y}_{B^\alpha_{p,q}} &\lesssim (\nnorm{\bfX}_{B^\alpha_{p,q}} + \nnorm{\tilde\bfX}_{B^\alpha_{p,q}})\gamma \\
&\qquad + (1+ \norm{X}_{B^\alpha_{p,q}} + \norm{\tilde X}_{B^\alpha_{p,q}}) \norm{\Delta X}_{B^\alpha_{p,q}} + \norm{\Delta \bbX}_{B^{2\alpha}_{p/2,q/2}}\,.
\end{align*}
\eqref{ineq:LipschitzRDE3-SectionBesov} then follows from
\begin{equation}\label{ineq:EstimateGamma}
	\gamma \lesssim \abs{\Delta y_0} +\norm{\Delta X}_{B^\alpha_{p,q}} + \norm{\Delta\bbX}_{B^{2\alpha}_{p/2,q/2}} + (\norm{X}_{B^\alpha_{p,q}} + \norm{\tilde X}_{B^{\alpha}_{p,q}})\norm{\Delta X}_{B^\alpha_{p,q}}\,,
\end{equation}
where we used Lemma \ref{lem:BesovRPAreVariationRP}.

Let us move to proving \eqref{ineq:LipschitzRDE4-SectionBesov}. By putting the Besov norm $\norm\cdot_{B^{2\alpha}_{p/2,q/2}}$ on \eqref{ineq:LipschitzRDE3-7} and using Hölder's inequality, we get
\begin{align}
\begin{split}\label{ineq:BesovEstimateR}
\norm{\Delta R^{\bfY,\bfX}}_{B^{2\alpha}_{p/2,q/2}} &\le \norm{V^{r/2} \Delta R^{\bfY,\bfX}}_{B^{2\alpha}_{p/2,q/2}}\\
&\lesssim (\norm{V^r X}_{B^\alpha_{p,q}}^2 + \norm{V^r X}_{B^\alpha_{p,q}}\norm{V^r \tilde X}_{B^\alpha_{p,q}} + \norm{V^{r/2} \bbX}_{B^{2\alpha}_{p/2,q/2}} + \norm{V^{r/2} \tilde\bbX}_{B^{2\alpha}_{p/2,q/2}})\gamma\\
&\quad + (\norm{V^r X}_{B^\alpha_{p,q}} + \norm{V^r \tilde X}_{B^\alpha_{p,q}} + \norm{V^{r/2} \bbX}_{B^{2\alpha}_{p/2,q/2}}^{\frac 12} + \norm{V^{r/2} \tilde\bbX}_{B^{2\alpha}_{p/2,q/2}}^{\frac 12})\norm{V^r \Delta X}_{B^\alpha_{p,q}} \\
&\quad+\norm{V^{r/2}\Delta \bbX}_{B^{2\alpha}_{p/2,q/2}}\,,
\end{split}
\end{align}
where we again used $V^{r/2} \bbX\le \epsilon^{\frac 12} V^{r/2}\bbX^{\frac 12}$ and $V^{r/2} \tilde\bbX\le \epsilon^{\frac 12} V^{r/2}\tilde\bbX^{\frac 12}$ when necessary. Direct application of Corollary \ref{cor:Vr_Besov_Embedding_Rough_Paths} gives
\begin{align*}
\norm{V^r X}_{B^\alpha_{p,q}}^2 &+ \norm{V^r X}_{B^\alpha_{p,q}}\norm{V^r \tilde X}_{B^\alpha_{p,q}} + \norm{V^{r/2} \bbX}_{B^{2\alpha}_{p/2,q/2}} + \norm{V^{r/2} \tilde\bbX}_{B^{2\alpha}_{p/2,q/2}} \\
&\lesssim \norm{X}_{B^\alpha_{p,q}}^2 + \norm{\bbX}_{B^{2\alpha}_{p/2,q/2}} + \norm{\tilde X}_{B^\alpha_{p,q}}^2 + \norm{\tilde\bbX}_{B^{2\alpha}_{p/2,q/2}}  + \norm{X}_{B^\alpha_{p,q}} \norm{\tilde X}_{B^\alpha_{p,q}}\\
&\lesssim \nnorm{\bfX}_{B^\alpha_{p,q}}^2 + \nnorm{\tilde\bfX}_{B^\alpha_{p,q}}^2 + \norm{X}_{B^\alpha_{p,q}} \norm{\tilde X}_{B^\alpha_{p,q}}\,,
\end{align*}
as well as
\begin{align*}
(\norm{V^r X}_{B^\alpha_{p,q}} + \norm{V^r \tilde X}_{B^\alpha_{p,q}} &+ \norm{V^{r/2} \bbX}_{B^{2\alpha}_{p/2,q/2}}^{\frac 12} + \norm{V^{r/2} \tilde\bbX}_{B^{2\alpha}_{p/2,q/2}}^{\frac 12}) \norm{V^r \Delta X}_{B^\alpha_{p,q}} \\ &\lesssim (\nnorm{\bfX}_{B^\alpha_{p,q}} + \nnorm{\tilde\bfX}_{B^\alpha_{p,q}}) \norm{\Delta X}_{B^\alpha_{p,q}}\,.
\end{align*}
Finally, we have
\begin{equation*}
\norm{V^{r/2} \Delta\bbX}_{B^{2\alpha}_{p/2,q/2}} \lesssim \norm{\Delta\bbX}_{B^{2\alpha}_{p/2,q/2}} + \norm{\Delta X}_{B^\alpha_{p,q}}(\norm{X}_{B^\alpha_{p,q}} + \norm{\tilde X}_{B^\alpha_{p,q}})\,.
\end{equation*}
Putting all these as well as \eqref{ineq:EstimateGamma} in \eqref{ineq:BesovEstimateR} shows \eqref{ineq:LipschitzRDE4-SectionBesov}.
\end{proof}

\begin{remark}
Reducing the condition on $q$ in Proposition \ref{prop:Besov-Embedding-P-Alpha} (see Remark \ref{rem:reduceQ}) would lead to a reduction in Proposition \ref{prop:VrBesovEmbeddingDeltaX2} and therefore in this proof. Thus, we expect Theorem \ref{thm:LipschitzBesovEstimateRDE} to hold for all $0< q\le\infty$.
\end{remark}

\printbibliography

\end{document}